\documentclass[12pt]{amsart}
\usepackage{amssymb,amsmath}
\usepackage{cite}
\usepackage{accents}
\usepackage{graphicx}
\usepackage{listings}
\usepackage{pst-node}
\usepackage{enumerate, enumitem}
\usepackage{mathtools}
\usepackage{float,comment}

\usepackage{tikz-cd} 
\usetikzlibrary{calc}
\usetikzlibrary{arrows.meta}
\usetikzlibrary{angles}

\definecolor{mutation}{RGB}{123, 216, 236}
\definecolor{cas2}{RGB}{255, 0, 0}
\definecolor{cas1}{RGB}{0, 0, 255}
\definecolor{cas3}{RGB}{0, 180, 0}
\definecolor{highlight}{RGB}{0, 201, 181}
\definecolor{swapcolor}{RGB}{252, 161, 3}
\usepackage{hyperref}
\usepackage[nameinlink]{cleveref}

\DeclareMathOperator{\rank}{rank}

\newcommand{\tn}[1]{\textnormal{#1}}
\newcommand{\R}[0]{\mathbb{R}}
\newcommand{\C}[0]{\mathbb{C}}

\renewcommand{\H}[0]{\mathbb{H}}

\newcommand{\Acc}[0]{\mathcal{A}}
\newcommand{\Xcc}[0]{\mathcal{X}}

\newcommand{\Tcc}[0]{\mathcal{T}}
\newcommand{\Rcc}[0]{\mathcal{R}}

\newcommand{\colvec}[2][1.0]{%
  \scalebox{#1}{%
    \renewcommand{\arraystretch}{1.0}%
    $\begin{bmatrix}#2\end{bmatrix}$%
  }
}
\makeatletter
\renewcommand*\env@matrix[1][*\c@MaxMatrixCols c]{%
  \hskip -\arraycolsep
  \let\@ifnextchar\new@ifnextchar
  \array{#1}}
\makeatother

\usepackage{soul}

\theoremstyle{plain}
\newtheorem{thm}{Theorem}[section]
\newtheorem{lemma}[thm]{Lemma}
\newtheorem*{theorem*}{Theorem}
\newtheorem*{corollary*}{Corollary}
\newtheorem{example}[thm]{Example}

\newtheorem{cor}[thm]{Corollary}

\newtheorem{conj}[thm]{Conjecture}
\newtheorem{defn}[thm]{Definition}
\theoremstyle{definition}

\newtheorem{definition}[thm]{Definition}

\newtheorem{remark}[thm]{Remark}

\numberwithin{equation}{section}

\usepackage{vmargin}
\setpapersize{USletter}
\setmargrb{1cm}{0.5cm}{1cm}{1.cm}


\def\tr{\operatorname{tr}}

\lstset{language=Python}

 \definecolor{maroon}{rgb}{0.5,0,0}
 
 \usepackage{scalerel,stackengine}
\stackMath
\newcommand\reallywidehat[1]{%
\savestack{\tmpbox}{\stretchto{%
  \scaleto{%
    \scalerel*[\widthof{\ensuremath{#1}}]{\kern.1pt\mathchar"0362\kern.1pt}%
    {\rule{0ex}{\textheight}}
  }{\textheight}%
}{2.4ex}}%
\stackon[-6.9pt]{#1}{\tmpbox}%
}
\parskip 1ex

\newcommand{\uwidehat}[1]{%
  \mathpalette\douwidehat{#1}%
}

\makeatletter
\newcommand{\douwidehat}[2]{%
  \sbox0{$\m@th#1\widehat{\hphantom{#2}}$}%
  \sbox2{$\m@th#1x$}
  \sbox4{$\m@th#1#2$}
  \dimen0=\ht0
  \advance\dimen0 -.8\ht2
  \dimen2=\dp4
  \rlap{%
    \raisebox{\dimexpr\dimen0-\dimen2}{%
      \scalebox{1}[-1]{\box0}%
    }%
  }%
  {#2}%
}
\makeatother

\begin{document}

\title[GEOMETRIC LEAF OF THE SYMPLECTIC GROUPOID]{GEOMETRIC LEAF OF THE SYMPLECTIC GROUPOID}

\begin{abstract}{{
A symplectic groupoid is the set of pairs of matrices $(B,A)$ with $A\in \mathcal A$---the set of real unipotent upper-triangular matrices---and 
$B\in SL_n({\mathbb R})$ being such that $\widetilde{A}=BAB^T\in\mathcal A$. The symplectic groupoid possesses a nondegenerate symplectic structure which induces a reflection-equation algebra on $\mathcal A$. Fock and Chekhov defined a Poisson map of Teichmüller space ${\mathcal T}_{g,s}$ of genus g surfaces with $s=1,2$ holes into the space of unipotent upper-triangular $n\times n$ matrices whose image forms a \emph{geometric locus}. Elements of the geometric locus satisfy \emph{rank condition}. We describe the Hamiltonian reduction of the Poisson cluster variety of $\mathcal A$ by the rank condition for $n=5$ and $6$ and show that all such reductions are geometric. In both cases, we analyze the induced cluster structures on the results of Hamiltonian reduction and recover the celebrated cluster structure on ${\mathcal T}_{2,1}$ for $n=5$ and ${\mathcal T}_{2,2}$ for $n=6$.}}
\end{abstract}

\author{E. Brodsky}
\address{Michigan State University, East Lansing,
MI 48823}
\email{brodskye@msu.edu}

\author{L. Chekhov}
\address{Department of Mathematics, Michigan State University, East Lansing,
MI 48823}
\email{chekhov@msu.edu}

\author{P. Dangwal}
\address{360 Vincent Hall,
206 Church Street SE,
Minneapolis, MN 55455
}
\email{pdangwal@umn.edu}

\author{S.Hamlin}
\address{Department of Mathematics
Building 380, Stanford, California 94305}
\email{hamlin@stanford.edu}

\author{X. Lian}
\address{School of Mathematical Sciences, Nankai University,
No 94. Weijin Road, Tianjin, 300071, P.R. China}
\email{xulian189@outlook.com}

\author{M. Shapiro}
\address{Department of Mathematics, Michigan State University, East Lansing,
MI 48823}
\email{mshapiro@msu.edu}

\author{S. Sottile}
\address{Department of Mathematics,
Building 380, Stanford, California 94305}
\email{sottile@stanford.edu}

\author{Z. Zhan}
\address{University of Washington,
Department of Mathematics.
Box 354350, C-138 Padelford.
Seattle, WA 98195-4350}
\email{zzhan4@uw.edu}

\maketitle
\tableofcontents

\section{Introduction}

{ {
The study of the cluster structure of the \emph{symplectic groupoid} of unipotent upper-triangular $n\times n$ matrices was initiated in \cite{10.1093/imrn/rnac101} and \cite{chekhov2022roots}.}
This paper continues these studies with the aim to explore the correspondence between the cluster realization of the Teichm\"uller space $\mathcal{T}_{g,s}$ of genus $g$-curves with $s$ holes, where $g\ge 1, s\in\{1,2\}$ and cluster structure of the \emph{symplectic groupoid} of unipotent upper-triangular $n\times n$ matrices.} 

Let $\mathcal{A}_n$ denote the affine space of real unipotent upper-triangular $n\times n$ matrices. The symplectic groupoid $\mathfrak{M}_n$ of the unipotent upper-triangular $n\times n$ matrices is the set of pairs 
\begin{align*}
    \mathfrak{M}_n=\{(B,A)| B\in SL_n, A\in \mathcal{A}_n, \text{ such that } BAB^T\in\mathcal{A}_n\}.
\end{align*}

The space $\mathfrak{M}_n$ possesses a natural symplectic structure \cite{Wein87}. The push forward of the dual Poisson bracket under the
projection $\mathcal{M}\to\mathcal{A}_n$ mapping $(B,A)\mapsto A$ induces the Poisson bracket on $\mathcal{A}_n$ studied in \cite{boalch2001}, \cite{bondal2004} and called \emph{Bondal Poisson bracket}.

The Teichm\"uller space $\mathcal{T}_{g,s}$ of genus $g$ Riemann surfaces with $s$ holes is a Poisson variety equipped with the renowned Goldman Poisson bracket. Define $par(n)=\begin{cases}
    1,&\text{ if } n\text{ is odd,}\\
    2,&\text{ otherwise.}
\end{cases}$ 

In \cite{ChF2000}, a Poisson map $\varphi:\mathcal{T}_{\lfloor n/2\rfloor,par(n)}\to \mathcal{A}_n$ was constructed. The map  $\varphi$ is a finite covering by $\Tcc_{\lfloor n/2\rfloor,par(n)}$ of the image $\operatorname{Im}(\varphi)$. Notice that $\dim\left(\Tcc_{\lfloor n/2\rfloor,par(n)}\right)=3n-6$ and 
$\dim(\mathcal{A}_n)={n \choose 2}$. The dimensions of the corresponding generic symplectic leaves are $3n-6-par(n)$ and ${n\choose 2} - \lfloor\frac{n}{2}\rfloor$. { For $n=3$  dimensions of symplectic leaves coincide and both are equal $2$, for $n=4$ the dimensions  are $4$, and for $n=5$ they are $8$. However, in the latter case, the dimensions of spaces are different,
$\dim(\mathcal{T}_{2,1})=9$ with one central element while $\dim(\mathcal{A}_5)=10$ with two central elements.  }


We call the image $\operatorname{Im}(\varphi)$  \emph{a geometric locus} of $\Acc_n$. It is the union of symplectic leaves in $\Acc_n$ called \emph{geometric leaves}. For $n\ge 6$, the dimension of a geometric leave is smaller  than the dimension of a generic symplectic leaf in $\Acc_n$.
 In \cite{10.1093/imrn/rnac101}, the fourth and the fifth coauthors constructed a Poisson cluster algebra 
 $\Xcc(\Acc_n)$ together with a finite Poisson map $\xi:\C[\Acc_n]\to\Xcc(\Acc_n)$, where $\C[\Acc_n]$ is the Poisson algebra of regular functions on $\Acc_n$ equipped with the Bondal Poisson bracket and $\Xcc(\Acc_n)$ is equipped with the cluster Poisson structure.  We note that $\operatorname{Im}(\varphi)$ satisfies the so-called \emph{rank condition}.
 We use the rank condition to describe a Hamiltonian reduction of $\Xcc(\Acc_n)$ for $n=5$ and $6$. We prove that the Poisson algebra obtained as the result of the reduction for $n=5,6$ inherits cluster structure from $\Xcc(\Acc_n)$ and coincides with the corresponding renowned cluster algebras  $\Xcc\left(\Tcc_{\lfloor \frac{n}{2}\rfloor,par(n)}\right)$ whose seeds are labeled by ideal triangulations of hyperbolic surfaces of genus $\lfloor \frac{n}{2}\rfloor$ with $par(n)$ holes (see Section~\ref{sec:shear}).
 


In Section~\ref{sec:cluster}, we review the definitions of Poisson varieties and a cluster structure compatible with a Poisson bracket. In Section~\ref{sec:shear}, we recall the definition of the shear coordinates on the Teichm\"uller space $\mathcal{T}_{g,s}$. In Section~\ref{sec:symplecticgroupoid}, we recall the construction of the symplectic groupoid of the unipotent upper-triangular matrices and the corresponding Poisson bracket on $\mathcal{A}_n$, while, in Section~\ref{sec:logcanonicalgroupoid},  the construction of log-canonical parameters for the symplectic groupoid~\cite{10.1093/imrn/rnac101} is described.

In Section~\ref{sec:geometricleafdimension}, we compare the dimension of a geometric leaf and the dimension of $\mathcal{A}_n$ for $n\ge 3$. In Section~\ref{sec:rankcondition}, we define the necessary condition for a geometric leaf (called \emph{the rank condition}). Finally, in Section~\ref{sec:solutionRank} we provide a solution of the rank conditions for $n=3,4,5$ and $6$. 

{\rm Acknowledgement.} The project started under the umbrella of the "Discovering America" program at Michigan State University, hosting one-semester joint research teams of 
visiting students from China and groups of USA students.
The authors are grateful to the Math Department of Michigan State University for its hospitality, support, and inspiring research atmosphere, which allowed us to continue our research and resulted in this publication. Special thanks are due to Linhui Shen, Francis Bonahon, Alexander Shapiro, and Michael Berstein for the valuable discussion. M.S. was partially supported by the NSF research grant DMS \#2100791.

\section{Cluster algebras and Poisson geometry}\label{sec:cluster}

\subsection{Preliminaries}
We start by recalling the definition of a cluster algebra and a compatible Poisson structure.


\begin{definition}\label{def:quiver}

A \textit{quiver} is a directed graph which we shall represent by the tuple $Q = (V, E, s, t)$. $V$ and $E$ are the \textit{vertex} and \textit{edge sets}, respectively, and $s, t: E \to V$ are the \textit{source} and \textit{target maps}. 
\end{definition}

We will call the elements of the directed edge set $E$ \textit{arrows}. Moreover, for an arrow $\alpha \in E$, $s(\alpha) \in V$ is called the \textit{source} or \textit{starting} vertex and $t(\alpha) \in V$ the \textit{target} or \textit{terminal} vertex.

In keeping with Cluster Algebra convention, we shall require our quivers to have no oriented $1$-cycles (loops) or oriented $2$-cycles.

{
We consider only finite quivers with $|V|<\infty$ and $|E|<\infty|$.}

\begin{definition}\label{def:quiver-mut} Let $Q$ be a quiver whose set of vertices $V$ is divided into two disjoint subsets $V=V_m\bigsqcup V_f$ of \emph{mutable} vertices $V_m$ and of \emph{frozen} vertices $V_f$. To each vertex $k\in V_m$ one can associate a \emph{quiver mutation} $\mu_k$ at vertex $k$.
 The quiver mutation $\mu_k$ at vertex $k$ transforms $Q$ into a new quiver $Q' = \mu_k(Q)$ via a sequence of three steps:
\begin{enumerate}
\item for each oriented two-arrow path $i \to k \to j$, introduce a new arrow $i \to j$; 
\item reverse the direction of all arrows incident to the vertex $k$;
\item consecutively remove all oriented 2-cycles created in Step 1.
\end{enumerate}
\end{definition}

It is an easy exercise to show that for any quiver $Q$ and any vertex $k$ in $Q$, $\mu_k$ is an involution on $Q$.

\begin{example}
The following figure demonstrates a quiver mutation at vertex $1$ of the Markov quiver.\\

\begin{center}
    \begin{tikzpicture}[scale=0.9]
			\node [text=red](x) at (-1,0) {${\color{red}1}$};
			\node (y) at (1,0) {$2$};
			\node (z) at (0,1.732058) {$3$};
			\draw [thick, ->] (-0.76,0.079)--(0.76,0.079);
			\draw [thick, ->] (-0.76,-0.079)--(0.76,-0.079);
			\draw [thick, ->] (0.8,0.25)--(0.08,1.5);
			\draw [thick, ->] (0.97,0.33)--(0.25,1.58);
			\draw [thick, <-] (-0.8,0.25)--(-0.08,1.5);
			\draw [thick, <-] (-0.97,0.33)--(-0.25,1.58);

			\node [text=red] (x1) at (3,0) {${\color{red}1}$};
			\node (y1) at (5,0) {$2$};
			\node (z1) at (4,1.732058) {$3$};
			\draw [thick, ->] (3.24,0.079)--(4.76,0.079);
			\draw [thick, ->] (3.24,-0.079)--(4.76,-0.079);
			\draw [thick, ->] (4.8,0.25)--(4.08,1.5);
			\draw [thick, ->] (4.97,0.33)--(4.25,1.58);
			\draw [color=blue,thick, <-] (5.12,0.45)--(4.41, 1.71);
			\draw [color=blue,thick, <-] (5.27,0.57)--(4.57, 1.79);
			\draw [color=blue,thick, <-] (5.42,0.69)--(4.73, 1.87);
			\draw [color=blue,thick, <-] (5.57,0.81)--(4.89, 1.95);
			\draw [thick, <-] ( 3.2,0.25)--(3.92,1.5);
			\draw [thick, <-] (3.03,0.33)--(3.75,1.58);

			\node [text=red] (x2) at (7,0) {${\color{red}1}$};
			\node (y2) at (9,0) {$2$};
			\node (z2) at (8,1.732058) {$3$};
			\draw [thick, <-] (7.24,0.079)--(8.76,0.079);
			\draw [thick, <-] (7.24,-0.079)--(8.76,-0.079);
			\draw [thick, ->] (8.8,0.25)--(8.08,1.5);
			\draw [thick, ->] (8.97,0.33)--(8.25,1.58);
			\draw [color=blue,thick, <-] (9.12,0.45)--(8.41, 1.71);
			\draw [color=blue,thick, <-] (9.27,0.57)--(8.57, 1.79);
			\draw [color=blue,thick, <-] (9.42,0.69)--(8.73, 1.87);
			\draw [color=blue,thick, <-] (9.57,0.81)--(8.89, 1.95);
			\draw [color=red,thick] (8.37,0.78)--(9,1.26);
			\draw [thick, ->] ( 7.2,0.25)--(7.92,1.5);
			\draw [thick, ->] (7.03,0.33)--(7.75,1.58);

			\node [text=red] (x3) at (11,0) {${\color{red}1}$};
			\node (y3) at (13,0) {$2$};
			\node (z3) at (12,1.732058) {$3$};
			\draw [thick, <-] (11.24,0.079)--(12.76,0.079);
			\draw [thick, <-] (11.24,-0.079)--(12.76,-0.079);
			\draw [thick, <-] (12.8,0.25)--(12.08,1.5);
			\draw [thick, <-] (12.97,0.33)--(12.25,1.58);
			\draw [thick, ->] ( 11.2,0.25)--(11.92,1.5);
			\draw [thick, ->] (11.03,0.33)--(11.75,1.58);
			\node (0) at (0,-0.6) {\small Step $0$};
			\node (1) at (4,-0.6) {\small Step $1$};
			\node (2) at (8,-0.6) {\small Step $2$};
			\node (3) at (12,-0.6) {\small Step $3$};
\end{tikzpicture}
\end{center}
\end{example}

\begin{definition}\label{def:seed}
Let $Q$ be a quiver and let $V$ denote its vertex set. A \textit{seed} associated with $Q$ is a tuple $i = (Q, \{X_k\}_{k\in V})$ where $\{X_k\}_{k\in V}$ is a set of independent variables generating the ambient field $\mathcal F$  of rational functions of $X_1, X_2, \dots$.
\end{definition}







We now present an equivalent definition of \emph{quiver mutation} 
in terms of \emph{skew-symmetric} matrices. For now, we restrict our attention to \textit{skew-symmetric coefficient-free} cluster algebras.

\begin{remark}
    For the more general treatment of the subject, the reader is referred to \cite{fominzelevinsky2002},\cite{GSV10}. In the current paper, a more general definition of \emph{skew-symmetrizable cluster algebra with geometric coefficients} is not used.
\end{remark}





\begin{definition}\label{def:matrixmutation}
    Let $B = (b_{ij})$ be a skew-symmetric 
$n \times n$  integer matrix. 
For $k \in [1,n]$, the \textit{matrix mutation} $\mu_k$ in direction $k$ transforms $B$ into the 
$n \times n$ 
matrix $\mu_k(B) = B' = (b_{ij}')$ with the entries defined as 
    
    \begin{equation}\label{eq:matrixmutation}
    b_{ij}' =
    \begin{cases}
    -b_{ij} &\text{if $i=k$ or $j=k$}\\
    b_{ij}  + \frac{|b_{ik}|b_{kj} + b_{ik}|b_{kj}|}{2}& \text{otherwise.}
    \end{cases}
    \end{equation}
\end{definition}



\begin{remark}\label{rm:quiver-matrix}
The information in any directed graph can be recorded in a skew-symmetric integer matrix $B$ by defining $b_{ij}$ to be the number of arrows from vertex $i$ to $j$ minus the number of arrows from $j$ to $i$. 
This matrix is called the \textit{signed incidence matrix} of the graph. Observe that since we do not allow loops in a quiver, the diagonal entries of the incidence matrix of a quiver are always $0$. Thus, quivers give rise to skew-symmetric matrices, and it is easy to verify that the above definition of matrix mutations~\ref{def:matrixmutation} matches the combinatorial definition of quiver mutations~\ref{def:quiver-mut} given earlier in this section.\\
\end{remark}




\begin{definition}
    A \textit{seed} in $\mathcal F$ is a pair $(\mathbf{x}, B)$, where $B$ is a skew-symmetric $n \times n$ integer matrix and $\mathbf{x} = (x_1, \dots, x_n)$ is a collection of \emph{cluster} variables forming a transcendence basis of $\mathcal F$. 
The tuple $\mathbf{x}$ is called a \textit{cluster},
$n$ is the  \emph{rank} of the cluster algebra.

\end{definition}

\begin{definition}
    Given a seed $(\mathbf{x}, B)$ as above, the \textit{seed mutation} $\mu_k$ ($k \in [1,n])$ transforms $(\mathbf{x}, B)$ into a new seed $\mu_k(\mathbf{x}, B) = (\mathbf{x}', B')$ such that 
    
    \begin{itemize}
        \item $B' = \mu_k(B)$  (see Formula~\ref{eq:matrixmutation})
        \item $x'_j=(\mathbf{x'})_j$  satisfies so-called \emph{$x$-variable mutation rule}: 
        $$
x'_i=\mu_k(x_i) = 
\begin{cases}
x_i^{-1}, & \text{if $i = k$} \\
{
x_i\left(1 + x_k^{-sgn(b_{ik})}\right)^{-b_{ik}}},  & \text{if $i \neq k$},
\end{cases}
\quad\hbox{where}\quad
sgn(z):=\begin{cases}
    1, & \text{ if } z>0,\\
    0, & \text{ if } z=0,\\
    -1, & \text{ if } z<0.
\end{cases}
$$
        
    \end{itemize}
\end{definition}

{
Let $x_1$ be the set of cluster varables in a seed $t$. For mutation $\mu_k$ we say that the open set of $\mathbb R$ defined by the inequality $x_k\ne -1$ is the set of \emph{allowed values} for $x_k$. Note that the condition $x_k\ne -1$ is equivalent to the condition $\mu_k(x_k)\ne -1$ in the seed $\mu_k(t)$.
}



\begin{example}
The quiver 
    \begin{tikzpicture}[scale=0.9]
			\node (x) at (-1,0) {$\bullet$};
			\node (y) at (1,0) {$\bullet$};
			\draw [thick, ->] (-0.76,0.)--(0.76,0.);
\end{tikzpicture} represents the rank $2$ cluster algebra of type $A_2$.
It has five seeds placed inside rectangles in the figure below, connected by mutations shown by dotted intervals.

\begin{center}
    \begin{tikzpicture}[scale=0.9]
           
			\node[rectangle,draw] (x) at (-2,0.5) {$x\longrightarrow y$};

			\node[rectangle,draw] (x1) at (3,0) {$\frac{1}{x}\longleftarrow y(1+x)$};
			\draw [thick, dotted] (x)--(x1);

			\node[rectangle,draw] (x2) at (7,2) {$\frac{1+y(1+x)}{x}\longrightarrow \frac{1}{y(1+x)}$};
			\draw [thick, dotted] (x1)--(x2);

			\node[rectangle,draw] (x3) at (3,4) {$\frac{x}{1+y(1+x)}\longleftarrow \frac{1+y}{xy}$};
			\draw [thick, dotted] (x2)--(x3);

			\node[rectangle,draw] (x4) at (-2,3.5) {$\frac{1}{y} \longrightarrow \frac{xy}{1+y}$};
			\draw [thick, dotted] (x4)--(x3);
            \draw [thick, dotted] (x4)--(x);

\end{tikzpicture}
\end{center}

\end{example}


\subsection{Compatible Poisson brackets}
 
In this subsection, we recall some basic definitions of Poisson geometry and the notion of a Poisson structure compatible with the cluster algebra structure first introduced in  \cite{GSV03}.\\

A \textit{Poisson algebra} is a commutative associative algebra $\mathcal A$ equipped with a \textit{Poisson bracket} $\{ \cdot, \cdot \}: \mathcal A \times \mathcal A \to \mathcal A$ that is a skew-symmetric bilinear map satisfying 
\begin{enumerate}
    \item the \textit{Leibniz identity} $\{fg, h\} = f\{g,h\} + \{f, h\}g$ and,
    \item the \textit{Jacobi identity} $\{f, \{g, h\} \} + \{g, \{h, f\} \}+ \{h, \{f,g\}\} = 0$
\end{enumerate}
for functions $f,g,h$. 

{
A \textit{Casimir element of $\mathcal{A}$} is an element $c \in \mathcal A$ such that $\{c , f \} = 0$ for any $f \in \mathcal A$. Abusing notations, we call Casimir elements also \emph{Casimir function} or, simply, \emph{Casimir}.
A smooth real manifold $M$ is said to be a \textit{Poisson manifold} if the algebra $C^\infty(M)$ of smooth functions on $M$ is a Poisson algebra. In this case, we say that $M$ is equipped with a \textit{Poisson structure}.\\
}

Let $\{\cdot, \cdot\}$ be a Poisson bracket on the ambient field $\mathcal F$ considered as the field of rational functions in $n$ independent variables with rational coefficients. We say that the collection of functions $\mathbf{f}=(f_1, \dots, f_{n})$ is \textit{log-canonical} with respect to the bracket $\{\cdot, \cdot\}$ if $\{f_i, f_j\} = \omega_{ij}f_if_j$, where $\omega_{ij}$ are  constants for all $i,j \in [1,n]$. The \textit{coefficient matrix} of $\{\cdot, \cdot\}$ is defined to be $\Omega^\mathbf{f} = \left(\omega_{ij}\right)_{i,j=1}^n$. Due to the antisymmetry of $\{\cdot, \cdot\}$, the coefficient matrix is evidently skew-symmetric.\\


\begin{definition}
A cluster structure is \emph{compatible} with a Poisson bracket if any cluster is log-canonical with respect to the bracket. 
\end{definition}

\begin{remark}
    It is well known that any cluster structure has a compatible Poisson structure~\cite{GSV10} which in the cluster $\mathbf{x} = (x_1, \dots, x_n)$ takes the form $\{x_i,x_j\}=B_{ij} x_i x_j$. Recall that $B$ is an integer skew-symmetric matrix.
\end{remark}


One of the well-known examples of a cluster algebra equipped with a natural Poisson bracket is provided by the Teichm\"uller space equipped with the Goldman Poisson bracket \cite{GSV05},\cite{FST08Acta},\cite{fock2006moduli}.


%
\section{Description of Cluster Coordinates of the Teichm\"uller Space $\mathcal{T}_{g,s}$, $s\ge 1$}~\label{sec:shear}

The topology of an oriented, 
complete,
two-dimensional surface is entirely described by the genus $g$ and the number of boundary components, $s$. Such a surface will be denoted $\Sigma_{g,s}$.
We assume that $g,s$ satisfy the \emph{hyperbolicity condition}
{
$2g-2+s>0$}.



{ The topological surface $\Sigma_{g,s}$ with $g,s$ satisfying hyperbolicity condition can be equipped with a hyperbolic metric, i.e.,  a metric with constant curvature $-1$; the surface with a hyperbolic metric is called a \emph{hyperbolic surface}.
Any hyperbolic surface is obtained as a quotient of the upper half plane $\H  = \{x+iy \in \C|\ x,y \in \R,\ y > 0 \}$ by a discrete subroup 
$F$ of $PSL(2,\R)$. Such $F$ is called a Fuchsian group. Note that 
$\H$ is equipped with the standard hyperbolic metric
$ds^2 = \frac{dx^2+dy^2}{y^2}$. An element $g\in PSL(2,\R)$  represented by a $2\times 2$ real matrix 
$g=\pm \begin{pmatrix} a & b\\
c & d
\end{pmatrix}
$
 acts naturally on $\H$ via M\"obius transformations, 
$z \mapsto \frac{az + b}{cz+d}$.
Such action is an isometry. 
On $\H$, the geodesics are given by half circles centered on the real line and vertical lines. An ideal triangle in $\H$ is a triangle in the 
{open} 
upper half plane $\{(x,y)| y> 0\}$ with geodesic sides and all three vertices on the real line, or two on the real line and one at infinity. All ideal triangles in $\H$ are isometric to each other. An \emph{ideal triangle} of a hyperbolic surface is a triangle with geodesic sides isometric to one (and, hence, any)  ideal triangle in $\H$.

For $2g-2 + s > 0$, each hyperbolic surface of genus $g$ with $s$ holes can be represented as a disjoint union of $s$ 
{magenta}{topologically closed} “outer” infinite hyperbolic domains (“holes”), each of which is isometric to a half of a one-sheeted hyperboloid whose geodesic boundary is a unique closed geodesics homeomorphic to a loop around a hole (a hole “boundary,” or perimeter), and the (topologically open but bounded) part which we call the \emph{reduced surface}. In what follows, a \emph{hyperbolic surface} will always refer to this reduced surface.

{A fundamental domain is a subset of $\H$ that is a connected union of $4(g-1)+2s$ ideal triangles, which, upon the action of $F$ on $\H$, transforms into an infinite ideal-triangle tesselation $\mathbb T$ of $\H-\bigl\{ \cup^\infty C_{P_i} \bigr\}$, where $C_{P_i}$ are disjoint closed semicircular domains bounded by all copies of boundary geodesics. Then $\Sigma_{g,s}=\Bigl( \H-\bigl\{ \cup^\infty C_{P_i} \bigr\} \Bigr)\Bigm\backslash F$, and we call an (ideal) triangulation $T$ of $\Sigma_{g,s}$ the corresponding factor of the tesselation, $T=\mathbb T\backslash F$.
}

Therefore, an \emph{ideal triangulation} of a hyperbolic surface is a partition of the reduced surface by geodesics that satisfies the following conditions:
\begin{itemize}
\item The surface is divided into regions, each of which is the interior of an ideal triangle.
\item Each geodesic in the partition forms an edge of an embedded ideal triangle.
\item The vertices of the ideal triangles are located at ideal points, which are points at infinity in hyperbolic geometry 
{(the absolute)}.
\end{itemize}

In an ideal triangulation of a hyperbolic surface, each edge is a side of two triangles that form together an ideal quadrilateral. 
A parameter, \emph{the (exponential) shear coordinate}, can be assigned to each edge of an ideal triangulation by lifting to $\H$.
{
The exponential shear coordinate is  the cross-ratio of the four vertices of the lift of the ideal quadrilateral.} 

An ideal triangulation $T$ of $\Sigma_{g,s}$ is dual to the \emph{ribbon} (or \emph{fat}) graph $\Gamma$ whose vertices correspond to ideal triangles of $T$ and edges of $\Gamma$ are dual to the edges of $T$, i.e., each edge of $\Gamma$ transversally intersects exactly one edge of $T$ at exactly one point. The ribbon graph is homotopy equivalent to the surface,
in particular, $\pi_1(\Sigma_{g,s},p)\simeq \pi_1(\Gamma,p)$ for any point $p\in\Gamma$. Any 
nontrivial element $\gamma\in \pi_1(\Sigma_{g,s},p)$ has a unique representative as a closed path (sequence of edges of $\Gamma$ with the same starting and end point) {
without backtracking}. 
 At the vertices of the ribbon graph, the incident half-edges have a cyclic order that is compatible with the orientation of the surface. }



On a hyperbolic surface $\Sigma$, a closed geodesic $\gamma$ starting and ending at $p \in \Sigma$ can be lifted to a path $\tilde{\gamma}$ in the universal cover $\H$. The deck transformation of $\H$ given by $[\gamma] \in \pi_1(\Sigma)$ is given by an element $M_\gamma\in PSL(2,\R)$ of the Fuchsian group of the surface. Following \cite{fock1993}, we describe $M_\gamma$ as a product of matrices:
\begin{equation}\label{eq:LRX}
L = \colvec{0 & 1\\ -1 & -1},\ R = \colvec{1 & 1\\ -1 & 0},\ X(x_t) = \colvec{0 & \dfrac{1}{\sqrt{x_t}} \\ -\sqrt{x_t} & 0},
\end{equation}
where $t$ is an edge of a ribbon graph describing the hyperbolic surface and $x_t$ is the (exponential) shear parameter assigned to $t$. The path $\gamma$ traveled along the edges of the ribbon graph, turning right and left, which leads to the following product formula for deck transformation $M_\gamma$ \textcolor{magenta}{with the composition law from right to left.}

\begin{figure}[h]
    \centering
        \begin{tikzpicture}[scale=1.5]
                \draw[line width = 1pt, draw=black,double=white,double distance=4mm] (-1,2) -- (0,1) -- (1,2);
            
                \draw[line width = 1pt, draw=black,double=white,double distance=4mm] (-1,-2) -- (0,-1) -- (1,-2);
                \draw[line width = 1pt, draw=black,double=white,double distance=4mm] (0,-0.95) -- (0,0.95);
            \draw[line width = 3pt, draw=orange, -latex] (0.5,-1.5) -- (0,-1) -- (0,1) -- (-0.5,1.5);
 \draw  (0.5,-1.5) [-latex] arc[start angle=-45, end angle=90, radius=0.6cm];
 \draw  (0,0.4) [-latex] arc[start angle=270, end angle=135, radius=0.6cm];
                \coordinate (R) at (0.7,-0.8);
                \coordinate (L) at (-0.7,0.8);
                \coordinate (X) at (0.5,0.);
\coordinate (t) at (0,0);
                \node at (R) {$R$};
                \node at (L) {$L$};
                \node at (X) {$X(x_t)$};
\node at (t) {$t$};
            \end{tikzpicture} 
\caption{Factorization of deck transformation operator. The deck transformation along path $\gamma$ is expressed as a product of the matrix sequence
$M_\gamma = \ldots\cdot L\cdot X(x_t)\cdot R\cdot\ldots $}\label{fig:hol}
            \end{figure}


Then, the expression of the \emph{geodesic function}
$G_\gamma := \left|\tn{tr}M_\gamma \right|$ in terms of the geodesic length  $l_\gamma$ of  $\gamma$ is well-known $G_\gamma = 2\cosh \left(\frac{l_\gamma}{2}\right)$~\cite{ChF2000}. 

\begin{definition}
    The Teichm\"uller Space $\mathcal{T}_{g,s}$ is the space of all hyperbolic surfaces of genus $g$ with $s$ holes (or, equivalently, complex structures on a genus $g$ surface with $s$ holes with a choice of basis of $H_1(\Sigma_{g,s},\mathbb{Z})$).
\end{definition}




\section{Description of the Space of Objects $\mathcal{A}_n$ of the Symplectic Groupoid $\mathfrak{M}_n$}\label{sec:symplecticgroupoid}
{
\begin{definition}[see~\cite{bondal2004}]
    {Bondal's} \emph{symplectic groupoid $\mathfrak{M}_n$ of unipotent upper triangular matrices} (or, \emph{symplectic groupoid}, for brevity)  is the groupoid with object set $\mathcal{A}_n$, the set of $n\times n$ unipotent real upper triangular matrices, and the set of morphism from $A \in \mathcal{A}_n$ to $\widetilde A \in \mathcal{A}_n$, i.e.,  the set of matrices $B \in SL(n, \R)$ such that
 $\widetilde A=BAB^T$.
\end{definition}

For a more general notion of symplectic groupoid, see \cite{Wein87}. 
Note that $\mathfrak{M}_n$ is a submanifold of $SL(n,\R) \times \mathcal{A}_n$ given by
$\mathfrak{M}_n = \{(B,A) | B \in SL(\R,n), A \in \mathcal{A}_n, BAB^T \in \mathcal{A}_n\}$.

 The \emph{set of compatible pairs} is the fiber product $\mathfrak{M}_n\times_{\mathcal{A}_n}\mathfrak{M}_n$:
\[\mathfrak{M}_n^{(2)} = \big\{\left( (C,\widetilde A), (B, A) \right) | C,B \in SL(\R,n), A,\widetilde A \in \mathcal{A}_n, \widetilde A = BAB^T, C\widetilde A C^T \in \mathcal{A}_n\big\} \subset \mathfrak{M}_n \times \mathfrak{M}_n\]
}
There are two natural projections $p_1,p_2:\mathfrak{M}_n^{(2)}\to\mathfrak{M}_n$, projecting on the first and second coordinates respectively, in addition to a multiplication map
$m: ((C, BAB^T),(B,A)) \mapsto (CB,A)$.
There is a natural symplectic form $\omega$ on $\mathfrak{M}_n$ such that
$m^*\omega = p_1^*\omega + p_2^*\omega$~(\cite{bondal2004}). The symplectic form
$\omega$ defines a dual Poisson bracket on $\mathfrak{M}_n$.
The push forward of this Poisson bracket under the natural projection $\mathfrak{M}_n\to\mathcal{A}_n$ induces a Poisson bracket $\mathcal{P}_{\text{sym}}$ on $\mathcal{A}_n$, studied in \cite{bondal2004}.

{
In particular, for any matrix $A\in\mathcal{A}_n$ consider the polynomial $\chi_A(\lambda):=\det(A+\lambda A^T)$. Its coefficients are the polynomial functoins of entries $A_{ij}$ of $A$. They 
generate the algebra of Casimir functions with respect to the Poisson bracket on $\mathcal{A}_n$. 
{It is easy to see that $\chi_A(\lambda)=\chi_{\widetilde A}(\lambda)$.}
\begin{remark}
    Note that the polynomial $\chi_A(\lambda)$ is reciprocal for any $A$ and, hence, the set of the coefficients contains  only $\lfloor \frac{n-1}{2}\rfloor$ algebraically independent functions of $A_{ij}$.
\end{remark}
}








\section{Log-canonical coordinates for $\mathcal{A}_n$}\label{sec:logcanonicalgroupoid}


{
In this section we recall the construction from \cite{10.1093/imrn/rnac101} of log-canonical coordinates compatible with the Poisson bracket on $\mathcal{A}_n$ . The Fock--Goncharov parameters $Z_{ijk}$ for $n\times n$ upper-triangular matrices are log-canonical coordinates for the standard trigonometric Poisson-Lie bracket on $SL_n$. Parameters $Z_{ijk}$ are organized in the triangular lattice. The corresponding log-canonical Poisson bracket $\mathcal{P}_{tr}$ is described by the quiver on Fig.~\ref{fig:FG}. As explained in  \cite{10.1093/imrn/rnac101},  $Z_{ijk}$ parametrize, in particular, Borel subgroup $B_n$ of the upper triangular matrices {
which is a Poisson subgroup of $SL_n$} and the log-canonical Poisson structure
 $\mathcal{P}_{tr}$ coincides with the 
 Poisson-Lie structure to $B_n$.

\begin{figure}[h]
        \begin{tikzpicture}[scale=1.0]
\node[draw,circle,blue] (Z600) at (-6,0) {$Z_{600}$};
    \node[draw,circle] (Z501) at (-4,0) {$Z_{501}$};
        \node[draw,circle] (Z402) at (-2,0) {$Z_{402}$};
            \node[draw,circle] (Z303) at (0,0) {$Z_{303}$};
                \node[draw,circle] (Z204) at (2,0) {$Z_{204}$};
                    \node[draw,circle] (Z105) at (4,0) {$Z_{105}$};
                        \node[draw,circle,blue] (Z006) at (6,0) {$Z_{006}$};
\node[draw,circle] (Z510) at (-5,1.4) {$Z_{510}$};
    \node[draw,circle] (Z411) at (-3,1.4) {$Z_{411}$};
        \node[draw,circle] (Z312) at (-1,1.4) {$Z_{312}$};
            \node[draw,circle] (Z213) at (1,1.4) {$Z_{213}$};
                \node[draw,circle] (Z114) at (3,1.4) {$Z_{114}$};
                    \node[draw,circle] (Z015) at (5,1.4) {$Z_{015}$};
\node[draw,circle] (Z420) at (-4,2.8) {$Z_{420}$};
    \node[draw,circle] (Z321) at (-2,2.8) {$Z_{321}$};
        \node[draw,circle] (Z222) at (0,2.8) {$Z_{222}$};
            \node[draw,circle] (Z123) at (2,2.8) {$Z_{123}$};
                \node[draw,circle] (Z024) at (4,2.8) {$Z_{024}$};
\node[draw,circle] (Z330) at (-3,4.2) {$Z_{330}$};
    \node[draw,circle] (Z231) at (-1,4.2) {$Z_{231}$};
        \node[draw,circle] (Z132) at (1,4.2) {$Z_{132}$};
            \node[draw,circle] (Z033) at (3,4.2) {$Z_{033}$};
\node[draw,circle] (Z240) at (-2,5.6) {$Z_{240}$};
    \node[draw,circle] (Z141) at (0,5.6) {$Z_{141}$};
        \node[draw,circle] (Z042) at (2,5.6) {$Z_{042}$};
\node[draw,circle] (Z150) at (-1,7) {$Z_{150}$};
    \node[draw,circle] (Z051) at (1,7) {$Z_{051}$};
\node[draw,circle, blue] (Z060) at (0,8.4) {$Z_{060}$};


\draw [->] (Z600) edge[dashed] (Z510)  (Z510) edge[dashed] (Z420) (Z420) edge[dashed] (Z330) (Z330) edge[dashed] (Z240) (Z240) edge[dashed] (Z150) (Z150) edge[dashed] (Z060) (Z060) edge[dashed] (Z051) (Z051) edge[dashed] (Z042) (Z042) edge[dashed] (Z033) (Z033) edge[dashed] (Z024) (Z024) edge[dashed] (Z015) (Z015) edge[dashed] (Z006) (Z006) edge[dashed] (Z105) (Z105) edge[dashed] (Z204) (Z204) edge[dashed] (Z303) (Z303) edge[dashed] (Z402)  (Z402) edge[dashed] (Z501) (Z501) edge[dashed] (Z600);

\draw [->,line width = 0.4mm] (Z105) edge (Z015) (Z015) edge (Z114) (Z114) edge (Z024) (Z024) edge (Z123) (Z123) edge (Z033)  (Z033) edge (Z132) (Z132) edge (Z042) (Z042) edge (Z141) (Z141) edge (Z051) (Z051) edge (Z150);

\draw [->,line width = 0.4mm] (Z150) edge (Z141) (Z141) edge (Z132) (Z132) edge (Z123) (Z123) edge (Z114) (Z114) edge (Z105);

\draw [->,line width = 0.4mm] (Z204) edge (Z114) (Z114) edge (Z213) (Z213) edge (Z123) (Z123) edge (Z222) (Z222) edge (Z132) (Z132) edge (Z231) (Z231) edge (Z141) (Z141) edge (Z240);

\draw [->,line width = 0.4mm] (Z240) edge (Z231) (Z231) edge (Z222) (Z222) edge (Z213) (Z213) edge (Z204) ;

\draw [->,line width = 0.4mm] (Z303) edge (Z213) (Z213) edge (Z312) (Z312) edge (Z222) (Z222) edge (Z321) (Z321) edge (Z231) (Z231) edge (Z330) ;

\draw [->,line width = 0.4mm] (Z330) edge (Z321) (Z321) edge (Z312) (Z312) edge (Z303) ;

\draw [->,line width = 0.4mm] (Z402) edge (Z312) (Z312) edge (Z411) (Z411) edge (Z321) (Z321) edge (Z420)  ;

\draw [->,line width = 0.4mm] (Z420) edge (Z411) (Z411) edge (Z402) ;

\draw [->,line width = 0.4mm] (Z501) edge (Z411) (Z411) edge (Z510) (Z510) edge (Z501)  ;

\end{tikzpicture}
\caption{The Fock-Goncharov parameters organized in the triangular lattice. The quiver with vertices labeled by the Fock-Goncharov parameters $Z_{\alpha,\beta,\gamma}$ encodes the Poisson bracket between $Z$s: $\{Z_{\alpha,\beta,\gamma},Z_{\alpha',\beta',\gamma'}\}=\varkappa\cdot Z_{\alpha,\beta,\gamma}\cdot Z_{\alpha',\beta',\gamma'}$, here $\varkappa$ denotes the algebraic sum of weights of arrows between $(\alpha,\beta,\gamma)$ and $(\alpha',\beta',\gamma')$. The dashed arrow contributes to $\varkappa$ weight $\frac{1}{2}$ in the arrow's direction and $-\frac{1}{2}$ in the opposite direction. The solid arrow contributes weight $\pm 1$.}
\label{fig:FG}
\end{figure}

As the next step in \cite{10.1093/imrn/rnac101}, we utilized $Z_{ijk}$ to construct log-canonical coordinates for the Poisson bracket on $\mathcal{A}_n$.

{
Parameters $Z_{ijk}$ describe three (non-independent) triangular matrices ${\mathcal M}_1,{\mathcal M}_2,{\mathcal M}_3$.
Matrices $\mathcal{M}_i$ are boundary measurement matrices in the sense of A.Postnikov (\cite{postnikov2006}). Below, we explain the construction in the case $n=6$.
We draw the following black oriented graph $\Gamma$ (see Figure~\ref{fi:plab}) dual to the original quiver (in blue). Let ${\mathfrak B}$ denote the set of triples of baricentric indices ${\mathfrak B}=\{(i_1,i_2,i_3)| 0\le i_1,i_2\,i_3\le 6 \text{ and } i_1+i_2+i_3=6\}$.
For each oriented path $p$ in $\Gamma$ starting from one of the vertices $\{1,2,3,4,5,6\}$ on the right side of the triangle and terminating at one of the vertices $\{1',2',3',4',5',6'\}$ on the left side of the triangle, we  denote by ${\mathfrak B}_p$ the subset of $\mathfrak{B}$ for vertices situated above the path $p$ and we assign to the path its weight $w(p)=\prod_{\alpha\in {\mathfrak B}_p} Z_\alpha$. 

Let ${\mathfrak P}_{ij}$ be the set of all oriented paths from $j$ to $i'$. We define the entry $\left({\mathcal M}_1\right)_{ij}$ of the matrix  ${\mathcal M}_1$ to be $\left({\mathcal M}_1\right)_{ij}=\sum_{p\in {\mathfrak P}_{ij}} w(p)$. Clearly,   ${\mathcal M}_1$ is lower-triangular because there is no path in $\Gamma$ from $j$ to $i'$ for $i<j$. 

To construct  ${\mathcal M}_2$ we consider oriented paths in $\Gamma$ from the right side 
$\{1,2,3,4,5,6\}$ to the bottom
$\{1'',2'',3'',4'',5'',6''\}$. Denote the set of such paths connecting $j$ to $i''$ by ${\mathfrak P}^1_{ij}$. Any such path $p$ breaks the triangle in two parts: the left part ${\mathfrak B}^l_p$ containing $Z_{006}$ and $Z_{600}$ and the right part ${\mathfrak B}^r_p$ containing $Z_{060}$. We set the weight  $w(p)=\prod_{\alpha\in{\mathfrak B}^l_p} Z_\alpha$, and  $\left({\mathcal M}_2\right)_{ij}=\sum_{p\in {\mathfrak P}^1_{ij}} w(p)$. ${\mathcal M}_2$ is upper-triangular.

Finally, to construct  ${\mathcal M}_3$, we obtain a new graph $\Gamma'$ from $\Gamma$ by rotating the black graph $\Gamma$ $120^\circ$ counterclockwise. It is important to note that the labels of all variables and all boundary vertices do not rotate, only orientations of arrows in $\Gamma$ change.  Consider oriented paths in $\Gamma'$ from the left side 
$\{1',2',3',4',5',6'\}$ to the bottom
$\{1'',2'',3'',4'',5'',6''\}$. Denote the set of such paths connecting $j'$ to $i''$ by ${\mathfrak P}^2_{ij}$. Any such path $p$ divides the triangle in two parts: the left part ${\widetilde{\mathfrak B}}^l_p$ containing $Z_{600}$ and the right ${\widetilde{\mathfrak B}}^r_p$ containing $Z_{060}$ and $Z_{006}$. We set the weight  $w(p)=\prod_{\alpha\in{\widetilde{\mathfrak B}}^l_p} Z_\alpha$, and  $\left({\mathcal M}_3\right)_{ij}=\sum_{p\in {\mathfrak P}^2_{ij}} w(p)$. ${\mathcal M}_3$ is upper-triangular.

\begin{figure}[h]
\centering
\psscalebox{1.1}{
	\begin{pspicture}(-3,-3)(4,4){
		\newcommand{\LEFTDOWNARROW}{%
			{\psset{unit=1}
				\rput(0,0){\psline[linecolor=black,linewidth=2pt]{<-}(0,0)(.765,.45)}
		}}
		\newcommand{\DOWNARROW}{%
	{\psset{unit=1}
					\rput(0,0){\psline[linecolor=black,linewidth=2pt]{->}(0,0.1)(0,-0.566)}
		\put(0,0){\pscircle[fillstyle=solid,fillcolor=lightgray]{.15}}
}}
		\newcommand{\LEFTUPARROW}{%
	{\psset{unit=1}
		\rput(0,0){\psline[linecolor=black,linewidth=2pt]{->}(0,0)(-.765,.45)}
}}
	\newcommand{\STARUP}{
			{\psset{unit=1}
	\rput(0,0){\psline[linecolor=black,linewidth=2pt]{<-}(0,0)(.5,-.26)}
	\rput(0,0){\psline[linecolor=black,linewidth=2pt]{<-}(0,0.1)(0,.466)}
	\rput(0,0){\psline[linecolor=black,linewidth=2pt]{->}(0,0)(-.5,-.26)}
	\put(0,0){\pscircle[fillstyle=solid,fillcolor=black]{.15}}
	\put(0,.566){\pscircle[fillstyle=solid,fillcolor=lightgray]{.15}}
}}
		\newcommand{\PATGEN}{%
			{\psset{unit=1}
				\rput(0,0){\psline[linecolor=blue,linewidth=2pt]{->}(0,0)(.45,.765)}
				\rput(0,0){\psline[linecolor=blue,linewidth=2pt]{->}(1,0)(0.1,0)}
				\rput(0,0){\psline[linecolor=blue,linewidth=2pt]{->}(0,0)(.45,-.765)}
				\put(0,0){\pscircle[fillstyle=solid,fillcolor=lightgray]{.1}}
		}}
		\newcommand{\PATLEFT}{%
			{\psset{unit=1}
				\rput(0,0){\psline[linecolor=blue,linewidth=2pt,linestyle=dashed]{->}(0,0)(.45,.765)}
				\rput(0,0){\psline[linecolor=blue,linewidth=2pt]{->}(1,0)(0.1,0)}
				\rput(0,0){\psline[linecolor=blue,linewidth=2pt]{->}(0,0)(.45,-.765)}
				\put(0,0){\pscircle[fillstyle=solid,fillcolor=lightgray]{.1}}
		}}
		\newcommand{\PATRIGHT}{%
			{\psset{unit=1}
				\rput(0,0){\psline[linecolor=blue,linewidth=2pt,linestyle=dashed]{->}(0,0)(.45,-.765)}
				\put(0,0){\pscircle[fillstyle=solid,fillcolor=lightgray]{.1}}
		}}
		\newcommand{\PATBOTTOM}{%
			{\psset{unit=1}
				\rput(0,0){\psline[linecolor=blue,linewidth=2pt]{->}(0,0)(.45,.765)}
				\rput(0,0){\psline[linecolor=blue,linewidth=2pt,linestyle=dashed]{->}(1,0)(0.1,0)}
				\put(0,0){\pscircle[fillstyle=solid,fillcolor=lightgray]{.1}}
		}}
		\newcommand{\PATTOP}{%
			{\psset{unit=1}
				\rput(0,0){\psline[linecolor=blue,linewidth=2pt]{->}(1,0)(0.1,0)}
				\rput(0,0){\psline[linecolor=blue,linewidth=2pt]{->}(0,0)(.45,-.765)}
				\put(0,0){\pscircle[fillstyle=solid,fillcolor=lightgray]{.1}}
		}}
		\newcommand{\PATBOTRIGHT}{%
			{\psset{unit=1}
				\rput(0,0){\psline[linecolor=blue,linewidth=2pt]{->}(0,0)(.45,.765)}
				\put(0,0){\pscircle[fillstyle=solid,fillcolor=lightgray]{.1}}
				\put(.5,0.85){\pscircle[fillstyle=solid,fillcolor=lightgray]{.1}}
		}}
		\multiput(-2.5,-0.85)(0.5,0.85){4}{\PATLEFT}
		\multiput(-2,-1.7)(1,0){4}{\PATBOTTOM}
		\multiput(-2,-1.176)(1.0,0){5}{\STARUP}
		\put(-0.5,2.55){\PATTOP}
		\multiput(-1.5,-0.85)(1,0){4}{\PATGEN}
		\multiput(-1.5,-0.335)(1.0,0){4}{\STARUP}
		\multiput(-1,0)(1,0){3}{\PATGEN}
		\multiput(-1.0,0.5)(1.0,0){3}{\STARUP}
		\multiput(-.5,0.85)(1,0){2}{\PATGEN}
		\multiput(-.5,1.4)(1.0,0){2}{\STARUP}
		\put(0,1.7){\PATGEN}
		\put(0,2.3){\STARUP}
		\multiput(-1.5,-0.85)(1,0){4}{\PATGEN}
		\multiput(0.5,2.55)(0.5,-0.85){4}{\PATRIGHT}
		\put(2,-1.7){\PATBOTRIGHT}
		\multiput(2.6,-1.4)(-0.5,.85){6}{\LEFTDOWNARROW}
		\multiput(-2.6,-1.4)(0.5,.85){6}{\LEFTUPARROW}
		\multiput(-2.5,-1.5)(1.0,0){6}{\DOWNARROW}
		\put(1.2,3.2){\makebox(0,0)[br]{\hbox{{$1$}}}}
		\put(1.7,2.4){\makebox(0,0)[br]{\hbox{{$2$}}}}
		\put(2.2,1.6){\makebox(0,0)[br]{\hbox{{$3$}}}}
		\put(2.7,0.8){\makebox(0,0)[br]{\hbox{{$4$}}}}
		\put(3.2,-0.1){\makebox(0,0)[br]{\hbox{{$5$}}}}
		\put(3.7,-1.0){\makebox(0,0)[br]{\hbox{{$6$}}}}

		\put(-1.2,3.2){\makebox(0,0)[br]{\hbox{{$1'$}}}}
		\put(-1.7,2.4){\makebox(0,0)[br]{\hbox{{$2'$}}}}
		\put(-2.2,1.6){\makebox(0,0)[br]{\hbox{{$3'$}}}}
		\put(-2.7,0.8){\makebox(0,0)[br]{\hbox{{$4'$}}}}
		\put(-3.2,-0.1){\makebox(0,0)[br]{\hbox{{$5'$}}}}
		\put(-3.7,-1.0){\makebox(0,0)[br]{\hbox{{$6'$}}}}

		\put(-2.4,-2.6){\makebox(0,0)[br]{\hbox{{$1''$}}}}
		\put(-1.4,-2.6){\makebox(0,0)[br]{\hbox{{$2''$}}}}
		\put(-0.4,-2.6){\makebox(0,0)[br]{\hbox{{$3''$}}}}
		\put(0.6,-2.6){\makebox(0,0)[br]{\hbox{{$4''$}}}}
		\put(1.6,-2.6){\makebox(0,0)[br]{\hbox{{$5''$}}}}
		\put(2.6,-2.6){\makebox(0,0)[br]{\hbox{{$6''$}}}}
\put(-2.7,-1.8){\makebox(0,0)[br]{\hbox{{\tiny{\color{red} $Z_{600}$}}}}}
\put(2.7,-1.8){\makebox(0,0)[bl]{\hbox{{\tiny{\color{red} $Z_{060}$}}}}}
\put(0,3.2){\makebox(0,0)[bc]{\hbox{{\tiny{\color{red} $Z_{006}$}}}}}
	}
	\end{pspicture}
}
	\caption{\small
		The plabic graph $G$ dual to the quiver of Fock-Goncharov parameters 
		Face weights $Z_{600}, Z_{060}, Z_{006}$ are added.
	}
	\label{fi:plab}
\end{figure}

Next, set normalized matrix $M_i=S\cdot \mathcal{M}_i\cdot D_i^{-1}$, where $D_i=\det({\mathcal M}_i)$ and 
$S=\begin{pmatrix}
    0 & \dots & 0 & 1 \\
    0 & \dots & -1 & 0 \\
     & \dots & \dots & \\
     0 & (-1)^{n-1} & \dots & 0 \\
    (-1)^n & 0 & \dots & 0
\end{pmatrix}$.

 {In \cite{10.1093/imrn/rnac101}, it was proposed} to set $\widehat{A}=M_1^T M_2$. Easy to observe that $\widehat{A}$ is upper triangular. 
 {Upon normalization discussed below, matrix elements $a_{i,j}$ 
 of $\widehat A$ 
 satisfy similar Poisson relation as geodesic functions defined in the Section~\ref{sec:geometricleafdimension}. We call $a_{i,j}$
 \emph{ (generalized) geodesic functions}. }


The upper-triangular matrix $\widehat{A}$ is constructed from the pair $(\mathcal{M}_1,\mathcal{M}_2)$ providing the Poisson map been built from the collection of Fock-Goncharov parameters equipped with the Poisson structure $\mathcal{P}_{tr}$ to the upper-triangular matrices equipped with symplectic groupoid Poisson bracket $\mathcal{P}_{\text{sym}}$. The expression of $\widehat{A}_{ij}$ in terms of Fock-Goncharov parameters depends neither separately on $Z_{k,0,n-k}$ nor on $Z_{k,n-k,0}$ but only on the product $Z_{k,0,n-k}Z_{n-k,k,0}$.
Therefore, in the description of all matrix entries $\widehat{A}_{ij}$ $n-1$ parameters $Z_{k,0,n-k}$ and $Z_{n-k,k,0}$ are replaced by their products  $Z_{k,0,n-k} Z_{n-k,k,0}$, which results in a new quiver with $n-1$ parameters less. The amalgamated quiver has $n-1$ remaining frozen variables $Z_{0,k,n-k}$. Amalgamation results in the appearance of exactly $n-1$ new independent Casimirs \cite{10.1093/imrn/rnac101}, which 
are products of first powers of particular $Z_{ijk}$, $i\ne 0$ and exactly one square $Z_{0,k,n-k}^2$, $k\notin \{0,n\}$.
Choosing the symplectic leaf in which all of these new Casimir functions are equal to unity, we obtain 
an upper-triangular matrix with \emph{units on the main diagonal}. The condition that Casimir equals one allows us to express the corresponding frozen variable $Z_{0,k,n-k}$ through the remaining Fock-Goncharov parameters entering the monomial expression of the Casimir. Hence, the frozen parameter $Z_{0,k,n-k}$ can be expressed in terms of the other mutable parameters and can be dropped from the set of coordinates.
We denote the set of all this conditions \emph{the normalizing condition}.
}
}

{Notably, since $Z_{0,k,n-k}$ are squared in the expressions for Casimirs and come in first powers in expressions for matrix elements, we replace their occurrences in $a_{i,j}$ by products of $(Z_{i,j,k})^{-1/2}$, which explains the appearance of half-integer powers of the Fock--Goncharov coordinates in the expressions for $a_{i,j}$, see examples in Sec. 8.}



\begin{figure}[h]
    \hskip-5cm 
        \begin{tikzpicture}[scale=0.85]
    \node[draw,circle,orange] (Z501) at (-4,0) {$Z_{501}$};
        \node[draw,circle,gray] (Z402) at (-2,0) {$Z_{402}$};
            \node[draw,circle,red] (Z303) at (0,0) {$Z_{303}$};
                \node[draw,circle,brown] (Z204) at (2,0) {$Z_{204}$};
                    \node[draw,circle,blue] (Z105) at (4,0) {$Z_{105}$};
\node[draw,circle,blue] (Z510) at (-5,1.4) {$Z_{510}$};
    \node[draw,circle] (Z411) at (-3,1.4) {$Z_{411}$};
        \node[draw,circle] (Z312) at (-1,1.4) {$Z_{312}$};
            \node[draw,circle] (Z213) at (1,1.4) {$Z_{213}$};
                \node[draw,circle] (Z114) at (3,1.4) {$Z_{114}$};
\node[draw,circle,brown] (Z420) at (-4,2.8) {$Z_{420}$};
    \node[draw,circle] (Z321) at (-2,2.8) {$Z_{321}$};
        \node[draw,circle] (Z222) at (0,2.8) {$Z_{222}$};
            \node[draw,circle] (Z123) at (2,2.8) {$Z_{123}$};
\node[draw,circle,red] (Z330) at (-3,4.2) {$Z_{330}$};
    \node[draw,circle] (Z231) at (-1,4.2) {$Z_{231}$};
        \node[draw,circle] (Z132) at (1,4.2) {$Z_{132}$};
\node[draw,circle,gray] (Z240) at (-2,5.6) {$Z_{240}$};
    \node[draw,circle] (Z141) at (0,5.6) {$Z_{141}$};
\node[draw,circle,orange] (Z150) at (-1,7) {$Z_{150}$};


\draw [->]  (Z510) edge[dashed] (Z420) (Z420) edge[dashed] (Z330) (Z330) edge[dashed] (Z240) (Z240) edge[dashed] (Z150)  (Z105) edge[dashed] (Z204) (Z204) edge[dashed] (Z303) (Z303) edge[dashed] (Z402)  (Z402) edge[dashed] (Z501);

\draw [->,line width = 0.4mm] (Z150) edge (Z141) (Z141) edge (Z132) (Z132) edge (Z123) (Z123) edge (Z114) (Z114) edge (Z105);

\draw [->,line width = 0.4mm] (Z204) edge (Z114) (Z114) edge (Z213) (Z213) edge (Z123) (Z123) edge (Z222) (Z222) edge (Z132) (Z132) edge (Z231) (Z231) edge (Z141) (Z141) edge (Z240);

\draw [->,line width = 0.4mm] (Z240) edge (Z231) (Z231) edge (Z222) (Z222) edge (Z213) (Z213) edge (Z204) ;

\draw [->,line width = 0.4mm] (Z303) edge (Z213) (Z213) edge (Z312) (Z312) edge (Z222) (Z222) edge (Z321) (Z321) edge (Z231) (Z231) edge (Z330) ;

\draw [->,line width = 0.4mm] (Z330) edge (Z321) (Z321) edge (Z312) (Z312) edge (Z303) ;

\draw [->,line width = 0.4mm] (Z402) edge (Z312) (Z312) edge (Z411) (Z411) edge (Z321) (Z321) edge (Z420)  ;

\draw [->,line width = 0.4mm] (Z420) edge (Z411) (Z411) edge (Z402) ;

\draw [->,line width = 0.4mm] (Z501) edge (Z411) (Z411) edge (Z510) (Z510) edge (Z501)  ;

 \draw[line width = 1.0mm, draw=blue, rounded corners=7pt] (Z510) .. controls (-7,-3) and (3,-2) .. (Z105);
 \draw[line width = 1.0mm, draw=brown, rounded corners=7pt] (Z420) .. controls (-14,0) and (-2,-5) .. (Z204);
\draw[line width = 1.0mm, draw=red, rounded corners=7pt] (Z330) .. controls (-14,2) and (-3,-5) .. (Z303);
\draw[line width = 1.0mm, draw=gray, rounded corners=7pt] (Z240) .. controls (-10,4) and (-4,-3) .. (Z402);
 \draw[line width = 1.0mm, draw=orange, rounded corners=7pt] (Z150) .. controls (-6,7) and (-10,1) .. (Z501);
\end{tikzpicture}
\vskip -1cm
\caption{The quiver shows the collection of amalgamated Fock--Goncharov parameters. The amalgamated pairs of variables are marked in the same color and connected by an additional thick curve of the same color. 
{This amalgamated quiver is the one depicted in Fig.~\ref{fig:clusterA6} } 
{where parameters $a_t,b_t$ and $c_t$ coincide either with $Z_{ijk}$ or with amalgamated 
$Z_{k,0,n-k} Z_{n-k,k,0}$ of this Figure}.}
\label{fig:amalg}
\end{figure}

\begin{remark}
    Note that after amalgamation and imposing the normalizing condition, the quiver on Fig.~\ref{fig:FG} turns into the quiver on Fig.~\ref{fig:amalg} for the remaining Fock--Goncharov parameters, which is equivalent to the quiver on Fig.~\ref{fig:clusterA6}.
\end{remark}

\section{Geometric Leaf, Dimension Count, cases $n=3$, $n=4$}\label{sec:geometricleafdimension}

{ We start with reminding the reader of the known parametrization of Teichm\"uller space $\mathcal{T}_{g,s}$ of hyperbolic genus $g$ two-dimensional surfaces with $s=1$ or $s=2$ holes.
}

{
Following \cite{ChF2000}, for a positive integer $n\ge 3$ consider the following ribbon graph $\Gamma_n$   consisting of two horizontal chains of $n-3$ equal intervals and 
$n$ slanted edges connecting vertices of the first chain with vertices of the second chain (see below Figure~\ref{fign5v1} for $n=5$).

\begin{figure}[h]
    \centering
     \hskip2cm   \begin{minipage}[b]{.45\textwidth}
        \begin{tikzpicture}[scale=1.5]
            \draw[line width = 1pt, draw=black,double=white,double distance=4mm, rounded corners=30pt] (2,3) -- (1,3);
            \draw[line width = 1pt, draw=black,double=white,double distance=4mm, rounded corners=30pt] (2,0) -- (0.92,0);
            \draw[line width = 1pt, draw=black,double=white,double distance=4mm, rounded corners=30pt] (2,3) -- (3.08,3);
            \draw[line width = 1pt, draw=black,double=white,double distance=4mm, rounded corners=30pt] (2,0) -- (3,0);
            \draw[line width = 1pt, draw=black,double=white,double distance=4mm, rounded corners=30pt] (1,3) -- (0,3) -- (4,0) -- (3,0);
            \draw[line width = 1pt, draw=black,double=white,double distance=4mm, rounded corners=30pt] (1,3) -- (3,0);
            \draw[line width = 1pt, draw=black,double=white,double distance=4mm, rounded corners=30pt] (2,3) -- (2,0);
            \draw[line width = 1pt, draw=black,double=white,double distance=4mm, rounded corners=30pt] (3,3) -- (1,0);
            \draw[line width = 1pt, draw=black,double=white,double distance=4mm, rounded corners=30pt] (3.08,3) -- (4,3) -- (0,0) -- (0.92,0);
            \draw[line width = 4mm, draw=white, rounded corners=30pt] (2,3) -- (0.85,3);
            \draw[line width = 4mm, draw=white, rounded corners=30pt] (2,0) -- (1,0);
            \draw[line width = 4mm, draw=white, rounded corners=30pt] (2,3) -- (3,3);
            \draw[line width = 4mm, draw=white, rounded corners=30pt] (2,0) -- (3.15,0);
            \draw[line width = 3.8mm, draw=white, fill=white]  (3,3)--(3.5,3);
            \draw[line width = 0.7mm, draw=lightgray, rounded corners=7pt] 
            (2,1.3) -- (2,0) -- (0.3,0.03)--(0.3,0.25)--(1.9,1.5)--(3.65,2.7) --(3.65,2.97) -- (2,2.97)--(2,1.8) ;

            \coordinate (X1) at (3.34,2.5);
            \coordinate (X2) at (2.68,2.5);
            \coordinate (X3) at (2,2.5);
            \coordinate (X4) at (1.33,2.5);
            \coordinate (X5) at (0.66,2.5);
            \coordinate (Y1) at (2.5,3.);
            \coordinate (Y2) at (1.5,3.);
            \coordinate (Y3) at (2.5,-0.);
            \coordinate (Y4) at (1.5,-0.);
            \node at (X1) {$5$};
            \node at (X2) {$4$};
            \node at (X3) {$3$};
            \node at (X4) {$2$};
            \node at (X5) {$1$};
            \node at (Y1) {$7$};
            \node at (Y2) {$6$};
            \node at (Y3) {$9$};
            \node at (Y4) {$8$};
        \end{tikzpicture}
    \end{minipage}
\caption{Ribbon Graph $\Gamma_5$ describes the surface of genus 2 with one hole. Gray curve represents the loop $\gamma_{35}$. The matrix of the monodromy operator along the loop $\gamma_{35}$ is  $M_{\gamma_{35}}=X(x_5) R X(x_8) L X(x_3) R X(x_7) L$.}
\label{fign5v1}
\end{figure}

 We enumerate all the slanted edges $1$ to $n$ following their top vertices left to right.
Set $$par(n)=\begin{cases}
    1, &\text{ for odd } n\\
    2, &\text{ for even } n.
\end{cases}$$

Note that the ribbon graph $\Gamma_n$ is homeomorphic to genus $g=\lfloor\frac{n}{2}\rfloor$ surface
$\Sigma_{g,s}$ with $s=par(n)$ holes.

The cardinality of the set of edges $E(\Gamma_n)$ of the ribbon graph $\Gamma_n$ (Fig.~\ref{fign5v1}) is 
$3n-6=\dim \mathcal{T}_{g,s}$.
Associating with every edge $e$ a positive parameter $x(e)$, we parametrize $\mathcal{T}_{g,s}$.
Recall that the collection of parameters $x_i=x(e_i)$, $e_i\in E(\Gamma_n)$, forms a collection of log-canonical coordinates for the renowned Goldman Poisson bracket 
\begin{equation}\label{eq:Gldmn} 
\{x_i,x_j\}=\rm{adj}(e_i,e_j) x_i x_j
\end{equation}
where $\rm{adj}(e_i,e_j)$ is the adjacency index of edges $e_i$ and $e_j$. 
Namely, for $e\in E(\Gamma_n)$ we denote by $\partial e$ the subset $\{P,Q\}$ of two (maybe, coinciding) vertices of $e$; i.e., elements of the set $V(\Gamma_n)$ of vertices of $\Gamma_n$ such that $e$ connects $P$ and $Q$. 
\[\rm{adj}(e_i,e_j)=\sum_{{\bf v}\in \partial e_i\cap \partial e_j} \rm{adj}_{\bf v}(e_i,e_j),\] 
and
the local adjacency index $\rm{adj}_{\bf v}(e_i,e_j)=
\begin{cases}
    +1,&\text{ if  at vertex $\bf v$ half edge  $e_j$ follows immediately $e_i$} \\
    &\text{ in the anti-clockwise direction};\\
    -1,&\text{ if  at vertex $\bf v$ half edge $e_j$ follows immediately $e_i$} \\
&\text{ in the clockwise direction}\\
\end{cases}$\newline (see Fig.~\ref{fig:adj}).

\begin{figure}[h]
    \centering
        \begin{tikzpicture}[scale=1.5]
        				\coordinate (O) at (0,0);
                \coordinate (R) at (1,0);
                \coordinate (NW) at (-0.5,0.87);
                \coordinate (SW) at (-0.5,-0.87);
   
                \draw[line width = 1pt, draw=black] (O)--(R);
                \draw[line width = 1pt, draw=black] (O) -- (NW);
                \draw[line width = 1pt, draw=black] (O) -- (SW);
\draw  (0.5,0) [-latex] arc[start angle=0, end angle=120, radius=0.5cm];
\draw  (-0.25,0.435) [-latex] arc[start angle=120, end angle=240, radius=0.5cm];
 \draw  (0.5,0) [latex-] arc[start angle=0, end angle=-120, radius=0.5cm];
             \node at (R.east)[below] {$e_1$};
            \node at (NW.east)[right] {$e_2$};
             \node at (SW.east)[right] {$e_3$};
            \node at (O.east)[below] {$v$};
             \end{tikzpicture} 
\caption{Adjacency indices: $\rm{adj}_{\bf v}(e_1,e_2)=\rm{adj}_{\bf v}(e_2,e_3)=\rm{adj}_{\bf v}(e_3,e_1)=1$;
$\rm{adj}_{\bf v}(e_1,e_3)=\rm{adj}_{\bf v}(e_3,e_2)=\rm{adj}_{\bf v}(e_2,e_1)=-1$.}\label{fig:adj}
            \end{figure}
 }



{ A path in $\Gamma_n$ is a sequence $P_1,P_2,\dots,P_n$ of vertices such that each pair of consecutive vertices 
$P_i,P_{i+1}$ is an edge of $\Gamma_n$. A path is a loop if $P_n=P_1$. We call a path(loop) \emph{simple} if there are no repeated edges in the path. 

We denote by $\gamma_{ij}$ the simple loop whose starting point is the top vertex of the slanted edge $i$, followed by the bottom vertex of slanted $i$, followed by the simple path of bottom vertices from the bottom vertex of $i$ to the bottom vertex of $j$, followed by the top vertex of slanted $j$, followed by the simple path of top vertices from the  top vertex of $j$ to the top vertex of $i$. Note that $\gamma_{ij}$ is completely determined by the choice of pair of indices $1\le i,j\le n$. Note that $\gamma_{ii}$ is the identity in $\pi_1(\Gamma_n)$ as a trivial loop.

{
\begin{definition}\label{def:Gij}
    \emph{$G_{ij}$} is the geodesic function corresponding to $\gamma_{ij}$, $G_{ij}=|\operatorname{Trace}{M_{\gamma_{ij}}}|$, where $M_{\gamma_{ij}}\in PSL_2(\mathbb{R})$ is the element of the Fuchsian group determined by the loop $\gamma_{ij}$. 
\end{definition}
    
\begin{remark}       It is well-known that $G_{ij}=e^{\ell_{ij}/2}+e^{-\ell_{ij}/2}$, where $\ell_{ij}$ is the geodesic length of $\gamma_{ij}$. From the formulas of Section~\ref{sec:shear} it is easy to find the expression for $G_{ij}$ in terms of parameters $x_i$.  The push forward of Goldman bracket~\ref{eq:Goldman} coincides with the Bondal Poisson bracket on the unipotent triangular matrices $\mathcal{A}_n$ induced from the symplectic groupoid. 
\end{remark}



\begin{remark}\label{rem:centralsymmetric}
Note that the graph $\Gamma_n$ is central symmetric. This central symmetry generates the group  ${\mathbb Z}_2$. Any slanted edge of $\Gamma_n$ is central symmetric, and any edge in the top horizontal part of $\Gamma_n$ is central symmetric to the corresponding edge of the bottom horizontal part. For instance, 
each edge 1,2,3,4,5 in Fig~\ref{fign5v1} is central symmetric,
edge 6 and edge 9 form a centrally symmetric pair, edge 7 and edge 8 form the other centrally symmetric pair. 
Therefore $\mathbb{Z}_2$ acts on 
on $E(\Gamma_n)$ by exchanging edges in all pairs of two distinct centrally symmetric edges. This action induces $\mathbb{Z}_2$-actions on the corresponding Teichm\"uller space $\mathcal{T}_{g,s}$ and on the set of loops in $\Gamma_n$.

The $\mathbb{Z}_2$-action on the set of loops changes the orientation of any loop $\gamma_{ij}$ to the opposite one. Since the geodesic function $G_{ij}$ does not depend on the orientation of the loop $\gamma_{ij}$, $G_{ij}$ stays invariant under $\mathbb{Z}_2$-action.
\end{remark}

}

\begin{definition}
A Poisson subalgebra of a Poisson algebra $\mathcal{A}$ is a vector subspace of $\mathcal{A}$ closed under multiplication and Poisson bracket. A Poisson subalgebra is generated by a subset $S$ if it is a minimal Poisson subalgebra containing $S$.
\end{definition}

 Let $\mathbf{R}_n$ be the Poisson subalgebra generated by $\{G_{i i+1}\}_{1\le i< n}$. Let 
$\mathcal{O}[\mathcal{T}_{g,s}]^{\mathbb{Z}_2}\subset \mathcal{O}[\mathcal{T}_{g,s}]$ be the subring of $\mathbb{Z}_2$-invariant functions of the ring of regular functions on $\mathcal{T}_{g,s}$.   Remark~\ref{rem:centralsymmetric}  implies the following statement.

\begin{lemma}\label{lem:Z2-invariant}
$\mathbf{R}_n\subseteq \mathcal{O}[\mathcal{T}_{g,s}]^{\mathbb Z_2}$
\end{lemma}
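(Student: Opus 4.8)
The plan is to show that each generator $G_{i,i+1}$ of $\mathbf{R}_n$ lies in $\mathcal{O}[\mathcal{T}_{g,s}]^{\mathbb{Z}_2}$, and then deduce the same for the whole Poisson subalgebra they generate. The containment of generators is essentially the content of the remark immediately preceding the statement: the $\mathbb{Z}_2$-action comes from the central symmetry of $\Gamma_n$, it acts on the set of loops by reversing orientation of each $\gamma_{ij}$ (sending $\gamma_{ij}$ to a loop freely homotopic to $\gamma_{ij}$ traversed backwards), and the geodesic function $G_{ij}=e^{\ell_{ij}/2}+e^{-\ell_{ij}/2}$ depends only on the unoriented free homotopy class, hence only on the geodesic length $\ell_{ij}$, which is symmetry-invariant. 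So $G_{i,i+1}\in\mathcal{O}[\mathcal{T}_{g,s}]^{\mathbb{Z}_2}$ for all $1\le i<n$.

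The second step is the passage from generators to the generated Poisson subalgebra. Here I would verify that $\mathcal{O}[\mathcal{T}_{g,s}]^{\mathbb{Z}_2}$ is itself a Poisson subalgebra of $\mathcal{O}[\mathcal{T}_{g,s}]$: it is visibly a vector subspace closed under multiplication (invariants of a group action always form a subring), and it is closed under the Goldman bracket because the $\mathbb{Z}_2$-action is by Poisson automorphisms of $\mathcal{T}_{g,s}$ — this follows since the action is induced on shear coordinates by a permutation of the edges of $\Gamma_n$ that preserves the adjacency form $\mathrm{adj}(e_1,e_2)$ (central symmetry preserves, up to a global feature one must check, the cyclic orientation data at vertices), so it preserves the log-canonical form $\{x(e_1),x(e_2)\}=\mathrm{adj}(e_1,e_2)x(e_1)x(e_2)$ of the bracket. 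Given that $\mathcal{O}[\mathcal{T}_{g,s}]^{\mathbb{Z}_2}$ is a Poisson subalgebra containing the set $\{G_{i,i+1}\}_{1\le i<n}$, and $\mathbf{R}_n$ is by definition the minimal Poisson subalgebra containing that set, minimality gives $\mathbf{R}_n\subseteq\mathcal{O}[\mathcal{T}_{g,s}]^{\mathbb{Z}_2}$.

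The main obstacle I anticipate is the verification that the central symmetry of $\Gamma_n$ really acts by a \emph{Poisson} automorphism — i.e., that the edge permutation it induces preserves the signed local adjacency indices $\mathrm{adj}_{\bf v}(e_1,e_2)$ rather than reversing their sign. A central symmetry of a planar ribbon graph is orientation-preserving as a map of the plane (it is rotation by $\pi$), so at each vertex it should preserve the anti-clockwise cyclic order of incident half-edges; one must pair this against the way the symmetry maps vertices to vertices to confirm no sign flip occurs in $\mathrm{adj}_{\bf v}$. This is a short local check but it is the one genuinely nontrivial point; everything else is the formal remark about free homotopy classes and a standard invariant-theory argument about $\mathbb{Z}_2$-fixed subalgebras. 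One should also note that $\mathcal{T}_{g,s}$ is connected and the action is by algebraic automorphisms, so "regular function" and "$\mathbb{Z}_2$-invariant regular function" behave as expected and $\mathbf{R}_n$ sits inside the ring of regular functions to begin with.
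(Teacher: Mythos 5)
Your proposal is correct and follows essentially the same route as the paper, which simply observes (in the remark preceding the lemma) that the $\mathbb{Z}_2$-action reverses the orientation of each $\gamma_{ij}$ while $G_{ij}$ is orientation-independent, so the generators $G_{i,i+1}$ are invariant, and then concludes by minimality of $\mathbf{R}_n$. Your additional check that the central symmetry acts by a Poisson automorphism (being an orientation-preserving rotation by $\pi$, it preserves the cyclic order at vertices and hence the adjacency indices), so that $\mathcal{O}[\mathcal{T}_{g,s}]^{\mathbb{Z}_2}$ is indeed a Poisson subalgebra, is a point the paper leaves implicit, and you resolve it correctly.
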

    }

The Chekhov-Fock map $\varphi:\mathcal{T}_{g,s}\to \mathcal{A}_n$ was defined in \cite{ChF2000} as follows.  For a point $\Sigma^h\in \mathcal{T}_{g,s}$,  denote by $G_{ij}(\Sigma^h)$ the value of the geodesic function $G_{ij}$ at  $\Sigma^h$. Then, 
\begin{align*}
    \varphi(\Sigma^h)= 
    \begin{pmatrix}1&&G_{ij}(\Sigma^h)\\&\ddots&\\0&&1\end{pmatrix}.
\end{align*}

Recall the following
\begin{thm}{{\rm\cite{ChF2000}}}
    The map $\varphi$ is Poisson.
\end{thm}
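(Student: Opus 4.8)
The plan is to verify that $\varphi$ intertwines the Goldman bracket on $\mathcal T_{g,s}$ with the Bondal/symplectic-groupoid bracket on $\mathcal A_n$, and for this the essential point is to compute both sides on a generating set of functions. Since the entries of the matrix $\varphi(\Sigma^h)$ are precisely the geodesic functions $G_{ij}$ associated to the loops $\gamma_{ij}$ in the ribbon graph $\Gamma_n$, it suffices to show
\[
\{\,a_{ij}\circ\varphi\,,\,a_{k\ell}\circ\varphi\,\} \;=\; \{\,a_{ij}\,,\,a_{k\ell}\,\}_{\mathcal A_n}\circ\varphi
\]
for all pairs of matrix coordinates $a_{ij}$ on $\mathcal A_n$, i.e. that
\[
\{G_{ij},G_{k\ell}\}_{\mathrm{Goldman}} \;=\; \{a_{ij},a_{k\ell}\}_{\mathcal A_n}\bigl(\varphi(\Sigma^h)\bigr).
\]
First I would recall the explicit form of the Bondal Poisson bracket on $\mathcal A_n$ in matrix-entry coordinates (from \cite{bondal2004}): it is the quadratic bracket coming from the classical $r$-matrix whose structure constants on the entries $a_{ij}$ are the ones computed there. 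On the Teichm\"uller side, I would use the fact that the $G_\gamma = |\tr X_\gamma|$ are trace functions of products of the matrices $L,R,X_t$ built from shear coordinates, so their Goldman brackets are governed by Wolpert's/Goldman's formula: $\{G_\gamma,G_\delta\}$ is a signed sum over intersection points of $\gamma$ and $\delta$ of resolved-loop geodesic functions, which for the special family $\gamma_{ij}$ on the particular ribbon graph $\Gamma_n$ closes up combinatorially.

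The key steps, in order: (1) Pin down the combinatorics of the loops $\gamma_{ij}$ on $\Gamma_n$ — which slanted and horizontal edges they traverse, and how two loops $\gamma_{ij}$, $\gamma_{k\ell}$ intersect (transversally at vertices of $\Gamma_n$), using the cyclic order at each trivalent vertex. (2) Apply Goldman's bracket formula to get $\{G_{ij},G_{k\ell}\}$ as a signed sum of geodesic functions of the loops obtained by smoothing the intersections; show that each smoothed loop is again one of the $\gamma_{ab}$ (or a product of such), so the answer is a polynomial in the $G_{ab}$ with the predicted skew-symmetric, quadratic-leading form. (3) Independently, expand $\{a_{ij},a_{k\ell}\}_{\mathcal A_n}$ from the $r$-matrix formula and substitute $a_{ij}\mapsto G_{ij}$; check the two polynomial expressions agree. (4) Since the $a_{ij}\circ\varphi = G_{ij}$ generate the coordinate ring of the image, conclude by the Leibniz rule that $\varphi^*$ is a Poisson algebra homomorphism, hence $\varphi$ is a Poisson map. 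A cleaner alternative for steps (1)–(3) is to route through the known Fock–Goncharov/Penner picture: the shear coordinates $x(e)$ are log-canonical for Goldman (as stated in Section~\ref{sec:shear}), the entries of $A$ are the specific polynomials in the $Z_{ijk}$ recalled in Section~\ref{sec:logcanonicalgroupoid}, and one matches the quiver $\mathcal P_{tr}$ after amalgamation against the Goldman adjacency quiver of $\Gamma_n$; then Poissonness is the statement that a log-canonical-coordinate change respects the two brackets, which is a finite check on the exchange matrices.

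The main obstacle will be step (2): controlling the Goldman bracket computation so that the smoothings of $\gamma_{ij}\cap\gamma_{k\ell}$ stay inside the span of the $G_{ab}$ and produce exactly the Bondal structure constants — there is genuine bookkeeping in tracking signs from the local orientation conventions (the $\mathrm{adj}_{\mathbf v}$ index and the cyclic orders at vertices of $\Gamma_n$) and in handling the cases where loops share more than one vertex or where a smoothing yields a non-simple loop that must be rewritten in the generators. I would organize this by first doing small $n$ ($n=3,4$, where $\mathcal A_n$ is $2$- and $3$-dimensional) to fix conventions and then arguing the general pattern by the self-similar structure of $\Gamma_n$ (its central $\mathbb Z_2$-symmetry and the repetitive chain of intervals), invoking Lemma~\ref{lem:Z2-invariant} to restrict attention to the $\mathbb Z_2$-invariant subalgebra generated by the $G_{i,i+1}$ and their bracket-closure, which is where the $a_{ij}$ land.
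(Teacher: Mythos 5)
The paper does not actually prove this statement: it is imported verbatim from \cite{FC99}, so there is no internal argument to compare against. Your proposal is a faithful reconstruction of the standard Chekhov--Fock argument: since the pullbacks of the matrix entries are exactly the geodesic functions $G_{ij}$, Poissonness reduces to checking that the Goldman brackets $\{G_{ij},G_{k\ell}\}$, computed by resolving intersections of $\gamma_{ij}$ and $\gamma_{k\ell}$ on $\Gamma_n$, reproduce the structure constants of the Bondal (Nelson--Regge/Dubrovin--Ugaglia) bracket on the entries $a_{ij}$, and then to conclude by Leibniz since the $a_{ij}$ generate the coordinate ring. One point to state more carefully: the smoothings are generally \emph{not} again loops of the form $\gamma_{ab}$ (e.g.\ resolving the single intersection of $\gamma_{ij}$ and $\gamma_{jk}$ produces $\gamma_{ik}$ together with a loop of type $(ijkj)$), so closure on the generators only holds after using the trace/skein identity $G_{ij}G_{jk}=G_{ik}+G_{(ijkj)}$ to eliminate the non-elementary term, yielding relations such as $\{G_{ij},G_{jk}\}=G_{ik}-\tfrac12 G_{ij}G_{jk}$; you flag this bookkeeping as the main obstacle, and with it handled case by case (disjoint, one shared index, interlaced indices) the verification against the Bondal structure constants goes through. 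Your alternative route through log-canonical Fock--Goncharov coordinates and quiver amalgamation is also viable and is closer in spirit to the construction the present paper uses in its later sections, but it is not what \cite{FC99} does.
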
 

{It was shown in \cite{ChF2000} that for a special choice of the geodesics $G_{i,j}$ shown in Fig.~\ref{fign5v1}, applying skein and Poisson relations (the Goldman bracket), the Poisson algebra of $G_{i,j}$ becomes closed on the finite set of $G_{i,j}$ with $1\le i<j\le n$. This algebra for geodesic functions was first obtained by Nelson, Regge, and Zertuche~\cite{NelsonReggeZertuche1990}
 and is also known as the (semiclassical) reflection equation algebra, or twisted Yangian.
}

%

%

It is well-known that $\dim(\mathcal{T}_{g,s})=|E(\Gamma_n)|=3n-6$, (recall, $g=\lfloor \frac{n}{2}\rfloor, s=par(n)$)  
while $\dim(\mathcal{A}_n)=\frac{1}{2}n(n-1)$. 
The number of independent Casimirs for $\mathcal{T}_{g,s}$ is $s$ and the number of independent Casimirs for $\mathcal{A}_n$ is $\lfloor\frac{n}{2}\rfloor$.

{
For the case $n=3$,  $\dim(\Tcc_{1,1})=\dim(\Acc_3)=3$.  Both, $\mathcal{T}_{1,1}$ and 
$\mathcal{A}_3$ have one Casimir. Therefore, the dimension of a generic symplectic leaf in $\mathcal{T}_{1,1}$ and in 
$\mathcal{A}_3$ is equal to $2$.  Moreover, any loop in this case is a sequence of $\mathbb{Z}_2$-invariant arcs and is $\mathbb{Z}_2$-(anti-)symmetric. This observation shows that any geodesic function  on $\Tcc_{1,1}$ is $\mathbb{Z}_2$-invariant. (Recall that any elliptic curve has $\mathbb{Z}_2$-symmetry.) For $n>3$ (equivalently, $s>1$ or $g>2$) there exist non-$\mathbb{Z}_2$ (anti-)invariant 
loops and, hence, non-$\mathbb{Z}_2$ invariant geodesic function.

\begin{cor} $\mathbf{R}_3\cong \mathcal{O}[\mathcal{T}_{1,1}]$. For $n>3$, $\mathbf{R}_n \subsetneq \mathcal{O}[\mathcal{T}_{g,s}]$.
\end{cor}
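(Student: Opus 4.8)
The plan is to prove the two assertions of the corollary separately, using the $\mathbb{Z}_2$-action on $\Gamma_n$ and the structure of loops. First, for the isomorphism $\mathbf{R}_3\cong\mathcal{O}[\mathcal{T}_{1,1}]$: by Lemma~\ref{lem:Z2-invariant} we already have $\mathbf{R}_3\subseteq\mathcal{O}[\mathcal{T}_{1,1}]^{\mathbb{Z}_2}$, so it suffices to show that the reverse containment holds and that the $\mathbb{Z}_2$-action on $\mathcal{O}[\mathcal{T}_{1,1}]$ is trivial. The key observation, already noted in the text just above the corollary, is that for $n=3$ every loop in $\Gamma_3$ is a concatenation of $\mathbb{Z}_2$-invariant arcs (each edge of $\Gamma_3$ is central-symmetric), hence every loop $\gamma$ is taken by the central symmetry to the same loop with reversed orientation; since $G_\gamma=|\operatorname{tr}X_\gamma|$ is orientation-independent, every geodesic function is $\mathbb{Z}_2$-invariant. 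Because geodesic functions $G_\gamma$ over all $\gamma\in\pi_1(\Sigma_{1,1})$ generate $\mathcal{O}[\mathcal{T}_{1,1}]$ (the standard fact that the trace coordinates generate the ring of functions on Teichm\"uller/character variety), it follows that $\mathcal{O}[\mathcal{T}_{1,1}]^{\mathbb{Z}_2}=\mathcal{O}[\mathcal{T}_{1,1}]$. It then remains to check $\mathcal{O}[\mathcal{T}_{1,1}]\subseteq\mathbf{R}_3$, i.e. that the three geodesic functions $G_{12},G_{23}$, together with the one additional generator coming from the remaining edge (equivalently $G_{13}$), already lie in the Poisson subalgebra generated by $\{G_{i,i+1}\}$; this is where the classical presentation of $\mathcal{O}[\mathcal{T}_{1,1}]$ enters — one writes $G_{13}$ (the trace of the third of the three standard simple curves on the once-punctured torus) in terms of $G_{12}$, $G_{23}$ and their Poisson bracket $\{G_{12},G_{23}\}$, using the Goldman bracket / skein relation on $\Sigma_{1,1}$, which expresses a bracket of two traces as a combination of traces of resolved curves.

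For the second assertion, the strict inclusion $\mathbf{R}_n\subsetneq\mathcal{O}[\mathcal{T}_{g,s}]$ for $n>3$, the approach is to exhibit a regular function on $\mathcal{T}_{g,s}$ that is not $\mathbb{Z}_2$-invariant; combined with Lemma~\ref{lem:Z2-invariant}, which says $\mathbf{R}_n$ sits inside the $\mathbb{Z}_2$-invariant subring, this immediately gives the strict inclusion. The text observes that for $n>3$ there exist non-$\mathbb{Z}_2$-(anti-)invariant loops; I would make this concrete by producing an explicit simple loop $\gamma$ in $\Gamma_n$ whose image under the central symmetry $\sigma$ is a loop $\sigma(\gamma)$ that is \emph{not} freely homotopic to $\gamma^{\pm 1}$, so that $G_\gamma$ and $G_{\sigma(\gamma)}=\sigma^*G_\gamma$ are distinct elements of $\mathcal{O}[\mathcal{T}_{g,s}]$. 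For instance, for $n=4$ one takes a loop using the non-self-symmetric edges $5$ and $6$ in an asymmetric way (say $\gamma_{15}$-type, i.e. a loop passing through exactly one of the central-symmetric pair but not symmetrically); then $G_\gamma\notin\mathcal{O}[\mathcal{T}_{g,s}]^{\mathbb{Z}_2}\supseteq\mathbf{R}_n$. The general-$n$ case is analogous using the centrally symmetric pairs of edges identified in the Remark preceding Lemma~\ref{lem:Z2-invariant}.

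The main obstacle is the first assertion — specifically, proving $\mathcal{O}[\mathcal{T}_{1,1}]\subseteq\mathbf{R}_3$, i.e. that the two nearest-neighbor geodesic functions generate the whole ring as a \emph{Poisson} subalgebra. This requires knowing the precise ring structure of $\mathcal{O}[\mathcal{T}_{1,1}]$ (it is generated by three trace functions $a=G_{12}$, $b=G_{23}$, $c=G_{13}$ subject to a single relation, with the Casimir $a^2+b^2+c^2-abc$ essentially fixed) and that the Goldman bracket gives $\{a,b\}=\pm(ab-2c)$ (up to sign and normalization), so that $c$ is recovered from $a$, $b$ and $\{a,b\}$, placing it in $\mathbf{R}_3$; then $\mathbf{R}_3$ contains all three generators and hence equals $\mathcal{O}[\mathcal{T}_{1,1}]$. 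I would cite the standard references on the once-punctured torus character variety and the Goldman bracket for the skein computation rather than rederiving it. Verifying that no further relations obstruct surjectivity, and that $\mathbf{R}_3$ is not cut down by the Casimir constraint, is the one place a short explicit check is unavoidable.
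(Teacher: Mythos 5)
Your argument is correct and follows essentially the paper's own route: the strict inclusion for $n>3$ is exactly Lemma~\ref{lem:Z2-invariant} combined with the existence of non-$\mathbb{Z}_2$-invariant geodesic functions noted in the remark preceding the corollary, and your recovery of $G_{13}$ as $\tfrac12 G_{12}G_{23}+\{G_{12},G_{23}\}$ is precisely the mechanism of Lemma~\ref{lem:GijINR} (the $n=3$ surjectivity being otherwise subsumed in Theorem~\ref{thm:quadraticextension} together with the $\mathbb{Z}_2$-invariance of all geodesic functions on $\mathcal{T}_{1,1}$). Two cosmetic slips only: the trace functions of the three simple curves on the once-punctured torus generate a free polynomial ring (there is no relation; the Casimir $a^2+b^2+c^2-abc$ merely labels symplectic leaves), and for $n=4$ the non-invariant loop should be described as one traversing the centrally symmetric pair of horizontal edges $5$ and $6$ asymmetrically rather than as ``$\gamma_{15}$'', since the notation $\gamma_{ij}$ is reserved for pairs of slanted edges.
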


{
For the next step, let's recall the \emph{skein relation}: for any two $SL_2$ matrices $A$ and $B$ one has
\begin{equation}\label{eq:skein}
tr(A)tr(B)=tr(AB)+tr(AB^{-1}).
\end{equation}
(see Fig.~\ref{fig:skein}).
\begin{figure}[h]
    \centering
        \begin{tikzpicture}[scale=1.5]
        \def\ww {3 cm}
        \def\rr {0.15}
        \coordinate (Tlb) at (0,0);
        \coordinate (Tc) at (1,1);
        \coordinate (Trb) at (2,0);
        \coordinate (Tlu) at (0,2);
        \coordinate (Tru) at (2,2);
        
            \draw[-{>[scale=2.5,length=6, width=6]}, line width = 2pt] (Tlb) -- (Tru) ;
            \draw[{<[scale=2.5,length=6, width=6]}-,line width = 2pt] (Tlu) -- (Trb) ;

            \draw[ line width = 1cm] ($(Tc)+(1 cm,0)$) -- ($(Tc)+(2 cm,0)$);
            \draw[ line width = 0.6cm,color=white] ($(Tc)+(0.8 cm,0)$) -- ($(Tc)+(2.2 cm,0)$);

            \draw[-{>[scale=2.5,length=6, width=6]}, line width = 2pt,  rounded corners=30pt] ($(Tlb)+(\ww ,0)$) -- ($(Tc)+(\ww,0)$) -- ($(Tlu)+(\ww,0)$);
           \draw[-{>[scale=2.5,length=6, width=6]}, line width = 2pt,  rounded corners=30pt] ($(Trb)+(\ww,0)$) -- ($(Tc)+(\ww,0)$) -- ($(Tru)+(\ww,0)$);
            
            \draw[-{>[scale=2.5,length=6, width=6]}, line width = 2pt, rounded corners=30pt] ($(Tlb)+(2*\ww,0)$) -- ($(Tc)+(2*\ww,0)$) -- ($(Trb)+(2*\ww,0)$);
           \draw[{<[scale=2.5, length=6, width=6]}-, line width = 2pt,  rounded corners=30pt] ($(Tlu)+(2*\ww ,0)$) -- ($(Tc)+(2*\ww,0)$) -- ($(Tru)+(2*\ww,0)$);

    \def\pluslength{1 cm}  
    \def\plusthickness{0.2 cm} 
    \def\pluscolor{black}  

    \draw[\pluscolor, line width=\plusthickness] ($(Tc)+(1.5*\ww,0)-(0.5*\pluslength, 0)$)  -- ($(Tc)+(1.5*\ww,0)+(0.5*\pluslength, 0)$);
    
    \draw[\pluscolor, line width=\plusthickness]  ($(Tc)+(1.5*\ww,0)-(0,0.5*\pluslength)$)  -- ($(Tc)+(1.5*\ww,0)+(0,0.5*\pluslength)$);

    \node at ($(Tlb)+(0.3 cm, 0)$)   {\Large $\alpha$};   
      \node at ($(Trb)-(0.4 cm, 0)$)   {\Large $\beta$};   
       \node at ($(Tc)+(\ww, -1 cm)$)   {\Large $\alpha\beta$};   
       \node at ($(Tc)+(2*\ww, -1 cm)$)   {\Large $\alpha\beta^{-1}$};

         \end{tikzpicture}
    \caption{Two paths $\alpha$ and $\beta$ intersect at one point as shown on the left. The product of the traces of corresponding monodromy operators satisfies the skein relation $tr(M_\alpha)tr(M_\beta)=tr(M_\alpha M_\beta)+tr(M_\alpha M_\beta^{-1})=tr(M_{\alpha\beta})+tr(M_{\alpha\beta^{-1}})$.
Notice that path $\alpha\beta$ is isotopic to the first path on the right hand side and the path $\alpha\beta^{-1}$ is isotopic to the second one.}\label{fig:skein}
\end{figure}

The \emph{Goldman Poisson bracket} for geodesic functions of two loops $\alpha$ and $\beta$ intersecting once takes the following form:
\begin{equation}\label{eq:Goldman}
\{G_\alpha,G_\beta\}=\dfrac{1}{2}G_{\alpha\beta}-\dfrac{1}{2}G_{\alpha\beta^{-1}}
\end{equation} (see Fig.~\ref{fig:Goldman}).

\begin{figure}[h]
    \centering
        \begin{tikzpicture}[scale=1.5]
        \def\ww {3 cm}
        \def\rr {0.15}
        \coordinate (Tlb) at (0,0);
        \coordinate (Tc) at (1,1);
        \coordinate (Trb) at (2,0);
        \coordinate (Tlu) at (0,2);
        \coordinate (Tru) at (2,2);
        
            \draw[-{>[scale=2.5,length=6, width=6]}, line width = 2pt] (Tlb) -- (Tru) ;
            \draw[{<[scale=2.5,length=6, width=6]}-,line width = 2pt] (Tlu) -- (Trb) ;

            \draw[ line width = 1cm] ($(Tc)+(1 cm,0)$) -- ($(Tc)+(1.65 cm,0)$);
            \draw[ line width = 0.6cm,color=white] ($(Tc)+(0.8 cm,0)$) -- ($(Tc)+(2.2 cm,0)$);

            \draw[-{>[scale=2.5,length=6, width=6]}, line width = 2pt,  rounded corners=30pt] ($(Tlb)+(\ww ,0)$) -- ($(Tc)+(\ww,0)$) -- ($(Tlu)+(\ww,0)$);
           \draw[-{>[scale=2.5,length=6, width=6]}, line width = 2pt,  rounded corners=30pt] ($(Trb)+(\ww,0)$) -- ($(Tc)+(\ww,0)$) -- ($(Tru)+(\ww,0)$);
            
            \draw[-{>[scale=2.5,length=6, width=6]}, line width = 2pt, rounded corners=30pt] ($(Tlb)+(2*\ww,0)$) -- ($(Tc)+(2*\ww,0)$) -- ($(Trb)+(2*\ww,0)$);
           \draw[{<[scale=2.5, length=6, width=6]}-, line width = 2pt,  rounded corners=30pt] ($(Tlu)+(2*\ww ,0)$) -- ($(Tc)+(2*\ww,0)$) -- ($(Tru)+(2*\ww,0)$);

    \def\pluslength{1 cm}  
    \def\plusthickness{0.2 cm} 
    \def\pluscolor{black}  

    \draw[\pluscolor, line width=\plusthickness] ($(Tc)+(1.5*\ww,0)-(0.5*\pluslength, 0)$)  -- ($(Tc)+(1.5*\ww,0)+(0.25*\pluslength, 0)$);
    

    \node at ($(Tlb)+(0.3 cm, 0)$)   {\Large $\alpha$};   
      \node at ($(Trb)-(0.4 cm, 0)$)   {\Large $\beta$};   
       \node at ($(Tc)+(\ww, -1 cm)$)   {\Large $\alpha\beta$};   
       \node at ($(Tc)+(2*\ww, -1 cm)$)   {\Large $\alpha\beta^{-1}$};   

       \node at ($(Tc)+(1.65*\ww, 0)$)   {\Large $\dfrac{1}{2}$};   
       \node at ($(Tc)+(0.65*\ww, 0)$)   {\Large $\dfrac{1}{2}$};

         \end{tikzpicture}
    \caption{Goldman Poisson bracket.}\label{fig:Goldman}
\end{figure}

}

\begin{thm}\label{thm:quadraticextension} ${\bf R}_n\cong \mathcal{O}[\mathcal{T}_{g,s}]^{\mathbb{Z}_2}$
\end{thm}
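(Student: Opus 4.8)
The plan is to show that $\mathbf R_n$, the Poisson subalgebra generated by the geodesic functions $G_{i,i+1}$, $1\le i<n$, coincides with the $\mathbb Z_2$-invariant subring $\mathcal O[\mathcal T_{g,s}]^{\mathbb Z_2}$. One inclusion, $\mathbf R_n\subseteq \mathcal O[\mathcal T_{g,s}]^{\mathbb Z_2}$, is exactly Lemma~\ref{lem:Z2-invariant}, so the content is the reverse inclusion. The first step is to reduce the problem to a statement about generators: since $\mathcal O[\mathcal T_{g,s}]$ is generated as an algebra by the shear parameters $x(e)$, $e\in E(\Gamma_n)$, the ring $\mathcal O[\mathcal T_{g,s}]^{\mathbb Z_2}$ is generated by the $\mathbb Z_2$-orbit sums and products $x(e)+x(\sigma e)$, $x(e)x(\sigma e)$ (with $\sigma$ the central involution), together with the parameters $x(e)$ that are already fixed by $\sigma$. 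So it suffices to express each of these invariant generators as an element of $\mathbf R_n$.

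The second and main step is to build up enough elements of $\mathbf R_n$ to recover these invariants. The idea is that the geodesic functions $G_{ij}$ for \emph{all} pairs $i<j$ — not just consecutive ones — already lie in $\mathbf R_n$: one obtains $G_{ij}$ from the consecutive ones by repeated Poisson brackets and polynomial operations, using the skein/trace relations among geodesic functions on a surface (each $G_{i,i+1}$ is a trace of a product of the matrices $L,R,X_t$ along $\gamma_{i,i+1}$, and $\{G_a,G_b\}$ expands, via $\{\operatorname{tr},\operatorname{tr}\}$ Goldman-type identities, into a linear combination of traces of longer loops, i.e. of $G_{ij}$'s and their products). Concretely I would induct on $j-i$: bracketing $G_{i,i+1}$ with $G_{i+1,i+2}$ produces (a polynomial in) $G_{i,i+2}$ plus lower terms, and so on, so all $G_{ij}\in\mathbf R_n$. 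Then I would argue that the full collection $\{G_{ij}\}$ generates $\mathcal O[\mathcal T_{g,s}]^{\mathbb Z_2}$: the matrix entries of $\varphi(\Sigma^h)=(G_{ij})$ are precisely the $G_{ij}$, and by the Fock–Chekhov construction of $\varphi$ these entries, as functions of the shear parameters, separate the $\mathbb Z_2$-orbits and in fact generate all $\mathbb Z_2$-invariant regular functions — essentially because $\varphi$ realizes $\operatorname{Im}(\varphi)=\mathcal T_{g,s}/\mathbb Z_2$ as an affine variety whose coordinate ring is pulled back to $\mathcal O[\mathcal T_{g,s}]^{\mathbb Z_2}$. Combining, $\mathcal O[\mathcal T_{g,s}]^{\mathbb Z_2}\subseteq \mathbf R_n$, giving equality.

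An alternative, cleaner route for the second step avoids skein calculus: show directly that $\varphi^*$ induces an isomorphism $\mathcal O[\operatorname{Im}(\varphi)]\xrightarrow{\sim}\mathbf R_n$ and that $\operatorname{Im}(\varphi)=\mathcal T_{g,s}/\mathbb Z_2$ as affine varieties, so that $\varphi^*\mathcal O[\operatorname{Im}(\varphi)]=\mathcal O[\mathcal T_{g,s}]^{\mathbb Z_2}$. For this one must check (i) that the $G_{ij}$ for consecutive $i,j$ generate the image of $\varphi^*$ — which is where the bracket-closure argument above re-enters, to get all $G_{ij}$ from the consecutive ones — and (ii) that $\varphi$ has degree exactly $2$ onto its image with the deck transformation being the central $\mathbb Z_2$, which is stated in the introduction (the map $\varphi$ is a finite covering) and in the discussion following the Corollary (for $n>3$ there exist non-$\mathbb Z_2$-invariant geodesic functions, so the covering is genuinely $2$-to-$1$, not the identity).

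The hard part will be step (i)/the bracket-closure claim: verifying rigorously that the Poisson subalgebra generated by the \emph{consecutive} geodesic functions $G_{i,i+1}$ contains \emph{all} geodesic functions $G_{ij}$ and hence all of $\mathcal O[\mathcal T_{g,s}]^{\mathbb Z_2}$. This requires a careful analysis of the Goldman bracket of trace functions on $\Gamma_n$ — controlling which longer loops appear when one brackets two loops sharing part of their path, and checking that the resulting system of relations is rich enough to solve for every $G_{ij}$ by induction on $j-i$ (and then that the $G_{ij}$ generate the invariant ring, for which a dimension/transcendence-degree count $\operatorname{tr.deg}\mathbf R_n = \operatorname{tr.deg}\mathcal O[\mathcal T_{g,s}]^{\mathbb Z_2}=3n-6$ plus normality of one side, or an explicit inversion, would finish). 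Everything else — the $\mathbb Z_2$-invariance, the identification of invariants of $\mathbb Z_2$ acting by a transposition-type involution on the $x(e)$, and the affine-quotient description of $\operatorname{Im}(\varphi)$ — is comparatively routine given the setup already in the paper.
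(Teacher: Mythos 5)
Your inclusion $\mathbf{R}_n\subseteq\mathcal{O}[\mathcal{T}_{g,s}]^{\mathbb{Z}_2}$ and your observation that all $G_{ij}$ (not only consecutive ones) lie in $\mathbf{R}_n$ via $\langle\cdot,\cdot\rangle$-type bracket identities match the paper (Lemma~\ref{lem:Z2-invariant}, Lemma~\ref{lem:GijINR}). But the step you flag as "the hard part" is not where the real difficulty lies, and the way you propose to finish has a genuine gap: you assert that the matrix entries $G_{ij}$ generate all of $\mathcal{O}[\mathcal{T}_{g,s}]^{\mathbb{Z}_2}$, "essentially because $\varphi$ realizes $\operatorname{Im}(\varphi)=\mathcal{T}_{g,s}/\mathbb{Z}_2$." That identification at the level of coordinate rings is (essentially) the statement being proved, and none of your suggested justifications delivers it: knowing that $\varphi$ is a finite $2$-to-$1$ covering of its image, or that the $G_{ij}$ separate $\mathbb{Z}_2$-orbits, or that transcendence degrees agree, only gives that the invariant ring is algebraic/integral over the subalgebra generated by the $G_{ij}$; equality would additionally require that subalgebra to be integrally closed in the invariant ring, which is exactly what must be established and is not addressed. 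Your preliminary reduction is also incorrect as stated: for a $\mathbb{Z}_2$ acting diagonally by swapping several pairs of coordinates $x(e)\leftrightarrow x(\sigma e)$, the invariant ring is \emph{not} generated by the orbit sums and products $x(e)+x(\sigma e)$, $x(e)x(\sigma e)$ of individual edges (mixed invariants such as $x(e)x(f)+x(\sigma e)x(\sigma f)$ are needed), and you never carry out the promised expression of even those invariants through $\mathbf{R}_n$.

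The paper's route supplies precisely the ingredient your outline is missing. The invariant ring contains symmetric combinations of geodesic functions of loops that are \emph{not} of the form $\gamma_{ij}$ (e.g.\ the pair $(\widehat{i_1}i_2i_3i_4)$, $(\uwidehat{i_1}i_2i_3i_4)$, which are exchanged by $\mathbb{Z}_2$ and individually lie outside $\mathbf{R}_n$), so one must show by hand that such symmetric combinations are in $\mathbf{R}_n$: Lemma~\ref{lem:GijklINSqrtR} does this by identifying $f+g$ and $fg$ with a coefficient of $\det(U+\lambda U^T)$ and the Pfaffian of $U-U^T$ for the $4\times4$ matrix built from the $G_{i_si_t}$, so that $\Rcc_n=\mathbf{R}_n[(\widehat{1}234)]$ is a quadratic extension. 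Then the long combinatorial induction (Lemmata~\ref{INR}--\ref{lem:2bnddINR} on $2$-bounded loops and Theorem~\ref{thm:squareroot} via the lane-number induction and skein resolutions) shows that \emph{every} simple-loop geodesic function lies in this single quadratic extension, after which the $\mathbb{Z}_2$-swap of the two roots forces the invariant part down to $\mathbf{R}_n$. Your proposal collapses all of this into the easy closure statement for the $G_{ij}$ plus an unproven quotient-variety claim, so as written it does not prove the theorem; to repair it you would either have to prove normality/integral closedness of the algebra generated by the $G_{ij}$ inside the invariant ring (not known and not attempted), or reproduce the paper's explicit analysis of arbitrary loops and the quadratic extension $\Rcc_n$.
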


{
The proof of the Theorem~\ref{thm:quadraticextension} occupies the rest of this section.
}

{
For convenience, we redraw the ribbon graph $\Gamma_n$  (Fig.~\ref{fign5v1} for $n=5$) as homeomorphic  \emph{twisted ribbon graph} (see Fig.~\ref{fig:ribbontwist}).
}

{

\begin{lemma}\label{lem:twistedribbon}
The twisted ribbon graph $\widetilde\Gamma_5$ in Fig.~\ref{fig:ribbontwist} is homeomorphic to the ribbon graph $\Gamma_5$ in Fig.~\ref{fign5v1}.
\end{lemma}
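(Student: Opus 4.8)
The plan is to reduce the statement to the classification of compact orientable surfaces with boundary. Recall that a ribbon graph $R$ carries the same information as a compact surface-with-boundary $\Sigma(R)$ that deformation retracts onto $R$, that two ribbon graphs are homeomorphic precisely when the surfaces $\Sigma(R)$ are homeomorphic, and that for orientable surfaces this happens if and only if they have the same genus and the same number of boundary components. Equivalently, writing $V$ and $E$ for the numbers of vertices and edges and $F$ for the number of boundary cycles of $R$, the invariant $(g,s)$ is recovered from $V-E=2-2g-s$ together with $s=F$. So I would first observe that the twisted graph of Figure~\ref{fig:ribbontwist} and $\Gamma_n$ have literally the same underlying abstract graph — the two horizontal chains of $n-3$ intervals together with the $n$ slanted edges, with the same incidences — so that $V$ and $E$ automatically coincide; only the ribbon (cyclic order and twist) data differs between the two pictures.

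It then remains to check that the boundary-cycle count $F$ is the same for both, namely $F=par(n)$. For $\Gamma_n$ this is exactly the fact recalled in Section~\ref{sec:geometricleafdimension}, that $\Gamma_n$ describes a genus-$\lfloor n/2\rfloor$ surface with $par(n)$ holes. For the twisted graph I would trace its boundary directly: starting along one side of a band and applying at each band the local rule that a twist switches which side of the band one continues along, follow the resulting closed walks and count them. The count splits according to the parity of $n$ — this is precisely the source of $par(n)$ — so I would organize it by verifying the base cases $n=3,4$ directly against Figures~\ref{fign3} and~\ref{fign4}, and then running an induction that peels off one pair of slanted edges at a time (removing one interval from each horizontal chain), checking at each step that the operation changes neither $V-E$ nor $F$.

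Alternatively — and this is the version I would actually put in the paper if the pictures cooperate — I would exhibit an explicit finite sequence of ambient moves carrying Figure~\ref{fig:ribbontwist} to the standard picture of $\Gamma_n$: planar isotopies, slides of a band endpoint along an incident band, and the ``untwisting'' move that trades a crossing of two bands for a pair of compensating twists. Each such move is realized by a homeomorphism of the thickened surface, so their composition yields $\Sigma(\text{twisted})\cong\Sigma(\Gamma_n)$, hence a homeomorphism of ribbon graphs, and in fact makes the homeomorphism explicit for later use.

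The main obstacle is the bookkeeping: in the boundary-tracing argument one must keep careful track of the cyclic order of the $n$ slanted half-edges at the chain vertices together with every twist, and must confirm that orientability is respected throughout (no stray odd number of half-twists), treating $n$ even and $n$ odd separately; in the explicit-move version one must check that the chosen sequence of slides and untwistings is legal and genuinely terminates at $\Gamma_n$. Everything else is routine once the combinatorial model of both ribbon graphs is written down.
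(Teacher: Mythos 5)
The paper in fact supplies no written proof of this lemma: it is treated as evident from the picture, the twisted graph of Fig.~\ref{fig:ribbontwist} being by construction a redrawing of $\Gamma_n$ in which the slanted bands are straightened to vertical position and the resulting changes of attachment (cyclic order at the endpoints of the horizontal chains) are recorded as the crossings/twists of the end bands --- i.e.\ exactly your second, ``explicit sequence of moves'' variant, with each move realized by a homeomorphism of the thickened surface. Your first variant (same underlying graph, hence same $V-E$ and Euler characteristic; count boundary cycles; check orientability; invoke the classification of compact orientable surfaces with boundary) is also a correct proof of the literal statement, but note two things. First, the assertion that the two pictures have ``literally the same underlying abstract graph'' is itself the only nontrivial combinatorial content and needs a sentence of justification, since the twisted figure is drawn with wrap-around horizontal bands and half-twisted end bands; this is the same bookkeeping you defer to the end, together with the orientability check (an even number of twisted bands along every cycle). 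Second, and more importantly for how the lemma is used later in the section, the classification argument only produces \emph{some} homeomorphism, whereas the subsequent computations (e.g.\ reading the red curve of Fig.~\ref{fig:ribbontwist} as $\gamma_{24}$ and encoding simple loops by sequences of vertical segments $\widehat{i}$, $\uwidehat{i}$) require the specific labeled identification under which the $i$th vertical band corresponds to the $i$th slanted edge of $\Gamma_n$. So if you write this up, the explicit-isotopy version is the one to keep --- it both proves the statement and fixes the identification the rest of the argument relies on --- with the Euler-characteristic/boundary count at most serving as a consistency check.
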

}

\begin{figure}[h]
    \centering
    \begin{minipage}[b]{.45\textwidth}
        \begin{tikzpicture}[scale=1.5]
        \def\ww {3}
        \def\rr {0.15}
        \coordinate (T0) at (0,3);
        \coordinate (T1) at (1,3);
        \coordinate (T2) at (2,3);
        \coordinate (T3) at (3,3);
        \coordinate (T4) at (4,3);
        \coordinate (T5) at (5,3);
        \coordinate (B0) at (0,0);
        \coordinate (B1) at (1,0);
        \coordinate (B2) at (2,0);
        \coordinate (B3) at (3,0);
        \coordinate (B4) at (4,0);
        \coordinate (B5) at (5,0);

            \draw[line width = 1pt, draw=black,double=white,double distance=2.8*\ww mm, rounded corners=30pt] ($(T4)-(0,2*\ww mm)$) -- (T4) -- (T0) -- ($(T0)-(0,2*\ww mm)$);
            \draw[line width = 1pt, draw=black,double=white,double distance=2.8*\ww mm, rounded corners=30pt] ($(B4)+(0,2*\ww mm)$) -- (B4) -- (B0) -- ($(B0)+(0,2*\ww mm)$);

            \draw[line width = 1pt, draw=black, rounded corners=8pt] ($(B4)+(\ww mm,2*\ww mm)$) -- ($(B4)+(\ww mm,4*\ww mm)$) --  ($(T4)-(\ww mm,4*\ww mm)$) -- ($(T4)-(\ww mm,2*\ww mm)$);
            \draw[line width=2.6*\ww mm, fill=white, white] ($(B0)+(0,1.9*\ww mm)$) --  ($(B0)+(0,4.1*\ww mm)$);            
            \draw[line width=2.6*\ww mm, fill=white, white] ($(T0)-(0,1.9*\ww mm)$) --  ($(T0)-(0,4.1*\ww mm)$);            
        
            \draw[line width = 1pt, draw=black, rounded corners=8pt] ($(B0)+(\ww mm,2*\ww mm)$) -- ($(B0)+(\ww mm,4*\ww mm)$) --  ($(T0)-(\ww mm,4*\ww mm)$) -- ($(T0)-(\ww mm,2*\ww mm)$);
            \draw[fill=white, white] ($0.5*(B0)+0.5*(T0)$) circle (0.1);
           \draw[line width = 1pt, draw=black, rounded corners=8pt] ($(B0)+(-\ww mm,2*\ww mm)$) -- ($(B0)+(-\ww mm,4*\ww mm)$) --  ($(T0)-(-\ww mm,4*\ww mm)$) -- ($(T0)-(-\ww mm,2*\ww mm)$);
            \draw[line width=2.6*\ww mm, fill=white, white] ($(B4)+(0,1.9*\ww mm)$) --  ($(B4)+(0,4*\ww mm)$);            
            \draw[line width=2.6*\ww mm, fill=white, white] ($(T4)-(0,1.9*\ww mm)$) --  ($(T4)-(0,4*\ww mm)$);            
                \draw[line width=2.6*\ww mm, fill=white, white] ($(B1)+(0,0.9*\ww mm)$) --  ($(B1)+(0,1.1*\ww mm)$);            
            \draw[line width=2.6*\ww mm, fill=white, white] ($(T1)-(0,0.9*\ww mm)$) --  ($(T1)-(0,1.1*\ww mm)$);            
          \draw[line width=2.6*\ww mm, fill=white, white] ($(B2)+(0,0.9*\ww mm)$) --  ($(B2)+(0,1.1*\ww mm)$);            
            \draw[line width=2.6*\ww mm, fill=white, white] ($(T2)-(0,0.9*\ww mm)$) --  ($(T2)-(0,1.1*\ww mm)$);            
               \draw[line width=2.6*\ww mm, fill=white, white] ($(B3)+(0,0.9*\ww mm)$) --  ($(B3)+(0,1.1*\ww mm)$);            
            \draw[line width=2.6*\ww mm, fill=white, white] ($(T3)-(0,0.9*\ww mm)$) --  ($(T3)-(0,1.1*\ww mm)$);            

                       \draw[line width = 2pt, draw=blue, rounded corners=20pt] (B4) -- ($(B3)-(0.5*\ww mm,0)$) -- ($(T3)+(0.5*\ww mm,0)$) -- (T4) -- cycle;

                   \draw[line width = 2pt, draw=red, rounded corners=20pt] (B1) -- ($(B3)+(0.5*\ww mm,0)$) -- ($(T3)-(0.5*\ww mm,0)$) -- (T1) -- cycle; 
             \draw[fill=white, white] ($0.5*(B4)+0.5*(T4)$) circle (0.1);
           \draw[line width = 1pt, draw=black, rounded corners=8pt] ($(B4)+(-\ww mm,2*\ww mm)$) -- ($(B4)+(-\ww mm,4*\ww mm)$) --  ($(T4)-(-\ww mm,4*\ww mm)$) -- ($(T4)-(-\ww mm,2*\ww mm)$);

            \draw[line width = 1pt, draw=black, rounded corners=8pt] ($(B1)+(\ww mm,\ww mm)$) -- ($(B1)+(\ww mm,4*\ww mm)$) --  ($(T1)-(\ww mm,4*\ww mm)$) -- ($(T1)-(\ww mm,\ww mm)$);
            \draw[fill=white, white] ($0.5*(B1)+0.5*(T1)$) circle (\rr);
           \draw[line width = 1pt, draw=black, rounded corners=8pt] ($(B1)+(-\ww mm,\ww mm)$) -- ($(B1)+(-\ww mm,4*\ww mm)$) --  ($(T1)-(-\ww mm,4*\ww mm)$) -- ($(T1)-(-\ww mm,\ww mm)$);
            
             \draw[line width = 1pt, draw=black, rounded corners=8pt] ($(B2)+(\ww mm,\ww mm)$) -- ($(B2)+(\ww mm,4*\ww mm)$) --  ($(T2)-(\ww mm,4*\ww mm)$) -- ($(T2)-(\ww mm,\ww mm)$);
            \draw[fill=white, white] ($0.5*(B2)+0.5*(T2)$) circle (\rr);
           \draw[line width = 1pt, draw=black, rounded corners=8pt] ($(B2)+(-\ww mm,\ww mm)$) -- ($(B2)+(-\ww mm,4*\ww mm)$) --  ($(T2)-(-\ww mm,4*\ww mm)$) -- ($(T2)-(-\ww mm,\ww mm)$);
               
              \draw[line width = 1pt, draw=black, rounded corners=8pt] ($(B3)+(\ww mm,\ww mm)$) -- ($(B3)+(\ww mm,4*\ww mm)$) --  ($(T3)-(\ww mm,4*\ww mm)$) -- ($(T3)-(\ww mm,\ww mm)$);
            \draw[fill=white, white] ($0.5*(B3)+0.5*(T3)$) circle (\rr);
           \draw[line width = 1pt, draw=black, rounded corners=8pt] ($(B3)+(-\ww mm,\ww mm)$) -- ($(B3)+(-\ww mm,4*\ww mm)$) --  ($(T3)-(-\ww mm,4*\ww mm)$) -- ($(T3)-(-\ww mm,\ww mm)$);
              
     \draw[fill=black, color=black] ($(B3)+(0,0.15)$) circle (2pt) node [left] {$P$};           
            
         \end{tikzpicture}
    \end{minipage}
    \caption{The twisted ribbon graph $\widetilde\Gamma_n$ for $n=6$. The red curve is the loop $\gamma_{24}$. The blue curve is the loop $\gamma_{45}$. The two loops intersect at point $P$.}\label{fig:ribbontwist}
\end{figure}

{
\begin{remark}  Similarly $\widetilde\Gamma_n$ are defined that are homeomorphic to $\Gamma_n$.
\end{remark}

\begin{proof} Obvious.
\end{proof}

\begin{remark}\label{rmrk:nonselfintersectinggenerators}
A collection of loops $\gamma_1,\dots,\gamma_m$ in $\widetilde\Gamma_n$ corresponds to the monomial $\displaystyle\prod_{\alpha=1}^m G_{\gamma_\alpha}$. The algebra of geodesic functions
has a system of addiditive generators that are monomials corresponding to collections of loops.
The skein relation expresses the geodesic function of a loop with self-intersections as a polynomial in geodesic functions of the loops with fewer self-intersections. By induction, we conclude that the algebra of geodesic functions has a system of additive generators containing monomials corresponding only to collections of 
{nonintersecting}  loops 
{(laminations)}.
\end{remark}

Note that the isotopy class of any oriented loop in $\widetilde\Gamma_n$ is entirely described by the cyclically ordered sequence of an even number of vertical intervals travelled alternately upwards and downwards in the loop path.  
We denote by $\widehat{i}$ (correspondingly, $\uwidehat{i}$) the directed upwards (correspondingly, directed downwards)  interval along the $i$th vertical edge of $\widetilde\Gamma_n$. 
For instance, the red loop in Figure~\ref{fig:ribbontwist} travelled clockwise corresponds  to the sequence $\widehat{2}\uwidehat{4}$ (or, equivalently, $\uwidehat{4}\widehat{2}$)  while the red loop travelled counterclockwise corresponds to the sequence $\uwidehat{2}\widehat{4}$
(equivalently, $\widehat{4}\uwidehat{2}$). 
Note that any sequence describing a loop in $\widetilde\Gamma_n$ has interlacing overhat and underhat indices. Therefore, such sequence is entirely determined by the direction of the first index in the sequence. Note that inverting the orientation of the loop leads to changes of all overhats to underhats and back, and changes the cyclic order of the sequence to the opposite. We will denote the oriented closed loop by the sequence of indices, with the first index oriented up or down as $(\widehat{i_1} i_2\dots i_{2k-1} i_{2k})$. The corresponding geodesic function is denoted $G(\widehat{i_1} i_2\dots i_{2k-1} i_{2k})$.
Recall that the geodesic function does not depend on the orientation of the loop. If the inverting orientation of the sequence leads to the sequence in the same cyclic equivalence 
class as the original sequence, then the change from underhat to overhat and vice versa won't change the geodesic function.
In this situation, we will omit the overhat (underhat) if it does not lead to confusion. 
 In particular, $G(i_1 i_2)=G(\widehat{i_1} i_2)=G(\uwidehat{i_1} i_2)$.
Similarly, $G(i_1 i_2 i_3 i_2)=G(\widehat{i_1}i_2 i_3 i_2)=G(\uwidehat{i_1} i_2 i_3 i_2)$. 


To avoid ambiguity, we note that in the class of sequences up to cyclic permutations, there is a unique alphabetically minimal sequence, which we will use to label the corresponding loop.

\begin{lemma}\label{lem:GijINR} For all $1\le i<j\le n$, $G_{ij}=G(ij)\in \mathbf{R}_n$ and 
 for all $1\le i<j<k\le n$ elements $G(ijkj),G(ijik),G(ikjk)\in\mathbf{R}_n$.
\end{lemma}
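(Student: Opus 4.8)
The plan is to show that the relevant geodesic functions lie in $\mathbf{R}_n$, the Poisson subalgebra generated by $\{G_{i,i+1}\}_{1\le i<n}$, by building the longer loops out of the adjacent-index loops $\gamma_{i,i+1}$ via the skein (Goldman) relations, which express products of geodesic functions of intersecting loops as sums of geodesic functions of resolved loops. Concretely, $\mathbf{R}_n$ is closed under multiplication and Poisson bracket, and it is a standard fact (from the trace identity $\tr(M)\tr(N)=\tr(MN)+\tr(MN^{-1})$ applied to the Fuchsian representatives $X_\gamma\in PSL(2,\R)$) that for two loops meeting transversally at a single point, the product of their geodesic functions equals the sum of the geodesic functions of the two smoothings at that point. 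So the strategy is: first establish $G_{i,i+1}\in\mathbf{R}_n$ by definition; then produce $G_{ij}=(ij)$ for $|i-j|\ge 2$ by resolving a product of a chain of adjacent loops; then handle the three length-four words $(ijkj),(ijik),(ikjk)$ similarly.

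First I would treat $G_{ij}$ for general $i<j$. Using the description of simple loops in the twisted ribbon graph $\widetilde\Gamma_n$ by cyclic words in overhat/underhat indices, the loop $\gamma_{ij}$ corresponds to the two-letter word $(\widehat i\,\uwidehat j)$. I would argue by induction on $j-i$. The base case $j-i=1$ is the generator. For the inductive step, I would take the product $G_{i,j-1}\cdot G_{j-1,j}$; the loops $\gamma_{i,j-1}$ and $\gamma_{j-1,j}$ share exactly the top and bottom vertices of the slanted edge $j-1$, so in the twisted picture they meet transversally, and the skein relation gives $G_{i,j-1}G_{j-1,j}=G_{ij}+(\text{a loop with word }\widehat i\,\uwidehat{j-1}\,\widehat j\,\uwidehat{j-1})$ or a similar shorter/decomposable term. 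One must check that the second resolution term is itself already in $\mathbf{R}_n$ — this is exactly where the words $(ijkj)$ etc. enter, so the two halves of the lemma are proved together by a simultaneous induction. Rearranging, $G_{ij}=G_{i,j-1}G_{j-1,j}-(\text{known element})\in\mathbf{R}_n$.

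For the three cubic-type words, the idea is the same: $(ijkj)$ should appear as a resolution term in a product like $G_{ij}\cdot G_{jk}$ (two loops sharing the vertices of slanted edge $j$), whose other resolution is $G_{ik}$; thus $(ijkj)=G_{ij}G_{jk}-G_{ik}$, and since $G_{ij},G_{jk},G_{ik}$ are already shown to be in $\mathbf{R}_n$, so is $(ijkj)$. The words $(ijik)$ and $(ikjk)$ are handled by choosing the appropriate pair of adjacent-index or already-constructed loops and the vertex at which they are resolved — e.g. resolving at slanted edge $i$ in a product involving $\gamma$'s through $i$, and at slanted edge $k$ in a product through $k$. The bookkeeping requires keeping careful track of which vertex the two loops are smoothed at, and of the sign/orientation conventions that let us drop overhats (the remark about $(\widehat{i_1}i_2)=(\uwidehat{i_1}i_2)$ is used to identify resolution terms with the canonical alphabetically-minimal representative).

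The main obstacle I expect is precisely the combinatorics of the skein resolutions in the twisted ribbon graph $\widetilde\Gamma_n$: one must verify that each product $G_\alpha G_\beta$ used really is a product of two loops meeting at a \emph{single} transversal point (so that the two-term skein relation, rather than a multi-term one, applies), identify both smoothings explicitly as words, and confirm that the "unwanted" smoothing is always a word already proved to be in $\mathbf{R}_n$ at an earlier stage of the induction. Setting up the induction so that $G_{ij}$ and the cubic words are proved in the correct interleaved order — and checking that no genuinely new, more complicated word ever appears as a byproduct — is the crux; once the resolution identities are pinned down, the algebra is a one-line rearrangement using that $\mathbf{R}_n$ is a subalgebra.
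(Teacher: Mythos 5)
Your argument has a genuine gap: it is circular at exactly the point you flag as delicate. From the product/skein relation alone you only ever learn that the \emph{sum} of the two smoothings lies in $\mathbf{R}_n$: resolving $\gamma_{i,j-1}$ and $\gamma_{j-1,j}$ at their single intersection gives $(i,j-1)(j-1,j)=(i,j)+(i,j-1,j,j-1)$, so you know $(i,j)+(i,j-1,j,j-1)\in\mathbf{R}_n$, but you cannot separate the two terms. Your ``simultaneous induction'' proposes to prove $(ij)$ by subtracting the cubic word, and to prove the cubic word $(i,j-1,j,j-1)$ by subtracting $(ij)$ from the same product; neither is available before the other, and already at the base case ($G_{i,i+2}$ from $G_{i,i+1}G_{i+1,i+2}$) the ``unwanted'' smoothing is one of the new cubic words, not something established earlier. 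So the induction never gets off the ground using multiplication alone.

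The missing ingredient — and the paper's actual mechanism — is the Goldman Poisson bracket, which you mention in passing ($\mathbf{R}_n$ is closed under the bracket) but never use. For two simple loops meeting in exactly one point, the bracket gives (half) the \emph{difference} of the same two smoothings: $\{(i_1i_2),(i_2i_3)\}=\tfrac12(i_1i_3)-\tfrac12(i_1i_2i_3i_2)$. Combining this with the product identity via $\langle f,g\rangle:=\tfrac12 fg+\{f,g\}$ isolates $\langle(i_1i_2),(i_2i_3)\rangle=(i_1i_3)$, which lies in $\mathbf{R}_n$ because $\mathbf{R}_n$ is a Poisson subalgebra; then $(i_1i_2i_3i_2)=(i_1i_2)(i_2i_3)-(i_1i_3)\in\mathbf{R}_n$, and the other two quartic words follow similarly. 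With this bracket relation inserted, your inductive scheme (build up $(ij)$ by increasing $j-i$, then deduce the words $(ijkj),(ijik),(ikjk)$) becomes exactly the paper's proof; without it, the two resolution terms cannot be disentangled.
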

\begin{proof}
 For any $1\le i_1<i_2<i_3\le n$ the skein relation implies $G(i_1 i_2)\cdot G(i_2 i_3)=G(i_1 i_3)+G(i_1 i_2 i_3 i_2)$. Note that the loops $(i_1 i_2)$ and $(i_2 i_3)$ intersect exactly in one point implying that 
$\{G(i_1 i_2),G(i_2 i_3)\}=\dfrac{1}{2} G(i_1 i_3)-\dfrac{1}{2}G(i_1 i_2 i_3 i_2)$ by (\ref{eq:Goldman}).

Introduce $\langle f,g\rangle:=\dfrac{1}{2} f\cdot g + \{f,g\}$. Observe,  that $\langle G(i_1 i_2),G(i_2 i_3)\rangle = G(i_1 i_3)$,  hence, $G(i_1 i_3)\in \mathbf{R}_n$.
That implies immediately that $G(ijkj)\in\mathbf{R}_n$. Similarly, $G(ijik),G(ikjk)\in\mathbf{R}_n$.
\end{proof}

\begin{remark} For an algebra $\mathbb{A}$ we call its square extension an algebra $\mathbb{A}[\xi]$ where $\xi$ is a root of an irreducible over $\mathbb{A}$ quadratic polynomial {
$p(\xi)=a_0+a_1\xi+a_2\xi^2,\ a_i\in\mathbf{A},\ p(\xi)=0$}.
\end{remark}

\begin{lemma}\label{lem:GijklINSqrtR} For $1\le i_1<i_2<i_3<i_4\le n$, the algebra $\mathbf{R}_n[G(\widehat{i_1} i_2 i_3 i_4)]$ is a square extension of $\mathbf{R}_n$.
\end{lemma}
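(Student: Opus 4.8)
The plan is to express the product of the two geodesic functions $(\widehat{i_1}i_2)$ and $(\widehat{i_3}i_4)$ — both of which lie in $\mathbf{R}_n$ by Lemma~\ref{lem:GijINR} — using the skein relation, and to identify the ``new'' element produced as exactly $(\widehat{i_1}i_2i_3i_4)$ (up to lower-complexity terms already in $\mathbf{R}_n$). Concretely, the loops $\gamma_{i_1i_2}$ and $\gamma_{i_3i_4}$ are disjoint when $i_1<i_2<i_3<i_4$... wait — I should instead pair $(\widehat{i_1}i_3)$ with $(\widehat{i_2}i_4)$, whose loops intersect, since all of $(i_1i_3),(i_2i_4)\in\mathbf{R}_n$ by Lemma~\ref{lem:GijINR}. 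Resolving the (two) intersection points by the skein relation writes $(\widehat{i_1}i_3)\cdot(\widehat{i_2}i_4)$ as a $\Z$-linear combination of products of simple loops; the top-complexity term in that resolution is $(\widehat{i_1}i_2i_3i_4)$, and every other term is a product of loops of the form $(ab)$ or $(abcb)$, hence lies in $\mathbf{R}_n$ by Lemma~\ref{lem:GijINR}. This exhibits $(\widehat{i_1}i_2i_3i_4)$ as a root of a monic degree-one... no: the point is that $(\widehat{i_1}i_2i_3i_4)$ satisfies no linear relation over $\mathbf{R}_n$, so we must produce a genuine quadratic relation.

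So the real mechanism is the one already used in Lemma~\ref{lem:GijINR}: introduce the twisted product $\langle f,g\rangle=\tfrac12 f\cdot g+\{f,g\}$ and the companion $\langle f,g\rangle'=\tfrac12 f\cdot g-\{f,g\}$. For a pair of loops meeting at a single point, resolving the crossing the two ways gives $\langle f,g\rangle$ and $\langle f,g\rangle'$ as the two resolutions, so both products $\langle f,g\rangle,\langle f,g\rangle'$ and their symmetric functions $\langle f,g\rangle+\langle f,g\rangle'=f\cdot g$ and $\langle f,g\rangle\cdot\langle f,g\rangle'=\tfrac14 f^2g^2-\{f,g\}^2$ lie in $\mathbf{R}_n$ (the last because $\{f,g\}$ is itself, by the bracket computation in Lemma~\ref{lem:GijINR}, a combination of lower loops in $\mathbf{R}_n$). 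Applying this with $f=(\widehat{i_1}i_3)$, $g=(\widehat{i_2}i_4)$: one of the two single-crossing resolutions of $fg$ at the appropriate intersection point is $(\widehat{i_1}i_2i_3i_4)$ (possibly times an already-$\mathbf{R}_n$ loop), and the other resolution $\zeta$ is of strictly smaller complexity and lies in $\mathbf{R}_n$; then $(\widehat{i_1}i_2i_3i_4)$ and $\zeta$ are the two roots of $x^2-(f\cdot g)x+\big(\tfrac14 f^2g^2-\{f,g\}^2\big)=0$, a quadratic over $\mathbf{R}_n$. Hence $\mathbf{R}_n[(\widehat{i_1}i_2i_3i_4)]$ is (at most) a quadratic extension; irreducibility of the quadratic — i.e.\ that $(\widehat{i_1}i_2i_3i_4)\notin\mathbf{R}_n$ — follows because its image is not $\Z_2$-invariant together with Theorem~\ref{thm:quadraticextension} which identifies $\mathbf{R}_n$ with $\mathcal{O}[\mathcal{T}_{g,s}]^{\Z_2}$.

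The steps, in order: (1) fix the twisted ribbon graph picture of Fig.~\ref{fig:ribbontwist} and check that $\gamma_{i_1i_3}$ and $\gamma_{i_2i_4}$ can be isotoped to meet in a single transverse point for $i_1<i_2<i_3<i_4$ — this is the combinatorial heart and the step I expect to be the main obstacle, since one must control the self-intersection and mutual-intersection pattern of these loops in the twisted model and verify that one skein resolution gives precisely the simple loop $(\widehat{i_1}i_2i_3i_4)$ while the other gives something strictly simpler; (2) invoke the skein relation and the single-crossing bracket formula (exactly as in Lemma~\ref{lem:GijINR}) to get $f\cdot g=(\widehat{i_1}i_2i_3i_4)+\zeta$ and $\{f,g\}=\tfrac12\big((\widehat{i_1}i_2i_3i_4)-\zeta\big)$ up to sign, with $\zeta\in\mathbf{R}_n$ and $\{f,g\}\in\mathbf{R}_n$ by Lemma~\ref{lem:GijINR}; (3) assemble the monic quadratic $x^2-(f\cdot g)x+(\tfrac14 f^2g^2-\{f,g\}^2)$ over $\mathbf{R}_n$ having $(\widehat{i_1}i_2i_3i_4)$ as a root; (4) conclude the extension is quadratic, and argue irreducibility via $\Z_2$-non-invariance and Theorem~\ref{thm:quadraticextension}. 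If the single-crossing hypothesis in step (1) fails for some index configurations, the fallback is to realize $(\widehat{i_1}i_2i_3i_4)$ by a two-step iteration — first adjoin an intermediate $(\widehat{i_1}i_2i_3)\in\mathbf{R}_n$-type loop, then cross with a $(j k)$ loop meeting it once — but I expect the direct single-crossing argument to work uniformly.
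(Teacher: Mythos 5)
Your mechanism cannot work, and the obstruction is structural rather than a fixable detail. By definition $\mathbf{R}_n$ is a \emph{Poisson} subalgebra, so it is closed under both multiplication and the bracket; consequently, for any $f,g\in\mathbf{R}_n$ whose loops meet in a single point, \emph{both} skein resolutions $\tfrac12 fg+\{f,g\}$ and $\tfrac12 fg-\{f,g\}$ automatically lie in $\mathbf{R}_n$ (this is exactly the $\langle f,g\rangle$ trick of Lemma~\ref{lem:GijINR}). So if, as you claim in steps (1)--(2), one single-crossing resolution of $(i_1i_3)\cdot(i_2i_4)$ were $(\widehat{i_1}i_2i_3i_4)$, you would have proved $(\widehat{i_1}i_2i_3i_4)\in\mathbf{R}_n$, which destroys the irreducibility you invoke in step (4) and contradicts the $\mathbb{Z}_2$-argument you cite (the $\mathbb{Z}_2$ action swaps $(\widehat{i_1}i_2i_3i_4)$ with $(\uwidehat{i_1}i_2i_3i_4)$, so the former is not $\mathbb{Z}_2$-invariant, while $\mathbf{R}_n\subseteq\mathcal{O}[\mathcal{T}_{g,s}]^{\mathbb{Z}_2}$ by Lemma~\ref{lem:Z2-invariant}). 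The same inconsistency is visible inside your own quadratic: the sum of its two roots is $f\cdot g\in\mathbf{R}_n$, so if the companion root $\zeta$ lies in $\mathbf{R}_n$ then so does $(\widehat{i_1}i_2i_3i_4)$ --- you cannot have $\zeta\in\mathbf{R}_n$ and an irreducible quadratic simultaneously. The geometric premise also fails: for interleaved indices the loops $\gamma_{i_1i_3}$ and $\gamma_{i_2i_4}$ have geometric intersection number two, not one (consistent with the bracket $\{G_{i_1i_3},G_{i_2i_4}\}$ being quadratic in the $G$'s), and resolving their product produces the ``crossed-order'' quadrilaterals $(\widehat{i_1}i_3i_2i_4)$, $(\widehat{i_1}i_3i_4i_2)$, never the cyclic loop $(\widehat{i_1}i_2i_3i_4)$ alone; the cyclic loop only ever appears together with its partner $(\uwidehat{i_1}i_2i_3i_4)$. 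Your fallback (adjoin a three-index loop, then cross once) fails for the same closure reason, and a three-letter sequence is not even a valid simple loop here since ups and downs must alternate.

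The paper's proof takes the conjugate pair seriously from the start: it sets $f=(\widehat{i_1}i_2i_3i_4)$, $g=(\uwidehat{i_1}i_2i_3i_4)$ and shows that the elementary symmetric functions $f+g$ and $fg$ lie in $\mathbf{R}_n$, because they are explicit polynomial expressions in the two-index geodesic functions $(i_si_t)\in\mathbf{R}_n$: with $U$ the unipotent $4\times4$ matrix $U_{st}=(i_si_t)$, one symmetric function is $\mathrm{Pf}(U-U^T)=(i_1i_2)(i_3i_4)-(i_1i_3)(i_2i_4)+(i_1i_4)(i_2i_3)$ and the other is the coefficient of $\lambda$ in $\det(U+\lambda U^T)$. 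Since neither $f$ nor $g$ lies in $\mathbf{R}_n$ (they are exchanged by the $\mathbb{Z}_2$ symmetry), the monic quadratic $x^2-(f+g)x+fg$ over $\mathbf{R}_n$ is irreducible and $\mathbf{R}_n[f]$ is a square extension. If you want to salvage a skein-theoretic derivation, the correct target is this \emph{sum} $f+g$: resolving both crossings in the products $(i_1i_2)(i_3i_4)$, $(i_1i_3)(i_2i_4)$, $(i_1i_4)(i_2i_3)$ and combining with the Pfaffian signs makes the non-simple terms cancel and yields $f+g$, but no manipulation within $\mathbf{R}_n$ can ever isolate $f$ itself.
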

\begin{proof} Let $f=G(\widehat{i_1}i_2 i_3 i_4), g=G(\uwidehat{i_1}i_2 i_3 i_4)$ and $\sigma_1=f+g,\sigma_2=fg$ be two elementary symmetric functions of $f$ and $g$. Construct an unipotent upper-triangular $4\times 4$ matrix $U$ with $U_{st}=G(i_s i_t)\in\mathbf{R}_n$ for all $1\le s<t\le 4$. Coefficient of the first power of $\lambda$  of  $\det(U+\lambda U^T)$ coincides with  $\sigma_1$. The Pfaffian of the skew-symmetric matrix  $U- U^T$ is $\rm{Pf}(U-U^T)=\sqrt{det(U-U^T)}$  and $\sigma_1=\rm{Pf}(U-U^T)$. Then, $\sigma_1,\sigma_2\in\mathbf{R}_n$. It remains to notice that $f,g\not\in\mathbf{R}_n$.
\end{proof}

\begin{cor}\label{cor:1234INR} $G(\uwidehat{i_1}i_2 i_3 i_4)\in \mathbf{R}_n[G(\widehat{i_1}i_2 i_3 i_4))]$ and $G(\widehat{i_1}i_2 i_3 i_4)\in \mathbf{R}_n[G(\uwidehat{i_1}i_2 i_3 i_4))]$
\end{cor}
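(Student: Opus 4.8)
The plan is to deduce this corollary directly from \Cref{lem:GijklINSqrtR}, exactly as \Cref{cor:1234INR} is phrased as an immediate consequence of that lemma. Fix $1\le i_1<i_2<i_3<i_4\le n$ and write $f=(\widehat{i_1}i_2 i_3 i_4)$, $g=(\uwidehat{i_1}i_2 i_3 i_4)$, together with $\sigma_1=f+g$ and $\sigma_2=fg$. \Cref{lem:GijklINSqrtR} already gives $\sigma_1,\sigma_2\in\mathbf R_n$ and $f,g\notin\mathbf R_n$; in particular the quadratic polynomial $p(x)=x^2-\sigma_1 x+\sigma_2$ has coefficients in $\mathbf R_n$, is satisfied by both $f$ and $g$, and (since neither root lies in $\mathbf R_n$) is irreducible over $\mathbf R_n$. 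So each of $\mathbf R_n[f]$ and $\mathbf R_n[g]$ is the square extension of $\mathbf R_n$ determined by $p$.

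The key step is then the observation that inside $\mathbf R_n[f]$ one has $g=\sigma_1-f$, which exhibits $g=(\uwidehat{i_1}i_2 i_3 i_4)$ as an $\mathbf R_n$-linear combination of $1$ and $f$, hence $g\in\mathbf R_n[f]=\mathbf R_n[(\widehat{i_1}i_2 i_3 i_4)]$. Symmetrically $f=\sigma_1-g\in\mathbf R_n[g]=\mathbf R_n[(\uwidehat{i_1}i_2 i_3 i_4)]$, which is the second assertion. (Equivalently, since $f$ and $g$ are the two roots of the same irreducible quadratic over $\mathbf R_n$, the two square extensions $\mathbf R_n[f]$ and $\mathbf R_n[g]$ coincide as subalgebras, and the corollary just records that each generator lies in the extension generated by the other.) It is worth spelling out that the relation $f+g=\sigma_1$ is precisely the skein/trace identity already used in the proof of \Cref{lem:GijklINSqrtR}, so no new geometric input is needed here.

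I do not expect any serious obstacle: the only thing to be slightly careful about is the irreducibility claim, i.e. that $\sigma_1^2-4\sigma_2$ is not a square in $\mathbf R_n$, equivalently that $f\ne g$. This follows from $f,g\notin\mathbf R_n$ as asserted in \Cref{lem:GijklINSqrtR} — if $f=g$ then $f=\tfrac12\sigma_1\in\mathbf R_n$, a contradiction — so one may simply invoke that lemma. The whole argument is two lines once the lemma is in hand, which matches the way the statement is presented in the excerpt (a corollary with no displayed proof of its own beyond the lemma it cites).
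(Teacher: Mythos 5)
Your proposal is correct and uses essentially the same argument as the paper, whose entire proof is the observation that $\sigma_1=f+g\in\mathbf{R}_n$ (from Lemma~\ref{lem:GijklINSqrtR}), so $g=\sigma_1-f\in\mathbf{R}_n[f]$ and symmetrically $f=\sigma_1-g\in\mathbf{R}_n[g]$. The additional discussion of irreducibility of the quadratic is harmless but not needed for the stated membership claims.
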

\begin{proof} As in the proof of Lemma~\ref{lem:GijklINSqrtR}, $\sigma_1\in\mathbf{R}_n$ that implies the statement.
\end{proof}

Let $\Rcc_n=\mathbf{R}_n[G(\widehat{1}234)]$.

\begin{lemma}\label{INR}  
For all $1\le i_1<i_2<i_3<i_4\le n$ both $G(\widehat{i_1}i_2 i_3 i_4)$ and $G(\uwidehat{i_1}i_2 i_3 i_4)$ belong to $\Rcc_n$.
\end{lemma}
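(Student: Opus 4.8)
The plan is to show that \emph{all} the square extensions $\mathbf{R}_n\big[(\widehat{i_1}i_2i_3i_4)\big]$, as the quadruple $1\le i_1<i_2<i_3<i_4\le n$ varies, coincide with $\Rcc_n=\mathbf{R}_n\big[(\widehat{1}234)\big]$. First, by Corollary~\ref{cor:1234INR} it suffices to prove $(\widehat{i_1}i_2i_3i_4)\in\Rcc_n$, since then $(\uwidehat{i_1}i_2i_3i_4)\in\mathbf{R}_n\big[(\widehat{i_1}i_2i_3i_4)\big]\subseteq\Rcc_n$. Next I record the module picture: by Lemma~\ref{lem:GijklINSqrtR} the elements $\sigma_1=(\widehat{1}234)+(\uwidehat{1}234)$ and $\sigma_2=(\widehat{1}234)(\uwidehat{1}234)$ lie in $\mathbf{R}_n$ while $(\widehat{1}234)\notin\mathbf{R}_n$, so $w:=(\widehat{1}234)-(\uwidehat{1}234)$ is nonzero with $w^2=\sigma_1^2-4\sigma_2\in\mathbf{R}_n$, and therefore $\Rcc_n=\mathbf{R}_n\oplus\mathbf{R}_n\,w$ as a free rank-two $\mathbf{R}_n$-module. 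Writing similarly $w_Q:=(\widehat{i_1}i_2i_3i_4)-(\uwidehat{i_1}i_2i_3i_4)$ we have $(\widehat{i_1}i_2i_3i_4)=\tfrac12(\sigma_1^Q+w_Q)$ with $\sigma_1^Q\in\mathbf{R}_n$, so the claim reduces to $w_Q\in\mathbf{R}_n\,w$. Finally, because central symmetry of $\Gamma_n$ fixes every slanted edge but reverses the direction of travel along it, the $\mathbb{Z}_2$-action sends the loop with code $\widehat{i_1}i_2i_3i_4$ to the loop with code $\uwidehat{i_1}i_2i_3i_4$; hence $w$ and every $w_Q$ are $\mathbb{Z}_2$-anti-invariant, and using $\mathbf{R}_n=\mathcal{O}[\mathcal{T}_{g,s}]^{\mathbb{Z}_2}$ from Theorem~\ref{thm:quadraticextension}, any containment $w_Q\in\Rcc_n$ is automatically a containment in the anti-invariant summand $\mathbf{R}_n\,w$. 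Thus $w_Q\in\mathbf{R}_n\,w$ is equivalent to $(\widehat{i_1}i_2i_3i_4)\in\Rcc_n$.

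To move between quadruples I would use the skein/Poisson identity already exploited in Lemma~\ref{lem:GijINR}: if two simple loops $\alpha,\beta$ meet transversally at a single point then $\langle G_\alpha,G_\beta\rangle=\tfrac12 G_\alpha G_\beta+\{G_\alpha,G_\beta\}$ equals the geodesic function of the single loop obtained by the positive smoothing. Given adjacent quadruples $Q$ and $Q'=(Q\setminus\{x\})\cup\{y\}$, the loop $(\widehat{Q})$ and the loop $(xy)\in\mathbf{R}_n$ share exactly the vertical edge $x$; resolving their intersection there produces $(\widehat{Q'})$ (or $(\uwidehat{Q'})$, which by Corollary~\ref{cor:1234INR} is just as good) together with geodesic functions of loops that traverse a vertical edge twice. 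Organising the $4$-subsets of $\{1,\dots,n\}$ as the Johnson graph $J(n,4)$, which is connected, I would induct along a path from $\{1,2,3,4\}$ to $Q$: at each step the new $4$-loop is obtained from the previous one (already in $\Rcc_n$ by the inductive hypothesis) by such a resolution together with correction terms which I must argue also lie in $\Rcc_n$; the anti-invariance observation of the previous paragraph then upgrades ``$(\widehat{Q'})\in\Rcc_n$'' to ``$w_{Q'}\in\mathbf{R}_n\,w$'', closing the induction.

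The main obstacle is the bookkeeping of the correction terms, i.e.\ proving that every geodesic function appearing besides $(\widehat{Q'})$ in these resolutions lies in $\Rcc_n$. They fall into two classes. The loops that repeat a vertical edge are handled by a supplementary lemma extending Lemma~\ref{lem:GijINR}: the skein relation ``pops the spike'' at the repeated edge, strictly decreasing the number of traversed vertical segments counted with multiplicity, so by induction such a loop is a polynomial in the generators $(ij)$ and the $4$-letter loops $(ijkj),(ijik),(ikjk)$ of Lemma~\ref{lem:GijINR}, hence lies in $\mathbf{R}_n\subseteq\Rcc_n$. The loops that are genuine $4$-loops on a different quadruple must be disposed of by choosing the induction order on $J(n,4)$, for instance by $i_1+i_2+i_3+i_4$ and then lexicographically, so that every such quadruple is strictly earlier and already treated. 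Verifying that these two reductions interact correctly — in particular that spike-popping really terminates in $\mathbf{R}_n$ and never regenerates a $4$-loop on a later quadruple — is the delicate point; it is equivalent to the clean statement that the $\mathbb{Z}_2$-anti-invariant part $\mathcal{O}^{-}$ of $\mathcal{O}[\mathcal{T}_{g,s}]$ is a principal $\mathbf{R}_n$-module generated by $w$, which one could alternatively try to establish directly from the structure of the $\mathbb{Z}_2$-fixed locus in $\mathcal{T}_{g,s}$.
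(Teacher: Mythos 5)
Your overall strategy (move from $(\widehat{1}234)$ to an arbitrary quadruple by pairing with two-loops $(xy)\in\mathbf{R}_n$ under the bracket $\langle f,g\rangle=\tfrac12 fg+\{f,g\}$, then use Corollary~\ref{cor:1234INR} for the underhat version) is the same as the paper's, but the execution has a genuine gap at exactly the step that carries all the weight. The paper's proof rests on the exact identity $\langle(\widehat{i_1}i_2i_3i_4),(i_4k_4)\rangle=(\widehat{i_1}i_2i_3k_4)$ for $k_4>i_4$, together with its analogues for replacing $i_3$ by $k_3\in(i_3,i_4)$, $i_2$ by $k_2\in(i_2,i_3)$, and $i_1$ by $k_1<i_1$: for these \emph{restricted} one-index moves the two simple loops meet in exactly one point and the positive smoothing is again a simple four-loop, so there are no correction terms whatsoever, and connectivity to $(1,2,3,4)$ through such moves is immediate. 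You instead allow arbitrary Johnson-graph swaps $Q\mapsto(Q\setminus\{x\})\cup\{y\}$, for which the loop $(\widehat{Q})$ and the two-loop $(xy)$ need not intersect in a single point, and you assert that the resolution produces the new four-loop \emph{together with} geodesic functions of loops traversing an edge twice --- which contradicts your own (correct) statement that $\langle\cdot,\cdot\rangle$ of two loops meeting once is a single geodesic function, and which creates the bookkeeping problem (spike-popping, choice of induction order, non-regeneration of later quadruples) that you then explicitly leave unresolved, conceding that the delicate point is equivalent to a structural statement about the anti-invariant part of $\mathcal{O}[\mathcal{T}_{g,s}]$ that you do not prove. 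As written, the induction does not close; the fix is simply to restrict to the single-index moves above, where the identity is clean and no remainder appears.

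A secondary problem is circularity: you invoke Theorem~\ref{thm:quadraticextension} ($\mathbf{R}_n\cong\mathcal{O}[\mathcal{T}_{g,s}]^{\mathbb{Z}_2}$), but in the paper that theorem is proved from Theorem~\ref{thm:squareroot}, which depends on Lemma~\ref{lem:2bnddINR} and hence on the present lemma (and on Lemma~\ref{INR2}); it is not available here. The weaker Lemma~\ref{lem:Z2-invariant} ($\mathbf{R}_n\subseteq\mathcal{O}[\mathcal{T}_{g,s}]^{\mathbb{Z}_2}$) would suffice to see that an anti-invariant element of $\Rcc_n=\mathbf{R}_n\oplus\mathbf{R}_n w$ lies in $\mathbf{R}_n\,w$, but this whole module-theoretic layer does no real work: since $\sigma_1^Q=(\widehat{Q})+(\uwidehat{Q})\in\mathbf{R}_n$ by Lemma~\ref{lem:GijklINSqrtR}, the equivalence $w_Q\in\Rcc_n\iff(\widehat{Q})\in\Rcc_n$ is immediate, so your reduction to $w_Q\in\mathbf{R}_n\,w$ is a reformulation of the statement rather than progress toward it. Dropping that layer and replacing the Johnson-graph swaps by the paper's restricted moves yields the intended short argument.
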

\begin{proof} For $k_4>i_4$, $\langle G(\widehat{i_1}i_2 i_3 i_4),G(i_4 k_4)\rangle=G(\widehat{i_1}i_2 i_3 k_4)$. Since $G(i_4 k_4)\in\mathbf{R}_n$ we obtain that  $G(\widehat{i_1}i_2 i_3 i_4)$ and $G(\widehat{i_1}i_2 i_3 k_4)$ both simultaneously belong or not belong to $\Rcc_n$. Similar statement we claim for $G(\widehat{i_1}i_2 i_3 i_4)$ and $G(\widehat{i_1}i_2 k_3 i_4)$ if $i_3<k_3<i_4$ ;
 $G(\widehat{i_1}i_2 i_3 i_4)$ and $G(\widehat{i_1}k_2 i_3 i_4)$ if $i_2<k_2<i_3$ ;
and 
 $G(\widehat{i_1}i_2 i_3 i_4)$ and $G(\widehat{k_1}i_2 i_3 i_4)$ if $k_1<i_1$. Combining all these statements above with the fact that $G(\widehat{1}234)\in\Rcc_n$, we conclude that
   $G(\widehat{i_1}i_2 i_3 i_4)\in\Rcc_n$. By Corollary~\ref{cor:1234INR}, we see also that    $G(\uwidehat{i_1}i_2 i_3 i_4)\in\Rcc_n$.
\end{proof}

\begin{lemma}\label{INR2} The following inclusions hold
\begin{enumerate}
\item For all $1\le i_1<i_2<i_3<i_4<i_5\le n$,  $G(\widehat{i_1}i_3 i_5 i_4 i_3 i_2)\in \Rcc_n$. 
\item For all $1\le i_1<i_2<i_3<i_4<i_5<i_6\le n$,  $G(\uwidehat{i_1}i_2 i_3 i_4 i_5 i_6)\in \Rcc_n$. Similarly, $G(\widehat{i_1} i_2 i_3 i_4 i_5 i_6)\in\Rcc_n$.
\item For all $1\le i_1<i_2<i_3<i_4<i_5<i_6\le n$,  $G(\uwidehat{i_1}i_2 i_3 i_6 i_5 i_4)\in \Rcc_n$. Equivalently, $G(\widehat{i_1} i_6 i_5 i_4 i_3 i_2)\in\Rcc_n$.
\end{enumerate}  
\end{lemma}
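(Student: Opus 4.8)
The plan is to prove all three inclusions by the bootstrapping device already used for Lemmas~\ref{lem:GijINR} and~\ref{INR}: exhibit each target geodesic function as an iterated bracket $\langle f,g\rangle:=\tfrac12 fg+\{f,g\}$ of functions already known to lie in $\Rcc_n$. Since $\Rcc_n=\mathbf{R}_n[(\widehat1234)]$ is a Poisson subalgebra closed under multiplication, $\langle f,g\rangle\in\Rcc_n$ whenever $f,g\in\Rcc_n$; and whenever two simple loops $\alpha,\beta$ on the twisted ribbon graph $\widetilde\Gamma_n$ meet transversally at a single point, the skein relation gives $\langle G_\alpha,G_\beta\rangle=G_\gamma$ with $\gamma$ the connected, orientation-compatible smoothing of $\alpha\cup\beta$. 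The available building blocks are the two-letter loops $(jk)$ and the four-letter three-index loops $(jklk)$ etc., all in $\mathbf{R}_n$ by Lemma~\ref{lem:GijINR}, together with all monotone four-letter loops $(\widehat{j_1}j_2j_3j_4)$ and $(\uwidehat{j_1}j_2j_3j_4)$, all in $\Rcc_n$ by Lemma~\ref{INR}.

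I would handle part~(2) first. The crux is the single word $(\widehat1\,2\,3\,4\,5\,6)$: I would write it as a finite chain of single-crossing brackets built up from $(\widehat1234)$, gluing on two further vertical strands (via loops from $\mathbf{R}_n$) and checking at each step that the gluing locus is one transversal crossing. Once $(\widehat1\,2\,3\,4\,5\,6)\in\Rcc_n$, its $\mathbb{Z}_2$-conjugate $(\uwidehat1\,2\,3\,4\,5\,6)$ also lies in $\Rcc_n$, because the sum of the two equals a symmetric function that lies in $\mathbf{R}_n$ by a $6\times6$-matrix computation exactly as in Lemma~\ref{lem:GijklINSqrtR} (equivalently, by $\mathbb{Z}_2$-invariance together with Theorem~\ref{thm:quadraticextension}). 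From these two anchors, all $(\widehat{i_1}\dots i_6)$ and $(\uwidehat{i_1}\dots i_6)$ are reached by the index-sliding moves $\langle\,\cdot\,,(jk)\rangle$, one index at a time, in the same order and with the same monotonicity constraints as in the proof of Lemma~\ref{INR}: each move replaces one vertical strand by a neighbouring one and preserves simplicity, so membership in $\Rcc_n$ propagates.

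Parts~(1) and~(3) would then be deduced from the monotone six-letter words (and the four-letter words), again by brackets against short $\mathbf{R}_n$-loops, now chosen so as to fold or reverse a tail of the index sequence. For~(3), passing from $(\uwidehat{i_1}i_2i_3i_4i_5i_6)$ to $(\uwidehat{i_1}i_2i_3i_6i_5i_4)$ amounts to reversing the last three strands, which should be achieved by a bounded sequence of moves $\langle\,\cdot\,,(jk)\rangle$ and $\langle\,\cdot\,,(jklk)\rangle$; for~(1), the geodesic function of $(\widehat{i_1}i_3i_5i_4i_3i_2)$ equals that of the almost-monotone word $(\uwidehat{i_1}i_2i_3i_4i_5i_3)$ obtained by reversing orientation, which is reached from a monotone six-letter word by collapsing the last index onto $i_3$ through a short chain of single-crossing brackets. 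At every step one must verify that the sequence produced is the cyclic word of a genuine simple loop in $\widetilde\Gamma_n$ and that it meets the auxiliary loop in exactly one transversal point.

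The main obstacle is precisely this verification of the geometric intersection pattern. Everything hinges on choosing the auxiliary short loops so that \emph{every} intermediate bracket is a single-crossing gluing --- and hence returns a single resolved loop --- rather than a multi-crossing product whose several skein summands would each have to be controlled separately; once a correct sequence of one-crossing moves is in hand, closure of $\Rcc_n$ under Poisson bracket and multiplication finishes the argument. A secondary point, already visible in the four-letter case, is that one genuinely needs the square extension $\Rcc_n=\mathbf{R}_n[(\widehat1234)]$ and not $\mathbf{R}_n$ alone, so the base case of each family must be anchored to $(\widehat1234)$.
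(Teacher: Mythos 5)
Your toolkit is the right one (skein relations, the bracket $\langle f,g\rangle=\tfrac12 fg+\{f,g\}$, bootstrapping from Lemmata~\ref{lem:GijINR} and~\ref{INR}), but the proposal stalls exactly at the point you yourself flag as ``the main obstacle'', and for the monotone six-letter words that obstacle cannot be removed in the form you propose. If the smoothing of one transversal crossing of $\alpha\cup\beta$ is to be $(\uwidehat{i_1}i_2 i_3 i_4 i_5 i_6)$, in which every strand occurs exactly once, then $\alpha$ and $\beta$ must use disjoint strand sets partitioning $\{i_1,\dots,i_6\}$: gluing at a shared strand, as in $\langle(i_1i_2),(i_2i_3)\rangle$ or the sliding moves in the proof of Lemma~\ref{INR}, always drops or doubles the shared strand in each resolution, so it never outputs a word containing each index exactly once while enlarging the strand set. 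But the available pairs with disjoint strand sets, e.g.\ $(\uwidehat{i_1}i_2 i_4 i_6)$ with $(i_3 i_5)$, or $(\uwidehat{i_1}i_2 i_3 i_6)$ with $(i_4 i_5)$, meet in two points, so a single bracket $\langle f,g\rangle$ does not return one loop and one must control all skein summands simultaneously. This is exactly what the paper's proof does: part (2) is the explicit identity $(\uwidehat{i_1} i_2 i_3 i_4 i_5 i_6)=-2 (\uwidehat{i_1} i_2 i_4 i_6)\cdot(i_3 i_5)+\frac{1}{2}(\uwidehat{i_1} i_2 i_3 i_6)\cdot(i_4 i_5)+\frac{1}{2}(\uwidehat{i_1} i_2 i_5 i_6)\cdot(i_3 i_4)+\{(i_3 i_5),(\uwidehat{i_1} i_2 i_4 i_6)\}$, a two-crossing computation in which several resolutions cancel, not a chain of one-crossing gluings. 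Your plan defers precisely this verification (``exactly one transversal crossing at every step''), and for these words it would fail; hence the core step of the argument is missing.

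The same applies to (1) and (3), where moreover your logical order is not the one the identities support. The paper proves (1) directly from four-letter words via $(\widehat{i_1} i_3 i_5 i_4 i_3 i_2)=2(\widehat{i_1} i_4 i_3 i_2)\cdot(i_3 i_5)-2(\widehat{i_1} i_5 i_3 i_2)\cdot(i_3 i_4)+(\widehat{i_1} i_5 i_4 i_2)$, where the unwanted resolution $(\widehat{i_1} i_5 i_3 i_4 i_3 i_2)$ cancels between the two two-crossing products, and proves (3) from (2) by a four-term identity expressing $(\uwidehat{i_1}i_2 i_3 i_6 i_5 i_4)$ through $(\uwidehat{i_1}i_2 i_3 i_4 i_5 i_6)$ and the products $(\uwidehat{i_1}i_2 i_3 i_4)\cdot(i_5 i_6)$, $(\uwidehat{i_1}i_2 i_3 i_6)\cdot(i_4 i_5)$, $(\uwidehat{i_1}i_2 i_3 i_5)\cdot(i_4 i_6)$. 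Your reduction of (1) to a monotone six-letter word by ``collapsing the last index onto $i_3$'' is asserted, not constructed (the orientation-reversal rewriting of $(\widehat{i_1}i_3 i_5 i_4 i_3 i_2)$ as $(\uwidehat{i_1}i_2 i_3 i_4 i_5 i_3)$ is correct but is only a relabeling). Finally, be careful with the step from $(\widehat{1}23456)$ to $(\uwidehat{1}23456)$: invoking Theorem~\ref{thm:quadraticextension} there is circular, since that theorem is proved later using Theorem~\ref{thm:squareroot}, which rests on the present lemma; and the $6\times 6$ analogue of the Pfaffian argument of Lemma~\ref{lem:GijklINSqrtR} is nowhere established --- the paper instead handles the overhat/underhat cases by the same explicit two-crossing identity. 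In short, the closure argument for $\Rcc_n$ is fine, but the substance of the lemma consists of the explicit multi-crossing skein identities, and the proposal does not supply them.
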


\begin{proof}
$(1)$ Using the skein relations we see that $2G(\widehat{i_1} i_4 i_3 i_2)\cdot G(i_3 i_5)-2G(\widehat{i_1} i_5 i_3 i_2)\cdot G(i_3 i_4)
=G(\widehat{i_1} i_3 i_5 i_4 i_3 i_2)+G(\widehat{i_1} i_5 i_3 i_4 i_3 i_2)-G(\widehat{i_1} i_5 i_4 i_2)-G(\widehat{i_1} i_5 i_3 i_4 i_3 i_2)
=G(\widehat{i_1} i_3 i_5 i_4 i_3 i_2)-G(\widehat{i_1} i_5 i_4 i_2)$.

Therefore,  $G(\widehat{i_1} i_3 i_5 i_4 i_3 i_2)=2G(\widehat{i_1} i_4 i_3 i_2)\cdot G(i_3 i_5)-2G(\widehat{i_1} i_5 i_3 i_2)\cdot G(i_3 i_4)+G(\widehat{i_1} i_5 i_4 i_2)$.
It remains to notice that the rhs of the latest equality belongs to $\Rcc_n$ by Lemmata~\ref{lem:GijINR},\ref{lem:GijklINSqrtR}.

$(2)$ $G(\uwidehat{i_1} i_2 i_3 i_4 i_5 i_6)=-2 G(\uwidehat{i_1} i_2 i_4 i_6)\cdot G(i_3 i_5)+\frac{1}{2}G(\uwidehat{i_1} i_2 i_3 i_6)\cdot G(i_4 i_5)+\frac{1}{2} G(\uwidehat{i_1}i_2 i_5 i_6)\cdot G(i_3 i_4)+\{G(i_3 i_5), G(\uwidehat{i_1} i_2 i_4 i_6)\}$. By Lemmata \ref{lem:GijINR}--\ref{INR}, the rhs belongs to $\Rcc_n$. The second statement is proved similarly.

$(3)$ $G(\uwidehat{i_1}i_2 i_6 i_5 i_4)=G(\uwidehat{i_1} i_2 i_3 i_4)\cdot G(i_5 i_6)+G(\uwidehat{i_1} i_2 i_3 i_6)\cdot G(i_4 i_5)-G(\uwidehat{i_1}i_2 i_3 i_4 i_5 i_6)-
G(\uwidehat{i_1} i_2 i_3 i_5)\cdot G(i_4 i_6)$. The rhs is in $\Rcc_n$ by the Lemmata  \ref{lem:GijINR}--\ref{INR} above and Part (2).
\end{proof}

}

\begin{defn} We call the union of two disconnected horizontal intervals of $\widetilde\Gamma_n$ between vertical edge $i$ and $i+1$ {\bf Gate} $i$.
We say that a non-selfintersecting connected loop in $\widetilde\Gamma_n$ is {\bf horizontally $2$-bounded} if it travels every gate and every vertical edge at most twice: at most once in one direction and at most once in the opposite direction.

We say that a  non-selfintersecting connected loop in $\widetilde\Gamma_n$ is {\bf $2$-bounded} if it travels every edge (vertical and horizontal) at most twice: 
at most once in one direction and at most once in the opposite direction.
\end{defn}

\begin{lemma}~\label{lem:horizontally2bounded}
If $\gamma$ is a non-selfintersecting horizontally $2$-bounded loop in $\widetilde\Gamma_n$ then $G_\gamma\in \Rcc_n$.
\end{lemma}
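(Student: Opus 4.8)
The plan is to induct on the length $m$ of the cyclically ordered sequence $(j_1 j_2 \cdots j_m)$ that encodes $\gamma$ (recall $m$ is even, with overhats and underhats interlacing). A preliminary observation streamlines everything: since $\gamma$ is a closed loop, it crosses every gate an \emph{even} number of times, so ``horizontally $2$-bounded'' forces each gate to be crossed exactly $0$ or $2$ times; and since $\gamma$ is connected, the set of crossed gates is a contiguous block, so $\gamma$ spans a contiguous range of vertical edges. The induction will repeatedly replace $G_\gamma$ by $\langle G_{\gamma'}, G_{\gamma''}\rangle = \tfrac12 G_{\gamma'} G_{\gamma''} + \{G_{\gamma'}, G_{\gamma''}\}$ for strictly shorter horizontally $2$-bounded loops $\gamma', \gamma''$ that meet transversally in one point, exactly as in the proof of Lemma~\ref{lem:GijINR}; this stays inside $\Rcc_n$ because $\Rcc_n = \mathbf{R}_n[(\widehat{1}234)]$ is closed under multiplication and Poisson bracket (the latter since $\sigma_1, \sigma_2 \in \mathbf{R}_n$ by Lemma~\ref{lem:GijklINSqrtR} and $(\widehat{1}234) - (\uwidehat{1}234)$ squares into $\mathbf{R}_n$).

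For the base of the induction one checks the finitely many combinatorial types of horizontally $2$-bounded simple loops with $m \le 6$. Using cyclic reindexing, reversal of orientation, and the central $\mathbb{Z}_2$-symmetry — none of which changes $G_\gamma$ — a length-$4$ loop with a repeated index is brought to $(ijkj),(ijik)$ or $(ikjk)$, hence lies in $\mathbf{R}_n$ by Lemma~\ref{lem:GijINR}; a length-$4$ loop with four distinct indices is either $(\widehat{i_1}i_2 i_3 i_4)$, handled by Lemma~\ref{INR}, or (e.g.\ the type $(\widehat{1}\,2\,4\,3)$) it is the single-point resolution of $(i_1 i_3) \cup (i_2 i_4)$, so $G_\gamma = \langle G_{i_1 i_3}, G_{i_2 i_4}\rangle \in \mathbf{R}_n$ by Lemma~\ref{lem:GijINR} again. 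The length-$6$ types are dispatched either by Lemma~\ref{INR2} directly, or — for those carrying a repeated index — by a single skein relation turning $G_\gamma$ into a product of a $G_{ij}$ with a shorter horizontally $2$-bounded loop.

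For the inductive step, suppose $m \ge 8$. If some vertical edge $i$ is traversed twice by $\gamma$, write the cyclic word as $(A\, i\, B\, i)$ and close up $A\,i$ and $i\,B$ through the column of edge $i$ to obtain loops $\gamma'$ and $\gamma''$; these are simple, strictly shorter, still horizontally $2$-bounded (each gate's two crossings get distributed between them), and meet transversally at the single point on edge $i$, so the one-intersection skein identity gives $G_\gamma = \langle G_{\gamma'}, G_{\gamma''}\rangle$ for the appropriate reconnection and orientations, and we conclude by the induction hypothesis. If instead $j_1, \dots, j_m$ are all distinct, then horizontal $2$-boundedness forces the word, up to the symmetries above, to contain a long monotone run of consecutive indices, and the ``merge'' identities from the proof of Lemma~\ref{INR2}(2),(3) (replacing $i_3 i_4 i_5$ by $i_3 i_5$, etc.) rewrite $G_\gamma$ as an $\mathbf{R}_n$-linear combination, together with one Poisson bracket, of geodesic functions of strictly shorter horizontally $2$-bounded loops; induction again closes the case.

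The main obstacle is the combinatorial verification inside the inductive step: that the cut at a doubly-traversed edge (respectively the skein-merge in the all-distinct case) genuinely yields loops that are again \emph{simple} and \emph{horizontally $2$-bounded}, and that in the first case the two pieces intersect in \emph{exactly} one point so that the clean identity $G_\gamma = \langle G_{\gamma'}, G_{\gamma''}\rangle$ applies rather than a multi-term skein expansion with uncontrolled summands. This is elementary but requires careful bookkeeping of the overhat/underhat labels (to identify which of the two reconnections realizes $\gamma$) and of the behaviour of $\gamma$ near the two end-columns of its spanned block, where the ``turn-around'' strands sit; it is also here that one must be sure the all-distinct case cannot escape the monotone-run dichotomy. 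A secondary point, worth stating explicitly for completeness, is the closure of $\Rcc_n$ under the Poisson bracket noted in the first paragraph.
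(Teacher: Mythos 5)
Your plan follows the route the paper itself indicates (its proof of Lemma~\ref{lem:horizontally2bounded} is precisely the remark that the arguments of Lemmata~\ref{INR} and~\ref{INR2} are to be repeated for loops with a longer tail), and the half of your induction that applies when some vertical edge is traversed twice is sound and attractive: the two loops obtained by cutting and reconnecting inside that edge are simple, strictly shorter, still horizontally $2$-bounded, and meet in exactly one essential point, so $G_\gamma=\langle G_{\gamma'},G_{\gamma''}\rangle$ up to the order of the arguments, exactly in the style of the proof of Lemma~\ref{lem:skippedleg}.

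The genuine gap is the case of pairwise distinct vertical indices, which your cutting device cannot reach and which is where the content of the lemma lies; moreover the specific claims you make there are incorrect. In the base case, the loops $(i_1i_3)$ and $(i_2i_4)$ with $i_1<i_2<i_3<i_4$ intersect in \emph{two} points, one on the top chain and one on the bottom chain (correspondingly $\{G_{i_1i_3},G_{i_2i_4}\}$ is a difference of two quadratic terms, one per intersection point in Goldman's formula), so $\langle G_{i_1i_3},G_{i_2i_4}\rangle$ is not a single geodesic function and your dispatch of the type $(\widehat{1}\,2\,4\,3)$ fails; in fact that word does not encode a simple loop at all --- it is freely homotopic to a product of the disjoint loops $(1\,2)$ and $(3\,4)$ joined along an arc and carries an essential self-crossing --- which shows that the combinatorial control your induction needs (which alternating words with distinct indices are horizontally $2$-bounded \emph{and} represent simple loops) has not been carried out. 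That control is the crux: the gate condition forces the cyclic word to be unimodal, and simplicity further forces the chord systems of top runs and of bottom runs to be non-crossing; for longer words this class contains patterns that are not instances of the two shapes of Lemma~\ref{INR2} (already $(\widehat{1}\,2\,3\,8\,7\,6\,5\,4)$ for $n\ge 8$), and your inductive step for this case --- ``the word contains a long monotone run, apply the merge identities of Lemma~\ref{INR2}(2),(3)'' --- is asserted rather than proved: neither the dichotomy, nor the generalized identities, nor the verification that every loop appearing in them is again horizontally $2$-bounded is supplied, and you flag this yourself as the main obstacle. Until this part (exactly the ``longer tail'' extension the paper alludes to) is written out, the proof is incomplete.

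A smaller but real point: your parenthetical justification that $\Rcc_n$ is closed under the Poisson bracket does not suffice. Knowing $\sigma_1,\sigma_2\in\mathbf{R}_n$ and that $\bigl((\widehat{1}234)-(\uwidehat{1}234)\bigr)^2\in\mathbf{R}_n$ only lets you solve for $\{f,(\widehat{1}234)\}$ with the denominator $(\widehat{1}234)-(\uwidehat{1}234)$, which is not invertible in $\Rcc_n$. In the paper's lemmas this issue never arises because every bracket used has at least one argument in $\mathbf{R}_n$ and is evaluated explicitly through the skein resolutions; your induction should either restrict itself to brackets of that form or prove the closure statement separately.
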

\begin{proof} The proof is a repetition of the proofs of Lemmata \ref{INR},\ref{INR2} for loops with a longer tail.
\end{proof}

\begin{lemma}~\label{lem:skippedleg} Let $\gamma$ be a non-selfintersecting $2$-bounded loop that travels once left-to-right along the bottom horizontal legs in Gates $i-1$ and $i$ , once right-to-left along the top horizontal legs
in the same Gates and does not travel along the vertical edge $i$ (as shown in Fig~\ref{fig:skippedleg}). 
Denote by $\gamma_L$ (or, $\gamma_R$) the shortened left (or, right) part of $\gamma$ (as shown in Fig~\ref{fig:leftrightparts}). If both $G_{\gamma_L}$ and $G_{\gamma_R}$ belong to $\Rcc_n$ then $G_\gamma\in\Rcc_n$. 
\end{lemma}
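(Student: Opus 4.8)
The plan is to express $G_\gamma$ as an element of the subalgebra generated by $G_{\gamma_L}$, $G_{\gamma_R}$ and the short geodesic functions $(i-1\,i)$, $(i\,i+1)$, $\dots$, all of which lie in $\Rcc_n$, by applying the skein relation at the two points where $\gamma$ crosses over the skipped vertical edge $i$. More precisely, $\gamma$ is obtained from $\gamma_L$ and $\gamma_R$ by a ``connected sum'' operation across Gate $i-1$ and Gate $i$: near the omitted vertical edge $i$ the loop $\gamma$ runs left-to-right along the bottom leg and right-to-left along the top leg, so resolving $\gamma$ against the simple loop $(\,i\,)$ that goes straight up and down the vertical edge $i$ (if it exists in $\widetilde\Gamma_n$) via the Kauffman skein relation produces, as the two resolutions, a disjoint union $\gamma_L\sqcup\gamma_R$ and a single longer loop, plus correction terms with strictly fewer intersections.

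First I would set up notation: orient $\gamma$, record the cyclic word of vertical segments it traverses, and split it at the two passages adjacent to edge $i$ into the words for $\gamma_L$ and $\gamma_R$. Second, I would introduce the auxiliary loop(s) with which to perform the skein resolution --- these are the simple loops supported in a neighborhood of vertical edge $i$ together with the two adjacent gates, whose geodesic functions are products of $(j\,j{+}1)$'s and hence manifestly in $\Rcc_n$ by Lemma~\ref{lem:GijINR}. Third, I would apply the skein relation (and, if a Poisson-bracket identity is more convenient, the bilinear form $\langle\cdot,\cdot\rangle$ from the proof of Lemma~\ref{lem:GijINR}) to rewrite $G_{\gamma_L}\cdot G_{\gamma_R}$, or $\langle G_{\gamma_L}, G_{\gamma_R}\rangle$, as $G_\gamma$ plus a $\Rcc_n$-linear combination of geodesic functions of loops that are either shorter, horizontally $2$-bounded, or already handled by Lemmata~\ref{INR}, \ref{INR2}, \ref{lem:horizontally2bounded}. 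Solving that identity for $G_\gamma$ gives the claim. An induction on the combined length (number of vertical segments) of $\gamma_L$ and $\gamma_R$, with base cases supplied by Lemma~\ref{INR} and Lemma~\ref{INR2}, closes the argument for the correction terms.

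The main obstacle I anticipate is \textbf{bookkeeping of the resolution at Gate $i$}: because $\gamma$ traverses the bottom leg of Gate $i-1$ and Gate $i$ left-to-right and the corresponding top legs right-to-left while skipping the vertical edge $i$, there are several combinatorial configurations for how $\gamma_L$ and $\gamma_R$ reconnect (depending on the parity of overhats/underhats at the endpoints and on whether a genuine simple loop on edge $i$ exists in $\widetilde\Gamma_n$), and one must check in each case that the ``other'' resolution and the smoothing corrections are indeed strictly simpler loops or loops of the types already shown to lie in $\Rcc_n$. Verifying this case analysis --- essentially, that no resolution escapes the inductive hierarchy --- is the crux; once it is in place, the skein identity and Lemma~\ref{lem:GijINR} finish the proof exactly as in Lemmata~\ref{INR}--\ref{lem:horizontally2bounded}.
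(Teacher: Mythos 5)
Your proposal circles the right tool but misses the single observation that makes the paper's proof a one-liner. The paper simply notes that the loops $\gamma_L$ and $\gamma_R$ of Fig.~\ref{fig:leftrightparts} intersect in exactly one point (on the vertical edge adjacent to the skipped edge $i$), and that the resolution of that unique crossing compatible with the orientations is precisely $\gamma$; hence, exactly as in the proof of Lemma~\ref{lem:GijINR}, $\langle G_{\gamma_L},G_{\gamma_R}\rangle=\tfrac12\,G_{\gamma_L}G_{\gamma_R}+\{G_{\gamma_L},G_{\gamma_R}\}=G_\gamma$ on the nose. Since $\Rcc_n$ is closed under products and Poisson brackets, $G_\gamma\in\Rcc_n$ follows immediately: no correction terms, no induction on length, no case analysis. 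You do mention computing $\langle G_{\gamma_L},G_{\gamma_R}\rangle$ as one option, but you never record the one-point-intersection fact that forces this bracket to equal $G_\gamma$ exactly, which is the entire content of the lemma.

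What you propose instead has genuine problems. The opening move, resolving $\gamma$ against ``the simple loop $(i)$ that goes straight up and down the vertical edge $i$,'' is not available: such a path is not a closed loop of $\widetilde\Gamma_n$ (your own parenthetical ``if it exists'' signals this), and the claim that one of its resolutions would be the disjoint union $\gamma_L\sqcup\gamma_R$ has the logic backwards relative to the paper, where $\gamma$ arises as a resolution of the crossing of $\gamma_L$ with $\gamma_R$, not the other way around. More importantly, your fallback plan hinges on the assertion that all ``correction terms'' are geodesic functions of loops that are shorter, horizontally $2$-bounded, or covered by Lemmata~\ref{INR}--\ref{lem:horizontally2bounded}; you yourself flag verifying this as the crux, and you give no argument for it. Since the hypotheses here are only that $G_{\gamma_L},G_{\gamma_R}\in\Rcc_n$ (the tails of $\gamma_L,\gamma_R$ can be long and complicated), an induction on ``combined length'' with base cases from Lemmata~\ref{INR} and~\ref{INR2} is not set up to close. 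So as written the proposal leaves its critical step unproved, whereas the intended argument needs none of this machinery.
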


\begin{proof} As in the proof of Lemma~\ref{lem:GijINR}, it is enough to notice that $\langle G_{\gamma_L},G_{\gamma_R}\rangle=G_\gamma$. 
\end{proof}

\begin{remark} Note a non-selfintersecting $2$-bounded loop that skips vertical leg $i$ and travels once along the top and the bottom horizontal legs of Gates $i$ and $i+1$ must travel along the top and bottom horizontal legs in the opposite directions, otherwise it can not be a closed loop.
\end{remark}

\begin{figure}[h]
    \centering
    \begin{minipage}[b]{.45\textwidth}
        \begin{tikzpicture}[scale=1.5]
        \def\ww {3}
        \def\rr {0.15}
        \coordinate (T0) at (0,3);
        \coordinate (T1) at (1,3);
        \coordinate (T2) at (2,3);
        \coordinate (T3) at (3,3);
        \coordinate (T4) at (4,3);
        \coordinate (T5) at (5,3);
        \coordinate (B0) at (0,0);
        \coordinate (B1) at (1,0);
        \coordinate (B2) at (2,0);
        \coordinate (B3) at (3,0);
        \coordinate (B4) at (4,0);
        \coordinate (B5) at (5,0);

            \draw[line width = 1pt, draw=black,double=white,double distance=2.8*\ww mm] (T3) --  (T1);
            \draw[line width = 1pt, draw=black,double=white,double distance=2.8*\ww mm]  (B3) -- (B1);

        
                \draw[line width=2.6*\ww mm, fill=white, white] ($(B1)+(0,0.9*\ww mm)$) --  ($(B1)+(0,1.1*\ww mm)$);            
            \draw[line width=2.6*\ww mm, fill=white, white] ($(T1)-(0,0.9*\ww mm)$) --  ($(T1)-(0,1.1*\ww mm)$);            
          \draw[line width=2.6*\ww mm, fill=white, white] ($(B2)+(0,0.9*\ww mm)$) --  ($(B2)+(0,1.1*\ww mm)$);            
            \draw[line width=2.6*\ww mm, fill=white, white] ($(T2)-(0,0.9*\ww mm)$) --  ($(T2)-(0,1.1*\ww mm)$);            
               \draw[line width=2.6*\ww mm, fill=white, white] ($(B3)+(0,0.9*\ww mm)$) --  ($(B3)+(0,1.1*\ww mm)$);            
            \draw[line width=2.6*\ww mm, fill=white, white] ($(T3)-(0,0.9*\ww mm)$) --  ($(T3)-(0,1.1*\ww mm)$);            
           \draw[line width=2.6*\ww mm, fill=white, white] ($(B4)+(0,0.9*\ww mm)$) --  ($(B4)+(0,1.1*\ww mm)$);            
            \draw[line width=2.6*\ww mm, fill=white, white] ($(T4)-(0,0.9*\ww mm)$) --  ($(T4)-(0,1.1*\ww mm)$);

           \draw[line width=2.6*\ww mm, fill=white, white] ($(T1)+(0,1.1*\ww mm)$) --  ($(B1)-(0,1.1*\ww mm)$);   
           \draw[line width=2.6*\ww mm, fill=white, white] ($(T3)+(0,1.1*\ww mm)$) --  ($(B3)-(0,1.1*\ww mm)$);

                    \draw[line width = 2pt, draw=red, -latex] (B1) -- (B3); 
                   \draw[line width = 2pt, draw=red, latex-] (T1) -- (T3); 


            \draw[fill=white, white] ($0.5*(B1)+0.5*(T1)$) circle (\rr);
            
             \draw[line width = 1pt, draw=black, rounded corners=8pt] ($(B2)+(\ww mm,\ww mm)$) -- ($(B2)+(\ww mm,4*\ww mm)$) --  ($(T2)-(\ww mm,4*\ww mm)$) -- ($(T2)-(\ww mm,\ww mm)$);
            \draw[fill=white, white] ($0.5*(B2)+0.5*(T2)$) circle (\rr);
           \draw[line width = 1pt, draw=black, rounded corners=8pt] ($(B2)+(-\ww mm,\ww mm)$) -- ($(B2)+(-\ww mm,4*\ww mm)$) --  ($(T2)-(-\ww mm,4*\ww mm)$) -- ($(T2)-(-\ww mm,\ww mm)$);
               
              
  \node[black] at ($0.5*(T1)+0.5*(B1)$) {Loop $\gamma$};      
    \node[black] at (T4) {$\Huge{\bf\dots}$};
       \node[black] at (T0) {$\Huge{\bf\dots}$};
          \node[black] at (B4) {$\Huge{\bf\dots}$};
             \node[black] at (B0) {$\Huge{\bf\dots}$};
         \end{tikzpicture}
    \end{minipage}
    \caption{The simple closed loop $\gamma$ (in red) skips the vertical leg $i$.}\label{fig:skippedleg}
\end{figure}

\begin{figure}[h]
    \centering
    \begin{minipage}[b]{.45\textwidth}
        \begin{tikzpicture}[scale=1.5]
        \def\ww {3}
        \def\rr {0.15}
        \coordinate (T0) at (0,3);
        \coordinate (T1) at (1,3);
        \coordinate (T2) at (2,3);
        \coordinate (T3) at (3,3);
        \coordinate (T4) at (4,3);
        \coordinate (T5) at (5,3);
        \coordinate (B0) at (0,0);
        \coordinate (B1) at (1,0);
        \coordinate (B2) at (2,0);
        \coordinate (B3) at (3,0);
        \coordinate (B4) at (4,0);
        \coordinate (B5) at (5,0);

            \draw[line width = 1pt, draw=black,double=white,double distance=2.8*\ww mm] (T3) --  (T1);
            \draw[line width = 1pt, draw=black,double=white,double distance=2.8*\ww mm]  (B3) -- (B1);

        
                \draw[line width=2.6*\ww mm, fill=white, white] ($(B1)+(0,0.9*\ww mm)$) --  ($(B1)+(0,1.1*\ww mm)$);            
            \draw[line width=2.6*\ww mm, fill=white, white] ($(T1)-(0,0.9*\ww mm)$) --  ($(T1)-(0,1.1*\ww mm)$);            
          \draw[line width=2.6*\ww mm, fill=white, white] ($(B2)+(0,0.9*\ww mm)$) --  ($(B2)+(0,1.1*\ww mm)$);            
            \draw[line width=2.6*\ww mm, fill=white, white] ($(T2)-(0,0.9*\ww mm)$) --  ($(T2)-(0,1.1*\ww mm)$);            
               \draw[line width=2.6*\ww mm, fill=white, white] ($(B3)+(0,0.9*\ww mm)$) --  ($(B3)+(0,1.1*\ww mm)$);            
            \draw[line width=2.6*\ww mm, fill=white, white] ($(T3)-(0,0.9*\ww mm)$) --  ($(T3)-(0,1.1*\ww mm)$);            
           \draw[line width=2.6*\ww mm, fill=white, white] ($(B4)+(0,0.9*\ww mm)$) --  ($(B4)+(0,1.1*\ww mm)$);            
            \draw[line width=2.6*\ww mm, fill=white, white] ($(T4)-(0,0.9*\ww mm)$) --  ($(T4)-(0,1.1*\ww mm)$);

           \draw[line width=2.6*\ww mm, fill=white, white] ($(T1)+(0,1.1*\ww mm)$) --  ($(B1)-(0,1.1*\ww mm)$);   
           \draw[line width=2.6*\ww mm, fill=white, white] ($(T3)+(0,1.1*\ww mm)$) --  ($(B3)-(0,1.1*\ww mm)$);

                  \draw[line width = 2pt, draw=red, loosely dashed, rounded corners=20pt,-latex] (B1) -- ($(B2)+(0.5*\ww mm,0)$) -- ($(T2)-(0.5*\ww mm,0)$) -- (T1) ; 
                  
                 \draw[line width = 2pt, draw=red, loosely dotted, rounded corners=20pt,latex-] (B3) -- ($(B2)-(0.5*\ww mm,0)$) -- ($(T2)+(0.5*\ww mm,0)$) -- (T3); 


            \draw[fill=white, white] ($0.5*(B1)+0.5*(T1)$) circle (\rr);
            
             \draw[line width = 1pt, draw=black, rounded corners=8pt] ($(B2)+(\ww mm,\ww mm)$) -- ($(B2)+(\ww mm,4*\ww mm)$) --  ($(T2)-(\ww mm,4*\ww mm)$) -- ($(T2)-(\ww mm,\ww mm)$);
            \draw[fill=white, white] ($0.5*(B2)+0.5*(T2)$) circle (\rr);
           \draw[line width = 1pt, draw=black, rounded corners=8pt] ($(B2)+(-\ww mm,\ww mm)$) -- ($(B2)+(-\ww mm,4*\ww mm)$) --  ($(T2)-(-\ww mm,4*\ww mm)$) -- ($(T2)-(-\ww mm,\ww mm)$);
               
              
 \node[black] at ($0.5*(T0)+0.5*(B0)$) {Left part $\gamma_L$};
  \node[black] at ($0.5*(T4)+0.5*(B4)$) {Right part $\gamma_R$}; 
    \node[black] at (T4) {$\Huge{\bf\dots}$};
       \node[black] at (T0) {$\Huge{\bf\dots}$};
          \node[black] at (B4) {$\Huge{\bf\dots}$};
             \node[black] at (B0) {$\Huge{\bf\dots}$};
         \end{tikzpicture}
    \end{minipage}
    \caption{The left part $\gamma_L$ is dashed, the right part $\gamma_R$ is dotted.}\label{fig:leftrightparts}
\end{figure}

\begin{defn} We call a connected non-selfintersecting $2$-bounded loop {\bf maximal} if it travels every edge exactly twice: once in each direction.
\end{defn}

{

\begin{lemma}\label{lem:maxloop} For all odd $n\ge 3$, a maximal simple $2$-bounded loop in $\widetilde\Gamma_n$ is either $(\widehat{1}\ 2\ 3\ \dots n-1\ n \ 1\ 2\dots n-1\ n )$ or  $(\uwidehat{1}\ 2\ 3\ \dots n-1\ n\ 1\ 2\dots n-1\ n )$
\end{lemma}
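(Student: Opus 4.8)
The plan is to turn ``maximal'' into a rigidity statement: once $\gamma$ is required to run along \emph{every} edge of $\widetilde\Gamma_n$ exactly twice (once in each direction), its combinatorial type is essentially forced, and what is left is to read off the resulting code from the explicit picture of $\widetilde\Gamma_n$.

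First I would record $\gamma$ by its cyclic word of vertical traversals. Since $\gamma$ runs along each of the $n$ vertical edges exactly once upward and once downward, and the top and bottom horizontal parts of $\widetilde\Gamma_n$ are joined only through the vertical edges, successive vertical traversals alternate direction; by the interlacing-hats convention of the text, $\gamma$ is then coded by a cyclic word $(\widehat{a_1}\,\uwidehat{a_2}\,\widehat{a_3}\cdots\uwidehat{a_{2n}})$ in which every label in $\{1,\dots,n\}$ appears exactly twice --- once at an odd position and once at an even position. Consecutive entries $\widehat{a_{2k-1}}\,\uwidehat{a_{2k}}$ are joined by an embedded arc of $\gamma$ in the top region from the top of vertical edge $a_{2k-1}$ to the top of vertical edge $a_{2k}$, and consecutive entries $\uwidehat{a_{2k}}\,\widehat{a_{2k+1}}$ by an embedded arc in the bottom region (cyclic indices mod $2n$).

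Next I would use simplicity and maximality together. Simplicity makes the $n$ arcs in the top region pairwise non-crossing, and likewise the $n$ arcs in the bottom region; maximality makes the top arcs cover every top horizontal edge exactly twice and the bottom arcs cover every bottom horizontal edge exactly twice. I would then prove the key combinatorial fact: for the ribbon structure of $\widetilde\Gamma_n$, a pairwise non-crossing system of arcs on the tops of the vertical edges which covers every top horizontal edge exactly twice is unique up to the evident symmetries, and it pairs up tops of vertical edges whose labels are \emph{consecutive}; the same for the bottom region. (Locally this is the statement that, at a trivalent vertex met by all six incident strand-ends, the only non-crossing pairing of those six cyclically ordered ends which makes no $U$-turn on any band is the one that turns consistently --- i.e. $\gamma$ behaves at every vertex like the boundary of a regular neighborhood of $\widetilde\Gamma_n$.) Combining the constraint coming from the top region with the one coming from the bottom region, and tracking the twist that permutes the bottom endpoints of the vertical edges (Fig.~\ref{fig:ribbontwist}), forces the word $a_1a_2\cdots a_{2n}$ to be $1\,2\,3\cdots n\,1\,2\cdots n$ up to cyclic rotation and up to the central $\mathbb Z_2$-symmetry of $\widetilde\Gamma_n$ (which fixes each vertical edge but exchanges its upward and downward directions, hence exchanges the two overhat/underhat phases). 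The two resulting codes are exactly $(\widehat 1\,2\,3\cdots n-1\,n\,1\,2\cdots n-1\,n)$ and $(\uwidehat 1\,2\,3\cdots n-1\,n\,1\,2\cdots n-1\,n)$, which is the assertion of the lemma.

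The main obstacle is the rigidity step: showing that the filling non-crossing arc system of the top region (equivalently, the consistent-turning behavior at every trivalent vertex, once one checks that a taut representative has no genuine backtracking) is unique up to the stated symmetries, and then carefully extracting its cyclic word from the twisted drawing --- this last being a bookkeeping exercise with the twist but the place where an error is easiest to make. A more hands-on alternative, in the spirit of Lemmas~\ref{lem:GijINR}--\ref{lem:skippedleg}, is to argue directly on the cyclic word: starting from the up-traversal of vertical edge $1$ and using the local picture of $\widetilde\Gamma_n$ near each vertical edge, show step by step that the only way to eventually use \emph{every} horizontal edge is to proceed $1,2,3,\dots,n$ and then repeat, which leaves exactly the two listed sequences.
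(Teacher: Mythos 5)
The paper gives no argument here beyond ``by inspection,'' so yours is in effect the only detailed proof on the table, and its overall strategy --- code the loop by its alternating vertical traversals, use embeddedness to get a non-crossing arc system in the top and in the bottom region, and conclude that the loop must follow the boundary of a regular neighbourhood of $\widetilde\Gamma_n$ --- is the right way to make the inspection precise. However, your ``key combinatorial fact'' is false as stated: a non-crossing arc system that uses each vertical stub once inward and once outward and covers every top horizontal edge exactly twice need \emph{not} pair consecutive verticals. A clean counterexample in the top region alone: take two parallel arcs running the whole length from the top of vertical $1$ to the top of vertical $n$, together with a small U-turn arc at the top of every intermediate vertical; every gate is covered exactly twice and nothing crosses. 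More generally, for any subset $S\ni 1,n$ one can run the ``consecutive'' pattern on $S$ and put U-turns at all stubs outside $S$. So the uniqueness you invoke holds only after U-turns (backtracking into a vertical band) have been excluded; your parenthetical local statement --- non-crossing plus no U-turn at a trivalent vertex forces consistent turning --- is correct, but it does not rule these configurations out, and the entire weight of the lemma rests on the backtracking exclusion that you defer to ``once one checks that a taut representative has no genuine backtracking.''

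That exclusion is where the real work sits, and how hard it is depends on how you read ``loop.'' If, as elsewhere in this section, loops are taken up to isotopy (so the code is that of a taut representative), then an excursion that goes up a vertical band and immediately back down is removable by an isotopy pushing it out of the band, contradicting maximality of a taut representative; with U-turns gone, your local rigidity applies at every trivalent vertex, the curve is a union of boundary components of the regular neighbourhood, and being connected and covering every edge twice it is the full boundary (possible only for odd $n$, consistent with the corollary that follows), whose code is exactly $(\widehat 1\,2\,3\cdots n\,1\,2\cdots n)$ or its orientation reversal --- then the only remaining care is the bookkeeping through the twists of Figure~\ref{fig:ribbontwist}. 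If instead arbitrary embedded representatives are allowed, the exclusion genuinely needs a global argument: at the level of abstract matchings a top configuration with U-turns \emph{can} compose with a bottom one to a single $2n$-strand cycle, so you must use the coupling of lane choices between top and bottom (via the twisted bands) and the double-coverage of the bottom gates to show no such configuration closes up into one embedded loop. Either way, promote the no-U-turn claim from an aside to an explicit lemma with proof; as written, that step is a gap.
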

\begin{proof} By inspection.
\end{proof}
}

\begin{cor} A maximal simple $2$-bounded loop exists only for odd $n$. For even $n$  both sequences of indices in Lemma~\ref{lem:maxloop} lead to two connected components.
\end{cor}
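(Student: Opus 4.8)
\emph{Proof plan.} The idea is to recognise the hypothetical maximal loop as (a piece of) the boundary of a ribbon neighbourhood of the spine $\widetilde\Gamma_n$ and then to count its connected components from the topology of the underlying surface.

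By Lemma~\ref{lem:maxloop} a maximal simple $2$-bounded loop, if one exists, must be one of the two explicit sequences $(\widehat 1\,2\,3\,\cdots\,n-1\,n\,1\,2\,\cdots\,n-1\,n)$ or $(\uwidehat{1}\,2\,3\,\cdots\,n-1\,n\,1\,2\,\cdots\,n-1\,n)$. First I would record the quick parity obstruction that already yields the ``only for odd $n$'' direction: in any sequence describing a simple loop in $\widetilde\Gamma_n$ the overhat and underhat indices interlace, so in a sequence of length $2n$ the $k$-th letter carries an overhat precisely when $k$ has one particular parity (fixed by the first letter). The vertical edge $i$ occurs in positions $i$ and $n+i$; if $n$ is even these positions have the same parity, so edge $i$ would be travelled twice in the \emph{same} direction, contradicting $2$-boundedness. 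Hence for even $n$ neither candidate sequence is an admissible $2$-bounded loop, so no maximal simple $2$-bounded loop exists.

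To get the sharper statement that both constructions split into exactly two connected pieces, I would argue topologically. By definition a maximal $2$-bounded (multi)curve runs along every edge of $\widetilde\Gamma_n$ exactly twice, and, being $2$-bounded, once in each direction. At every (trivalent) vertex of the spine $\widetilde\Gamma_n$ there is a unique way to pair these strands compatibly with the ribbon (cyclic) structure that creates neither a self-intersection nor a $U$-turn, namely the ``corner-turning'' pattern; consequently the maximal $2$-bounded multicurve is isotopic to the boundary $\partial\mathcal N(\widetilde\Gamma_n)$ of a regular (ribbon) neighbourhood of $\widetilde\Gamma_n$. Its number of connected components therefore equals the number of boundary components of the surface carrying $\widetilde\Gamma_n$ as a spine, which by Lemma~\ref{lem:twistedribbon} is $\Sigma_{\lfloor n/2\rfloor,\,par(n)}$; so the count is $par(n)$. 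For odd $n$ this is $1$: the maximal simple $2$-bounded loop exists and coincides with one of the sequences of Lemma~\ref{lem:maxloop}. For even $n$ it is $2$: no connected maximal simple $2$-bounded loop exists, and both constructions of Lemma~\ref{lem:maxloop}, once realised on $\widetilde\Gamma_n$, decompose into two connected loops.

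The step I expect to require the most care is the identification of the maximal $2$-bounded multicurve with $\partial\mathcal N(\widetilde\Gamma_n)$: this needs an explicit description of the vertices of $\widetilde\Gamma_n$ (in particular that all interior vertices are trivalent, so that the crossingless, $U$-turn-free resolution is forced), together with the verification that the combinatorial ``construction'' underlying Lemma~\ref{lem:maxloop} produces this whole multicurve rather than a proper subcollection of its strands. In keeping with the ``by inspection'' proof of Lemma~\ref{lem:maxloop}, an equivalent and more hands-on route is to trace the walk $\widehat 1\,\uwidehat{2}\,\widehat 3\,\cdots$ through the gates of $\widetilde\Gamma_n$ directly: for odd $n$ it returns to its starting strand only after $2n$ steps, having used every edge twice, whereas for even $n$ the first pass through the rungs $1,2,\dots,n$ already closes up after $n$ steps and the remaining edge-traversals assemble into a second, disjoint closed loop, exhibiting the two components explicitly.
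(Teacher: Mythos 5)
Your proposal is correct, and it is in fact more detailed than the paper, which offers no written proof of this corollary: it is treated as immediate from Lemma~\ref{lem:maxloop}, whose own justification is ``by inspection.'' Your parity argument is exactly the right way to make that inspection precise: since overhats and underhats interlace, edge $i$ sits at positions $i$ and $n+i$ of the length-$2n$ sequence, and for even $n$ these have the same parity, so the candidate sequences of Lemma~\ref{lem:maxloop} would traverse each vertical edge twice in the same direction, which is incompatible with a single $2$-bounded simple loop. Your second, topological route --- identifying the maximal $2$-bounded multicurve with the boundary of a ribbon neighbourhood of $\widetilde\Gamma_n$, so that its number of components equals the number of holes $par(n)$ --- is a genuinely different and more conceptual argument; it buys you, in one stroke, the existence of the maximal loop for odd $n$ and the two-component statement for even $n$, neither of which the parity count alone provides. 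Two small caveats. First, the ``forced corner-turning'' step needs the standing convention that loops are reduced (no immediate backtracking along an edge): at a trivalent vertex a $U$-turn pairing of the two strands of one edge is also crossingless and is not excluded by $2$-boundedness alone, so you should say explicitly that backtracking paths are discarded (as the paper implicitly does when it represents homotopy classes by non-repeating edge paths). Second, only the number of boundary components enters your count; note that for even $n$ the surface carried by $\Gamma_n$ has genus $n/2-1$ rather than $\lfloor n/2\rfloor$ (the paper's own cases $n=4\mapsto\mathcal{T}_{1,2}$ and $n=6\mapsto\mathcal{T}_{2,2}$ show this), but since $s=par(n)$ is all you use, this does not affect your argument.
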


\begin{lemma}\label{lem:maxloopINR} If $n$  is odd and $\gamma$ is the maximal loop then $G_\gamma\in\mathbf{R}_n$.
\end{lemma}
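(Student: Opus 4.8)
The plan is to combine the skein calculus used in Lemmata~\ref{INR}--\ref{lem:skippedleg} with the central $\mathbb{Z}_2$-symmetry, exploiting that $\Rcc_n=\mathbf R_n[(\widehat1234)]$ is a quadratic extension (Lemma~\ref{lem:GijklINSqrtR}). By Lemma~\ref{lem:maxloop} the maximal loop is one of $\gamma_+=(\widehat1\,2\,3\,\cdots\,n\,1\,2\,\cdots\,n)$ or $\gamma_-=(\uwidehat1\,2\,3\,\cdots\,n\,1\,2\,\cdots\,n)$. I would carry out three steps: (I) show $G_{\gamma_+}=G_{\gamma_-}=:G_\gamma$ and that $G_\gamma$ is $\mathbb{Z}_2$-invariant; (II) show $G_\gamma\in\Rcc_n$; (III) deduce $G_\gamma\in\mathbf R_n$ from the structure of the quadratic extension.

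For (I): the maximal loop is doubly wound, so its $PSL(2,\R)$-holonomy factors as $M_{\gamma_+}=AB$, where $A$ is the holonomy of the first traversal of legs $1,2,\dots,n$ and $B$ that of the second traversal; the other maximal loop is based at the midpoint along leg $1$, i.e.\ $M_{\gamma_-}=BA$. Hence $G_{\gamma_+}=\bigl|\tr(AB)\bigr|=\bigl|\tr(BA)\bigr|=G_{\gamma_-}$. Since the central symmetry interchanges up/down along every vertical leg, it sends a sequence $(\widehat{\,\cdot\,}\cdots)$ to the corresponding $(\uwidehat{\,\cdot\,}\cdots)$; thus $\mathbb{Z}_2(\gamma_+)=\gamma_-$, and combined with $G_{\gamma_+}=G_{\gamma_-}$ this shows $G_\gamma$ is a $\mathbb{Z}_2$-invariant element of $\mathcal O[\mathcal{T}_{g,s}]$.

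For (II): unlike the loops covered by Lemma~\ref{lem:horizontally2bounded}, the maximal loop traverses each gate four times, so it is not horizontally $2$-bounded and needs its own reduction. I would cut $\gamma$ at a self-intersection chosen so that one smoothing produces a simple loop that omits a single vertical leg (an instance of the configuration of Lemma~\ref{lem:skippedleg}, whose left and right parts are strictly shorter loops handled by Lemmata~\ref{lem:GijINR}--\ref{lem:horizontally2bounded}), while the other smoothing produces a strictly shorter simple $2$-bounded loop (or a disjoint union of such) to which the same procedure is applied recursively. The skein relation then expresses $G_\gamma$ as a half-integer polynomial in geodesic functions already known to lie in $\Rcc_n$, together with their Poisson brackets, which also stay in $\Rcc_n$; since each smoothing strictly decreases the number of traversed edges, the recursion terminates and gives $G_\gamma\in\Rcc_n$.

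For (III): write $G_\gamma=a+b\cdot(\widehat1234)$ with $a,b\in\mathbf R_n$, uniquely, using $\Rcc_n=\mathbf R_n\oplus\mathbf R_n\cdot(\widehat1234)$. The $\mathbb{Z}_2$-action fixes $\mathbf R_n$ pointwise (Lemma~\ref{lem:Z2-invariant}) and, as noted in Step~(I), sends $(\widehat1234)$ to $(\uwidehat1234)$; applying it to the displayed identity and using $\mathbb{Z}_2$-invariance of $G_\gamma$ yields $b\bigl((\widehat1234)-(\uwidehat1234)\bigr)=0$. Since $(\widehat1234)\neq(\uwidehat1234)$ (both lie outside $\mathbf R_n$ by Lemma~\ref{lem:GijklINSqrtR}) and $\mathcal O[\mathcal{T}_{g,s}]$ is an integral domain, $b=0$, so $G_\gamma=a\in\mathbf R_n$. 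The main obstacle is Step~(II): organizing the skein reduction of the doubly-wound loop so that every loop appearing is genuinely simpler and certified to lie in $\Rcc_n$ by the earlier lemmas — this is the book-keeping hidden behind phrases like ``by inspection'' and ``a repetition of the proofs''. A secondary point requiring care is the precise identification of the central symmetry on $\widetilde\Gamma_n$ (that it fixes each vertical leg while swapping its two ends), which underlies both the claim $\mathbb{Z}_2(\gamma_+)=\gamma_-$ and the claim $\mathbb{Z}_2((\widehat1234))=(\uwidehat1234)$; a slicker alternative, bypassing (II) entirely, would be to express $G_\gamma$ directly via the pairing $\langle\,\cdot\,,\,\cdot\,\rangle$ of geodesic functions of simple loops with repeated legs, all already in $\mathbf R_n$.
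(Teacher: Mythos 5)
Your Step~(II) — the only place where membership in $\Rcc_n$ (let alone $\mathbf R_n$) is actually supposed to be produced — does not work as described, and this is a genuine gap. The maximal loop is a \emph{simple} loop: it has no self-intersection to ``cut at'', so there is no skein smoothing of $\gamma$ itself of the kind you invoke; the crossing-resolution trick of Theorem~\ref{thm:squareroot} applies only to loops that run along some edge twice in the \emph{same} direction, whereas the maximal loop runs along every edge once in each direction. Likewise Lemma~\ref{lem:skippedleg} is not available, since the maximal loop skips no vertical leg, and Lemma~\ref{lem:horizontally2bounded} is excluded for the reason you yourself note. So the recursion you sketch has no well-defined first step, and the claim $G_\gamma\in\Rcc_n$ is not established. (Your Steps~(I) and~(III) are fine and non-circular: they use only Lemmata~\ref{lem:Z2-invariant}, \ref{lem:GijklINSqrtR} and Corollary~\ref{cor:1234INR}, which precede the statement; Step~(III) is essentially the Galois-descent argument the paper uses later for Theorem~\ref{thm:quadraticextension}. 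But without (II) they have nothing to descend.)

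The paper's proof takes the route you only gesture at in your closing sentence: it produces an explicit identity landing directly in $\mathbf R_n$, with no detour through $\Rcc_n$. One forms the product $G_{12}G_{23}\cdots G_{n-1,n}G_{1n}$ of elementary geodesic functions (all in $\mathbf R_n$ by Lemma~\ref{lem:GijINR}) and resolves it consecutively by skein relations; for odd $n$ the $\mathbb Z_2$-symmetry identifies the two windings $(\widehat1\,2\cdots n\,1\,2\cdots n)$ and $(\uwidehat1\,2\cdots n\,1\,2\cdots n)$, and the resolution assembles into the Markov-type identity
$G_\gamma=\mathcal M^{(n)}=G_{12}G_{23}\cdots G_{1n}-G^{(n-1)}+G^{(n-2)}-\dots+(-1)^nG^{(2)}+(-1)^{n+1}2(2-n)$,
where $G^{(k)}$ are the cyclic sums of products of the $G_{ij}$. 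Since the right-hand side is a polynomial in the $G_{ij}$, the conclusion $G_\gamma\in\mathbf R_n$ is immediate. To repair your argument you would either have to carry out this explicit skein computation (in which case Steps~(I) and~(III) become superfluous) or supply a genuinely different mechanism for Step~(II); as written, neither is done.
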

\begin{proof} 
Let  $\displaystyle G_{123\dots n}=G_{12} G_{23} G_{34}\dots G_{n-1,n} G_{1n}$,\quad 
$\displaystyle G^{(2)}=\sum_{1\le i<j\le n} G_{ij}^2$, \quad 
$\displaystyle G^{(3)}=\sum_{1\le i<j<k\le n} G_{ij}G_{jk}G_{ik}$, \newline 
$\displaystyle G^{(4)}=\sum_{1\le i<j<k<l\le n}  G_{ij}G_{jk}G_{kl}G_{il}$, etc.
Define the $n$th Markov function $\mathcal{M}^{(n)}$ on $\mathcal{T}_{g,s}$ as follows
$$\displaystyle \mathcal{M}^{(n)}=G_{123\dots n}-G^{(n-1)}+G^{(n-2)}-\dots+(-1)^n G^{(2)} +(-1)^{n+1} 2(2-n)\in\mathbf{R}_n.$$

For odd $n$ the $\mathbb{Z}_2$-symmetry reverses the orientation of the loop $(\widehat{1}\ 2\ 3\ \dots n-1\ n \ 1\ 2\dots n-1\ n )$, implying equality $G(\widehat{1}\ 2\ 3\ \dots n-1\ n \ 1\ 2\dots n-1\ n )=G(\uwidehat{1}\ 2\ 3\ \dots n-1\ n\ 1\ 2\dots n-1\ n )$. Repeatedly applying the skein relation to the product 
$G(1\ 2)(2\ 3)(3\ 4)\cdot\dots\cdot G(n-1\ n)(n\ 1)$ and all products in
$G^{(k)}, \ k=2,\dots,n-1$ we observe that $G(\widehat{1}\ 2\ 3\ \dots n-1\ n \ 1\ 2\dots n-1\ n )=\mathcal{M}^{(n)}$.

\end{proof}

\begin{lemma}\label{lem:2bnddINR}
For any simple $2$-bounded loops $\gamma$, its geodesic function $G_\gamma$ belongs to $\Rcc_n$.
\end{lemma}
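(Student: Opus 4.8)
The plan is to prove the statement by induction on a suitably chosen complexity $c(\gamma)$ of the loop $\gamma$ --- for definiteness one may take $c(\gamma)$ to record, lexicographically, the number of vertical edges of $\widetilde\Gamma_n$ in the support of $\gamma$, then the number of self-intersections of $\gamma$ on the surface, then the length of the cyclic word describing $\gamma$ --- arranged so that the two reductions below strictly decrease it. The base of the induction consists of the shortest loops, namely $(ij)$ and $(i_1i_2i_3i_4)$ (and the length-four loops $(ijkj),(ijik),(ikjk)$), which already belong to $\Rcc_n$ by Lemmata~\ref{lem:GijINR}--\ref{INR}. For the inductive step, two families of loops are handled at once: if $\gamma$ is horizontally $2$-bounded then $G_\gamma\in\Rcc_n$ by Lemma~\ref{lem:horizontally2bounded}, and if $\gamma$ is the maximal loop of Lemma~\ref{lem:maxloop} (which can only occur when $n$ is odd) then $G_\gamma\in\mathbf{R}_n\subseteq\Rcc_n$ by Lemma~\ref{lem:maxloopINR}. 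So it remains to treat a $2$-bounded $\gamma$ that is neither horizontally $2$-bounded nor maximal.

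Under this assumption, since $\gamma$ is $2$-bounded but not horizontally $2$-bounded, some Gate of $\widetilde\Gamma_n$ is traversed three or four times; as each of its two horizontal legs is traversed at most twice, at least one leg of that Gate is run once in each direction, so $\gamma$ folds back near it. I would then split into two sub-cases according to the local picture of $\gamma$ at the two trivalent vertices bounding the over-used Gate. In sub-case (i), $\gamma$ omits one of the two vertical edges bounding the over-used Gate; a short local check (using the Remark following Lemma~\ref{lem:skippedleg}) shows that $\gamma$ must then be exactly of the shape treated in Lemma~\ref{lem:skippedleg} --- it skips a vertical edge $i$ and travels once left-to-right along the bottom legs of Gates $i-1,i$ and once right-to-left along their top legs. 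Lemma~\ref{lem:skippedleg} gives $G_\gamma=\langle G_{\gamma_L},G_{\gamma_R}\rangle$, where $\gamma_L$ and $\gamma_R$ (Fig.~\ref{fig:leftrightparts}) are simple $2$-bounded loops whose supports involve strictly fewer vertical edges than $\gamma$, so $c(\gamma_L),c(\gamma_R)<c(\gamma)$; by the inductive hypothesis $G_{\gamma_L},G_{\gamma_R}\in\Rcc_n$, hence $G_\gamma\in\Rcc_n$. In sub-case (ii), $\gamma$ traverses both vertical edges bounding the over-used Gate; then $\gamma$ has a genuine self-intersection $p$ near that Gate, and resolving it by the skein relation writes $G_\gamma=\pm G_{\gamma_0}\pm G_{\gamma_1}G_{\gamma_2}$, where $\gamma_0$ is a simple $2$-bounded loop with one fewer self-intersection and $\gamma_1,\gamma_2$ are simple $2$-bounded loops shorter than $\gamma$; all three have smaller complexity, so by induction their geodesic functions lie in $\Rcc_n$, and since $\Rcc_n$ is closed under multiplication (Lemma~\ref{lem:GijklINSqrtR}) we conclude $G_\gamma\in\Rcc_n$.

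The step I expect to be the main obstacle is exactly this last reduction. One must verify, by an honest if elementary enumeration of how a reduced $2$-bounded loop can pass through the pair of trivalent vertices at the ends of an over-used Gate of $\widetilde\Gamma_n$, two things: first, that sub-cases (i) and (ii) together exhaust all remaining loops $\gamma$; and second --- the more delicate point --- that in sub-case (ii) the self-intersection $p$ can be chosen so that both skein resolutions $\gamma_0$ and $\gamma_1\sqcup\gamma_2$ are again $2$-bounded and of strictly smaller complexity, which is what makes the induction close. This analysis proceeds along the same lines as the skein computations in Lemmata~\ref{INR2} and~\ref{lem:horizontally2bounded}, only with more local configurations to track. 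The remaining points --- the base cases, and the verification that $\gamma_L,\gamma_R$ in sub-case (i) are $2$-bounded with smaller support --- are routine.
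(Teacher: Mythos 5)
Your skeleton---an induction whose ingredients are Lemmata~\ref{lem:horizontally2bounded}, \ref{lem:skippedleg} and \ref{lem:maxloopINR}---is the same as the paper's, whose proof of the lemma is exactly the assembly of those three statements. The genuine gap is your sub-case (ii). There you take a simple, $2$-bounded, non-maximal loop $\gamma$ that over-uses a gate and traverses both bounding vertical edges, and you propose to resolve ``a genuine self-intersection $p$'' by the skein relation. But $\gamma$ is simple by hypothesis, so it has no self-intersection to resolve; and the substitute device the paper uses for non-$2$-bounded loops---re-routing two parallel strands that run along an edge \emph{in the same direction} so as to create a crossing (proof of Theorem~\ref{thm:squareroot})---is unavailable here, precisely because $2$-boundedness means no edge is traversed twice in the same direction; indeed that argument is an induction on the total lane number whose base case \emph{is} the present lemma, so invoking it would be circular. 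What actually has to be shown in your sub-case (ii) is that such configurations either cannot occur for \emph{simple} $2$-bounded non-maximal loops or reduce to the skipped-leg situation. That this is a real topological statement is illustrated by the pattern $(\widehat{1}\,2\,3\,1\,2\,3)$: it is $2$-bounded, crosses Gates $1$ and $2$ four times each and uses both bounding verticals, but in the pair-of-pants neighbourhood of the sub-graph it runs along its free homotopy class is a commutator of the two square face loops, hence essential and null-homologous, so it admits no embedded representative---it is excluded by simplicity, not removed by a skein reduction.

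A second, smaller, problem sits in your sub-case (i): you assert that if one of the two verticals bounding the over-used gate is skipped then $\gamma$ ``must be exactly of the shape treated in Lemma~\ref{lem:skippedleg}''. The hypothesis of that lemma is that each of the two gates adjacent to the skipped vertical edge is crossed once along the bottom left-to-right and once along the top right-to-left, i.e.\ twice and in opposite directions, whereas your over-used gate is by assumption crossed at least three times, or twice in the same direction; the two configurations do not match, and the ``short local check'' you invoke is neither local nor supplied. In sum, the exhaustiveness of the trichotomy (horizontally $2$-bounded~/ skipped-leg~/ maximal) is the actual content of the lemma; your proposal defers exactly that point (you flag it yourself as the main obstacle), and the mechanism you offer for the leftover case contradicts the simplicity hypothesis it would need to exploit.
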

\begin{proof}
The claim follows from Lemmata~\ref{lem:horizontally2bounded},~\ref{lem:skippedleg}, and ~\ref{lem:maxloopINR}.
\end{proof}

\begin{remark}\label{rmrk:twice} Any non-selfintersecting non $2$-bounded loop in $\widetilde\Gamma_n$ has at least one edge that it travels twice in the same direction (segments $s_1\rightarrow t_1$ and $s_2\rightarrow t_2$ in Fig.~\ref{fig:twice} both run in the same direction along the ribbon edge $e$). Other segments of the same loop may run either between segments $s_1\to t_1$ and $s_2\to t_2$ or above or below in either direction, as the dashed one in Fig.~\ref{fig:twice}. Since both $s_1\to t_1$ and $s_2\to t_2$ are nonintersecting segments of the same simple loop $\gamma$, this loop follows from $s_1$ to $t_1$, then to $s_2$, then to $t_2$, and then returns to $s_1$.  
\end{remark}

\begin{figure}[h]
    \centering
    \begin{minipage}[b]{.45\textwidth}
        \begin{tikzpicture}[scale=1.5]
        \def\ww {3}
        \def\rr {0.15}
        \coordinate (T0) at (0,3);
        \coordinate (T1) at (1,3);
        \coordinate (T2) at (2,3);
        \coordinate (T3) at (3,3);
        \coordinate (T4) at (4,3);
        \coordinate (T5) at (5,3);
        \coordinate (B0) at (0,0);
        \coordinate (B1) at (1,0);
        \coordinate (B2) at (2,0);
        \coordinate (B3) at (3,0);
        \coordinate (B4) at (4,0);
        \coordinate (B5) at (5,0);

            \draw[line width = 1pt, draw=black,double=white,double distance=2.8*\ww mm] (T3) --  (T1);
           \draw[line width=2.6*\ww mm, fill=white, white] ($(T1)+(0,1.1*\ww mm)$) --  ($(T1)-(0,1.1*\ww mm)$);   
           \draw[line width=2.6*\ww mm, fill=white, white] ($(T3)+(0,1.1*\ww mm)$) --  ($(T3)-(0,1.1*\ww mm)$);

                  \draw[line width = 2pt, draw=red, rounded corners=20pt,-latex] ($(T1)+(0,0.5*\ww mm)$) --  ($(T3)+(0,0.5*\ww mm)$);  
                  \draw[line width = 2pt, draw=red, dashed, rounded corners=20pt,] ($(T1)+(0,0.0*\ww mm)$) --  ($(T3)+(0,0.0*\ww mm)$); 
                                   \draw[line width = 2pt, draw=red, rounded corners=20pt,-latex] ($(T1)-(0,0.5*\ww mm)$) --  ($(T3)-(0,0.5*\ww mm)$);  
                  
                  \node[color=black] at ($(T1)+(-1*\ww mm,0.5*\ww mm)$) {$s_1$};
                   \node[color=black] at ($(T1)+(-1*\ww mm,-0.5*\ww mm)$) {$s_2$};
                    \node[color=black] at ($(T3)+(1*\ww mm,0.5*\ww mm)$) {$t_1$};
                   \node[color=black] at ($(T3)+(1*\ww mm,-0.5*\ww mm)$) {$t_2$};
               
               \node[color=black] at ($0.5*(T1)+0.5*(T3)-(0,1.5*\ww mm)$) {$e$};

    \node[black] at (T4) {$\Huge{\bf\dots}$};
       \node[black] at (T0) {$\Huge{\bf\dots}$};
         \end{tikzpicture}
    \end{minipage}
    \caption{An edge traveled twice in the same direction  }\label{fig:twice}
\end{figure}

\begin{thm}\label{thm:squareroot} For any non-selfintersecting loop $\gamma$, its geodesic function $G_\gamma\in\Rcc_n$.
\end{thm}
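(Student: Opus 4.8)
The plan is to induct on the length $L(\gamma)$ of $\gamma$ --- the number of edges of $\widetilde\Gamma_n$ crossed by $\gamma$, counted with multiplicity --- working with a taut (geodesic) representative of the free homotopy class, which is legitimate since $G_\gamma$ depends only on that class. If $\gamma$ is $2$-bounded then $G_\gamma\in\Rcc_n$ by Lemma~\ref{lem:2bnddINR}; this handles in particular all $\gamma$ with $L(\gamma)\le 3$ (a non-$2$-bounded loop crosses some edge twice in the same direction and so has length at least $4$), which serves as the base of the induction.

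For the inductive step, suppose $\gamma$ is not $2$-bounded. By Remark~\ref{rmrk:twice} some edge $e$ is crossed by $\gamma$ at least twice; I would choose $e$, together with two strands $\alpha_1,\alpha_2$ of $\gamma$ through $e$ that are adjacent among all strands of $\gamma$ across $e$, to be \emph{innermost}, meaning that the portion of $\gamma$ trapped between $\alpha_1$ and $\alpha_2$ meets $e$ as little as possible. Writing $\gamma=\alpha_1\beta_1\alpha_2\beta_2$ as a cyclic word, the arcs $\beta_1,\beta_2$ each join the two endpoints of $e$, so closing them up appropriately produces two loops $\mu_1,\mu_2$: namely $\alpha_1\beta_1$ and $\alpha_2\beta_2$ if $\alpha_1,\alpha_2$ are co-oriented along $e$, or $\beta_1$ and $\beta_2$ themselves if oppositely oriented. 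Each $\mu_i$ is a sub-arc of the embedded curve $\gamma$, hence again simple, and since $L(\mu_1)+L(\mu_2)=L(\gamma)$ with each $L(\mu_i)\ge 2$ we get $L(\mu_i)<L(\gamma)$.

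The next step is to turn this decomposition into an identity of geodesic functions. Up to cyclic permutation $X_\gamma=X_eM_1X_e^{\pm1}M_2$, where $X_e$ is the matrix attached to traversing $e$ (so $\tr X_e=0$, $\det X_e=1$, hence $X_e^2=-I$ and $X_e^{-1}=-X_e$) and $M_i$ is the holonomy of $\beta_i$; combining $X_e^{-1}=-X_e$ with the $SL_2$ identity $\tr(PQ)+\tr(PQ^{-1})=\tr P\,\tr Q$ yields a relation of the form $\tr X_\gamma=\pm(\tr X_{\mu_1})(\tr X_{\mu_2})\mp\tr X_\delta$, where $\delta$ is the loop obtained from $\gamma$ by cutting both traversals of $e$ and re-gluing --- equivalently $\gamma$ with the innermost hairpin along $e$ removed. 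As $\mathcal T_{g,s}$ is connected and $|\tr X_\mu|=G_\mu\ge 2$ never vanishes, $\tr X_\mu=\varepsilon_\mu G_\mu$ with $\varepsilon_\mu\in\{\pm1\}$ constant, so this is an integer-coefficient polynomial identity expressing $G_\gamma$ through $G_{\mu_1},G_{\mu_2},G_\delta$; it may equivalently be written through the pairing $\langle f,g\rangle=\tfrac12 fg+\{f,g\}$, exactly in the spirit of Lemma~\ref{lem:GijINR}. By the induction hypothesis $G_{\mu_1},G_{\mu_2}\in\Rcc_n$. The innermost choice guarantees that the hairpin along $e$ bounds a disc, so $\delta$ is isotopic to a loop missing $e$ and therefore $L(\delta)<L(\gamma)$; resolving the self-intersections of $\delta$ by skein relations (which do not increase length) writes $G_\delta$ as an integer polynomial in geodesic functions of simple loops of length $<L(\gamma)$, all in $\Rcc_n$ by induction. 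Hence $G_\gamma\in\Rcc_n$.

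The step I expect to be the main obstacle is this last one: producing an innermost configuration and proving that the reconnected loop $\delta$ is strictly simpler. This is automatic when $\gamma$ meets $e$ only twice, but in general one must rule out that the chosen hairpin traps an essential sub-arc of $\gamma$, and it may be cleanest to induct not on $L(\gamma)$ but on $\sum_{e}\max\bigl(0,\,i(\gamma,e)-2\bigr)$, where $i(\gamma,e)$ is the geometric intersection number of $\gamma$ with the edge $e$ (or on this quantity together with $L(\gamma)$, ordered lexicographically), so that even a reconnected loop of the same length is still strictly simpler. Once the right complexity measure and the accompanying innermost/surgery argument are in place, the remaining ingredients --- the $SL_2$ trace identity and the appeal to Lemma~\ref{lem:2bnddINR} --- are routine.
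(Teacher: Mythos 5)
Your proposal is correct and follows essentially the same route as the paper: the base case is the $2$-bounded case (Lemma~\ref{lem:2bnddINR}), and the inductive step resolves a pair of same-direction traversals of an edge via the skein/trace identity $G_{\mu_1}G_{\mu_2}=G_\gamma+G_\delta$, with the "excess traversal" quantity you suggest at the end being essentially the paper's total lane number $\ell(\gamma)$. The innermost-strand worry you flag is not actually needed: any same-direction pair works, since the hairpin resolution strictly decreases the lane number (and the two split-off loops have smaller complexity), which is exactly how the paper closes the induction.
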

\begin{proof}  For each edge $e$ of the ribbon graph we define the \emph{lane number} of $e$ in $\gamma$ as 
$\ell_e(\gamma)=\max(0,r_{\overleftarrow{e}}(\gamma)-1)+ \max(0,r_{\overrightarrow{e}}(\gamma)-1)$ where
$r_{\overleftarrow{e}}(\gamma)$ is the number of times $\gamma$ travels $e$ in one direction while 
$r_{\overrightarrow{e}}(\gamma)$ is the number of times $\gamma$ travels $e$ in the opposite direction.
Define \emph{the total lane number} of $\gamma$ as $\ell(\gamma)=\sum_e \ell_e(\gamma)$ where the sum is over all edges of $\widetilde\Gamma_n$.

We prove the statement by induction on $\ell(\gamma)$.

Lemma~\ref{lem:2bnddINR} is the basis of induction because $\ell(\gamma)=0$ if and only if $\gamma$ is a $2$-bounded simple loop.

For the induction step, we consider $e$ as in Fig.~\ref{fig:twice}. Note that if we replace segments $s_1\to t_1$ and $s_2\to t_2$ by the crossed segments $s_1\to t_2$ and $s_2\to t_1$ as in Fig.~\ref{fig:crossing}, we obtain two intersecting loops $\gamma_1$ and $\gamma_2$ (not necessarily simple). Then we can use skein relation for the product $G_{\gamma_1}\cdot G_{\gamma_2}$ to obtain the sum of two geodesic functions of (possibly, nonsimple) loops: one is $\gamma$ and the second $\tilde\gamma$ whose total lane number is strictly smaller $\ell(\tilde\gamma)<\ell(\gamma)$. It remains to notice that $\ell(\gamma_1)<\ell(\gamma)$ and $\ell(\gamma_2)<\ell)\gamma)$ and also that resolving crossing with skein relation does not increase the total lane number. Therefore, we express $G_\gamma$ through $G_{\gamma_1}, G_{\gamma_2}$ and $G_{\tilde\gamma}$ with of the these geodesic functions in $\Rcc_n$ by induction assumptions. 

\begin{figure}[h]
    \centering
    \begin{minipage}[b]{.45\textwidth}
        \begin{tikzpicture}[scale=1.5]
        \def\ww {3}
        \def\rr {0.15}
        \coordinate (T0) at (0,3);
        \coordinate (T1) at (1,3);
        \coordinate (T2) at (2,3);
        \coordinate (T3) at (3,3);
        \coordinate (T4) at (4,3);
        \coordinate (T5) at (5,3);
        \coordinate (B0) at (0,0);
        \coordinate (B1) at (1,0);
        \coordinate (B2) at (2,0);
        \coordinate (B3) at (3,0);
        \coordinate (B4) at (4,0);
        \coordinate (B5) at (5,0);

            \draw[line width = 1pt, draw=black,double=white,double distance=2.8*\ww mm] (T3) --  (T1);
           \draw[line width=2.6*\ww mm, fill=white, white] ($(T1)+(0,1.1*\ww mm)$) --  ($(T1)-(0,1.1*\ww mm)$);   
           \draw[line width=2.6*\ww mm, fill=white, white] ($(T3)+(0,1.1*\ww mm)$) --  ($(T3)-(0,1.1*\ww mm)$);   
          
         \draw[line width = 2pt, draw=red, rounded corners=10pt,-latex] ($(T1)+(0,0.5*\ww mm)$) -- ($(T2)+(-0.3*\ww mm,0.5*\ww mm)$) -- ($(T2)+(1.5\ww mm,-0.5*\ww mm)$) -- ($(T3)+(0,-0.5*\ww mm)$); 
                  \draw[line width = 2pt, draw=red, rounded corners=10pt,-latex] ($(T1)+(0,-0.5*\ww mm)$) -- ($(T2)+(-1.5*\ww mm,-0.5*\ww mm)$) -- ($(T2)+(0.2*\ww mm,0.5*\ww mm)$) -- ($(T3)+(0,0.5*\ww mm)$);                 
                  \draw[line width = 2pt, draw=red, dashed, rounded corners=20pt,] ($(T1)+(0,-0.1*\ww mm)$) --  ($(T3)+(0,-0.1*\ww mm)$); 
                  
                  \node[color=black] at ($(T1)+(-1*\ww mm,0.5*\ww mm)$) {$s_1$};
                   \node[color=black] at ($(T1)+(-1*\ww mm,-0.5*\ww mm)$) {$s_2$};
                    \node[color=black] at ($(T3)+(1*\ww mm,0.5*\ww mm)$) {$t_1$};
                   \node[color=black] at ($(T3)+(1*\ww mm,-0.5*\ww mm)$) {$t_2$};
               
               \node[color=black] at ($0.5*(T1)+0.5*(T3)-(0,1.5*\ww mm)$) {$e$};

    \node[black] at (T4) {$\Huge{\bf\dots}$};
       \node[black] at (T0) {$\Huge{\bf\dots}$};
         \end{tikzpicture}
    \end{minipage}
    \caption{Crossing the edges travelled in the same direction }\label{fig:crossing}
\end{figure}

That completes the proof of the Theorem~\ref{thm:squareroot}.
\end{proof}

Now we are ready to prove Theorem~\ref{thm:quadraticextension}.
\begin{proof} (Theorem~\ref{thm:quadraticextension}) Theorem~\ref{thm:squareroot} and Remark~\ref{rmrk:nonselfintersectinggenerators} claim that the geodesic function of any  loop is in the quadratic extension $\Rcc_n$ of $\mathbf{R}_n$. Let $P$ be a quadratic polynomial in $\mathbf{R}_n[x]$ with roots $\rho_1$ and $\rho_2$ such that $\Rcc_n=\mathbf{R}_n[\rho_1]=\mathbf{R}_n[\rho_2]$. 
$\mathbb{Z}_2$-action permutes the roots $\rho_1$ and $\rho_2$. In view of Lemma~\ref{lem:GijklINSqrtR}, elementary symmetric functions of $\rho_1$ and $\rho_2$ belong to $\mathbf{R}_n$. It implies that $\mathbf{R}_n$ coincides with $\mathbb{Z}_2$-invariant subalgebra $\mathcal{O}[\mathcal{T}_{g,s}]^{\mathbb{Z}_2}$
\end{proof}

{
Since $\mathcal{T}_{g,s}$ is a union of isomorphic symplectic leaves and  by Theorem~\ref{thm:quadraticextension}, $\varphi$ is a $2-to-1$ Poisson map,  ${\rm Im}(\varphi)$ is a union of 
symplectic leaves of the same dimension in $\mathcal{A}_n$. Moreover, if for $s=2$  we consider a generic (not containing fixed points of ${\mathbb Z}_2$-action) symplectic leave of the Bondal Poisson structure in ${\mathcal A}_n$ it is isomorphic to the generic symplectic leaf of  $\mathcal{T}_{g,s}$.

\begin{definition} We call ${\rm Im}(\varphi)$ a \emph{geometric locus}.
    A \emph{geometric symplectic leaf} is a generic symplectic leaf in the geometric locus.
\end{definition}

}

We now consider low dimensional cases. For $n=3$, $\dim(\mathcal T_{1,1})=\dim({\mathcal A}_3)=3$,}
For $n=4$,  $\dim(\Tcc_{1,2})=\dim(\Acc_4)=6$. 

If $n=3$ or $n=4$ , since the dimensions coincide,  ${\rm Im}(\varphi)$ is a subset of $\mathcal{A}_n$ of the maximal dimension.
The dimension of a generic symplectic leaf in $\Tcc_{1,2}$ and in  $\Acc_4$ is $4$.

\begin{figure}[H]
    \begin{minipage}[b]{.45\textwidth}
        \begin{tikzpicture}[scale=1.5]
                \draw[line width = 1pt, draw=black,double=white,double distance=4mm, rounded corners=30pt] (0.87,0) -- (2,0) -- (0,3) -- (1.13,3) ;
                \draw[line width = 1pt, draw=black,double=white,double distance=4mm, rounded corners=30pt] (1,0.13) -- (1,2.87) ;
                \draw[line width = 1pt, draw=black,double=white,double distance=4mm, rounded corners=30pt] (0.87,0) -- (0,0) -- (2,3) -- (1.13,3) ;
                \draw[line width = 3.8mm, draw=white, fill=white]  (1.2,3)--(1,3);
                 \draw[line width = 3.8mm, draw=white, fill=white]  (1,2.5)--(1,3.);
                   \draw[line width = 3.8mm, draw=white, fill=white]  (1,-0.1)--(1,0.2);
                 \draw[line width = 3.8mm, draw=white, fill=white]  (0.8,0)--(1.2,0);

                \coordinate (X3) at (1.66,2.5);
                \coordinate (X4) at (1.0,2.5);
                \coordinate (X5) at (0.33,2.5);
                \node at (X3) {$3$};
                \node at (X4) {$2$};
                \node at (X5) {$1$};
            \end{tikzpicture} 
    \end{minipage}
\vskip -3cm
\hskip 8cm
    \begin{minipage}{.45\textwidth}
        \centering
        \label{M3}
        $
        \begin{bmatrix}
                1 & G_{12} & G_{13}\\
                0 & 1 & G_{23}\\
                0 & 0 & 1
            \end{bmatrix}
        $
    \end{minipage}
    \vskip 2cm
    \caption{$n=3$. Edge $k$ is assigned an exponential shear parameter $x_k$. See Fig.~\ref{fig:hol}.}
\label{fign3}
\end{figure}

\begin{figure}[h]
    \centering
    \begin{minipage}[b]{.45\textwidth}
        \begin{tikzpicture}[scale=1.5]
            \draw[line width = 1pt, draw=black,double=white,double distance=4mm, rounded corners=30pt] (2.1,3) -- (1,3);
            \draw[line width = 1pt, draw=black,double=white,double distance=4mm, rounded corners=30pt] (2,0) -- (0.9,0);
            \draw[line width = 1pt, draw=black,double=white,double distance=4mm, rounded corners=30pt] (1,3) -- (0,3) -- (3,0) -- (2,0);
            \draw[line width = 1pt, draw=black,double=white,double distance=4mm, rounded corners=30pt] (1,3) -- (2,0);
            \draw[line width = 1pt, draw=black,double=white,double distance=4mm, rounded corners=30pt] (2,3) -- (1,0);
            \draw[line width = 1pt, draw=black,double=white,double distance=4mm, rounded corners=30pt] (2.1,3) -- (3,3) -- (0,0) -- (0.9,0);
            \draw[line width = 4mm, draw=white, rounded corners=30pt] (2,3) -- (0.6,3);
            \draw[line width = 4mm, draw=white, rounded corners=30pt] (2.4,0) -- (1,0);
            \draw[line width = 3.8mm, draw=white, fill=white]  (1.8,3)--(2.2,3);

            \coordinate (X2) at (2.5,2.5);
            \coordinate (X3) at (1.84,2.5);
            \coordinate (X4) at (1.16,2.5);
            \coordinate (X5) at (0.5,2.5);
            \coordinate (Y2) at (1.5,3.);
            \coordinate (Y4) at (1.5,-0.);
            \node at (X2) {$4$};
            \node at (X3) {$3$};
            \node at (X4) {$2$};
            \node at (X5) {$1$};
            \node at (Y2) {$5$};
            \node at (Y4) {$6$};
        \end{tikzpicture}
    \end{minipage}
    \vskip -4cm
    \hskip 8cm
    \begin{minipage}[b]{.45\textwidth}
        \centering
        \label{M4}
        \(
        \begin{bmatrix}
                1 & G_{12} & G_{13} & G_{14}\\
                0 & 1 & G_{23} & G_{24}\\
                0 & 0 & 1 & G_{34}\\
                0 & 0 & 0 & 1
            \end{bmatrix}
        \)
    \end{minipage}
    \vskip 2cm
    \caption{$n=4$. Edge $k$ is assigned an exponential shear parameter $x_k$.}\label{fign4}
\end{figure}

{
For $n\geq 5$, $\dim(\Tcc_{g,s})<\dim(\Acc_n)$. For example, for $n=5$, $\dim(\Tcc_{2,1})=9$ and $\dim(\Acc_5)=10$. 
${\rm Im}(\varphi)$ is a subvariety of $\Acc_5$ of codimension 1. The dimensions of generic symplectic leaves in  $\Tcc_{2,1}$ and in $\Acc_5$ still coincide, and are equal to $8$.
}

For $n\ge 6$ the dimension of a generic symplectic leaf in $\Tcc_{g,s}$ is strictly less than of a generic symplectic leaf in $\Acc_n$.


\begin{figure}[h]
    \centering
    \begin{minipage}[b]{.45\textwidth}
        \begin{tikzpicture}[scale=1.5]
            \draw[line width = 1pt, draw=black,double=white,double distance=4mm, rounded corners=30pt] (2,3) -- (1,3);
            \draw[line width = 1pt, draw=black,double=white,double distance=4mm, rounded corners=30pt] (2,0) -- (0.92,0);
            \draw[line width = 1pt, draw=black,double=white,double distance=4mm, rounded corners=30pt] (2,3) -- (3.08,3);
            \draw[line width = 1pt, draw=black,double=white,double distance=4mm, rounded corners=30pt] (2,0) -- (3,0);
            \draw[line width = 1pt, draw=black,double=white,double distance=4mm, rounded corners=30pt] (1,3) -- (0,3) -- (4,0) -- (3,0);
            \draw[line width = 1pt, draw=black,double=white,double distance=4mm, rounded corners=30pt] (1,3) -- (3,0);
            \draw[line width = 1pt, draw=black,double=white,double distance=4mm, rounded corners=30pt] (2,3) -- (2,0);
            \draw[line width = 1pt, draw=black,double=white,double distance=4mm, rounded corners=30pt] (3,3) -- (1,0);
            \draw[line width = 1pt, draw=black,double=white,double distance=4mm, rounded corners=30pt] (3.08,3) -- (4,3) -- (0,0) -- (0.92,0);
            \draw[line width = 4mm, draw=white, rounded corners=30pt] (2,3) -- (0.85,3);
            \draw[line width = 4mm, draw=white, rounded corners=30pt] (2,0) -- (1,0);
            \draw[line width = 4mm, draw=white, rounded corners=30pt] (2,3) -- (3,3);
            \draw[line width = 4mm, draw=white, rounded corners=30pt] (2,0) -- (3.15,0);
            \draw[line width = 3.8mm, draw=white, fill=white]  (3,3)--(3.5,3);

            \coordinate (X1) at (3.34,2.5);
            \coordinate (X2) at (2.68,2.5);
            \coordinate (X3) at (2,2.5);
            \coordinate (X4) at (1.33,2.5);
            \coordinate (X5) at (0.66,2.5);
            \coordinate (Y1) at (2.5,3.);
            \coordinate (Y2) at (1.5,3.);
            \coordinate (Y3) at (2.5,-0.);
            \coordinate (Y4) at (1.5,-0.);
            \node at (X1) {$5$};
            \node at (X2) {$4$};
            \node at (X3) {$3$};
            \node at (X4) {$2$};
            \node at (X5) {$1$};
            \node at (Y1) {$7$};
            \node at (Y2) {$6$};
            \node at (Y3) {$9$};
            \node at (Y4) {$8$};
        \end{tikzpicture}
    \end{minipage}
\vskip -4.5cm
\hskip 10cm    
    \begin{minipage}[b]{.45\textwidth}
        \centering
        \label{M5}
        \(
        \begin{bmatrix}
                1 & G_{12} & G_{13} & G_{14} & G_{15}\\
                0 & 1 & G_{23} & G_{24} & G_{25}\\
                0 & 0 & 1 & G_{34} & G_{35}\\
                0 & 0 & 0 & 1 & G_{45}\\
                0 & 0 & 0 & 0 & 1\\                
            \end{bmatrix}
        \)
    \end{minipage}
    \vskip 2cm
    \caption{$n=5$. Edge $k$ is assigned an exponential shear parameter $x_k$.}
\label{fign5}
\end{figure}

{
\begin{remark} In the cases above the functions $G_{ij}$ are functions of the exponential shear parameters $x_k$ of the edges $k$ of the corresponding ribbon graphs~\ref{fign3},~\ref{fign4},~\ref{fign5}.
\end{remark}
}

\section{Description of Geometric Leaf: Rank Condition}\label{sec:rankcondition}

For $A=\begin{pmatrix}1&&&G_{ij}\\&1&&\\&&\ddots&\\0&&&1\end{pmatrix}\in \operatorname{Im}(\varphi)$, we have
$
A+A^T=\begin{pmatrix}2&&&G_{ij}\\&2&&\\&&\ddots&\\G_{ij}&&&2\end{pmatrix}.
$


{
Note that $\gamma_{ii}$ is the contractible loop. Hence $G_{ii}=2$ and we can replace $i$th diagonal element of $A+A^T$ by $G_{ii}$ for $1\leq i\leq n$. }

We write
\begin{align*}
A+A^T=\begin{pmatrix}G_{11}&&&G_{ij}\\&G_{22}&&\\&&\ddots&\\G_{ji}&&&G_{nn}\end{pmatrix}, \text{ where } G_{ji}=G_{ij}.
\end{align*}

Choose points $S_1$ and $S_2$ in a fat graph as shown in Figure~\ref{fig5}.

\begin{figure}[h]
    \centering
    \begin{minipage}[b]{.45\textwidth}
        \begin{tikzpicture}[scale=1.5]
            \draw[line width = 1pt, draw=black,double=white,double distance=4mm, rounded corners=30pt] (2.1,3) -- (1,3);
            \draw[line width = 1pt, draw=black,double=white,double distance=4mm, rounded corners=30pt] (2,0) -- (0.9,0);
            \draw[line width = 1pt, draw=black,double=white,double distance=4mm, rounded corners=30pt] (1,3) -- (0,3) -- (3,0) -- (2,0);
            \draw[line width = 1pt, draw=black,double=white,double distance=4mm, rounded corners=30pt] (1,3) -- (2,0);
            \draw[line width = 1pt, draw=black,double=white,double distance=4mm, rounded corners=30pt] (2,3) -- (1,0);
            \draw[line width = 1pt, draw=black,double=white,double distance=4mm, rounded corners=30pt] (2.1,3) -- (3,3) -- (0,0) -- (0.9,0);
            \draw[line width = 4mm, draw=white, rounded corners=30pt] (2,3) -- (0.6,3);
            \draw[line width = 4mm, draw=white, rounded corners=30pt] (2.4,0) -- (1,0);
            \draw[line width = 0.7mm, draw=blue, rounded corners=7pt] (0.80,2.97) -- (2,2.97) -- (1,0) -- (0.80,0.03);
            \draw[line width = 1pt, draw=black,double=white,double distance=4mm, rounded corners=30pt] (1,1) -- (2,2);
            
            \filldraw [black] (0.80,0.03) circle [radius=2pt]
                             (0.80, 2.97) circle [radius=2pt];
            \coordinate (A) at (0.5,3.3);
            \node at (A) {$S_1$};
            \coordinate (B) at (0.3,-0.3);
            \node at (B) {$S_2$};
        \end{tikzpicture}
    \end{minipage}
    \caption{Path corresponding to $F_3=L X(x_5) R  X(x_3) R$}\label{fig5}
\end{figure}

For any $i$, we introduce the holonomy operator $F_i\in SL_2(\mathbb{R})$ along the path connecting $S_1$ to $S_2$ and containing edge $i$ defined as a product of elementary matrices, as drawn in Figure~\ref{fig5}.

The monodromy $M_{\gamma_{ij}}=F_j^{-1}F_i$.


\begin{definition}
    Define a map from $SL_2\times SL_2$ to $\R$ by:
    \begin{align*}
        \langle A,B \rangle := \tr(B^{-1}A).
    \end{align*}
\end{definition}
{ Note that $SL_2$ is a subset of the linear space  of $2\times 2$ real matrices $\rm{Mat}_{2\times 2}\cong \mathbb{R}^4$ which linearly generates $\rm{Mat}_{2\times 2}$. Since $\det(B)=1$, the operator $B\mapsto B^{-1}$ is a restriction of a linear operator  
$\rm{Mat}_{2\times 2}\to \rm{Mat}_{2\times 2}$ implying that
 $\langle\cdot,\cdot\rangle$ allows an extension to a bilinear $\mathbb{R}$-valued form on $\rm{Mat}_{2\times 2}$. Abusing notation, we will denote the bilinear form also by the angle bracket $\langle\bullet,\bullet\rangle$.
 }
 
For $A, B, C \in SL_2$, $\alpha, \beta \in \R$, we have
\begin{itemize}
    \item $\langle A,B \rangle=\tr(B^{-1}A)=\tr(A^{-1}B)=\langle B, A \rangle$.
    \item $\langle \alpha A+\beta B, C\rangle= \tr(C^{-1}(\alpha A+\beta B))= \alpha \tr(C^{-1}A)+\beta \tr(C^{-1}B)=\alpha \langle A, C\rangle + \beta \langle B, C\rangle.$
    \item $\langle A, A \rangle = \tr(A^{-1}A) = \tr(\rm{Id})=2$.
\end{itemize}
Hence the bilinear form $\langle \cdot, \cdot \rangle$ restricted to $SL_2\times SL_2$ is a restriction of the inner product on $\rm{Mat}_{2\times 2}$.

The $(i,j)$-th entry of matrix $A+A^T$ is $\tr(F_j^{-1}F_i)=\langle F_i, F_j \rangle$, i.e. $A+A^T$ is the Gram matrix of inner products $\langle F_i, F_j \rangle$.

{
Recall that the Gram matrix of $n$ vectors ${\bf v}_1, \dots, {\bf v}_n$ in $m$-dimensional space $\R^m$ equipped with a bilinear form $\langle , \rangle$ is $H=\left(h_{ij}\right)_{i,j=1}^n$  with $h_{ij} = \langle {\bf v}_i, {\bf v}_j \rangle$.

The following classical statement is well known.

\begin{lemma}
${\rm rank}(H)\leq m$.
\end{lemma}
}

Since all $F_i$, $i=1,\dots,n$ are vectors in $M_{2\times 2}\cong \R^4$, by the previous lemma, $\rm{rank}(A+A^T)\leq 4$.

This proves the following conditions on matrix $A$ (see \cite{NelsonRegge1989},\cite{NelsonReggeZertuche1990}).
\begin{cor}\label{cor:rank}
   {\rm (Rank Condition)}\label{lem:rankcondition}
    If $A\in{\rm Im}(\varphi)$ then $\rm{rank}(A+A^T)\leq 4$. 
\end{cor}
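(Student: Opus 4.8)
The plan is to exhibit $A+A^{T}$ \emph{literally} as a Gram matrix of $n$ vectors living in a $4$-dimensional space, and then invoke the classical rank bound for Gram matrices stated just above the corollary.

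First I would unwind the definition of $\operatorname{Im}(\varphi)$: if $A\in\operatorname{Im}(\varphi)$ then $A=\varphi(\Sigma^{h})$ for some $\Sigma^{h}\in\mathcal{T}_{g,s}$, so the strictly upper-triangular entries of $A$ are the geodesic functions $G_{ij}(\Sigma^{h})$. Symmetrizing, the $(i,j)$-entry of $A+A^{T}$ for $i\neq j$ is $G_{ij}$, while the diagonal entry is $2$; using that the trivial loop $\gamma_{ii}$ has identity deck transformation and $|\operatorname{tr}(\pm\mathrm{Id})|=2$, I record the diagonal entries uniformly as $G_{ii}$, so that $A+A^{T}=(G_{ij})_{i,j=1}^{n}$ with the convention $G_{ji}=G_{ij}$.

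Next I would bring in the holonomy operators. For each edge $i$ of the ribbon graph I take the matrix $F_{i}\in SL_{2}(\mathbb{R})$ along the fixed path from $S_{1}$ to $S_{2}$ through edge $i$ (Figure~\ref{fig5}); the key identity is that the monodromy of $\gamma_{ij}$ is $M_{\gamma_{ij}}=F_{j}^{-1}F_{i}$, so that $G_{ij}=\langle F_{i},F_{j}\rangle$, where $\langle X,Y\rangle:=\operatorname{tr}(Y^{-1}X)$. I would then use the observation already verified in the excerpt that $\langle\cdot,\cdot\rangle$ is the restriction to $SL_{2}$ of a symmetric bilinear form on $\mathrm{Mat}_{2\times 2}\cong\mathbb{R}^{4}$: symmetry from $\operatorname{tr}(Y^{-1}X)=\operatorname{tr}(X^{-1}Y)$, bilinearity from the fact that $Y\mapsto Y^{-1}$ on $SL_{2}$ is the restriction of a linear endomorphism of $\mathrm{Mat}_{2\times 2}$ (Cramer's rule in dimension $2$), and $\langle X,X\rangle=2$. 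Consequently $A+A^{T}$ is precisely the Gram matrix of the $n$ vectors $F_{1},\dots,F_{n}\in\mathbb{R}^{4}$. Applying the classical lemma (the Gram matrix of $n$ vectors in an $m$-dimensional space has rank at most $m$, since its columns are the images of the $F_{i}$ under the linear map $v\mapsto(\langle F_{1},v\rangle,\dots,\langle F_{n},v\rangle)$, whose rank is at most $\dim\operatorname{span}\{F_{i}\}\le m$) with $m=4$ yields $\operatorname{rank}(A+A^{T})\le 4$.

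The only point requiring genuine care — and hence the main obstacle — is the passage from the a priori statement $G_{ij}=|\operatorname{tr} M_{\gamma_{ij}}|$ to the \emph{signed} equality $G_{ij}=\operatorname{tr}(F_{j}^{-1}F_{i})$: one must check that the path can be oriented and the lifts of the deck transformations chosen in $SL_{2}$ (rather than $PSL_{2}$) in a coherent way so that every relevant trace comes out with the correct sign, i.e. so that the geodesic functions are honestly the entries of the Gram matrix rather than merely their absolute values. Once this normalization is in place, the rest is routine $2\times 2$ bookkeeping.
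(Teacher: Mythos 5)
Your proposal follows essentially the same route as the paper: you identify $A+A^{T}$ as the Gram matrix of the holonomy operators $F_{1},\dots,F_{n}$ regarded as vectors in $\mathrm{Mat}_{2\times 2}\cong\mathbb{R}^{4}$ under the bilinear form $\langle X,Y\rangle=\operatorname{tr}(Y^{-1}X)$, and then invoke the classical rank bound for Gram matrices, exactly as in the text. Your caveat about choosing lifts in $SL_{2}$ coherently so that $G_{ij}=\operatorname{tr}(F_{j}^{-1}F_{i})$ holds with the correct sign (not merely up to absolute value) is a legitimate point of care that the paper passes over silently, but it does not alter the argument.
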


{
 \begin{conj} \label{con:rank} We conjecture that some positivity condition and rank condition are necessary and sufficient for $A$ being in a geometric locus. Results of this paper imply that  the conjecture holds for $n\le 6$. 
\end{conj}
}

\section{Solving the Rank Condition}\label{sec:solutionRank}

{The strategy of verifying Conjecture~\ref{con:rank} is as follows: we consider all possible solutions of the rank condition in the $\mathcal{A}_n$ quiver with cluster variables $(x_\alpha)$ transforming them, if necessary, using standard mutations to a new quiver with cluster variables $(\widetilde x_\beta)$. The rank condition becomes a system of equations $f_m(\widetilde x_\beta)=0$. Constraints $f_m$ are Lagrangian (of the first kind) because the rank condition is Hamiltonian. We then resolve the rank condition on the set of geodesic functions $G_{i,j}(\widetilde x_\beta)$ and show that these functions can be written in terms of new coordinates $(y_\gamma)$ (in all cases of nontrivial reduction, their number is strictly lesser than the number of $\widetilde x_\beta$), that is, $G_{i,j}(\widetilde x_\beta)=G_{i,j}\bigl(y_\gamma(\widetilde x_\beta)\bigr)$. The new variables $y_\gamma(\widetilde x_\beta)$ possess Poisson structure induced by that of $\widetilde x_\beta$. We win if Poisson relations for $y_\gamma$ constitute a geometric quiver; $G_{i,j}(y_\gamma)$ then become geodesic functions from Sec.~3.
}

\subsection{Expressing the rank condition in terms of transport matrices}
{
For a generic unipotent upper-triangular  matrix $A_n$ $\rm{rank}(A_n+A^T_n)=n$. If $\rm{corank}(A+A^T)\ge 1$ then $\det (A_n + A^T_n)= 0$. 
The following \emph{groupoid condition} was shown in \cite{10.1093/imrn/rnac101} 
\begin{equation}\label{eq:groupoidcond}
M_2=M_3 M_1,
\end{equation}
where $M_i=S\cdot \mathcal{M}_i\cdot D_i^{-1}$, $D_i$ is a nonzero constant,
$S=\begin{pmatrix}
    0 & \dots & 0 & 1 \\
    0 & \dots & -1 & 0 \\
     & \dots & \dots & \\
     0 & (-1)^{n-1} & \dots & 0 \\
    (-1)^n & 0 & \dots & 0
\end{pmatrix}$,\newline and $\mathcal{M}_i$ are the corresponding non-normalized matrices whose entries are polynomials in Fock--Goncharov parameters $Z_{ijk}$ (see Section~\ref{sec:logcanonicalgroupoid} for a more detailed description). 
}

{
Using that $A=M_1^T M_2$, we obtain \cite{chekhov2022roots}
\begin{align*}
    A &= M_1^TM_2= M_1^T M_3M_1 = (S\mathcal M_1)^T(S\mathcal M_3)(S \mathcal M_1)(D_1^2 D_3)^{-1}= \mathcal M_1^T (S^T S) \mathcal M_3 S \mathcal M_1(D_1^2 D_3)^{-1}\\
    &= \mathcal M_1^T \mathcal M_3 S \mathcal M_1 (-1)^{n-1}(D_1^2 D_3)^{-1}. \\
A + A^T &=\left( \mathcal M_1^T \mathcal M_3 S \mathcal M_1 + \mathcal M_1^T S^T \mathcal M_3 \mathcal M_1\right)\cdot (-1)^{n-1}(D_1^2 D_3)^{-1}\\
   & = \left(\mathcal M_1^T (\mathcal M_3 S + S^T \mathcal M_3^T) \mathcal M_1\right)\cdot (-1)^{n-1}(D_1^2 D_3)^{-1}.
\end{align*}
}

{
Therefore, $\det(A+A^T) = 0 \iff \det(\mathcal M_1)^2 \det(\mathcal M_3 S + S^T \mathcal M_3^T)(-1)^{n-1}(D_1^2 D_3)^{-1} = 0$. Moreover, since $\mathcal{M}_1$  is nondegenerate, and  $D_1$ and $D_3$ are nonzero,
\begin{equation}\label{eq:rank-AM}
\rank (A+A^T)=\rank(\mathcal M_3 S + S^T \mathcal M_3^T)
\end{equation}
Notice that because $\mathcal{M}_3 S$ is a lower anti-diagonal matrix (see, \cite{chekhov2022roots}), its transpose $S^T\mathcal{M}_3^T=(\mathcal{M}_3S)^T$ is also a lower anti-diagonal matrix, so the problem of solving the rank condition reduces to that of finding the rank of the lower anti-diagonal matrix
 \begin{equation}\label{eq:Psi}
 \Psi:=\mathcal M_3 S + S^T \mathcal M_3^T
\end{equation}
}





\subsection{$n = 3$ or $4$}\hfill\\
In these two cases, the Rank Condition~\ref{cor:rank}  holds automatically because $n\le 4$ 
{and quivers $\mathcal A_3$ and $\mathcal A_4$ are geometric quivers themselves.} 

\subsection{$n = 5$}\hfill\\

When $n=5$, the rank condition holds if and only if  $\det(A+A^T)=0$. 
{
By (\ref{eq:rank-AM}), this means that at least one of anti-diagonal terms of the matrix $\Psi$ must be zero. Let $(-1)^i m_{i,6-i}$, $i=1,\dots,5$ be anti-diagonal terms of $\mathcal M_3S$. We then have three cases (all cluster variables are for the $\mathcal A_5$ quiver in Fig.~\ref{quiverA5}).
\begin{enumerate}
\item[{\bf (0)}] $2m_{33}=0$. This case is empty because we assume that cluster variables never become zeros;
\item[{\bf (1)}] $m_{2,4}+m_{4,2}=m_{2,4}(1+C_1)=0$, where $C_1=\prod_{i=1}^5 a_i$. In this case, the rank condition is $C_1=-1$.
\item[{\bf (2)}] $m_{1,5}+m_{5,1}=m_{1,5}(1+C_1C_2)=0$, where $C_1$ is as above and $C_2=\prod_{i=1}^5 b_i$. In this case, the rank condition is $C_1C_2=-1$.
\end{enumerate}
}

A geometric cluster quiver for $\mathcal{T}_{2,1}$ can be obtained from the ribbon graph in Figure~\ref{fatgraph}. {
To simplify further formulas, we call the exponential shear parameters of horizontal and inclined edges in Fig.~\ref{fatgraph} by different letters.}
\begin{figure}[h]
    \centering
    \begin{tikzpicture}[scale=1.5]
        \draw[line width = 1pt, draw=black,double=white,double distance=4mm, rounded corners=30pt] (2,3) -- (1,3);
        \draw[line width = 1pt, draw=black,double=white,double distance=4mm, rounded corners=30pt] (2,0) -- (0.92,0);
        \draw[line width = 1pt, draw=black,double=white,double distance=4mm, rounded corners=30pt] (2,3) -- (3.08,3);
        \draw[line width = 1pt, draw=black,double=white,double distance=4mm, rounded corners=30pt] (2,0) -- (3,0);
        \draw[line width = 1pt, draw=black,double=white,double distance=4mm, rounded corners=30pt] (1,3) -- (0,3) -- (4,0) -- (3,0);
        \draw[line width = 1pt, draw=black,double=white,double distance=4mm, rounded corners=30pt] (1,3) -- (3,0);
        \draw[line width = 1pt, draw=black,double=white,double distance=4mm, rounded corners=30pt] (2,3) -- (2,0);
        \draw[line width = 1pt, draw=black,double=white,double distance=4mm, rounded corners=30pt] (3,3) -- (1,0);
        \draw[line width = 1pt, draw=black,double=white,double distance=4mm, rounded corners=30pt] (3.08,3) -- (4,3) -- (0,0) -- (0.92,0);
        \draw[line width = 4mm, draw=white, rounded corners=30pt] (2,3) -- (0.85,3);
        \draw[line width = 4mm, draw=white, rounded corners=30pt] (2,0) -- (1,0);
        \draw[line width = 4mm, draw=white, rounded corners=30pt] (2,3) -- (3,3);
        \draw[line width = 4mm, draw=white, rounded corners=30pt] (2,0) -- (3.15,0);

        \coordinate (X1) at (3.34,2.5);
        \coordinate (X2) at (2.68,2.5);
        \coordinate (X3) at (2,2.5);
        \coordinate (X4) at (1.33,2.5);
        \coordinate (X5) at (0.66,2.5);
        \coordinate (Y1) at (2.5,3.3);
        \coordinate (Y2) at (1.5,3.3);
        \coordinate (Y3) at (2.5,-0.3);
        \coordinate (Y4) at (1.5,-0.3);
        \node at (X1) {$v_1$};
        \node at (X2) {$v_2$};
        \node at (X3) {$v_3$};
        \node at (X4) {$v_4$};
        \node at (X5) {$v_5$};
        \node at (Y1) {$h_1$};
        \node at (Y2) {$h_2$};
        \node at (Y3) {$h_3$};
        \node at (Y4) {$h_4$};
    \end{tikzpicture}
    \caption{The labeled ribbon graph for $n=5$}\label{fatgraph}
\end{figure}

{
The Goldman Poisson structure is described by a quiver as explained in Section~\ref{sec:logcanonicalgroupoid}. Each edge of the ribbon graph Figure~\ref{fatgraph} corresponds to a vertex of the quiver (Fig.~\ref{quiverTgs}) by Equation~\ref{eq:Gldmn}.}
 An arrow of the quiver connects two edges $a$ to $b$ that share a common endpoint if $b$ immediately follows $a$  under rotation in the anticlockwise direction about the common endpoint.

\begin{figure}[H]
    \centering
    \begin{tikzpicture}[scale=1.5]
        \node (X1) at (2.5,2) {};
        \node (X2) at (2.5,1) {};
        \node (X3) at (1,0) {};
        \node (X4) at (-0.5,2) {};
        \node (X5) at (-0.5,1) {};
        \node (Y1) at (1.5,2.5) {};
        \node (Y2) at (0.5,1) {};
        \node (Y3) at (0.5,2.5) {};
        \node (Y4) at (1.5,1) {};

        \def \radius{0.06};
        \fill (X1) circle(\radius) node[right] {$v_1$};
        \fill (X2) circle(\radius) node[below right] {$v_2$};
        \fill (X3) circle(\radius) node[below right] {$v_3$};
        \fill (X4) circle(\radius) node[left] {$v_4$};
        \fill (X5) circle(\radius) node[below left] {$v_5$};
        \fill (Y1) circle(\radius) node[above] {$h_1$};
        \fill (Y2) circle(\radius) node[below left] {$h_2$};
        \fill (Y3) circle(\radius) node[above] {$h_3$};
        \fill (Y4) circle(\radius) node[below right] {$h_4$};

        \def \linewidth{0.3mm}
        \draw[-latex, line width = \linewidth, double, double distance=0.4mm]{(X1) -- (X2)};
        \draw[-latex, line width = \linewidth]{(X2) -- (Y1)};
        \draw[-latex, line width = \linewidth]{(X2) -- (Y4)};
        \draw[-latex, line width = \linewidth]{(X3) -- (Y2)};
        \draw[-latex, line width = \linewidth]{(X3) -- (Y3)};
        \draw[-latex, line width = \linewidth, double, double distance=0.4mm]{(X4) -- (X5)};
        \draw[-latex, line width = \linewidth]{(X5) -- (Y2)};
        \draw[-latex, line width = \linewidth]{(X5) -- (Y3)};
        \draw[-latex, line width = \linewidth]{(Y1) -- (X1)};
        \draw[-latex, line width = \linewidth]{(Y1) -- (X3)};
        \draw[-latex, line width = \linewidth]{(Y2) -- (X4)};
        \draw[-latex, line width = \linewidth]{(Y2) -- (Y1)};
        \draw[-latex, line width = \linewidth]{(Y3) -- (X4)};
        \draw[-latex, line width = \linewidth]{(Y3) -- (Y4)};
        \draw[-latex, line width = \linewidth]{(Y4) -- (X1)};
        \draw[-latex, line width = \linewidth]{(Y4) -- (X3)};
    \end{tikzpicture}
    \caption{Quiver for $\mathcal{T}_{2,1}$}\label{quiverTgs}
\end{figure}


The log-canonical parameters for the Poisson variety $\mathcal{A}_n$ were built in \cite{10.1093/imrn/rnac101} by modification of Fock-Goncharov construction. {
The quiver of the corresponding Poisson structure is obtained by amalgamation and removing some vertices from the quiver describing Fock-Goncharov parameters for $n=5$, similar to one on Fig.~\ref{fig:FG}, as explained in Section~\ref{sec:logcanonicalgroupoid}. The result is the quiver on Fig.~\ref{quiverA5} for $\mathcal{A}_5$  (such quiver  for $\mathcal{A}_6$ is shown on Fig.~\ref{fig:amalg}).}

\begin{figure}[H]
    \centering
    \begin{tikzpicture}

        \def \pointradius{2.5cm};

        \node[label={left: $b_5$}] (O1) at (-234:\pointradius) {};
        \node[label={right: $b_1$}] (O2) at (54:\pointradius) {};
        \node[label={right: $b_2$}] (O3) at (-18:\pointradius) {};
        \node[label={below: $b_3$}] (O4) at (-90:\pointradius) {};
        \node[label={left: $b_4$}] (O5) at (-162:\pointradius) {};
        \node[label={above: $a_1$}] (I1) at (90:0.5*\pointradius) {};
        \node[label={right: $a_2$}] (I2) at (18:0.5*\pointradius) {};
        \node[label={below right: $a_3$}] (I3) at (-54:0.5*\pointradius) {};
        \node[label={below left: $a_4$}] (I4) at (234:0.5*\pointradius) {};
        \node[label={left: $a_5$}] (I5) at (162:0.5*\pointradius) {};

        \def \radius{0.1cm};
        \fill (O1) circle(\radius) node[above left] {};
        \fill (O2) circle(\radius) node[above right] {};
        \fill (O3) circle(\radius) node[below right] {};
        \fill (O4) circle(\radius) node[below] {};
        \fill (O5) circle(\radius) node[below left] {};
        \fill[blue] (I1) circle(1.5*\radius) node[above] {};
        \fill (I2) circle(\radius) node[above right] {};
        \fill (I3) circle(\radius) node[above right] {};
        \fill (I4) circle(\radius) node[above left] {};
        \fill (I5) circle(\radius) node[above left] {};

        \def \linewidth{0.4mm};
        \draw[-latex, line width = \linewidth]{(O1) -- (O2)};
        \draw[-latex, line width = \linewidth]{(O2) -- (O3)};
        \draw[-latex, line width = \linewidth]{(O3) -- (O4)};
        \draw[-latex, line width = \linewidth]{(O4) -- (O5)};
        \draw[-latex, line width = \linewidth]{(O5) -- (O1)};
        \draw[-latex, line width = \linewidth]{(I1) -- (I2)};
        \draw[-latex, line width = \linewidth]{(I2) -- (I3)};
        \draw[-latex, line width = \linewidth]{(I3) -- (I4)};
        \draw[-latex, line width = \linewidth]{(I4) -- (I5)};
        \draw[-latex, line width = \linewidth]{(I5) -- (I1)};
        \draw[-latex, line width = \linewidth]{(I1) -- (I3)};
        \draw[-latex, line width = \linewidth]{(I2) -- (I4)};
        \draw[-latex, line width = \linewidth]{(I3) -- (I5)};
        \draw[-latex, line width = \linewidth]{(I4) -- (I1)};
        \draw[-latex, line width = \linewidth]{(I5) -- (I2)};
        \draw[-latex, line width = \linewidth]{(O1) -- (I5)};
        \draw[-latex, line width = \linewidth]{(I1) -- (O1)};
        \draw[-latex, line width = \linewidth]{(O2) -- (I1)};
        \draw[-latex, line width = \linewidth]{(I2) -- (O2)};
        \draw[-latex, line width = \linewidth]{(O3) -- (I2)};
        \draw[-latex, line width = \linewidth]{(I3) -- (O3)};
        \draw[-latex, line width = \linewidth]{(O4) -- (I3)};
        \draw[-latex, line width = \linewidth]{(I4) -- (O4)};
        \draw[-latex, line width = \linewidth]{(O5) -- (I4)};
        \draw[-latex, line width = \linewidth]{(I5) -- (O5)};
    \end{tikzpicture}
    \caption{Exchange quiver of cluster structure for $\mathcal{A}_5$ describing the Poisson bracket in log-canonical coordinates.}\label{quiverA5}
\end{figure}

\subsubsection{Hamiltonian reduction in Case (1).}

We describe now the procedure that relates the quiver in Fig.~\ref{quiverA5} with the quiver in Fig.~\ref{quiverTgs} 
{under the condition that $C_1=-1$}. Note that the number of vertices of the first quiver is 10, while the number of vertices of the second quiver is 9.
It indicates that this procedure must include a Hamiltonian reduction. 

Let's introduce the following notation.
For any $z_1,z_2,\cdots,z_n$, let 
$$\large\langle z_{1}z_{2} \cdots z_{n-1}z_{n} \large\rangle \coloneqq \left(z_{1}z_{2} \cdots z_{n-1}z_{n}\right)^{\frac{1}{2}}
\left(1+\frac{1}{z_{1}}+\frac{1}{z_{1}z_{2}}+ \cdots+\frac{1}{z_{1}z_{2} \cdots z_{n-1}}+\frac{1}{z_{1}z_{2} \cdots z_{n-1}z_{n}}\right).$$

{For the matrix elements $a_{i,i+1}$ for the original $\mathcal A_5$-quiver in Fig.~\ref{quiverA5}, upon resolving frozen variables as discussed at the end of Sec.~5, we obtain uniform expressions:
\begin{align}
&a_{1,2}=\bigl\langle b_1a_1a_3b_2\bigr\rangle,\quad
a_{2,3}=\bigl\langle b_2a_2a_4b_3\bigr\rangle,
\quad a_{3,4}=\bigl\langle b_3a_3a_5b_4\bigr\rangle,
\quad a_{4,5}=\bigl\langle b_4a_4a_1b_5\bigr\rangle. \label{Gij-5}
\end{align}
}

{
\begin{remark}
Expressions of all $a_{i,j}$ are universal Laurent polynomials. i.e., they retain their polynomial forms upon all sequences of mutations. This is true for all matrix elements in all $\mathcal A_n$ quivers. For a general $\mathcal A_n$-quiver having the form similar to that in Figs.~\ref{quiverA5} and \ref{fig:clusterA6}, the generating matrix elements $a_{i,i+1}$ are given by $a_{i,i+1}=\bigl\langle x_i y z w\cdots w z y x_{i+1}\bigr\rangle$, where $x_i$ and $x_{i+1}$ are cluster variables of the ``outer'' layer of the $\mathcal A_n$ quiver (the corresponding vertices are of order four) connected by an arrow $x_i\to x_{i+1}$, all cluster variables in this sequence are consecutively connected by arrows:
$$
x_i\to y_\cdot \to  z_\cdot \to w_\cdot \to \cdots \to w_\cdot \to z_\cdot \to y_\cdot \to x_{i+1},
$$
and the path is the minimum path that goes through all layers of the quiver and includes exactly two variables of each layer except of the innermost layer for which it includes two variables for odd $n$ and one variable for even $n$.
\end{remark}
}

Note that in cases $n=3$ and $n=4$, the image $\rm{Im}(\varphi)$ is an open subset in $\mathcal{A}_n$. One can check that, in this case, the geodesic functions can be written uniquely in terms of (square roots of) log-canonical parameters. The relation to the corresponding cluster variables of $\mathcal{A}_3$ and $\mathcal{A}_4$ was presented in~\cite{10.1093/imrn/rnac101}.

In case (1) of $n=5$, we proceed by performing a cluster mutation $\mu_{a_1}$ at the large highlighted blue vertex $a_1$, followed by mutations $\mu_{a_4}$ at $a_4$ and $\mu_{a_3}$ at $a_3$, which yields the following quiver:

\begin{figure}[H]
    \centering
    \begin{tikzpicture}[scale=1.5]
        \node (A1) at (1,2) {};
        \node (A2) at (1.5,2.5) {};
        \node (A3) at (1.5,1) {};
        \node (A4) at (0.5,1) {};
        \node (A5) at (0.5,2.5) {};
        \node (b_1) at (2.5,2) {};
        \node (B2) at (2.5,1) {};
        \node (B3) at (1,0) {};
        \node (B4) at (-0.5,2) {};
        \node (b_5) at (-0.5,1) {};

        \def \radius{0.06};
        \fill (A1) circle(\radius) node[right] {$a_1$};
        \fill (A2) circle(\radius) node[right] {$a_2$};
        \fill (A3) circle(\radius) node[below right] {$a_3$};
        \fill (A4) circle(\radius) node[below left] {$a_4$};
        \fill (A5) circle(\radius) node[left] {$a_5$};
        \fill (b_1) circle(\radius) node[right] {$b_1$};
        \fill (B2) circle(\radius) node[below right] {$b_2$};
        \fill (B3) circle(\radius) node[below right] {$b_3$};
        \fill (B4) circle(\radius) node[left] {$b_4$};
        \fill (b_5) circle(\radius) node[below left] {$b_5$};

        \def \linewidth{0.3mm}
        \draw[-latex, line width = \linewidth, double, double distance=0.4mm]{(A1) -- (A5)};
        \draw[-latex, line width = \linewidth, double, double distance=0.4mm]{(A2) -- (A1)};
        \draw[-latex, line width = \linewidth, double, double distance=0.4mm]{(A5) -- (A2)};
        \draw[-latex, line width = \linewidth, double, double distance=0.4mm]{(b_1) -- (B2)};
        \draw[-latex, line width = \linewidth, double, double distance=0.4mm]{(B4) -- (b_5)};
        \draw[-latex, line width = \linewidth]{(A1) -- (A3)};
        \draw[-latex, line width = \linewidth]{(A4) -- (A1)};
        \draw[-latex, line width = \linewidth]{(A3) -- (A2)};
        \draw[-latex, line width = \linewidth]{(A5) -- (A4)};
        \draw[-latex, line width = \linewidth]{(A3) -- (b_1)};
        \draw[-latex, line width = \linewidth]{(B2) -- (A3)};
        \draw[-latex, line width = \linewidth]{(A3) -- (B3)};
        \draw[-latex, line width = \linewidth]{(B3) -- (A4)};
        \draw[-latex, line width = \linewidth]{(A4) -- (B4)};
        \draw[-latex, line width = \linewidth]{(b_5) -- (A4)};
    \end{tikzpicture}
    \caption{Quiver for $\mathcal{A}_5$ after cluster mutation}\label{quiverA5modified}
\end{figure}

In the next step of the transformation, we make a Hamiltonian reduction.
To do it, we compute the geodesic functions for $\mathcal{T}_{2,1}$ and compare them with particular expressions for $\mathcal{A}_5$.


Our goal is to express geodesic functions $G_{ij}$ in terms of parameters of the ribbon graph Fig.~\ref{fatgraph} (or, equivalently, in terms of quiver Fig.~\ref{quiverTgs}). They take the form of the right-hand side of (\ref{eq:geodesicN5}). 

Rewriting expressions (\ref{Gij-5}) after the mutations $\mu_{a_4}\mu_{a_3}\mu_{a_1}$ 
of the cluster in Fig.~\ref{quiverA5} we obtain the expressions for $G_{i,i+1}$ in terms of $a_i,b_i$ parameters of quiver Fig.~\ref{quiverA5modified} (on the left-hand side of the following system).
\begin{align}
    & G_{12} = \langle b_{1}b_{2} \rangle = \langle v_{1}v_{2} \rangle \nonumber\\
    & G_{23} = \langle b_{2}a_{3}a_{2}a_{1}a_{3}b_{3} \rangle = \left(v_{2}h_{4}h_{1}v_{3}\right)^{\frac{1}{2}}\left(1+\frac{1}{v_{2}}+\frac{1}{h_{1}v_{2}}+\frac{1}{h_{4}v_{2}}+\frac{1}{h_{4}h_{1}v_{2}}+\frac{1}{v_{3}h_{4}h_{1}v_{2}}\right) \nonumber\\
    & G_{34} = \langle b_{3}a_{4}a_{1}a_{5}a_{4}b_{4} \rangle = \left(v_{3}h_{2}h_{3}v_{4}\right)^{\frac{1}{2}}\left(1+\frac{1}{v_{3}}+\frac{1}{v_{3}h_{3}}+\frac{1}{v_{3}h_{2}}+\frac{1}{v_{3}h_{2}h_{3}}+\frac{1}{v_{3}h_{2}h_{3}v_{4}}\right)  \label{eq:geodesicN5}\\
    & G_{45} = \langle b_{4}b_{5} \rangle = \langle v_{4}v_{5} \rangle \nonumber\\
    &
    { C_1=a_1a_2a_5,\qquad C_2=\prod_{i=1}^5 b_i a_3^2a_4^2a_1.}\nonumber
\end{align}

We wish to find expressions for $v_i,h_i$ in terms of $a_j,b_j$ that make (\ref{eq:geodesicN5}) hold (a solution is clearly not unique since the number of $a,b$-variables is strictly larger than the number of $x,y$-variables).
Inspection suggests the substitution 
\begin{equation}\label{subsx}
v_i=b_i \text{ for all } i \in \{1,2,3,4,5\}.
\end{equation}
\\
Performing this substitution simplifies the equation for $G_{23}$ to
\[ (a_1a_2a_3^2)^{\frac{1}{2}}\left(\frac{1}{a_3}+\frac{1}{a_2a_3}+\frac{1}{a_1a_2a_3}+\frac{1}{a_1a_2a_3^2}\right) = (h_{4}h_{1})^{\frac{1}{2}}\left(\frac{1}{h_{1}}+\frac{1}{h_{4}}+\frac{1}{h_{1}h_{4}}\right) \]
This equation holds if the following two conditions are satisfied:
\begin{enumerate}
    \item $(a_1a_2a_3^2)^\frac{1}{2} = (h_4h_1)^\frac{1}{2} \implies h_1h_4=a_1a_2a_3^2$ \vskip 0.2cm
    \item $\dfrac{1}{a_3}+\dfrac{1}{a_2a_3}+\dfrac{1}{a_1a_2a_3}+\dfrac{1}{a_1a_2a_3^2} = \dfrac{1}{h_{1}}+\dfrac{1}{h_{4}}+\dfrac{1}{h_{1}h_{4}} \implies h_1+h_4=a_1a_2a_3+a_1a_3+a_3$
\end{enumerate}
Condition $(2)$ implies the relation $h_4 = a_1a_2a_3+a_1a_3+a_3 - h_1$ whose substitution into condition $(1)$ yields the equation
\begin{align*}
    h_1(a_1a_2a_3+a_1a_3+a_3 - h_1) = a_1a_2a_3^2
\end{align*}
which becomes the quadratic equation
\begin{align*}
    h_1^2 - (a_1a_2a_3+a_1a_3+a_3)h_1 + a_1a_2a_3^2=0
\end{align*}
Solving for $h_1$  and observing that the conditions are symmetric for $h_1$ and $h_4$ yields
\begin{equation}\label{subsy14}
h_1, h_4 = \frac{a_3}{2}\left(a_1a_2+a_1+1 \pm \sqrt{a_1^2a_2^2+2a_1^2a_2-2a_1a_2+a_1^2+2a_1+1}\right) 
\end{equation}
Similar reasoning for $G_{34}$ implies
\begin{equation}\label{subsy23} h_2, h_3 = \frac{a_4}{2}\left(a_1a_5+a_5+1 \pm \sqrt{a_1^2a_5^2+2a_1a_5^2-2a_1a_5+a_5^2+2a_5+1}\right) 
\end{equation}

We claim now that the variables $v_1,\dots,v_5,h_1,\dots,h_4$ given by  formulae (\ref{subsx}),~(\ref{subsy14}),~(\ref{subsy23}) 
satisfy commutation relations determined by quiver on Fig.~\ref{quiverTgs} under the condition that the Casimir function  $C_1=a_1 a_2 a_5$ of the Poisson bracket defined by the quiver from Fig.~\ref{quiverA5modified} takes the value $-1$.


Indeed, clearly, $a_1 a_2 a_5$ is a Casimir function of Quiver in Fig.~\ref{quiverA5modified}.
We are interested in the case where the value of the Casimir function 
\begin{equation}\label{eq:CasA5}
 a_1a_2a_5=-1.   
\end{equation}
This critical condition is equivalent to Rank Condition~\ref{cor:rank}, the only one we need to make the reduction.
To confirm the veracity of this solution for expressing the $v_i$ and $h_j$ in terms of the $b_i$ and $a_j$, we seek to verify the commutation relations of the quiver in Fig.~\ref{quiverTgs}.
Note that by construction, $\{v_i,v_j\}=\{b_i,b_j\}$.
Note also that all $b_i$ Poisson commute with any variable of $\{a_1,a_2,a_5\}$. Comparing quivers~\ref{quiverTgs} and \ref{quiverA5modified}, we immediately conclude that the arcs of quiver ~\ref{quiverTgs} between $v_i$ and $v_j$ and between $v_i$ and $h_j$ reflect correct Poisson relations between corresponding expressions~(\ref{subsx}), ~(\ref{subsy14}), ~(\ref{subsy23}).

It remains to check the Poisson commutation relations between $h_1,h_2,h_3,h_4$. We show first that $\{h_1,h_4\}=0$. Indeed,
$\{h_1+h_4,h_1h_4\}=(h_1-h_4)\{h_1,h_4\}$. It remains to notice that $h_1h_4=a_1 a_2 a_3^2$, which clearly Poisson commutes with $a_1,a_2$ and $a_3$, hence, with any function of $a_1,a_2,a_3$. Therefore, 
$(h_1-h_4)\{h_1,h_4\}=0$, because $h_1$ and $h_4$ are functions of $a_1,a_2$ and $a_3$ only.  Since $h_1-h_4\ne 0$, we conclude that $\{h_1,h_4\}=0$. Similarly, we prove that $\{h_2,h_3\}=0$.

Direct computation shows $\{h_3,h_4\}=h_3 h_4$ and, similarly, $\{h_2,h_1\}=h_2 h_1$.

Next, we verify that $\{ h_2+h_3, h_1+h_4 \} = \{ h_2, h_1 \}+\{ h_3, h_4 \}$. Recall that $a_5=-\dfrac{1}{a_1a_2}$ to compute 
\\
\begin{align*}
\{h_2,h_1\}+\{h_3,h_4\} &= \frac{a_3a_4}{2} \Bigl((a_1a_5+a_5+1)(a_1a_2+a_1+1) \\
& \hspace{1cm} \pm \sqrt{((a_1a_5+a_5+1)^2-4a_1a_5)((a_2a_1+a_1+1)^2-4a_1a_2)} \Bigr)\\
& =\frac{a_3a_4}{2} \Bigl( (-\frac{1}{a_2}-\frac{1}{a_1a_2}+1)(a_1a_2+a_1+1)\pm\frac{1}{a_1a_2}((a_1a_2+a_1+1)^2-4a_1a_2) \Bigr)\\
& =\frac{a_3a_4}{2a_1a_2} \Bigl( (a_1^2a_2^2-a_1^2-2a_1-1)+(a_1^2a_2^2+2a_1^2a_2+2a_1a_2+2a_1+a_1^2+1-4a_1a_2) \Bigr)\\
& =\frac{a_3a_4}{2a_1a_2} \Bigl( 2a_1^2a_2^2+2a_1^2a_2-2a_1a_2 \Bigr)\\
& = a_3a_4(a_1a_2+a_1-1)
\end{align*}
\\
\begin{align*}
\hspace{1cm} \{h_2+h_3,h_1+h_4\} & =\{a_4(a_1a_5+a_5+1), a_3(a_1a_2+a_1+1)\}\\
& =\bigl\{a_4(-\frac{1}{a_2}-\frac{1}{a_1a_2}+1), a_3(a_1a_2+a_1+1)\bigr\}\\
& =a_4(a_1a_2+a_1+1)\Bigl\{-\frac{1}{a_2}-\frac{1}{a_1a_2}+1, a_3\Bigr\} + a_3\Bigl(-\frac{1}{a_2}-\frac{1}{a_1a_2}+1\Bigr)\{a_4, a_1a_2+a_1+1\}\\
& \hspace{1cm} +a_3a_4\Bigl\{-\frac{1}{a_2}-\frac{1}{a_1a_2}+1, a_1a_2+a_1+1\Bigr\}\\
& =a_4(a_1a_2+a_1+1)\Bigl(-\frac{a_3}{a_2}\Bigr)+a_3\Bigl(-\frac{1}{a_2}-\frac{1}{a_1a_2}+1\Bigr)(a_1a_2a_4+a_1a_4)+a_3a_4\Bigl(2a_1+\frac{2a_1+2}{a_2}\Bigr)\\
& =a_3a_4\Bigl(-a_1-\frac{a_1}{a_2}-\frac{1}{a_2}-a_1-\frac{a_1}{a_2}-1-\frac{1}{a_2}+a_1a_2+a_1+2a_1+\frac{2a_1}{a_2}+\frac{2}{a_2}\Bigr)\\
& =a_3a_4(a_1a_2+a_1-1)
\end{align*}
Subtracting the first equation from the second, we obtain 
\begin{equation}\label{y24andy31}
 \{h_2,h_4\}+\{h_3,h_1\}=0   
\end{equation}

Note that symmetric polynomials of $h_1,h_4$ and $h_2,h_3$ are expressible as polynomials in $a_1,a_2,a_3$ and $a_1,a_4,a_5$, correspondingly. From such expressions of $h_1+h_4$ and $h_2 h_3$ and commutation relations for $a_i$ we easily see that  $\{h_1+h_4,h_2 h_3\}=(h_1+h_4)h_2 h_3$.
In particular, 
\begin{equation}\label{y13Andy42}
  \{h_1,h_3\}h_2+\{h_4,h_2\}h_3=0 
\end{equation}

Combining ~\ref{y24andy31} and ~\ref{y13Andy42} and taking into account that the determinant $\det \begin{pmatrix}
    -1 & -1 \\
    h_2 & h_3
\end{pmatrix}=h_2-h_3\ne 0$ for generic $h_2,h_3$
we conclude $\{h_2,h_4\}=0$ and $\{h_1,h_3\}=0$.

Therefore, under the Rank Condition 
$a_1 a_2 a_5=-1$ the Poisson bracket between the variables $v_1,v_2,v_3,v_4,v_5$, $h_1,h_2,h_3,h_4$ given by \ref{subsx},\ref{subsy14},\ref{subsy23} is described by Quiver in Fig.~\ref{quiverTgs}.

We  have found an explicit form of the Poisson map $\mathcal{T}_{2,1}\rightarrow \mathcal{A}_5$ in terms of coordinates $v_i,h_j$ in Case (1).

\subsubsection{Case (2)}
{Instead of performing reduction directly in this case, we present a procedure of reducing this case, using invertible chains of mutations, to Case (1). For this, let us introduce a convenient notation, which we also use in $n=6$ case.
}

{In any quiver, monomial Casimirs $x_1^{\alpha_1}x_2^{\alpha_2}\cdots x_n^{\alpha_n}$ with integers $\alpha_i$ are determined by null vectors of the exchange matrix $B$: $B\vec\alpha=\vec 0$. Upon mutation at $x_j$, $\vec \alpha\to {\vec \alpha}{}'$ with 
\begin{equation}\label{eq:mut0}
\alpha'_i=\alpha_i \text{ for } i\ne j \quad
\text{ and } \quad
\alpha'_j= -\alpha_j+\sum_{i}B^{(+)}_{i,j}\alpha_i,
\end{equation}
 where the sum ranges only positive entries of the matrix $B$. Note that the nullity condition implies that 
$$
\sum_{i}B^{(+)}_{i,j}\alpha_i + \sum_{i}B^{(-)}_{i,j}\alpha_i=0\quad \forall j.
$$
Here, $B^{(+)}_{ij}=max(B_{ij},0)$, $B^{(-)}_{ij}=min(0,B_{ij})$.
We leave it as a simple but nice exercise to the reader to check that this condition is preserved under mutations.
}

{We start with the quiver in Fig.~\ref{quiverA5} assigning to each vertex a 2-row vector in which the first component is the power of this variable in the expression for $C_1$ and the second component is the power of this variable in the expression for $C_1C_2$. In Fig.~\ref{quiverA5muted} we show the original quiver and its transformed upon consequent mutations at $b_2$, $b_4$, and $b_3$. 
{
Note that the Casimirs $C_i$ preserve monomial form but the powers of mutated cluster variables in the monomial expressions for Casimirs may change because the variables themselve change.}
}

\begin{figure}[H]
    \centering
  \begin{minipage}{0.4\textwidth}
    \begin{tikzpicture}

        \def \pointradius{3cm};

        \node[label={left: $b_5$}] (O1) at (-234:\pointradius) {};
        \node[label={above: $\textcolor{red}{0}\textcolor{blue}{1}$}] (OO1) at (-234:\pointradius) {};
        \node[label={right: $b_1$}] (O2) at (54:\pointradius) {};
        \node[label={above: $\textcolor{red}{0}\textcolor{blue}{1}$}] (OO2) at (54:\pointradius) {};
        \node[label={right: $b_2$}] (O3) at (-18:\pointradius) {};
        \node[label={below: $\textcolor{red}{0}\textcolor{blue}{1}$}] (OO3) at (-18:\pointradius) {};
        \node[label={below left: $b_3$}] (O4) at (-90:\pointradius) {};
        \node[label={below right: $\textcolor{red}{0}\textcolor{blue}{1}$}] (OO4) at (-90:\pointradius) {};
        \node[label={left: $b_4$}] (O5) at (-162:\pointradius) {};
        \node[label={below: $\textcolor{red}{0}\textcolor{blue}{1}$}] (OO5) at (-162:\pointradius) {};
        \node[label={above: $a_1$}] (I1) at (90:0.5*\pointradius) {};
        \node[label={right: $\textcolor{red}{1}\textcolor{blue}{1}$}] (II1) at (90:0.5*\pointradius) {};
         \node[label={below: $a_2$}] (II2) at (14:0.55*\pointradius) {};
        \node[label={right: $\textcolor{red}{1}\textcolor{blue}{1}$}] (I2) at (18:0.5*\pointradius) {};
        \node[label={below right: $a_3$}] (I3) at (-54:0.5*\pointradius) {};
        \node[label={above right: $\textcolor{red}{1}\textcolor{blue}{1}$}] (II3) at (-54:0.5*\pointradius) {};
        \node[label={below left: $a_4$}] (I4) at (234:0.5*\pointradius) {};
        \node[label={above left: $\textcolor{red}{1}\textcolor{blue}{1}$}] (II4) at (234:0.5*\pointradius) {};
        \node[label={below: $a_5$}] (II5) at (166:0.55*\pointradius) {};
       \node[label={left: $\textcolor{red}{1}\textcolor{blue}{1}$}] (I5) at (162:0.5*\pointradius) {};
        
        \def \radius{0.1cm};
        \fill (O1) circle(\radius) node[above left] {};
        \fill (O2) circle(\radius) node[above right] {};
        \fill (O3) circle(\radius) node[below right] {};
        \fill (O4) circle(\radius) node[below] {};
        \fill (O5) circle(\radius) node[below left] {};
        \fill (I1) circle(\radius) node[above] {};
        \fill (I2) circle(\radius) node[above right] {};
        \fill (I3) circle(\radius) node[above right] {};
        \fill (I4) circle(\radius) node[above left] {};
        \fill (I5) circle(\radius) node[above left] {};

        \def \linewidth{0.4mm};
        \draw[-latex, line width = \linewidth]{(O1) -- (O2)};
        \draw[-latex, line width = \linewidth]{(O2) -- (O3)};
        \draw[-latex, line width = \linewidth]{(O3) -- (O4)};
        \draw[-latex, line width = \linewidth]{(O4) -- (O5)};
        \draw[-latex, line width = \linewidth]{(O5) -- (O1)};
        \draw[-latex, line width = \linewidth]{(I1) -- (I2)};
        \draw[-latex, line width = \linewidth]{(I2) -- (I3)};
        \draw[-latex, line width = \linewidth]{(I3) -- (I4)};
        \draw[-latex, line width = \linewidth]{(I4) -- (I5)};
        \draw[-latex, line width = \linewidth]{(I5) -- (I1)};
        \draw[-latex, line width = \linewidth]{(I1) -- (I3)};
        \draw[-latex, line width = \linewidth]{(I2) -- (I4)};
        \draw[-latex, line width = \linewidth]{(I3) -- (I5)};
        \draw[-latex, line width = \linewidth]{(I4) -- (I1)};
        \draw[-latex, line width = \linewidth]{(I5) -- (I2)};
        \draw[-latex, line width = \linewidth]{(O1) -- (I5)};
        \draw[-latex, line width = \linewidth]{(I1) -- (O1)};
        \draw[-latex, line width = \linewidth]{(O2) -- (I1)};
        \draw[-latex, line width = \linewidth]{(I2) -- (O2)};
        \draw[-latex, line width = \linewidth]{(O3) -- (I2)};
        \draw[-latex, line width = \linewidth]{(I3) -- (O3)};
        \draw[-latex, line width = \linewidth]{(O4) -- (I3)};
        \draw[-latex, line width = \linewidth]{(I4) -- (O4)};
        \draw[-latex, line width = \linewidth]{(O5) -- (I4)};
        \draw[-latex, line width = \linewidth]{(I5) -- (O5)};
        \end{tikzpicture}
        \end{minipage}
 \begin{minipage}{0.4\textwidth}   
    \begin{tikzpicture}

        \def \pointradius{3cm};

        \node[label={left: $b_5$}] (OO1) at (-238:1.05*\pointradius) {};
        \node[label={above: $\textcolor{red}{0}\textcolor{blue}{1}$}] (O1) at (-234:\pointradius) {};
        \node[label={right: $b_1$}] (OO2) at (58:1.05*\pointradius) {};
        \node[label={above: $\textcolor{red}{0}\textcolor{blue}{1}$}] (O2) at (54:\pointradius) {};
        \node[label={right: $b_2$}] (O3) at (-18:\pointradius) {};
        \node[label={below: $\textcolor{red}{1}\textcolor{blue}{1}$}] (OO3) at (-18:\pointradius) {};
        \node[label={below left: $b_3$}] (O4) at (-90:\pointradius) {};
        \node[label={below right: $\textcolor{red}{1}\textcolor{blue}{1}$}] (OO4) at (-90:\pointradius) {};
        \node[label={left: $b_4$}] (O5) at (-162:\pointradius) {};
        \node[label={below: $\textcolor{red}{1}\textcolor{blue}{1}$}] (OO5) at (-162:\pointradius) {};
        \node[label={above: $a_1$}] (I1) at (90:0.5*\pointradius) {};
        \node[label={right: $\textcolor{red}{1}\textcolor{blue}{1}$}] (II1) at (90:0.5*\pointradius) {};
         \node[label={below: $a_2$}] (II2) at (18:0.5*\pointradius) {};
        \node[label={right: $\textcolor{red}{1}\textcolor{blue}{1}$}] (I2) at (18:0.5*\pointradius) {};
        \node[label={below: $a_3$}] (II3) at (-54:0.43*\pointradius) {};
        \node[label={above right: $\textcolor{red}{1}\textcolor{blue}{1}$}] (I3) at (-54:0.5*\pointradius) {};
        \node[label={below: $a_4$}] (II4) at (234:0.43*\pointradius) {};
        \node[label={above left: $\textcolor{red}{1}\textcolor{blue}{1}$}] (I4) at (234:0.5*\pointradius) {};
        \node[label={below: $a_5$}] (II5) at (162:0.5*\pointradius) {};
       \node[label={left: $\textcolor{red}{1}\textcolor{blue}{1}$}] (I5) at (162:0.5*\pointradius) {};
        
        \def \radius{0.1cm};
        \fill (O1) circle(\radius) node[above left] {};
        \fill (O2) circle(\radius) node[above right] {};
        \fill (O3) circle(\radius) node[below right] {};
        \fill (O4) circle(\radius) node[below] {};
        \fill (O5) circle(\radius) node[below left] {};
        \fill (I1) circle(\radius) node[above] {};
        \fill (I2) circle(\radius) node[above right] {};
        \fill (I3) circle(\radius) node[above right] {};
        \fill (I4) circle(\radius) node[above left] {};
        \fill (I5) circle(\radius) node[above left] {};

        \def \linewidth{0.4mm};
        \draw[-latex, line width = \linewidth]{(I1) -- (I2)};
        \draw[-latex, line width = \linewidth]{(I3) -- (I4)};
        \draw[-latex, line width = \linewidth]{(I5) -- (I1)};
        \draw[-latex, line width = \linewidth]{(I1) -- (I3)};
        \draw[-latex, line width = \linewidth]{(I2) -- (I4)};
        \draw[-latex, line width = \linewidth]{(I3) -- (I5)};
        \draw[-latex, line width = \linewidth]{(I4) -- (I1)};
        \draw[-latex, line width = \linewidth]{(I5) -- (I2)};
        \draw[-latex, line width = \linewidth]{(I1) -- (O1)};
        \draw[-latex, line width = \linewidth]{(O2) -- (I1)};
        \draw[latex-, line width = \linewidth]{(O3) -- (I2)};
        \draw[latex-, line width = \linewidth]{(I3) -- (O3)};
        \draw[latex-, line width = \linewidth]{(O5) -- (I4)};
        \draw[latex-, line width = \linewidth]{(I5) -- (O5)};
        \draw[latex-, line width = 0.5mm, draw=black, rounded corners=7pt] (O4) .. controls (-5,-3) and (-5,1) .. (O1);
       \draw[-latex, line width = 0.5mm, draw=black, rounded corners=7pt] (O4) .. controls (5,-3) and (5,1) .. (O2);
        \draw[latex-, line width = 0.5mm, draw=black, rounded corners=7pt] (O4) .. controls (2,-2) .. (O3);
       \draw[-latex, line width = 0.5mm, draw=black, rounded corners=7pt] (O4) .. controls (-2,-2) .. (O5);
      \draw[latex-, line width = 0.5mm, draw=black, rounded corners=7pt] (O3) .. controls (1,-2) and (-1,-2) .. (O5);
    \end{tikzpicture}
\end{minipage}
    \caption{Left: $\mathcal{A}_5$-quiver with powers of Casimirs $C_1$ and $C_2$ indicated; Right: the same quiver after mutations at $b_4$, $b_2$, and $b_3$.}\label{quiverA5muted}
\end{figure}

In the transformed left quiver in Fig.~\ref{quiverA5muted}, the only difference in powers of $C_1$ and $C_1C_2$ appears at vertices $b_1$ and $b_5$. If we now perform mutations at these two vertices and permute variables $b_1$ and $b_5$, we obtain the quiver
  $$
   \begin{tikzpicture}

        \def \pointradius{3cm};

        \node[label={left: $b_1$}] (OO1) at (-238:1.05*\pointradius) {};
        \node[label={above: $\textcolor{red}{1}\textcolor{blue}{0}$}] (O1) at (-234:\pointradius) {};
        \node[label={right: $b_5$}] (OO2) at (58:1.05*\pointradius) {};
        \node[label={above: $\textcolor{red}{1}\textcolor{blue}{0}$}] (O2) at (54:\pointradius) {};
        \node[label={right: $b_2$}] (O3) at (-18:\pointradius) {};
        \node[label={below: $\textcolor{red}{1}\textcolor{blue}{1}$}] (OO3) at (-18:\pointradius) {};
        \node[label={below left: $b_3$}] (O4) at (-90:\pointradius) {};
        \node[label={below right: $\textcolor{red}{1}\textcolor{blue}{1}$}] (OO4) at (-90:\pointradius) {};
        \node[label={left: $b_4$}] (O5) at (-162:\pointradius) {};
        \node[label={below: $\textcolor{red}{1}\textcolor{blue}{1}$}] (OO5) at (-162:\pointradius) {};
        \node[label={above: $a_1$}] (I1) at (90:0.5*\pointradius) {};
        \node[label={right: $\textcolor{red}{1}\textcolor{blue}{1}$}] (II1) at (90:0.5*\pointradius) {};
         \node[label={below: $a_2$}] (II2) at (18:0.5*\pointradius) {};
        \node[label={right: $\textcolor{red}{1}\textcolor{blue}{1}$}] (I2) at (18:0.5*\pointradius) {};
        \node[label={below: $a_3$}] (II3) at (-54:0.43*\pointradius) {};
        \node[label={above right: $\textcolor{red}{1}\textcolor{blue}{1}$}] (I3) at (-54:0.5*\pointradius) {};
        \node[label={below: $a_4$}] (II4) at (234:0.43*\pointradius) {};
        \node[label={above left: $\textcolor{red}{1}\textcolor{blue}{1}$}] (I4) at (234:0.5*\pointradius) {};
        \node[label={below: $a_5$}] (II5) at (162:0.5*\pointradius) {};
       \node[label={left: $\textcolor{red}{1}\textcolor{blue}{1}$}] (I5) at (162:0.5*\pointradius) {};
        
        \def \radius{0.1cm};
        \fill (O1) circle(\radius) node[above left] {};
        \fill (O2) circle(\radius) node[above right] {};
        \fill (O3) circle(\radius) node[below right] {};
        \fill (O4) circle(\radius) node[below] {};
        \fill (O5) circle(\radius) node[below left] {};
        \fill (I1) circle(\radius) node[above] {};
        \fill (I2) circle(\radius) node[above right] {};
        \fill (I3) circle(\radius) node[above right] {};
        \fill (I4) circle(\radius) node[above left] {};
        \fill (I5) circle(\radius) node[above left] {};

        \def \linewidth{0.4mm};
        \draw[-latex, line width = \linewidth]{(I1) -- (I2)};
        \draw[-latex, line width = \linewidth]{(I3) -- (I4)};
        \draw[-latex, line width = \linewidth]{(I5) -- (I1)};
        \draw[-latex, line width = \linewidth]{(I1) -- (I3)};
        \draw[-latex, line width = \linewidth]{(I2) -- (I4)};
        \draw[-latex, line width = \linewidth]{(I3) -- (I5)};
        \draw[-latex, line width = \linewidth]{(I4) -- (I1)};
        \draw[-latex, line width = \linewidth]{(I5) -- (I2)};
        \draw[-latex, line width = \linewidth]{(I1) -- (O1)};
        \draw[-latex, line width = \linewidth]{(O2) -- (I1)};
        \draw[latex-, line width = \linewidth]{(O3) -- (I2)};
        \draw[latex-, line width = \linewidth]{(I3) -- (O3)};
        \draw[latex-, line width = \linewidth]{(O5) -- (I4)};
        \draw[latex-, line width = \linewidth]{(I5) -- (O5)};
        \draw[latex-, line width = 0.5mm, draw=black, rounded corners=7pt] (O4) .. controls (-5,-3) and (-5,1) .. (O1);
       \draw[-latex, line width = 0.5mm, draw=black, rounded corners=7pt] (O4) .. controls (5,-3) and (5,1) .. (O2);
        \draw[latex-, line width = 0.5mm, draw=black, rounded corners=7pt] (O4) .. controls (2,-2) .. (O3);
       \draw[-latex, line width = 0.5mm, draw=black, rounded corners=7pt] (O4) .. controls (-2,-2) .. (O5);
      \draw[latex-, line width = 0.5mm, draw=black, rounded corners=7pt] (O3) .. controls (1,-2) and (-1,-2) .. (O5);
    \end{tikzpicture}
$$
which has exactly the same form, except that $b_1\leftrightarrow b_5$ and all powers of variables in $C_1$ and $C_1C_2$ are interchanged. Performing now mutations at $b_3$, $b_2$, and $b_4$ we come back to the original $\mathcal A_5$-quiver with values of $C_1$ and $C_1C_2$ interchanged. We can then perform reduction as described in Case (1). 

{
\begin{remark}
    Note that Rank Condition~(\ref{eq:CasA5}) implies that not all variables $a_i$ and $b_i$ are positive when we perform the reduction to a geometric leaf.
\end{remark}

\begin{remark}
Despite the fact that for the Hamiltonian Reduction in this section we used  the Rank Condition on the matrix $A$ whose elements $G_{ij}$ are $\mathbb{Z}_2$ invariant, the cluster algebra obtained as the result of the reduction contains, in particular, all exponential shear coordinates as cluster variables and, hence, all geodesic functions (symmetric or not) are universal Laurent polynomials of the square roots of these cluster variables. 
\end{remark}
}


\subsection{$n = 6$} \hfill \\ 

\subsubsection{The Rank Condition for $n = 6$ and Fock-Goncharov parameters.}

{
The Rank Condition for $n=6$ claims that $\rm{corank}(A+A^T)=2$.

}

We now use that the rank condition can be rewritten (\ref{eq:rank-AM}) in terms of the matrix $\mathcal{M}_3 S$.
Let's denote the elements of the main diagonal of $\mathcal{M}_3$ by $m_{i,i}$ (where $i$ denotes the row's number). Recall that $S$ is anti-symmetric for $n=6$. Then both $\mathcal{M}_3 S$ and $\left(\mathcal{M}_3 S\right)^T$ are lower anti-diagonal with entry 
$(-1)^{i-1}m_{ii}$ in the $i$th row of the anti-diagonal of $\mathcal{M}_3 S$
and $(-1)^{i}m_{n-i+1,n-i+1}$ in the $i$th row of the anti-diagonal of
 $S^T \mathcal M_3^T = (\mathcal M_3 S)^T$.

Since the matrix $\mathcal{M}_3 S+S^T\mathcal{M}_3^T$ is symmetric, it contains three pairs of equal anti-diagonal  elements $m_{11}-m_{61}$, $-m_{25}+m_{52}$, $m_{34}-m_{43}$.
The equation $\det(\mathcal M_3S + S^T\mathcal M_3^T) = 0$ is equivalent to the condition (called \emph{Casimir Condition} that some of these three anti-diagonal elements is zero.
Introduce the anti-diagonal lower-triangular $6\times 6$ matrix $\Psi=\left(\psi_{ij}\right)_{i,j=1}^6=\mathcal M_3S + S^T\mathcal M_3^T$.

By the reasoning above, the Rank Condition $\text{rank}(A+A^T)\leq 4$ for $n=6$ splits into the following three cases:\\
\begin{enumerate}
 \item\label{case1} 

$\Psi$ satisfies \emph{Casimir Condition} $\psi_{34}=\psi_{43}=0\Longleftrightarrow c_1 c_2 c_3=1$ and 
   $\psi_{44}=0$.
    \bigskip
    \item\label{case2} 
$\Psi$ satisfies  \emph{Casimir Condition} $\psi_{25}=\psi_{52}=0\Longleftrightarrow b_1 b_2 b_3 b_4 b_5 b_6 c_1 c_2 c_3=1$ and 
$\det(\Psi_{[3,5]}^{[3,5]})=0$.
    \bigskip
    \item\label{case3} 
%
$\Psi$ satisfies  \emph{Casimir Condition} $\psi_{16}=\psi_{61}=0\Longleftrightarrow a_1 a_2 a_3 a_4 a_5 a_6 b_1 b_2 b_3 b_4 b_5 b_6 c_1 c_2 c_3=1$ and \newline
    $\det(\Psi_{[2,6]}^{[2,6]})=0$. 
\end{enumerate}

Rewriting all three cases in terms of variables $a_i,b_j,c_k$ we obtain the following:
    \begin{enumerate}
        \item[Case \ref{case1}]: The Casimir condition is $K_1=c_1 c_2 c_3 = 1$.  The condition $\psi_{4,4}=0$ becomes $1+c_2+c_2 c_3 = 0$.
        \item[Case \ref{case2}]: The Casimir condition is $K_2=b_1 b_2 b_3 b_4 b_5 b_6 c_1 c_2 c_3=1$. The condition $\det(\Psi_{[3,5]}^{[3,5]})=0$ is a polynomial containing $108$ monomial terms, 
        {all with positive signs}, which we omit here.
        \item[Case~\ref{case3}]: The Casimir condition is \scalebox{0.98215625}{$K_3=a_1 a_2 a_3 a_4 a_5 a_6 b_1 b_2 b_3 b_4 b_5 b_6 c_1 c_2 c_3=1$}. \\
        The expansion of the function $\det(\Psi_{[2,6]}^{[2,6]})$ contains 3,888 monomials (counted with multiplicities), again, all with positive signs, and we don't show this expression here.
    \end{enumerate}

In the following subsections, we analyze these cases more carefully, beginning with the simplest case, Case~\ref{case1}.
  
 \subsubsection{Hamiltonian reduction: Case~\ref{case1}}

 The goal of this section is to describe the Hamiltonian reduction that restores exponential shear coordinates and the Goldman Poisson structure on the geometric leaf in Case~\ref{case1} of the solution of the rank condition in $\mathcal{A}_6$.

Note first that the amalgamation procedure described in \cite{10.1093/imrn/rnac101} (and reminded in Section~\ref{sec:logcanonicalgroupoid})
replaces the product $Z_{\ell, 0, 6-\ell}Z_{6-\ell, \ell, 0}$ by one variable because the expression for any geodesic function contains solely the product $Z_{\ell, 0, 6-\ell}Z_{6-\ell, \ell, 0}$ for all $\ell\in\overline{1,5}$. In particular, the amalgamated coordinates $a_1,a_2,a_3,a_4,a_5,a_6$, $b_1,b_2,b_3,b_4,b_5,b_6$, $c_1,c_2,c_3$ were introduced in \cite{10.1093/imrn/rnac101}. Each $a_i, b_i,$ or $c_i$ is either one of the parameters $Z_{ijk}$ or the product $Z_{\ell, 0, 6-\ell}Z_{6-\ell, \ell, 0}$ such that all entries of $A\in \mathcal{A}_6$ are Laurent polynomial expressions in $a_i^\frac{1}{2},b_j^\frac{1}{2},c_k^\frac{1}{2}$. The Poisson bracket for $a_i,b_j,c_k$ is described by the quiver Fig.~\ref{fig:clusterA6} (isomorphic to the quiver on Fig.~\ref{fig:amalg}).

\begin{figure}
\begin{tikzpicture}[bend angle = 15, scale=0.6]
    \def \pointradius{2.5cm};

    \node(O1) at (90:2*\pointradius) {};
    \node(O2) at (30:2*\pointradius) {};
    \node(O3) at (-30:2*\pointradius) {};
    \node(O4) at (-90:2*\pointradius) {};
    \node(O5) at (-150:2*\pointradius) {};
    \node(O6) at (150:2*\pointradius) {};
    \node(M1) at (60:\pointradius) {};
    \node(M2) at (0:\pointradius) {};
    \node(M3) at (-60:\pointradius) {};
    \node(M4) at (-120:\pointradius) {};
    \node(M5) at (180:\pointradius) {};
    \node(M6) at (120:\pointradius) {};
    \node(I1) at (90:0.4*\pointradius) {};
    \node(I2) at (-30:0.4*\pointradius) {};
    \node(I3) at (210:0.4*\pointradius) {};

    \def \linewidth{0.4mm};
    \draw[-latex, line width = \linewidth,blue]{(O1) -- (O2)};
    \draw[-latex, line width = \linewidth,blue]{(O2) -- (O3)};
    \draw[-latex, line width = \linewidth,blue]{(O3) -- (O4)};
    \draw[-latex, line width = \linewidth,blue]{(O4) -- (O5)};
    \draw[-latex, line width = \linewidth,blue]{(O5) -- (O6)};
    \draw[-latex, line width = \linewidth,blue]{(O6) -- (O1)};

    \draw[-latex, line width = \linewidth,blue]{(O2) -- (M1)};
    \draw[-latex, line width = \linewidth,blue]{(M2) -- (O2)};
    \draw[-latex, line width = \linewidth,blue]{(O3) -- (M2)};
    \draw[-latex, line width = \linewidth,blue]{(M3) -- (O3)};
    \draw[-latex, line width = \linewidth,blue]{(O4) -- (M3)};
    \draw[-latex, line width = \linewidth,blue]{(M4) -- (O4)};
    \draw[-latex, line width = \linewidth,blue]{(O5) -- (M4)};
    \draw[-latex, line width = \linewidth,blue]{(M5) -- (O5)};
    \draw[-latex, line width = \linewidth,blue]{(O6) -- (M5)};
    \draw[-latex, line width = \linewidth,blue]{(M6) -- (O6)};
    \draw[-latex, line width = \linewidth,blue]{(O1) -- (M6)};
    \draw[-latex, line width = \linewidth,blue]{(M1) -- (O1)};

    \draw[-latex, line width = \linewidth,blue]{(M1) -- (M2)};
    \draw[-latex, line width = \linewidth,blue]{(M2) -- (M3)};
    \draw[-latex, line width = \linewidth,blue]{(M3) -- (M4)};
    \draw[-latex, line width = \linewidth,blue]{(M4) -- (M5)};
    \draw[-latex, line width = \linewidth,blue]{(M5) -- (M6)};
    \draw[-latex, line width = \linewidth,blue]{(M6) -- (M1)};

    \draw[-latex, line width = \linewidth,blue]{(M1) -- (I1)};
    \draw[-latex, line width = \linewidth,blue]{(I1) -- (M6)};
    \draw[-latex, line width = \linewidth,blue]{(M3) -- (I2)};
    \draw[-latex, line width = \linewidth,blue]{(I2) -- (M2)};
    \draw[-latex, line width = \linewidth,blue]{(I3) -- (M4)};
    \draw[-latex, line width = \linewidth,blue]{(M5) -- (I3)};

    \draw[-latex, line width = \linewidth,blue]{(M4) to [bend right] (I1)};
    \draw[-latex, line width = \linewidth,blue]{(I1) to [bend right] (M3)};
    \draw[-latex, line width = \linewidth,blue]{(M6) to [bend right] (I2)};
    \draw[-latex, line width = \linewidth,blue]{(I2) to [bend right] (M5)};
    \draw[-latex, line width = \linewidth,blue]{(M2) to [bend right] (I3)};
    \draw[-latex, line width = \linewidth,blue]{(I3) to [bend right] (M1)};

    \draw[-latex, line width = \linewidth,blue]{(I1) to [bend right] (I3)};
    \draw[-latex, line width = \linewidth,blue]{(I2) to [bend right] (I1)};
    \draw[-latex, line width = \linewidth,blue]{(I3) to [bend right] (I2)};

    \def \radius{0.2cm};
    \fill[red] (O1) circle(\radius) node[above] {{\color{black}$a_6$}};
    \fill[red] (O2) circle(\radius) node[above right] {{\color{black}$a_1$}};
    \fill[red] (O3) circle(\radius) node[below right] {{\color{black}$a_2$}};
    \fill[red] (O4) circle(\radius) node[below] {{\color{black}$a_3$}};
    \fill[red] (O5) circle(\radius) node[below left] {{\color{black}$a_4$}};
    \fill[red] (O6) circle(\radius) node[above left] {{\color{black}$a_5$}};
    \fill[green]  (M1) circle(\radius) node[above right] {{\color{black}$b_6$}};
    \fill[green]  (M2) circle(\radius) node[right] {{\color{black}$b_1$}};
    \fill[green]  (M3)[mutation] circle(\radius) node[below right] {{\color{black}$b_2$}};
    \fill[green]  (M4) circle(\radius) node[below left] {{\color{black}$b_3$}};
    \fill[green]  (M5) circle(\radius) node[left] {{\color{black}$b_4$}};
    \fill[green]  (M6) circle(\radius) node[above left] {{\color{black}$b_5$}};
    \fill[yellow] (I1) circle(\radius) node[above] {{\color{black}$c_1$}};
    \fill[yellow] (I2) circle(\radius) node[below right] {{\color{black}$c_3$}};
    \fill[yellow] (I3) circle(\radius) node[below left] {{\color{black}$c_2$}};
\end{tikzpicture}
    \caption{Quiver of the Poisson bracket and cluster structure for $\mathcal{A}_6$}
    \label{fig:clusterA6}
\end{figure}
The elementary geodesic functions $G_{i,i+1}$, $i=1,\dots,5$ and $G_{6,1}$, are 
\begin{align*}
&\langle a_6b_5c_3b_1a_1 \rangle,\qquad \langle a_1b_6c_1b_2a_2 \rangle, \qquad  \langle a_2b_1c_2b_3a_3 \rangle \\
& \langle a_3b_2c_3b_4a_4 \rangle,\qquad  \langle a_4b_3c_1b_5a_5 \rangle,\qquad  \langle a_5b_4c_2b_6a_6 \rangle 
\end{align*}
The reduction conditions are
$$
c_1c_2c_3=1\qquad\text{ and }\qquad 1+\frac1{c_2}+\frac{1}{c_2c_3}=0.
$$
Mutating at $c_1$, we obtain the transformed $n=6$ quiver 
{depicted in Fig.~\ref{fig:clusterA6mutated}}.

\begin{figure}
\centering
\begin{tikzpicture}[bend angle = 15, scale=0.5]
    \def \pointradius{2.5cm};

    \node(O1) at (90:2*\pointradius) {};
    \node(O2) at (30:2*\pointradius) {};
    \node(O3) at (-30:2*\pointradius) {};
    \node(O4) at (-90:2*\pointradius) {};
    \node(O5) at (-150:2*\pointradius) {};
    \node(O6) at (150:2*\pointradius) {};
    \node(M1) at (60:\pointradius) {};
    \node(M2) at (0:\pointradius) {};
    \node(M3) at (-60:\pointradius) {};
    \node(M4) at (-120:\pointradius) {};
    \node(M5) at (180:\pointradius) {};
    \node(M6) at (120:\pointradius) {};
    \node(I1) at (0:0.0*\pointradius) {};
    \node(I2) at (-90:0.6*\pointradius) {};
    \node(I3) at (90:0.6*\pointradius) {};

    \def \linewidth{0.4mm};
    \draw[-latex, line width = \linewidth,blue]{(O1) -- (O2)};
    \draw[-latex, line width = \linewidth,blue]{(O2) -- (O3)};
    \draw[-latex, line width = \linewidth,blue]{(O3) -- (O4)};
    \draw[-latex, line width = \linewidth,blue]{(O4) -- (O5)};
    \draw[-latex, line width = \linewidth,blue]{(O5) -- (O6)};
    \draw[-latex, line width = \linewidth,blue]{(O6) -- (O1)};

    \draw[-latex, line width = \linewidth,blue]{(O2) -- (M1)};
    \draw[-latex, line width = \linewidth,blue]{(M2) -- (O2)};
    \draw[-latex, line width = \linewidth,blue]{(O3) -- (M2)};
    \draw[-latex, line width = \linewidth,blue]{(M3) -- (O3)};
    \draw[-latex, line width = \linewidth,blue]{(O4) -- (M3)};
    \draw[-latex, line width = \linewidth,blue]{(M4) -- (O4)};
    \draw[-latex, line width = \linewidth,blue]{(O5) -- (M4)};
    \draw[-latex, line width = \linewidth,blue]{(M5) -- (O5)};
    \draw[-latex, line width = \linewidth,blue]{(O6) -- (M5)};
    \draw[-latex, line width = \linewidth,blue]{(M6) -- (O6)};
    \draw[-latex, line width = \linewidth,blue]{(O1) -- (M6)};
    \draw[-latex, line width = \linewidth,blue]{(M1) -- (O1)};

    \draw[-latex, line width = \linewidth,blue]{(M1) -- (M2)};
    \draw[-latex, line width = \linewidth,blue]{(M2) -- (M3)};
    \draw[-latex, line width = \linewidth,blue]{(M1) -- (M3)};
    \draw[-latex, line width = \linewidth,blue]{(M4) -- (M5)};
    \draw[-latex, line width = \linewidth,blue]{(M5) -- (M6)};
    \draw[-latex, line width = \linewidth,blue]{(M4) -- (M6)};

    \draw[-latex, line width = \linewidth,blue]{(I1) -- (M1)};
    \draw[-latex, line width = \linewidth,blue]{(M3) -- (I1)};
    \draw[-latex, line width = \linewidth,blue]{(I1) -- (M4)};
    \draw[-latex, line width = \linewidth,blue]{(M6) -- (I1)};
    \draw[-latex, line width = \linewidth,blue]{(I2) -- (I1)};
    \draw[-latex, line width = \linewidth,blue]{(I1) -- (I3)};

 \draw[-latex, line width = \linewidth,blue]{(I3) -- (M2)};
 \draw[-latex, line width = \linewidth,blue]{(M2) -- (I2)};

 \draw[-latex, line width = \linewidth,blue]{(I3) -- (M5)};
 \draw[-latex, line width = \linewidth,blue]{(M5) -- (I2)};



    \def \radius{0.2cm};
    \fill[red] (O1) circle(\radius) node[above] {{\color{black}$a'_6$}};
    \fill[red] (O2) circle(\radius) node[above right] {{\color{black}$a'_1$}};
    \fill[red] (O3) circle(\radius) node[below right] {{\color{black}$a'_2$}};
    \fill[red] (O4) circle(\radius) node[below] {{\color{black}$a'_3$}};
    \fill[red] (O5) circle(\radius) node[below left] {{\color{black}$a'_4$}};
    \fill[red] (O6) circle(\radius) node[above left] {{\color{black}$a'_5$}};
    \fill[green]  (M1) circle(\radius) node[above right] {{\color{black}$b'_6$}};
    \fill[green]  (M2) circle(\radius) node[right] {{\color{black}$b'_1$}};
    \fill[green]  (M3)[mutation] circle(\radius) node[below right] {{\color{black}$b'_2$}};
    \fill[green]  (M4) circle(\radius) node[below left] {{\color{black}$b'_3$}};
    \fill[green]  (M5) circle(\radius) node[left] {{\color{black}$b'_4$}};
    \fill[green]  (M6) circle(\radius) node[above left] {{\color{black}$b'_5$}};
    \fill[yellow] (I1) circle(\radius) node[left] {{\color{black}$c'_1$}};
    \fill[yellow] (I3) circle(\radius) node[above] {{\color{black}$c'_3$}};
    \fill[yellow] (I2) circle(\radius) node[below] {{\color{black}$c'_2$}};
\end{tikzpicture}
   \caption{$\mathcal{A}_6$-quiver upon mutation at $c_1$.}
    \label{fig:clusterA6mutated}
\end{figure}

In the quiver in Fig.~\ref{fig:clusterA6mutated}, the corresponding elementary geodesic functions are
\begin{align*}
&\langle a'_6b'_5c'_1c'_3b'_1a'_1 \rangle,\qquad \langle a'_1b'_6b'_2a'_2 \rangle, \qquad  \langle a'_2b'_1c'_2c'_1b'_3a'_3 \rangle, \\
& \langle a'_3b'_2c'_1c'_3b'_4a'_4 \rangle,\qquad  \langle a'_4b'_3b'_5a'_5 \rangle,\qquad  \langle a'_5b'_4c'_2c'_1b'_6a'_6 \rangle.
\end{align*}

By prime, we denote the corresponding cluster variables after mutations.

Let's rewrite the reduction conditions in the new cluster variables.
\begin{align*}
c_1c_2 c_3=1&\iff\dfrac{1}{c'_1}c'_2\dfrac{c'_1}{1+c'_1}c'_3(1+c'_1)=c'_2c'_3=1;\\
1+\dfrac{1}{c_2}+\dfrac{1}{c_2c_3}=0&\iff 1+\dfrac{1+c'_1}{c'_2c'_1}+\frac{1+c'_1}{c'_1c'_2c'_3(1+c'_1)}=\\
1+\dfrac{1+c'_1}{c'_2c'_1}+\frac{1}{c'_1c'_2c'_3} & \overset{c'_2c'_3=1}{=\joinrel=\joinrel=}\; 1+\dfrac{1}{c'_1c'_2}+\dfrac{1}{c'_2}+\frac{1}{c'_1}=0\\&\iff\left(1+\dfrac{1}{c'_2}\right)\left(1+\dfrac{1}{c'_1}\right)=0
\end{align*}

We conclude that the reduction conditions become 
$$
c'_2c'_3=1,\qquad \Bigl(1+\frac1{c'_1}\Bigr)\Bigl(1+\frac1{c'_2}\Bigr)=0
$$
whose solution is 
\begin{equation}\label{cc}
c'_2=-1,\qquad c'_3=-1.
\end{equation}

Note that the solution $c_1'=-1$ is not in {
the set of allowed values} for mutation at $c_1$.


Consider the expression 
$$\langle a'_6b'_5c'_1c'_3b'_1a'_1 \rangle=
(a'_6b'_5c'_1c'_3b'_1a'_1)^{1/2}\Bigl(1+\frac{1}{a'_6}+\frac{1}{a'_6b'_5}+\frac{1}{a'_6b'_5c'_1}+\frac{1}{a'_6b'_5c'_1c'_3}+\frac{1}{a'_6b'_5c'_1c'_3b'_1}+\frac{1}{a'_6b'_5c'_1c'_3b'_1a'_2}\Bigr).
$$
With conditions (\ref{cc}) imposed, the fourth and fifth terms mutually cancel each other. Introducing a new variable
$$
\widetilde b_1:=c'_3c'_1b'_1\;\overset{c'_3=-1}{=\joinrel=\joinrel=}\; -c'_1b'_1,
$$
the same expression can be written as $\langle a'_6b'_5\widetilde b_1a'_1 \rangle$. Correspondingly, introducing another variable 
$$
\widetilde b_4:=c'_2c'_1b'_4\;\overset{c'_2=-1}{=\joinrel=\joinrel=}\;-c'_1b'_4,
$$
we can write all six geodesic functions in the concise form
\begin{align*}
&\langle a'_6b'_5\widetilde b_1a'_1 \rangle,\qquad \langle a'_1b'_6b'_2a'_2 \rangle, \qquad  \langle a'_2\widetilde b'_1b'_3a'_3 \rangle, \\
& \langle a'_3b'_2\widetilde b_4a'_4 \rangle,\qquad  \langle a'_4b'_3b'_5a'_5 \rangle,\qquad  \langle a'_5\widetilde b_4b'_6a'_6 \rangle. 
\end{align*}

Amalgamating $-b'_1c'_1=\widetilde b_1$ and $-b'_4c'_1=\widetilde b_4$ we get the new quiver (with $c'_2$ and $c'_3$ detached because they Poisson commute with all the variables obtained after amalgamation.)

 \begin{figure}[H]
\begin{center}
\begin{tikzpicture}[bend angle = 15, scale=0.4]
    \def \pointradius{2.5cm};

    \node(O1) at (90:2*\pointradius) {};
    \node(O2) at (30:2*\pointradius) {};
    \node(O3) at (-30:2*\pointradius) {};
    \node(O4) at (-90:2*\pointradius) {};
    \node(O5) at (-150:2*\pointradius) {};
    \node(O6) at (150:2*\pointradius) {};
    \node(M1) at (60:\pointradius) {};
    \node(M2) at (0:\pointradius) {};
    \node(M3) at (-60:\pointradius) {};
    \node(M4) at (-120:\pointradius) {};
    \node(M5) at (180:\pointradius) {};
    \node(M6) at (120:\pointradius) {};
    \node(I1) at (0:0.0*\pointradius) {};
    \node(I2) at (-90:0.8*\pointradius) {};
    \node(I3) at (90:0.8*\pointradius) {};

    \def \linewidth{0.4mm};
    \draw[-latex, line width = \linewidth,blue]{(O1) -- (O2)};
    \draw[-latex, line width = \linewidth,blue]{(O2) -- (O3)};
    \draw[-latex, line width = \linewidth,blue]{(O3) -- (O4)};
    \draw[-latex, line width = \linewidth,blue]{(O4) -- (O5)};
    \draw[-latex, line width = \linewidth,blue]{(O5) -- (O6)};
    \draw[-latex, line width = \linewidth,blue]{(O6) -- (O1)};

    \draw[-latex, line width = \linewidth,blue]{(O2) -- (M1)};
    \draw[-latex, line width = \linewidth,blue]{(M2) -- (O2)};
    \draw[-latex, line width = \linewidth,blue]{(O3) -- (M2)};
    \draw[-latex, line width = \linewidth,blue]{(M3) -- (O3)};
    \draw[-latex, line width = \linewidth,blue]{(O4) -- (M3)};
    \draw[-latex, line width = \linewidth,blue]{(M4) -- (O4)};
    \draw[-latex, line width = \linewidth,blue]{(O5) -- (M4)};
    \draw[-latex, line width = \linewidth,blue]{(M5) -- (O5)};
    \draw[-latex, line width = \linewidth,blue]{(O6) -- (M5)};
    \draw[-latex, line width = \linewidth,blue]{(M6) -- (O6)};
    \draw[-latex, line width = \linewidth,blue]{(O1) -- (M6)};
    \draw[-latex, line width = \linewidth,blue]{(M1) -- (O1)};

    \draw[-latex, line width = \linewidth,blue]{(M1) -- (M3)};
    \draw[-latex, line width = \linewidth,blue]{(M2) -- (M4)};
    \draw[-latex, line width = \linewidth,blue]{(M3) -- (M5)};
    \draw[-latex, line width = \linewidth,blue]{(M4) -- (M6)};
    \draw[-latex, line width = \linewidth,blue]{(M5) -- (M1)};
    \draw[-latex, line width = \linewidth,blue]{(M6) -- (M2)};






    \def \radius{0.2cm};
    \fill[red] (O1) circle(\radius) node[above] {{\color{black}$a'_6$}};
    \fill[red] (O2) circle(\radius) node[above right] {{\color{black}$a'_1$}};
    \fill[red] (O3) circle(\radius) node[below right] {{\color{black}$a'_2$}};
    \fill[red] (O4) circle(\radius) node[below] {{\color{black}$a'_3$}};
    \fill[red] (O5) circle(\radius) node[below left] {{\color{black}$a'_4$}};
    \fill[red] (O6) circle(\radius) node[above left] {{\color{black}$a'_5$}};
    \fill[green]  (M1) circle(\radius) node[above right] {{\color{black}$b'_6$}};
    \fill[green]  (M2) circle(\radius) node[right] {{\color{black}$\widetilde b_1$}};
    \fill[green]  (M3)[mutation] circle(\radius) node[below right] {{\color{black}$b'_2$}};
    \fill[green]  (M4) circle(\radius) node[below left] {{\color{black}$b'_3$}};
    \fill[green]  (M5) circle(\radius) node[left] {{\color{black}$\widetilde b_4$}};
    \fill[green]  (M6) circle(\radius) node[above left] {{\color{black}$b'_5$}};
    \fill[yellow] (I3) circle(\radius) node[above] {{\color{black}$c'_3$}};
    \fill[yellow] (I2) circle(\radius) node[below] {{\color{black}$c'_2$}};
\end{tikzpicture}
\end{center}

\caption{Quiver of the Poisson bracket for variables $a_1',a_2',a_3',a_4',a_5',a_6',{\widetilde b_1},b_2',b_3',{\widetilde b_4},b'_5,b'_6$}
\label{fig:reducedquiver}
\end{figure}

Note that all vertices of the quiver (Fig.~\ref{fig:reducedquiver}) are four-valent with two incoming arrows 
and two outgoing arrows. It is straightforward to check that this quiver corresponds to a ribbon graph (Fig.~\ref{fig:fat_graph}) for $g=2$, $s=2$ with vertices of the quiver corresponding to edges of the ribbon graph:
\bigskip

 \begin{figure}[H]
\begin{center}
\begin{tikzpicture}[bend angle = 15, scale=0.4]
    \def \pointradius{2.5cm};

\node(O1) at (0,0){};
\node(O2) at (45:2*\pointradius){};

    \node(O11) at ($(O1)+(45:2*\pointradius)$) {};
    \node(O12) at  ($(O1)+(-45:2*\pointradius)$) {};
    \node(O13) at ($(O1)+(-135:2*\pointradius)$) {};
    \node(O14) at ($(O1)+(135:2*\pointradius)$) {};
    \node(O21) at ($(O2)+(45:2*\pointradius)$) {};
    \node(O22) at  ($(O2)+(-45:2*\pointradius)$) {};
    \node(O23) at ($(O2)+(-135:2*\pointradius)$) {};
    \node(O24) at ($(O2)+(135:2*\pointradius)$) {};

    \node(a4) at ($0.5*(O21)+0.5*(O24)$) {};
    \node(a5) at ($0.5*(O21)+0.5*(O22)$) {};
    \node(b5) at ($0.5*(O22)+0.5*(O23)$) {};
    \node(b3) at ($0.5*(O23)+0.5*(O24)$) {};

    \node(a3) at ($0.5*(O24)+0.5*(O14)$) {};
     \node(b4) at ($0.5*(O21)+0.5*(O11)$) {};
    \node(b1) at ($0.5*(O13)+0.5*(O23)$) {};
    \node(a6) at ($0.5*(O12)+0.5*(O22)$) {};

   \node(b2) at ($0.5*(O11)+0.5*(O14)$) {};
    \node(b6) at ($0.5*(O11)+0.5*(O12)$) {};
    \node(a1) at ($0.5*(O12)+0.5*(O13)$) {};
    \node(a2) at ($0.5*(O13)+0.5*(O14)$) {};

    \def \linewidth{2mm};
        \def \linew{1.5mm};
        \def \verradb{15mm};
        \def\verrads{14mm}

\draw   (O21) circle[radius=\verradb] [fill=black];
\draw   (O22) circle[radius=\verradb] [fill=black];
\draw   (O23) circle[radius=\verradb] [fill=black];
\draw   (O24) circle[radius=\verradb] [fill=black];

    \draw[very thick] ($(O21)+(-2*\linewidth,2*\linewidth)$) rectangle ($(O22)+(2*\linewidth,2*\linewidth)$) [fill=black];
    \draw[very thick] ($(O22)+(2*\linewidth,-2*\linewidth)$) rectangle ($(O23)+(-2*\linewidth,2*\linewidth)$) [fill=black];
     \draw[very thick] ($(O23)+(-2*\linewidth,-2*\linewidth)$) rectangle ($(O24)+(2*\linewidth,2*\linewidth)$) [fill=black];
      \draw[very thick] ($(O24)+(2*\linewidth,2*\linewidth)$) rectangle ($(O21)+(-2*\linewidth,-2*\linewidth)$) [fill=black];

      \draw[very thick,rotate=45] ($(O11)+(0,2*\linewidth)$) rectangle ($(O21)+(0,-3*\linewidth)$) [fill=black];
    \draw[very thick,rotate=45] ($(O12)+(0,2*\linewidth)$) rectangle ($(O22)+(0,-3*\linewidth)$) [fill=black];
    \draw[very thick,rotate=45] ($(O13)+(0,-3*\linewidth)$) rectangle ($(O23)+(0,3*\linewidth)$) [fill=black]; 
     \draw[very thick,rotate=45] ($(O14)+(0,3*\linewidth)$) rectangle ($(O24)+(0,-2*\linewidth)$) [fill=black];

\draw   (O21) circle[radius=\verrads] [fill=white];
\draw   (O22) circle[radius=\verrads] [fill=white];
\draw   (O23) circle[radius=\verrads] [fill=white];
\draw   (O24) circle[radius=\verrads] [fill=white];

        \fill [white] ($(O21)+(-2*\linew,2*\linew)$) rectangle ($(O22)+(2*\linew,2*\linew)$);
    \fill[white] ($(O22)+(2*\linew,-2*\linew)$) rectangle ($(O23)+(-2*\linew,2*\linew)$);
    \fill[white] ($(O23)+(-2*\linew,-2*\linew)$) rectangle ($(O24)+(2*\linew,2*\linew)$);
  \fill[white] ($(O24)+(2*\linew,2*\linew)$) rectangle ($(O21)+(-2*\linew,-2*\linew)$);

  \draw[very thick] ($(O11)+(-2*\linewidth,2*\linewidth)$) rectangle ($(O12)+(2*\linewidth,2*\linewidth)$) [fill=black];
    \draw[very thick] ($(O12)+(2*\linewidth,-2*\linewidth)$) rectangle ($(O13)+(-2*\linewidth,2*\linewidth)$) [fill=black];
     \draw[very thick] ($(O13)+(-2*\linewidth,-2*\linewidth)$) rectangle ($(O14)+(2*\linewidth,2*\linewidth)$) [fill=black];
     \draw[very thick] ($(O14)+(2*\linewidth,2*\linewidth)$) rectangle ($(O11)+(-2*\linewidth,-2*\linewidth)$) [fill=black];

\draw   (O11) circle[radius=\verradb] [fill=black];
\draw   (O12) circle[radius=\verradb] [fill=black];
\draw   (O13) circle[radius=\verradb] [fill=black];
\draw   (O14) circle[radius=\verradb] [fill=black];

        \fill [white] ($(O11)+(-2*\linew,2*\linew)$) rectangle ($(O12)+(2*\linew,2*\linew)$);
    \fill[white] ($(O12)+(2*\linew,-2*\linew)$) rectangle ($(O13)+(-2*\linew,2*\linew)$);
   \fill[white] ($(O13)+(-2*\linew,-2*\linew)$) rectangle ($(O14)+(2*\linew,2*\linew)$);
    \fill[white] ($(O14)+(2*\linew,2*\linew)$) rectangle ($(O11)+(-2*\linew,-2*\linew)$);

  \fill[white,rotate=45] ($(O11)+(-0.1,2*\linew-0.1)$) rectangle ($(O21)+(0.1,-3*\linew+0.1)$);
  \fill[white,rotate=45] ($(O12)+(-0.1,2*\linew-0.1)$) rectangle ($(O22)+(0.1,-3*\linew+0.1)$);
  \fill[white,rotate=45] ($(O13)+(-0.1,-3*\linew-0.1)$) rectangle ($(O23)+(0.1,3*\linew+0.1)$);
     \fill[white,rotate=45] ($(O14)+(-0.1,3*\linew-0.1)$) rectangle ($(O24)+(0.1,-2*\linew+0.1)$);

\draw[white]   (O11) circle[radius=\verrads] [fill=white];
\draw[white]    (O12) circle[radius=\verrads] [fill=white];
\draw[white]    (O13) circle[radius=\verrads] [fill=white];
\draw[white]    (O14) circle[radius=\verrads] [fill=white];

\draw (a1)  node[below,yshift=-1mm] {{\color{black}$a'_1$}};
\draw (a2)  node[left,xshift=-1mm] {{\color{black}$a'_2$}};
\draw (a6)  node[below right,xshift=1mm] {{\color{black}$a'_6$}};
\draw (b2)  node[below left,yshift=-1mm,xshift=-1mm] {{\color{black}$b'_2$}};
\draw (b6)  node[below left,yshift=-1mm,xshift=-1mm] {{\color{black}$b'_6$}};
\draw (a5)  node[right,yshift=0mm,xshift=2mm] {{\color{black}$a'_5$}};
\draw (b4)  node[above left,yshift=0mm,xshift=-2mm] {{\color{black}$\widetilde b_4$}};
\draw (b5)  node[above right,yshift=2mm,xshift=2mm] {{\color{black}$b'_5$}};
\draw (b3)  node[above right,yshift=2mm,xshift=2mm] {{\color{black}$b'_3$}};
\draw (a3)  node[above left,yshift=2mm,xshift=-2mm] {{\color{black}$a'_3$}};
\draw (a4)  node[above,yshift=2mm,xshift=0mm] {{\color{black}$a'_4$}};
\draw (b1)  node[above left,yshift=2mm,xshift=0mm] {{\color{black}$\widetilde b_1$}};
 \end{tikzpicture}
\end{center}

\caption{Ribbon graph of genus $2$ surface with $2$ holes.}
\label{fig:fat_graph}
\end{figure}
\bigskip
Introducing 
$
L_s:=LX_s=\begin{pmatrix} s^{-1/2}&0\\ -s^{-1/2}&s^{1/2} \end{pmatrix}\hbox{ and }R_s:=RX_s=\begin{pmatrix} s^{-1/2}&-s^{1/2}\\0 &s^{1/2} \end{pmatrix},
$
the geodesic functions take the form
\begin{align*}
&\tr\bigl(R_{a'_1} L_{\widetilde b_1} L_{b'_5}L_{a'_6}\bigr),\qquad  \tr\bigl(R_{a'_2} L_{b'_2} L_{b'_6}L_{a'_1}\bigr), \qquad  \tr\bigl(R_{a'_3} L_{b'_3} L_{\widetilde b_1} L_{a'_2}\bigr),\\
&\tr\bigl(R_{a'_4} L_{\widetilde b_4} L_{b'_2}L_{a'_3}\bigr),\qquad \tr\bigl(R_{a'_5} L_{b'_5} L_{b'_3}L_{a'_4}\bigr),\qquad \tr\bigl(R_{a'_6} L_{b'_6} L_{\widetilde b_4} L_{a'_5}\bigr).
\end{align*}

\begin{remark}
    Note that condition~\ref{cc} indicates that $c'_2$ and $c_3'$ do not belong to {
    the set of allowed values} for mutations at $c_2'$ and $c_3'$.
\end{remark}

\subsubsection{Hamiltonian reduction: Cases~\ref{case2} and ~\ref{case3}.}

Instead of trying to construct a straightforward Hamiltonian reduction for Cases~\ref{case2} and ~\ref{case3}, we use an indirect approach 
{which we already used in the case $n=5$}. More precisely, we find sequences of mutations that transform Cases~\ref{case2} and ~\ref{case3} to Case~\ref{case1}. 
{
Note that all cases differ by the element of matrix $\Psi$ that vanish. The vanishing of the corresponding elements corresponds to fixing the value of the corresponding Casimir function $K_1, K_2$, or $K_3$ to $1$ as explained above. Each Casimir function is a monomial in the variables $a_i, b_i$, and $c_i$ that are assigned to the vertices of the quiver ~\ref{fig:clusterA6}. We assign to every variable a 3-row vector in ${\mathbb Z}^3$,  whose components are the powers of the corresponding variable in the Casimir function. In the figures below, the red color corresponds to exponents in $K_1$, the green to $K_2$, and the blue to $K_3$.
In the following Figure, we see that $ K_1 = c_1c_2c_3$, and so on.
}

We observe first that the mutation sequences below permute the Casimir functions of the quiver~\ref{fig:clusterA6}. Then we check that the sequences permute the corresponding minors $\psi_{4,4}$, $\det(\Psi_{[3,5]}^{[3,5]})$, and $\det(\Psi_{[2,6]}^{[2,6]})$.

{
Beginning with the quiver for $\mathcal{A}_6$, we perform the following mutations at the bold highlighted vertices. Under the mutations, $K_1, K_2$, and $K_3$ remain monomials of mutated variables whose exponents are the components of vectors written in the following Figure.
}

\begin{figure}[H]
\begin{tikzpicture}[bend angle = 15, scale=0.6]
    \def \pointradius{2.5cm};

    \node(O1) at (90:2*\pointradius) {};
    \node(O2) at (30:2*\pointradius) {};
    \node(O3) at (-30:2*\pointradius) {};
    \node(O4) at (-90:2*\pointradius) {};
    \node(O5) at (-150:2*\pointradius) {};
    \node(O6) at (150:2*\pointradius) {};
    \node(M1) at (60:\pointradius) {};
    \node(M2) at (0:\pointradius) {};
    \node(M3) at (-60:\pointradius) {};
    \node(M4) at (-120:\pointradius) {};
    \node(M5) at (180:\pointradius) {};
    \node(M6) at (120:\pointradius) {};
    \node(I1) at (90:0.4*\pointradius) {};
    \node(I2) at (-30:0.4*\pointradius) {};
    \node(I3) at (210:0.4*\pointradius) {};

    \def \linewidth{0.4mm};
    \draw[-latex, line width = \linewidth]{(O1) -- (O2)};
    \draw[-latex, line width = \linewidth]{(O2) -- (O3)};
    \draw[-latex, line width = \linewidth]{(O3) -- (O4)};
    \draw[-latex, line width = \linewidth]{(O4) -- (O5)};
    \draw[-latex, line width = \linewidth]{(O5) -- (O6)};
    \draw[-latex, line width = \linewidth]{(O6) -- (O1)};

    \draw[-latex, line width = \linewidth]{(O2) -- (M1)};
    \draw[-latex, line width = \linewidth]{(M2) -- (O2)};
    \draw[-latex, line width = \linewidth]{(O3) -- (M2)};
    \draw[-latex, line width = \linewidth]{(M3) -- (O3)};
    \draw[-latex, line width = \linewidth]{(O4) -- (M3)};
    \draw[-latex, line width = \linewidth]{(M4) -- (O4)};
    \draw[-latex, line width = \linewidth]{(O5) -- (M4)};
    \draw[-latex, line width = \linewidth]{(M5) -- (O5)};
    \draw[-latex, line width = \linewidth]{(O6) -- (M5)};
    \draw[-latex, line width = \linewidth]{(M6) -- (O6)};
    \draw[-latex, line width = \linewidth]{(O1) -- (M6)};
    \draw[-latex, line width = \linewidth]{(M1) -- (O1)};

    \draw[-latex, line width = \linewidth]{(M1) -- (M2)};
    \draw[-latex, line width = \linewidth]{(M2) -- (M3)};
    \draw[-latex, line width = \linewidth]{(M3) -- (M4)};
    \draw[-latex, line width = \linewidth]{(M4) -- (M5)};
    \draw[-latex, line width = \linewidth]{(M5) -- (M6)};
    \draw[-latex, line width = \linewidth]{(M6) -- (M1)};

    \draw[-latex, line width = \linewidth]{(M1) -- (I1)};
    \draw[-latex, line width = \linewidth]{(I1) -- (M6)};
    \draw[-latex, line width = \linewidth]{(M3) -- (I2)};
    \draw[-latex, line width = \linewidth]{(I2) -- (M2)};
    \draw[-latex, line width = \linewidth]{(M4) -- (I3)};
    \draw[-latex, line width = \linewidth]{(I3) -- (M5)};

    \draw[-latex, line width = \linewidth]{(M4) to [bend right] (I1)};
    \draw[-latex, line width = \linewidth]{(I1) to [bend right] (M3)};
    \draw[-latex, line width = \linewidth]{(M6) to [bend right] (I2)};
    \draw[-latex, line width = \linewidth]{(I2) to [bend right] (M5)};
    \draw[-latex, line width = \linewidth]{(M2) to [bend right] (I3)};
    \draw[-latex, line width = \linewidth]{(I3) to [bend right] (M1)};

    \draw[-latex, line width = \linewidth]{(I3) to [bend left] (I1)};
    \draw[-latex, line width = \linewidth]{(I1) to [bend left] (I2)};
    \draw[-latex, line width = \linewidth]{(I2) to [bend left] (I3)};

    \def \radius{0.1cm};
    \fill (O1) circle(\radius) node[above] {{\color{cas1}1}{\color{cas2}0}{\color{cas3}0}};
    \fill (O2) circle(\radius) node[above right] {{\color{cas1}1}{\color{cas2}0}{\color{cas3}0}};
    \fill (O3) circle(\radius) node[below right] {{\color{cas1}1}{\color{cas2}0}{\color{cas3}0}};
    \fill (O4) circle(\radius) node[below] {{\color{cas1}1}{\color{cas2}0}{\color{cas3}0}};
    \fill (O5) circle(\radius) node[below left] {{\color{cas1}1}{\color{cas2}0}{\color{cas3}0}};
    \fill (O6) circle(\radius) node[above left] {{\color{cas1}1}{\color{cas2}0}{\color{cas3}0}};
    \fill (M1)[mutation] circle(2*\radius) node[above right] {{\color{cas1}1}{\color{cas2}0}{\color{cas3}1}};
    \fill (M2) circle(\radius) node[right] {{\color{cas1}1}{\color{cas2}0}{\color{cas3}1}};
    \fill (M3)[mutation] circle(2*\radius) node[below right] {{\color{cas1}1}{\color{cas2}0}{\color{cas3}1}};
    \fill (M4) circle(\radius) node[below left] {{\color{cas1}1}{\color{cas2}0}{\color{cas3}1}};
    \fill (M5)[mutation] circle(2*\radius) node[left] {{\color{cas1}1}{\color{cas2}0}{\color{cas3}1}};
    \fill (M6) circle(\radius) node[above left] {{\color{cas1}1}{\color{cas2}0}{\color{cas3}1}};
    \fill (I1) circle(\radius) node[above] {{\color{cas1}1}{\color{cas2}1}{\color{cas3}1}};
    \fill (I2) circle(\radius) node[below right] {{\color{cas1}1}{\color{cas2}1}{\color{cas3}1}};
    \fill (I3) circle(\radius) node[below left] {{\color{cas1}1}{\color{cas2}1}{\color{cas3}1}};
\end{tikzpicture}
\end{figure}
The result is shown below. We continue with the sequence of mutation at the bold highlighted vertices.
\begin{figure}[H]
\begin{tikzpicture}[bend angle = 15, scale=0.6]
    \def \pointradius{2.5cm};

    \node(O1) at (90:2*\pointradius) {};
    \node(O2) at (30:2*\pointradius) {};
    \node(O3) at (-30:2*\pointradius) {};
    \node(O4) at (-90:2*\pointradius) {};
    \node(O5) at (-150:2*\pointradius) {};
    \node(O6) at (150:2*\pointradius) {};
    \node(M1) at (60:\pointradius) {};
    \node(M2) at (0:\pointradius) {};
    \node(M3) at (-60:\pointradius) {};
    \node(M4) at (-120:\pointradius) {};
    \node(M5) at (180:\pointradius) {};
    \node(M6) at (120:\pointradius) {};
    \node(I1) at (90:0.4*\pointradius) {};
    \node(I2) at (-30:0.4*\pointradius) {};
    \node(I3) at (210:0.4*\pointradius) {};

    \def \linewidth{0.4mm};
    \draw[-latex, line width = \linewidth]{(O2) -- (O3)};
    \draw[-latex, line width = \linewidth]{(O4) -- (O5)};
    \draw[-latex, line width = \linewidth]{(O6) -- (O1)};

    \draw[-latex, line width = \linewidth]{(O1) -- (M1)};
    \draw[-latex, line width = \linewidth]{(M1) -- (O2)};
    \draw[-latex, line width = \linewidth]{(O3) -- (M3)};
    \draw[-latex, line width = \linewidth]{(M3) -- (O4)};
    \draw[-latex, line width = \linewidth]{(O5) -- (M5)};
    \draw[-latex, line width = \linewidth]{(M5) -- (O6)};

    \draw[-latex, line width = \linewidth]{(M2) -- (M1)};
    \draw[-latex, line width = \linewidth]{(M3) -- (M2)};
    \draw[-latex, line width = \linewidth]{(M4) -- (M3)};
    \draw[-latex, line width = \linewidth]{(M5) -- (M4)};
    \draw[-latex, line width = \linewidth]{(M6) -- (M5)};
    \draw[-latex, line width = \linewidth]{(M1) -- (M6)};

    \draw[-latex, line width = \linewidth]{(M6) to [bend left] (M2)};
    \draw[-latex, line width = \linewidth]{(M2) to [bend left] (M4)};
    \draw[-latex, line width = \linewidth]{(M4) to [bend left] (M6)};

    \draw[-latex, line width = \linewidth]{(O2) to [bend left] (I1)};
    \draw[-latex, line width = \linewidth]{(I1) -- (O3)};
    \draw[-latex, line width = \linewidth]{(M3) to [bend left] (I1)};
    \draw[-latex, line width = \linewidth]{(I1) -- (M1)};
    \draw[-latex, line width = \linewidth]{(O4) to [bend left] (I2)};
    \draw[-latex, line width = \linewidth]{(I2) -- (O5)};
    \draw[-latex, line width = \linewidth]{(M5) to [bend left] (I2)};
    \draw[-latex, line width = \linewidth]{(I2) -- (M3)};
    \draw[-latex, line width = \linewidth]{(O6) to [bend left] (I3)};
    \draw[-latex, line width = \linewidth]{(I3) -- (O1)};
    \draw[-latex, line width = \linewidth]{(M1) to [bend left] (I3)};
    \draw[-latex, line width = \linewidth]{(I3) -- (M5)};

    \def \radius{0.1cm};
    \fill (O1) circle(\radius) node[above] {{\color{cas1}1}{\color{cas2}0}{\color{cas3}0}};
    \fill (O2) circle(\radius) node[above right] {{\color{cas1}1}{\color{cas2}0}{\color{cas3}0}};
    \fill (O3) circle(\radius) node[below right] {{\color{cas1}1}{\color{cas2}0}{\color{cas3}0}};
    \fill (O4) circle(\radius) node[below] {{\color{cas1}1}{\color{cas2}0}{\color{cas3}0}};
    \fill (O5) circle(\radius) node[below left] {{\color{cas1}1}{\color{cas2}0}{\color{cas3}0}};
    \fill (O6) circle(\radius) node[above left] {{\color{cas1}1}{\color{cas2}0}{\color{cas3}0}};
    \fill (M1) circle(\radius) node[above right] {{\color{cas1}2}{\color{cas2}1}{\color{cas3}1}};
    \fill (M2) circle(\radius) node[right] {{\color{cas1}1}{\color{cas2}0}{\color{cas3}1}};
    \fill (M3) circle(\radius) node[below right] {{\color{cas1}2}{\color{cas2}1}{\color{cas3}1}};
    \fill (M4) circle(\radius) node[below left] {{\color{cas1}1}{\color{cas2}0}{\color{cas3}1}};
    \fill (M5) circle(\radius) node[left] {{\color{cas1}2}{\color{cas2}1}{\color{cas3}1}};
    \fill (M6) circle(\radius) node[above left] {{\color{cas1}1}{\color{cas2}0}{\color{cas3}1}};
    \fill (I1)[mutation] circle(2*\radius) node[above left] {{\color{cas1}1}{\color{cas2}1}{\color{cas3}1}};
    \fill (I2)[mutation] circle(2*\radius) node[above right] {{\color{cas1}1}{\color{cas2}1}{\color{cas3}1}};
    \fill (I3)[mutation] circle(2*\radius) node[below] {{\color{cas1}1}{\color{cas2}1}{\color{cas3}1}};
\end{tikzpicture}
\end{figure}

This has resulted in the following quiver.

\begin{figure}[H]
\begin{tikzpicture}[bend angle = 15, scale=0.6]
    \def \pointradius{2.5cm};

    \node(O1) at (90:2*\pointradius) {};
    \node(O2) at (30:2*\pointradius) {};
    \node(O3) at (-30:2*\pointradius) {};
    \node(O4) at (-90:2*\pointradius) {};
    \node(O5) at (-150:2*\pointradius) {};
    \node(O6) at (150:2*\pointradius) {};
    \node(M1) at (60:\pointradius) {};
    \node(M2) at (0:\pointradius) {};
    \node(M3) at (-60:\pointradius) {};
    \node(M4) at (-120:\pointradius) {};
    \node(M5) at (180:\pointradius) {};
    \node(M6) at (120:\pointradius) {};
    \node(I1) at (90:0.4*\pointradius) {};
    \node(I2) at (-30:0.4*\pointradius) {};
    \node(I3) at (210:0.4*\pointradius) {};

    \def \linewidth{0.4mm};
    \draw[-latex, line width = \linewidth, double]{(O6) -- (O1)};
    \draw[-latex, line width = \linewidth, double]{(O2) -- (O3)};
    \draw[-latex, line width = \linewidth, double]{(O4) -- (O5)};

    \draw[-latex, line width = \linewidth]{(O1) -- (I3)};
    \draw[-latex, line width = \linewidth]{(I3) -- (O6)};
    \draw[-latex, line width = \linewidth]{(O3) -- (I1)};
    \draw[-latex, line width = \linewidth]{(I1) -- (O2)};
    \draw[-latex, line width = \linewidth]{(O4) -- (I2)};
    \draw[-latex, line width = \linewidth]{(I2) -- (O5)};

    \draw[-latex, line width = \linewidth]{(M1) to [bend right] (M6)};
    \draw[-latex, line width = \linewidth]{(M6) to [bend right] (M5)};
    \draw[-latex, line width = \linewidth]{(M5) to [bend right] (M4)};
    \draw[-latex, line width = \linewidth]{(M4) to [bend right] (M3)};
    \draw[-latex, line width = \linewidth]{(M3) to [bend right] (M2)};
    \draw[-latex, line width = \linewidth]{(M2) to [bend right] (M1)};

    \draw[-latex, line width = \linewidth]{(M1) to [bend right] (M5)};
    \draw[-latex, line width = \linewidth]{(M5) to [bend right] (M3)};
    \draw[-latex, line width = \linewidth]{(M3) to [bend right] (M1)};

    \draw[-latex, line width = \linewidth]{(M2) to [bend left] (M4)};
    \draw[-latex, line width = \linewidth]{(M4) to [bend left] (M6)};
    \draw[-latex, line width = \linewidth]{(M6) to [bend left] (M2)};

    \draw[-latex, line width = \linewidth]{(M1) -- (I1)};
    \draw[-latex, line width = \linewidth]{(I1) to [bend right] (M3)};
    \draw[-latex, line width = \linewidth]{(M3) -- (I2)};
    \draw[-latex, line width = \linewidth]{(I2) to [bend right] (M5)};
    \draw[-latex, line width = \linewidth]{(M5) -- (I3)};
    \draw[-latex, line width = \linewidth]{(I3) to [bend right] (M1)};

    \def \radius{0.1cm};
    \fill (O1) circle(\radius) node[above] {{\color{cas1}1}{\color{cas2}0}{\color{cas3}0}};
    \fill (O2) circle(\radius) node[above right] {{\color{cas1}1}{\color{cas2}0}{\color{cas3}0}};
    \fill (O3) circle(\radius) node[below right] {{\color{cas1}1}{\color{cas2}0}{\color{cas3}0}};
    \fill (O4) circle(\radius) node[below] {{\color{cas1}1}{\color{cas2}0}{\color{cas3}0}};
    \fill (O5) circle(\radius) node[below left] {{\color{cas1}1}{\color{cas2}0}{\color{cas3}0}};
    \fill (O6) circle(\radius) node[above left] {{\color{cas1}1}{\color{cas2}0}{\color{cas3}0}};
    \fill (M1) circle(\radius) node[above right] {{\color{cas1}2}{\color{cas2}1}{\color{cas3}1}};
    \fill (M2) circle(\radius) node[right] {{\color{cas1}1}{\color{cas2}0}{\color{cas3}1}};
    \fill (M3) circle(\radius) node[below right] {{\color{cas1}2}{\color{cas2}1}{\color{cas3}1}};
    \fill (M4) circle(\radius) node[below left] {{\color{cas1}1}{\color{cas2}0}{\color{cas3}1}};
    \fill (M5) circle(\radius) node[left] {{\color{cas1}2}{\color{cas2}1}{\color{cas3}1}};
    \fill (M6) circle(\radius) node[above left] {{\color{cas1}1}{\color{cas2}0}{\color{cas3}1}};
    \fill (I1) circle(\radius) node[above left] {{\color{cas1}2}{\color{cas2}0}{\color{cas3}0}};
    \fill (I2) circle(\radius) node[above right] {{\color{cas1}2}{\color{cas2}0}{\color{cas3}0}};
    \fill (I3) circle(\radius) node[below] {{\color{cas1}2}{\color{cas2}0}{\color{cas3}0}};
\end{tikzpicture}
    \label{n=6--setup_mutations}
\end{figure}

The triples of numbers near each vertex encode the corresponding Casimir function: red numbers correspond to the smallest Casimir function (Case~\ref{case1}), green numbers correspond to the intermediate Casimir function (Case~\ref{case2}), and blue numbers encode the Casimir function of Case~\ref{case3}. More precisely, the Casimir function is obtained as the product of cluster variables raised to the power indicated by the number printed in the corresponding color.

The last quiver above can be redrawn as the next quiver below.
\begin{figure}[H]
\begin{tikzpicture}[scale=0.6]
    \def \pointradius{2.5cm};

    \node(O1) at (90:\pointradius) {};
    \node(O2) at (30:\pointradius) {};
    \node(O3) at (-30:\pointradius) {};
    \node(O4) at (-90:\pointradius) {};
    \node(O5) at (-150:\pointradius) {};
    \node(O6) at (150:\pointradius) {};
    \node(I1) at (30:0.3*\pointradius) {};
    \node(I2) at (-90:0.3*\pointradius) {};
    \node(I3) at (150:0.3*\pointradius) {};
    \node(W1) at ($(O2) + (60:0.52*\pointradius)$) {};
    \node(W2) at ($(O2) + (0:0.52*\pointradius)$) {};
    \node(W3) at ($(O4) + (-60:0.52*\pointradius)$) {};
    \node(W4) at ($(O4) + (-120:0.52*\pointradius)$) {};
    \node(W6) at ($(O6) + (120:0.52*\pointradius)$) {};
    \node(W5) at ($(O6) + (180:0.52*\pointradius)$) {};

    \def \linewidth{0.4mm};
    \draw[-latex, line width = \linewidth]{(O2) to [bend right] (O1)};
    \draw[-latex, line width = \linewidth]{(O1) to [bend right] (O6)};
    \draw[-latex, line width = \linewidth]{(O6) to [bend right] (O5)};
    \draw[-latex, line width = \linewidth]{(O5) to [bend right] (O4)};
    \draw[-latex, line width = \linewidth]{(O4) to [bend right] (O3)};
    \draw[-latex, line width = \linewidth]{(O3) to [bend right] (O2)};

    \draw[-latex, line width = \linewidth]{(O1) to [bend left] (O3)};
    \draw[-latex, line width = \linewidth]{(O3) to [bend left] (O5)};
    \draw[-latex, line width = \linewidth]{(O5) to [bend left] (O1)};

    \draw[-latex, line width = \linewidth]{(I1) -- (I3)};
    \draw[-latex, line width = \linewidth]{(I3) -- (I2)};
    \draw[-latex, line width = \linewidth]{(I2) -- (I1)};

    \draw[-latex, line width = \linewidth]{(O1) -- (I1)};
    \draw[-latex, line width = \linewidth]{(I3) -- (O1)};
    \draw[-latex, line width = \linewidth]{(O3) -- (I2)};
    \draw[-latex, line width = \linewidth]{(I1) -- (O3)};
    \draw[-latex, line width = \linewidth]{(O5) -- (I3)};
    \draw[-latex, line width = \linewidth]{(I2) -- (O5)};

    \draw[-latex, line width = \linewidth]{(O2) -- (W2)};
    \draw[-latex, double, line width = \linewidth]{(W2) -- (W1)};
    \draw[-latex, line width = \linewidth]{(W1) -- (O2)};

    \draw[-latex, line width = \linewidth]{(O4) -- (W4)};
    \draw[-latex, double, line width = \linewidth]{(W4) -- (W3)};
    \draw[-latex, line width = \linewidth]{(W3) -- (O4)};

    \draw[-latex, line width = \linewidth]{(O6) -- (W6)};
    \draw[-latex, double, line width = \linewidth]{(W6) -- (W5)};
    \draw[-latex, line width = \linewidth]{(W5) -- (O6)};

    \def \radius{0.1cm};
    \fill (O1) circle(\radius) node[above] {{\color{cas1}2}{\color{cas2}1}{\color{cas3}1}};
    \fill (O2) circle(\radius) node[left] {{\color{cas1}2}{\color{cas2}0}{\color{cas3}0}};
    \fill (O3) circle(\radius) node[below right] {{\color{cas1}2}{\color{cas2}1}{\color{cas3}1}};
    \fill (O4) circle(\radius) node[above] {{\color{cas1}2}{\color{cas2}0}{\color{cas3}0}};
    \fill (O5) circle(\radius) node[below left] {{\color{cas1}2}{\color{cas2}1}{\color{cas3}1}};
    \fill (O6) circle(\radius) node[right] {{\color{cas1}2}{\color{cas2}0}{\color{cas3}0}};
    \fill (I1) circle(\radius) node[above right] {{\color{cas1}1}{\color{cas2}0}{\color{cas3}1}};
    \fill (I2) circle(\radius) node[below] {{\color{cas1}1}{\color{cas2}0}{\color{cas3}1}};
    \fill (I3) circle(\radius) node[above left] {{\color{cas1}1}{\color{cas2}0}{\color{cas3}1}};
    \fill (W1) circle(\radius) node[above] {{\color{cas1}1}{\color{cas2}0}{\color{cas3}0}};
    \fill (W2) circle(\radius) node[below right] {{\color{cas1}1}{\color{cas2}0}{\color{cas3}0}};
    \fill (W3) circle(\radius) node[below right] {{\color{cas1}1}{\color{cas2}0}{\color{cas3}0}};
    \fill (W4) circle(\radius) node[below left] {{\color{cas1}1}{\color{cas2}0}{\color{cas3}0}};
    \fill (W6) circle(\radius) node[above] {{\color{cas1}1}{\color{cas2}0}{\color{cas3}0}};
    \fill (W5) circle(\radius) node[below left] {{\color{cas1}1}{\color{cas2}0}{\color{cas3}0}};
\end{tikzpicture}
    \label{n=6--base_quiver}
\end{figure}



Now consider the following sequence of mutations performed on one of the "wings" of this quiver. Since we only mutate the bottom three vertices of the quiver, we concern ourselves only with them and their neighbors (the highlighted region).
\begin{figure}[H]
 \begin{tikzpicture}[scale=0.75]
    \def \pointradius{2.5cm};
    \node(O1) at (90:\pointradius) {};
    \node(O2) at (30:\pointradius) {};
    \node(O3) at (-30:\pointradius) {};
    \node(O4) at (-90:\pointradius) {};
    \node(O5) at (-150:\pointradius) {};
    \node(O6) at (150:\pointradius) {};
    \node(I1) at (30:0.3*\pointradius) {};
    \node(I2) at (-90:0.3*\pointradius) {};
    \node(I3) at (150:0.3*\pointradius) {};
    \node(W1) at ($(O2) + (60:0.52*\pointradius)$) {};
    \node(W2) at ($(O2) + (0:0.52*\pointradius)$) {};
    \node(W3) at ($(O4) + (-60:0.52*\pointradius)$) {};
    \node(W4) at ($(O4) + (-120:0.52*\pointradius)$) {};
    \node(W6) at ($(O6) + (120:0.52*\pointradius)$) {};
    \node(W5) at ($(O6) + (180:0.52*\pointradius)$) {};

    \def \linewidth{0.4mm};
    \draw[-latex, line width = \linewidth]{(O2) to [bend right] (O1)};
    \draw[-latex, line width = \linewidth]{(O1) to [bend right] (O6)};
    \draw[-latex, line width = \linewidth]{(O6) to [bend right] (O5)};
    \draw[-latex, line width = \linewidth, color=highlight]{(O5) to [bend right] (O4)};
    \draw[-latex, line width = \linewidth, color=highlight]{(O4) to [bend right] (O3)};
    \draw[-latex, line width = \linewidth]{(O3) to [bend right] (O2)};

    \draw[-latex, line width = \linewidth]{(O1) to [bend left] (O3)};
    \draw[-latex, line width = \linewidth, color=highlight]{(O3) to [bend left] (O5)};
    \draw[-latex, line width = \linewidth]{(O5) to [bend left] (O1)};

    \draw[-latex, line width = \linewidth]{(I1) -- (I3)};
    \draw[-latex, line width = \linewidth]{(I3) -- (I2)};
    \draw[-latex, line width = \linewidth]{(I2) -- (I1)};

    \draw[-latex, line width = \linewidth]{(O1) -- (I1)};
    \draw[-latex, line width = \linewidth]{(I3) -- (O1)};
    \draw[-latex, line width = \linewidth]{(O3) -- (I2)};
    \draw[-latex, line width = \linewidth]{(I1) -- (O3)};
    \draw[-latex, line width = \linewidth]{(O5) -- (I3)};
    \draw[-latex, line width = \linewidth]{(I2) -- (O5)};

    \draw[-latex, line width = \linewidth]{(O2) -- (W2)};
    \draw[-latex, double, line width = \linewidth]{(W2) -- (W1)};
    \draw[-latex, line width = \linewidth]{(W1) -- (O2)};

    \draw[-latex, line width = \linewidth, color=highlight]{(O4) -- (W4)};
    \draw[-latex, double, line width = \linewidth, color=highlight]{(W4) -- (W3)};
    \draw[-latex, line width = \linewidth, color=highlight]{(W3) -- (O4)};

    \draw[-latex, line width = \linewidth]{(O6) -- (W6)};
    \draw[-latex, double, line width = \linewidth]{(W6) -- (W5)};
    \draw[-latex, line width = \linewidth]{(W5) -- (O6)};

    \def \radius{0.1cm};
    \fill (O1) circle(\radius) node[above] {{\color{cas1}2}{\color{cas2}1}{\color{cas3}1}};
    \fill (O2) circle(\radius) node[left] {{\color{cas1}2}{\color{cas2}0}{\color{cas3}0}};
    \fill[color=highlight] (O3) circle(\radius) node[below right] {{\color{cas1}2}{\color{cas2}1}{\color{cas3}1}};
    \fill[color=highlight] (O4) circle(\radius) node[above] {{\color{cas1}2}{\color{cas2}0}{\color{cas3}0}};
    \fill[color=highlight] (O5) circle(\radius) node[below left] {{\color{cas1}2}{\color{cas2}1}{\color{cas3}1}};
    \fill (O6) circle(\radius) node[right] {{\color{cas1}2}{\color{cas2}0}{\color{cas3}0}};
    \fill (I1) circle(\radius) node[above right] {{\color{cas1}1}{\color{cas2}0}{\color{cas3}1}};
    \fill (I2) circle(\radius) node[below] {{\color{cas1}1}{\color{cas2}0}{\color{cas3}1}};
    \fill (I3) circle(\radius) node[above left] {{\color{cas1}1}{\color{cas2}0}{\color{cas3}1}};
    \fill (W1) circle(\radius) node[above] {{\color{cas1}1}{\color{cas2}0}{\color{cas3}0}};
    \fill (W2) circle(\radius) node[below right] {{\color{cas1}1}{\color{cas2}0}{\color{cas3}0}};
    \fill[color=highlight] (W3) circle(\radius) node[below right] {{\color{cas1}1}{\color{cas2}0}{\color{cas3}0}};
    \fill[color=highlight] (W4) circle(\radius) node[below left] {{\color{cas1}1}{\color{cas2}0}{\color{cas3}0}};
    \fill (W6) circle(\radius) node[above] {{\color{cas1}1}{\color{cas2}0}{\color{cas3}0}};
    \fill (W5) circle(\radius) node[below left] {{\color{cas1}1}{\color{cas2}0}{\color{cas3}0}};

\end{tikzpicture}

\begin{tikzpicture}[scale=0.5]
  \def \linewidth{0.2mm};
    \def \pointradius{2.5cm};

    \node(1) at (135:\pointradius) {};
    \node(2) at (45:\pointradius) {};
    \node(3) at (0:0*\pointradius) {};
    \node(4) at (-135:\pointradius) {};
    \node(5) at (-45:\pointradius) {};

    \def \linewidth{0.4mm};
    \draw[-latex, line width = \linewidth]{(1) -- (4)};
    \draw[-latex, line width = \linewidth]{(3) -- (1)};
    \draw[-latex, line width = \linewidth]{(2) -- (3)};
    \draw[-latex, line width = \linewidth]{(4) -- (3)};
    \draw[-latex, line width = \linewidth]{(4) -- (5)};
    \draw[-latex, line width = \linewidth]{(3) -- (5)};
    \draw[-latex, line width = \linewidth]{(5) -- (2)};

    \def \radius{0.1cm};
    \fill (1) circle(\radius) node[above left] {{\color{cas1}2}{\color{cas2}1}{\color{cas3}1}};
    \fill (2) circle(\radius) node[above right] {{\color{cas1}2}{\color{cas2}1}{\color{cas3}1}};
    \fill (3) circle(\radius) node[right] {{\color{cas1}1}{\color{cas2}1}{\color{cas3}1}};
    \fill[mutation] (4) circle(\radius) node[below left] {{\color{cas1}1}{\color{cas2}0}{\color{cas3}0}};
    \fill (5) circle(\radius) node[below right] {{\color{cas1}1}{\color{cas2}0}{\color{cas3}0}};
    
     \draw[-latex, line width = \linewidth, dashed]{($(3) + (0:1.7*\pointradius)$)-- ($(3) + (0:2.2*\pointradius)$)};
    
\end{tikzpicture}
\begin{tikzpicture}[scale=0.5]
  \def \linewidth{0.2mm};
    \def \pointradius{2.5cm};

    \node(1) at (135:\pointradius) {};
    \node(2) at (45:\pointradius) {};
    \node(3) at (0:0*\pointradius) {};
    \node(4) at (-135:\pointradius) {};
    \node(5) at (-45:\pointradius) {};

    \def \linewidth{0.4mm};
    \draw[-latex, line width = \linewidth]{(5) to [bend right] (1)};
    \draw[-latex, double, line width = \linewidth]{(3) -- (4)};
    \draw[-latex, line width = \linewidth]{(4) -- (5)};
    \draw[-latex, line width = \linewidth]{(5) -- (3)};
    \draw[-latex, line width = \linewidth]{(2) -- (5)};
    \draw[-latex, line width = \linewidth]{(1) -- (2)};

    \def \radius{0.1cm};
    \fill (1) circle(\radius) node[above left] {{\color{cas1}2}{\color{cas2}1}{\color{cas3}1}};
    \fill (2) circle(\radius) node[above right] {{\color{cas1}2}{\color{cas2}1}{\color{cas3}1}};
    \fill (3) circle(\radius) node[right] {{\color{cas1}1}{\color{cas2}1}{\color{cas3}1}};
    \fill (4) circle(\radius) node[below left] {{\color{cas1}1}{\color{cas2}1}{\color{cas3}1}};
    \fill (5) circle(\radius) node[below right] {{\color{cas1}2}{\color{cas2}2}{\color{cas3}2}};

      \draw[-latex, line width = \linewidth, dashed]{($(3) + (0:1.7*\pointradius)$)-- ($(3) + (0:2.2*\pointradius)$)};
    
\end{tikzpicture}
\begin{tikzpicture}[scale=0.5]
  \def \linewidth{0.4mm};
    \def \pointradius{2.5cm};

    \node(1) at (135:\pointradius) {};
    \node(2) at (45:\pointradius) {};
    \node(3) at (0:0*\pointradius) {};
    \node(4) at (-135:\pointradius) {};
    \node(5) at (-45:\pointradius) {};

    \def \linewidth{0.4mm};
    \draw[-latex, line width = \linewidth]{(1) -- (2)};
    \draw[-latex, line width = \linewidth]{(3) -- (1)};
    \draw[-latex, line width = \linewidth]{(2) -- (3)};
    \draw[-latex, line width = \linewidth]{(3) -- (4)};
    \draw[-latex, double, line width = \linewidth]{(4) -- (5)};
    \draw[-latex, line width = \linewidth]{(5) -- (3)};

    \def \radius{0.1cm};
    \fill (1) circle(\radius) node[above left] {{\color{cas1}2}{\color{cas2}1}{\color{cas3}1}};
    \fill (2) circle(\radius) node[above right] {{\color{cas1}2}{\color{cas2}1}{\color{cas3}1}};
    \fill (3) circle(\radius) node[right] {{\color{cas1}2}{\color{cas2}2}{\color{cas3}2}};
    \fill (4) circle(\radius) node[below left] {{\color{cas1}1}{\color{cas2}1}{\color{cas3}1}};
    \fill (5) circle(\radius) node[below right] {{\color{cas1}1}{\color{cas2}1}{\color{cas3}1}};
    
\end{tikzpicture}
    
    \label{n=6--wing_interchanging_sequence}
\end{figure}

Applying these mutations to all three wings of the quiver yields the quiver below:
\begin{figure}[H]
\begin{tikzpicture}
    \def \pointradius{2.5cm};

    \node(O1) at (90:\pointradius) {};
    \node(O2) at (30:\pointradius) {};
    \node(O3) at (-30:\pointradius) {};
    \node(O4) at (-90:\pointradius) {};
    \node(O5) at (-150:\pointradius) {};
    \node(O6) at (150:\pointradius) {};
    \node(I1) at (30:0.3*\pointradius) {};
    \node(I2) at (-90:0.3*\pointradius) {};
    \node(I3) at (150:0.3*\pointradius) {};
    \node(W1) at ($(O2) + (60:0.52*\pointradius)$) {};
    \node(W2) at ($(O2) + (0:0.52*\pointradius)$) {};
    \node(W3) at ($(O4) + (-60:0.52*\pointradius)$) {};
    \node(W4) at ($(O4) + (-120:0.52*\pointradius)$) {};
    \node(W6) at ($(O6) + (120:0.52*\pointradius)$) {};
    \node(W5) at ($(O6) + (180:0.52*\pointradius)$) {};

    \def \linewidth{0.4mm};
    \draw[-latex, line width = \linewidth]{(O1) to [bend left] (O2)};
    \draw[-latex, line width = \linewidth]{(O2) to [bend left] (O3)};
    \draw[-latex, line width = \linewidth]{(O3) to [bend left] (O4)};
    \draw[-latex, line width = \linewidth]{(O4) to [bend left] (O5)};
    \draw[-latex, line width = \linewidth]{(O5) to [bend left] (O6)};
    \draw[-latex, line width = \linewidth]{(O6) to [bend left] (O1)};

    \draw[-latex, line width = \linewidth]{(O1) to [bend right] (O5)};
    \draw[-latex, line width = \linewidth]{(O3) to [bend right] (O1)};
    \draw[-latex, line width = \linewidth]{(O5) to [bend right] (O3)};

    \draw[-latex, line width = \linewidth]{(I1) -- (I3)};
    \draw[-latex, line width = \linewidth]{(I3) -- (I2)};
    \draw[-latex, line width = \linewidth]{(I2) -- (I1)};

    \draw[-latex, line width = \linewidth]{(O1) -- (I1)};
    \draw[-latex, line width = \linewidth]{(I3) -- (O1)};
    \draw[-latex, line width = \linewidth]{(O3) -- (I2)};
    \draw[-latex, line width = \linewidth]{(I1) -- (O3)};
    \draw[-latex, line width = \linewidth]{(O5) -- (I3)};
    \draw[-latex, line width = \linewidth]{(I2) -- (O5)};

    \draw[-latex, line width = \linewidth]{(O2) -- (W2)};
    \draw[-latex, double, line width = \linewidth]{(W2) -- (W1)};
    \draw[-latex, line width = \linewidth]{(W1) -- (O2)};

    \draw[-latex, line width = \linewidth]{(O4) -- (W4)};
    \draw[-latex, double, line width = \linewidth]{(W4) -- (W3)};
    \draw[-latex, line width = \linewidth]{(W3) -- (O4)};

    \draw[-latex, line width = \linewidth]{(O6) -- (W6)};
    \draw[-latex, double, line width = \linewidth]{(W6) -- (W5)};
    \draw[-latex, line width = \linewidth]{(W5) -- (O6)};

    \def \radius{0.1cm};
    \fill (O1) circle(\radius) node[above] {{\color{cas1}2}{\color{cas2}1}{\color{cas3}1}};
    \fill (O2) circle(\radius) node[left] {{\color{cas1}2}{\color{cas2}2}{\color{cas3}2}};
    \fill (O3) circle(\radius) node[below right] {{\color{cas1}2}{\color{cas2}1}{\color{cas3}1}};
    \fill (O4) circle(\radius) node[above] {{\color{cas1}2}{\color{cas2}2}{\color{cas3}2}};
    \fill (O5) circle(\radius) node[below left] {{\color{cas1}2}{\color{cas2}1}{\color{cas3}1}};
    \fill (O6) circle(\radius) node[right] {{\color{cas1}2}{\color{cas2}2}{\color{cas3}2}};
    \fill (I1) circle(\radius) node[above right] {{\color{cas1}1}{\color{cas2}0}{\color{cas3}1}};
    \fill (I2) circle(\radius) node[below] {{\color{cas1}1}{\color{cas2}0}{\color{cas3}1}};
    \fill (I3) circle(\radius) node[above left] {{\color{cas1}1}{\color{cas2}0}{\color{cas3}1}};
    \fill (W1) circle(\radius) node[above] {{\color{cas1}1}{\color{cas2}1}{\color{cas3}1}};
    \fill (W2) circle(\radius) node[below right] {{\color{cas1}1}{\color{cas2}1}{\color{cas3}1}};
    \fill (W3) circle(\radius) node[below right] {{\color{cas1}1}{\color{cas2}1}{\color{cas3}1}};
    \fill (W4) circle(\radius) node[below left] {{\color{cas1}1}{\color{cas2}1}{\color{cas3}1}};
    \fill (W6) circle(\radius) node[above] {{\color{cas1}1}{\color{cas2}1}{\color{cas3}1}};
    \fill (W5) circle(\radius) node[below left] {{\color{cas1}1}{\color{cas2}1}{\color{cas3}1}};
    
\end{tikzpicture}
    \label{n=6--wing_interchanged_quiver}
\end{figure}


We are finished with mutations of the wings, and center our attention on the highlighted area below:
\begin{figure}[H]
\begin{tikzpicture}
    \def \pointradius{2.5cm};

    \node(O1) at (90:\pointradius) {};
    \node(O2) at (30:\pointradius) {};
    \node(O3) at (-30:\pointradius) {};
    \node(O4) at (-90:\pointradius) {};
    \node(O5) at (-150:\pointradius) {};
    \node(O6) at (150:\pointradius) {};
    \node(I1) at (30:0.3*\pointradius) {};
    \node(I2) at (-90:0.3*\pointradius) {};
    \node(I3) at (150:0.3*\pointradius) {};
    \node(W1) at ($(O2) + (60:0.52*\pointradius)$) {};
    \node(W2) at ($(O2) + (0:0.52*\pointradius)$) {};
    \node(W3) at ($(O4) + (-60:0.52*\pointradius)$) {};
    \node(W4) at ($(O4) + (-120:0.52*\pointradius)$) {};
    \node(W6) at ($(O6) + (120:0.52*\pointradius)$) {};
    \node(W5) at ($(O6) + (180:0.52*\pointradius)$) {};

    \def \linewidth{0.4mm};
    \draw[-latex, line width = \linewidth, color =highlight ]{(O1) to [bend left] (O2)};
    \draw[-latex, line width = \linewidth, color =highlight]{(O2) to [bend left] (O3)};
    \draw[-latex, line width = \linewidth, color =highlight]{(O3) to [bend left] (O4)};
    \draw[-latex, line width = \linewidth, color =highlight]{(O4) to [bend left] (O5)};
    \draw[-latex, line width = \linewidth, color =highlight]{(O5) to [bend left] (O6)};
    \draw[-latex, line width = \linewidth, color =highlight]{(O6) to [bend left] (O1)};

    \draw[-latex, line width = \linewidth, color =highlight]{(O1) to [bend right] (O5)};
    \draw[-latex, line width = \linewidth, color =highlight]{(O3) to [bend right] (O1)};
    \draw[-latex, line width = \linewidth, color =highlight]{(O5) to [bend right] (O3)};

    \draw[-latex, line width = \linewidth, color =highlight]{(I1) -- (I3)};
    \draw[-latex, line width = \linewidth, color =highlight]{(I3) -- (I2)};
    \draw[-latex, line width = \linewidth, color =highlight]{(I2) -- (I1)};

    \draw[-latex, line width = \linewidth, color =highlight]{(O1) -- (I1)};
    \draw[-latex, line width = \linewidth, color =highlight]{(I3) -- (O1)};
    \draw[-latex, line width = \linewidth, color =highlight]{(O3) -- (I2)};
    \draw[-latex, line width = \linewidth, color =highlight]{(I1) -- (O3)};
    \draw[-latex, line width = \linewidth, color =highlight]{(O5) -- (I3)};
    \draw[-latex, line width = \linewidth, color =highlight]{(I2) -- (O5)};

    \draw[-latex, line width = \linewidth]{(O2) -- (W2)};
    \draw[-latex, double, line width = \linewidth]{(W2) -- (W1)};
    \draw[-latex, line width = \linewidth]{(W1) -- (O2)};

    \draw[-latex, line width = \linewidth]{(O4) -- (W4)};
    \draw[-latex, double, line width = \linewidth]{(W4) -- (W3)};
    \draw[-latex, line width = \linewidth]{(W3) -- (O4)};

    \draw[-latex, line width = \linewidth]{(O6) -- (W6)};
    \draw[-latex, double, line width = \linewidth]{(W6) -- (W5)};
    \draw[-latex, line width = \linewidth]{(W5) -- (O6)};

    \def \radius{0.1cm};
    \fill[highlight] (O1) circle(\radius) node[above] {{\color{cas1}2}{\color{cas2}1}{\color{cas3}1}};
    \fill[highlight] (O2) circle(\radius) node[left] {{\color{cas1}2}{\color{cas2}2}{\color{cas3}2}};
    \fill[highlight] (O3) circle(\radius) node[below right] {{\color{cas1}2}{\color{cas2}1}{\color{cas3}1}};
    \fill[highlight] (O4) circle(\radius) node[above] {{\color{cas1}2}{\color{cas2}2}{\color{cas3}2}};
    \fill[highlight] (O5) circle(\radius) node[below left] {{\color{cas1}2}{\color{cas2}1}{\color{cas3}1}};
    \fill[highlight] (O6) circle(\radius) node[right] {{\color{cas1}2}{\color{cas2}2}{\color{cas3}2}};
    \fill[highlight] (I1) circle(\radius) node[above right] {{\color{cas1}1}{\color{cas2}0}{\color{cas3}1}};
    \fill[highlight] (I2) circle(\radius) node[below] {{\color{cas1}1}{\color{cas2}0}{\color{cas3}1}};
    \fill[highlight] (I3) circle(\radius) node[above left] {{\color{cas1}1}{\color{cas2}0}{\color{cas3}1}};
    \fill (W1) circle(\radius) node[above] {{\color{cas1}1}{\color{cas2}1}{\color{cas3}1}};
    \fill (W2) circle(\radius) node[below right] {{\color{cas1}1}{\color{cas2}1}{\color{cas3}1}};
    \fill (W3) circle(\radius) node[below right] {{\color{cas1}1}{\color{cas2}1}{\color{cas3}1}};
    \fill (W4) circle(\radius) node[below left] {{\color{cas1}1}{\color{cas2}1}{\color{cas3}1}};
    \fill (W6) circle(\radius) node[above] {{\color{cas1}1}{\color{cas2}1}{\color{cas3}1}};
    \fill (W5) circle(\radius) node[below left] {{\color{cas1}1}{\color{cas2}1}{\color{cas3}1}};

\end{tikzpicture}
    \label{n=6--center_highlighted_quiver}
\end{figure}

We now present a sequence of mutations on this quiver that interchanges the first and third Casimirs:
\begin{figure}[H]
 \begin{tikzpicture}[scale=0.5]
    \def \pointradius{2.5cm};

\node(OO) at ($(180:2.*\pointradius)$) {$\bf (1)$};

    \node(O1) at (90:\pointradius) {};
    \node(O2) at (30:\pointradius) {};
    \node(O3) at (-30:\pointradius) {};
    \node(O4) at (-90:\pointradius) {};
    \node(O5) at (-150:\pointradius) {};
    \node(O6) at (150:\pointradius) {};
    \node(I1) at (30:0.3*\pointradius) {};
    \node(I2) at (-90:0.3*\pointradius) {};
    \node(I3) at (150:0.3*\pointradius) {};

    \def \linewidth{0.4mm};
    \draw[-latex, line width = \linewidth]{(O1) to [bend left] (O2)};
    \draw[-latex, line width = \linewidth]{(O2) to [bend left] (O3)};
    \draw[-latex, line width = \linewidth]{(O3) to [bend left] (O4)};
    \draw[-latex, line width = \linewidth]{(O4) to [bend left] (O5)};
    \draw[-latex, line width = \linewidth]{(O5) to [bend left] (O6)};
    \draw[-latex, line width = \linewidth]{(O6) to [bend left] (O1)};

    \draw[-latex, line width = \linewidth]{(O1) to [bend right] (O5)};
    \draw[-latex, line width = \linewidth]{(O3) to [bend right] (O1)};
    \draw[-latex, line width = \linewidth]{(O5) to [bend right] (O3)};

    \draw[-latex, line width = \linewidth]{(I1) -- (I3)};
    \draw[-latex, line width = \linewidth]{(I3) -- (I2)};
    \draw[-latex, line width = \linewidth]{(I2) -- (I1)};

    \draw[-latex, line width = \linewidth]{(O1) -- (I1)};
    \draw[-latex, line width = \linewidth]{(I3) -- (O1)};
    \draw[-latex, line width = \linewidth]{(O3) -- (I2)};
    \draw[-latex, line width = \linewidth]{(I1) -- (O3)};
    \draw[-latex, line width = \linewidth]{(O5) -- (I3)};
    \draw[-latex, line width = \linewidth]{(I2) -- (O5)};
    
    \def \radius{0.1cm};
    \fill (O1) circle(\radius) node[above] {{\color{cas1}2}{\color{cas2}1}{\color{cas3}1}};
    \fill (O2) circle(\radius) node[above right] {{\color{cas1}2}{\color{cas2}2}{\color{cas3}2}};
    \fill (O3) circle(\radius) node[below right] {{\color{cas1}2}{\color{cas2}1}{\color{cas3}1}};
    \fill (O4) circle(\radius) node[below] {{\color{cas1}2}{\color{cas2}2}{\color{cas3}2}};
    \fill (O5) circle(\radius) node[below left] {{\color{cas1}2}{\color{cas2}1}{\color{cas3}1}};
    \fill (O6) circle(\radius) node[above left] {{\color{cas1}2}{\color{cas2}2}{\color{cas3}2}};
    \fill (I1) circle(\radius) node[above right] {{\color{cas1}1}{\color{cas2}0}{\color{cas3}1}};
    \fill (I2) circle(\radius) node[below] {{\color{cas1}1}{\color{cas2}0}{\color{cas3}1}};
    \fill[mutation] (I3) circle(\radius) node[above left] {{\color{cas1}1}{\color{cas2}0}{\color{cas3}1}};

\end{tikzpicture}
\hskip 2 cm
\begin{tikzpicture}[scale=0.5]
    \def \pointradius{2.5cm};

\node(OO) at ($(180:2.*\pointradius)$) {$\bf (2)$};

    \node(O1) at (90:\pointradius) {};
    \node(O2) at (30:\pointradius) {};
    \node(O3) at (-30:\pointradius) {};
    \node(O4) at (-90:\pointradius) {};
    \node(O5) at (-150:\pointradius) {};
    \node(O6) at (150:\pointradius) {};
    \node(I1) at (30:0.3*\pointradius) {};
    \node(I2) at (-90:0.3*\pointradius) {};
    \node(I3) at (150:0.3*\pointradius) {};

    \def \linewidth{0.4mm};
    \draw[-latex, line width = \linewidth]{(O1) to [bend left] (O2)};
    \draw[-latex, line width = \linewidth]{(O2) to [bend left] (O3)};
    \draw[-latex, line width = \linewidth]{(O3) to [bend left] (O4)};
    \draw[-latex, line width = \linewidth]{(O4) to [bend left] (O5)};
    \draw[-latex, line width = \linewidth]{(O5) to [bend left] (O6)};
    \draw[-latex, line width = \linewidth]{(O6) to [bend left] (O1)};

    \draw[-latex, line width = \linewidth]{(O3) to [bend right] (O1)};
    \draw[-latex, line width = \linewidth]{(O5) to [bend right] (O3)};

    \draw[-latex, line width = \linewidth]{(I3) -- (I1)};
    \draw[-latex, line width = \linewidth]{(I2) -- (I3)};

    \draw[-latex, line width = \linewidth]{(O1) -- (I3)};
    \draw[-latex, line width = \linewidth]{(O3) -- (I2)};
    \draw[-latex, line width = \linewidth]{(I1) -- (O3)};
    \draw[-latex, line width = \linewidth]{(I3) -- (O5)};

    \def \radius{0.1cm};
    \fill[mutation] (O1) circle(\radius) node[above] {{\color{cas1}2}{\color{cas2}1}{\color{cas3}1}};
    \fill (O2) circle(\radius) node[above right] {{\color{cas1}2}{\color{cas2}2}{\color{cas3}2}};
    \fill (O3) circle(\radius) node[below right] {{\color{cas1}2}{\color{cas2}1}{\color{cas3}1}};
    \fill (O4) circle(\radius) node[below] {{\color{cas1}2}{\color{cas2}2}{\color{cas3}2}};
    \fill[mutation] (O5) circle(\radius) node[below left] {{\color{cas1}2}{\color{cas2}1}{\color{cas3}1}};
    \fill (O6) circle(\radius) node[above left] {{\color{cas1}2}{\color{cas2}2}{\color{cas3}2}};
    \fill (I1) circle(\radius) node[above right] {{\color{cas1}1}{\color{cas2}0}{\color{cas3}1}};
    \fill (I2) circle(\radius) node[below] {{\color{cas1}1}{\color{cas2}0}{\color{cas3}1}};
    \fill (I3) circle(\radius) node[above left] {{\color{cas1}2}{\color{cas2}1}{\color{cas3}1}};

\end{tikzpicture} 
    \vskip 1 cm
\begin{tikzpicture}[scale=0.5]
    \def \pointradius{2.5cm};
    
\node(OO) at ($(180:2.*\pointradius)$) {$\bf (3)$};

    \node(O1) at (90:\pointradius) {};
    \node(O2) at (30:\pointradius) {};
    \node(O3) at (-30:\pointradius) {};
    \node(O4) at (-90:\pointradius) {};
    \node(O5) at (-150:\pointradius) {};
    \node(O6) at (150:\pointradius) {};
    \node(I1) at (30:0.3*\pointradius) {};
    \node(I2) at (-90:0.3*\pointradius) {};
    \node(I3) at (150:0.3*\pointradius) {};

    \def \linewidth{0.4mm};
    \draw[-latex, line width = \linewidth]{(O2) to [bend right] (O1)};
    \draw[-latex, line width = \linewidth]{(O1) to [bend right] (O6)};
    \draw[-latex, line width = \linewidth]{(O6) to [bend right] (O5)};
    \draw[-latex, line width = \linewidth]{(O5) to [bend right] (O4)};
    \draw[-latex, line width = \linewidth]{(O4) to [bend left] (O6)};
    \draw[-latex, line width = \linewidth]{(O6) to [bend left] (O2)};

    \draw[-latex, line width = \linewidth]{(O1) to [bend left] (O3)};
    \draw[-latex, line width = \linewidth]{(O3) to [bend left] (O5)};

    \draw[-latex, line width = \linewidth]{(I3) -- (I1)};
    \draw[-latex, line width = \linewidth]{(I2) -- (I3)};

    \draw[-latex, line width = \linewidth]{(I3) -- (O1)};
    \draw[-latex, line width = \linewidth]{(O3) -- (I2)};
    \draw[-latex, line width = \linewidth]{(I1) -- (O3)};
    \draw[-latex, line width = \linewidth]{(O5) -- (I3)};

    \def \radius{0.1cm};
    \fill (O1) circle(\radius) node[above] {{\color{cas1}2}{\color{cas2}2}{\color{cas3}2}};
    \fill (O2) circle(\radius) node[above right] {{\color{cas1}2}{\color{cas2}2}{\color{cas3}2}};
    \fill[mutation] (O3) circle(\radius) node[below right] {{\color{cas1}2}{\color{cas2}1}{\color{cas3}1}};
    \fill (O4) circle(\radius) node[below] {{\color{cas1}2}{\color{cas2}2}{\color{cas3}2}};
    \fill (O5) circle(\radius) node[below left] {{\color{cas1}2}{\color{cas2}2}{\color{cas3}2}};
    \fill (O6) circle(\radius) node[above left] {{\color{cas1}2}{\color{cas2}2}{\color{cas3}2}};
    \fill (I1) circle(\radius) node[above right] {{\color{cas1}1}{\color{cas2}0}{\color{cas3}1}};
    \fill (I2) circle(\radius) node[below] {{\color{cas1}1}{\color{cas2}0}{\color{cas3}1}};
    \fill[mutation] (I3) circle(\radius) node[above left] {{\color{cas1}2}{\color{cas2}1}{\color{cas3}1}};

\end{tikzpicture}
    \hskip 2 cm
\begin{tikzpicture}[scale=0.5]
    \def \pointradius{2.5cm};

\node(OO) at ($(180:2.*\pointradius)$) {$\bf (4)$};

    \node(O1) at (90:\pointradius) {};
    \node(O2) at (30:\pointradius) {};
    \node(O3) at (-30:\pointradius) {};
    \node(O4) at (-90:\pointradius) {};
    \node(O5) at (-150:\pointradius) {};
    \node(O6) at (150:\pointradius) {};
    \node(I1) at (30:0.3*\pointradius) {};
    \node(I2) at (-90:0.3*\pointradius) {};
    \node(I3) at (150:0.3*\pointradius) {};

    \def \linewidth{0.4mm};
    \draw[-latex, line width = \linewidth]{(O2) to [bend right] (O1)};
    \draw[-latex, line width = \linewidth]{(O1) to [bend right] (O6)};
    \draw[-latex, line width = \linewidth]{(O6) to [bend right] (O5)};
    \draw[-latex, line width = \linewidth]{(O5) to [bend right] (O4)};
    
    \draw[-latex, line width = \linewidth]{(O4) to [bend left] (O6)};
    \draw[-latex, line width = \linewidth]{(O6) to [bend left] (O2)};

    \draw[-latex, line width = \linewidth]{(O3) to [bend right] (O1)};
    \draw[-latex, line width = \linewidth]{(O5) to [bend right] (O3)};

    \draw[-latex, line width = \linewidth]{(I3) -- (I2)};
    \draw[-latex, line width = \linewidth]{(I1) -- (I3)};
    
    \draw[-latex, line width = \linewidth]{(O1) -- (I3)};
    \draw[-latex, line width = \linewidth]{(I3) -- (O5)};
    
    \draw[-latex, line width = \linewidth]{(O3) -- (I1)};
    \draw[-latex, line width = \linewidth]{(I2) -- (O3)};

    \def \radius{0.1cm};
    \fill (O1) circle(\radius) node[above] {{\color{cas1}2}{\color{cas2}2}{\color{cas3}2}};
    \fill (O2) circle(\radius) node[above right] {{\color{cas1}2}{\color{cas2}2}{\color{cas3}2}};
    \fill (O3) circle(\radius) node[below right] {{\color{cas1}1}{\color{cas2}1}{\color{cas3}2}};
    \fill (O4) circle(\radius) node[below] {{\color{cas1}2}{\color{cas2}2}{\color{cas3}2}};
    \fill (O5) circle(\radius) node[below left] {{\color{cas1}2}{\color{cas2}2}{\color{cas3}2}};
    \fill (O6) circle(\radius) node[above left] {{\color{cas1}2}{\color{cas2}2}{\color{cas3}2}};
    \fill (I1) circle(\radius) node[above right] {{\color{cas1}1}{\color{cas2}0}{\color{cas3}1}};
    \fill (I2) circle(\radius) node[below] {{\color{cas1}1}{\color{cas2}0}{\color{cas3}1}};
    \fill (I3) circle(\radius) node[above left] {{\color{cas1}1}{\color{cas2}1}{\color{cas3}2}};

\end{tikzpicture}
    \label{n=6--first_Casimir_equivalency}
\end{figure}

Observe that the Casimirs of 
{Cases~\ref{case1} and ~\ref{case3}} are interchanged in the final two quivers. Thus, 
{relabeling two last mutated vertices} and repeating the sequence of mutations that led to this second-to-last quiver in reverse will lead back to the original quiver, but with the Casimirs of Cases~\ref{case1} and ~\ref{case3} interchanged.

Next, we present a sequence of vertex swaps that cyclically permutes the Casimirs.
\begin{figure}[H]
 \begin{tikzpicture}[scale=0.5]
    \def \pointradius{2.5cm};

\node(OO) at ($(180:2.*\pointradius)$) {$\bf (1)$};

    \node(O1) at (90:\pointradius) {};
    \node(O2) at (30:\pointradius) {};
    \node(O3) at (-30:\pointradius) {};
    \node(O4) at (-90:\pointradius) {};
    \node(O5) at (-150:\pointradius) {};
    \node(O6) at (150:\pointradius) {};
    \node(I1) at (30:0.3*\pointradius) {};
    \node(I2) at (-90:0.3*\pointradius) {};
    \node(I3) at (150:0.3*\pointradius) {};

    \def \linewidth{0.4mm};
    \draw[-latex, line width = \linewidth]{(O2) to [bend right] (O1)};
    \draw[-latex, line width = \linewidth]{(O1) to [bend right] (O6)};
    \draw[-latex, line width = \linewidth]{(O6) to [bend right] (O5)};
    \draw[-latex, line width = \linewidth]{(O5) to [bend right] (O4)};
    
    \draw[-latex, line width = \linewidth]{(O4) to [bend left] (O6)};
    \draw[-latex, line width = \linewidth]{(O6) to [bend left] (O2)};

    \draw[-latex, line width = \linewidth]{(O3) to [bend right] (O1)};
    \draw[-latex, line width = \linewidth]{(O5) to [bend right] (O3)};

    \draw[-latex, line width = \linewidth]{(I3) -- (I1)};
    \draw[-latex, line width = \linewidth]{(I2) -- (I3)};
    
    \draw[-latex, line width = \linewidth]{(O1) -- (I3)};
    \draw[-latex, line width = \linewidth]{(I3) -- (O5)};
    
    \draw[-latex, line width = \linewidth]{(O3) -- (I2)};
    \draw[-latex, line width = \linewidth]{(I1) -- (O3)};

    \def \radius{0.1cm};
    \fill (O1) circle(\radius) node[above] {{\color{cas1}2}{\color{cas2}2}{\color{cas3}2}};
    \fill (O2) circle(\radius) node[above right] {{\color{cas1}2}{\color{cas2}2}{\color{cas3}2}};
    \fill[swapcolor] (O3) circle(\radius) node[below right] {{\color{cas1}1}{\color{cas2}1}{\color{cas3}2}};
    \fill (O4) circle(\radius) node[below] {{\color{cas1}2}{\color{cas2}2}{\color{cas3}2}};
    \fill (O5) circle(\radius) node[below left] {{\color{cas1}2}{\color{cas2}2}{\color{cas3}2}};
    \fill (O6) circle(\radius) node[above left] {{\color{cas1}2}{\color{cas2}2}{\color{cas3}2}};
    \fill (I1) circle(\radius) node[above right] {{\color{cas1}1}{\color{cas2}0}{\color{cas3}1}};
    \fill (I2) circle(\radius) node[below] {{\color{cas1}1}{\color{cas2}0}{\color{cas3}1}};
    \fill[swapcolor] (I3) circle(\radius) node[above left] {{\color{cas1}1}{\color{cas2}1}{\color{cas3}2}};

\end{tikzpicture}
    \hskip 1 cm
\begin{tikzpicture}[scale=0.5]
    \def \pointradius{2.5cm};

\node(OO) at ($(180:2.*\pointradius)$) {$\bf (2)$};
    \node(O1) at (90:\pointradius) {};
    \node(O2) at (30:\pointradius) {};
    \node(O3) at (-30:\pointradius) {};
    \node(O4) at (-90:\pointradius) {};
    \node(O5) at (-150:\pointradius) {};
    \node(O6) at (150:\pointradius) {};
    \node(I1) at (30:0.3*\pointradius) {};
    \node(I2) at (-90:0.3*\pointradius) {};
    \node(I3) at (150:0.3*\pointradius) {};

    \def \linewidth{0.4mm};
    \draw[-latex, line width = \linewidth]{(O2) to [bend right] (O1)};
    \draw[-latex, line width = \linewidth]{(O1) to [bend right] (O6)};
    \draw[-latex, line width = \linewidth]{(O6) to [bend right] (O5)};
    \draw[-latex, line width = \linewidth]{(O5) to [bend right] (O4)};
    
    \draw[-latex, line width = \linewidth]{(O4) to [bend left] (O6)};
    \draw[-latex, line width = \linewidth]{(O6) to [bend left] (O2)};

    \draw[-latex, line width = \linewidth]{(O3) to [bend left] (O5)};
    \draw[-latex, line width = \linewidth]{(O1) to [bend left] (O3)};

    \draw[-latex, line width = \linewidth]{(I3) -- (I2)};
    \draw[-latex, line width = \linewidth]{(I1) -- (I3)};
    
    \draw[-latex, line width = \linewidth]{(O5) -- (I3)};
    \draw[-latex, line width = \linewidth]{(I3) -- (O1)};
    
    \draw[-latex, line width = \linewidth]{(O3) -- (I1)};
    \draw[-latex, line width = \linewidth]{(I2) -- (O3)};

    \def \radius{0.1cm};
    \fill (O1) circle(\radius) node[above] {{\color{cas1}2}{\color{cas2}2}{\color{cas3}2}};
    \fill (O2) circle(\radius) node[above right] {{\color{cas1}2}{\color{cas2}2}{\color{cas3}2}};
    \fill (O3) circle(\radius) node[below right] {{\color{cas1}2}{\color{cas2}1}{\color{cas3}1}};
    \fill (O4) circle(\radius) node[below] {{\color{cas1}2}{\color{cas2}2}{\color{cas3}2}};
    \fill (O5) circle(\radius) node[below left] {{\color{cas1}2}{\color{cas2}2}{\color{cas3}2}};
    \fill (O6) circle(\radius) node[above left] {{\color{cas1}2}{\color{cas2}2}{\color{cas3}2}};
    \fill[swapcolor] (I1) circle(\radius) node[above right] {{\color{cas1}1}{\color{cas2}0}{\color{cas3}1}};
    \fill[swapcolor] (I2) circle(\radius) node[below] {{\color{cas1}1}{\color{cas2}0}{\color{cas3}1}};
    \fill (I3) circle(\radius) node[above left] {{\color{cas1}2}{\color{cas2}1}{\color{cas3}1}};

\end{tikzpicture}
    \hskip 1 cm
\begin{tikzpicture}[scale=0.5]
    \def \pointradius{2.5cm};

\node(OO) at ($(180:2.*\pointradius)$) {$\bf (3)$};

    \node(O1) at (90:\pointradius) {};
    \node(O2) at (30:\pointradius) {};
    \node(O3) at (-30:\pointradius) {};
    \node(O4) at (-90:\pointradius) {};
    \node(O5) at (-150:\pointradius) {};
    \node(O6) at (150:\pointradius) {};
    \node(I1) at (30:0.3*\pointradius) {};
    \node(I2) at (-90:0.3*\pointradius) {};
    \node(I3) at (150:0.3*\pointradius) {};

    \def \linewidth{0.4mm};
    \draw[-latex, line width = \linewidth]{(O2) to [bend right] (O1)};
    \draw[-latex, line width = \linewidth]{(O1) to [bend right] (O6)};
    \draw[-latex, line width = \linewidth]{(O6) to [bend right] (O5)};
    \draw[-latex, line width = \linewidth]{(O5) to [bend right] (O4)};
    
    \draw[-latex, line width = \linewidth]{(O4) to [bend left] (O6)};
    \draw[-latex, line width = \linewidth]{(O6) to [bend left] (O2)};



    \draw[-latex, line width = \linewidth]{(O3) to [bend left] (O5)};
    \draw[-latex, line width = \linewidth]{(O1) to [bend left] (O3)};

    \draw[-latex, line width = \linewidth]{(I3) -- (I1)};
    \draw[-latex, line width = \linewidth]{(I2) -- (I3)};
    
    \draw[-latex, line width = \linewidth]{(O5) -- (I3)};
    \draw[-latex, line width = \linewidth]{(I3) -- (O1)};
    
    \draw[-latex, line width = \linewidth]{(O3) -- (I2)};
    \draw[-latex, line width = \linewidth]{(I1) -- (O3)};

    \def \radius{0.1cm};
    \fill (O1) circle(\radius) node[above] {{\color{cas1}2}{\color{cas2}2}{\color{cas3}2}};
    \fill (O2) circle(\radius) node[above right] {{\color{cas1}2}{\color{cas2}2}{\color{cas3}2}};
    \fill (O3) circle(\radius) node[below right] {{\color{cas1}2}{\color{cas2}1}{\color{cas3}1}};
    \fill (O4) circle(\radius) node[below] {{\color{cas1}2}{\color{cas2}2}{\color{cas3}2}};
    \fill (O5) circle(\radius) node[below left] {{\color{cas1}2}{\color{cas2}2}{\color{cas3}2}};
    \fill (O6) circle(\radius) node[above left] {{\color{cas1}2}{\color{cas2}2}{\color{cas3}2}};
    \fill (I1) circle(\radius) node[above right] {{\color{cas1}1}{\color{cas2}1}{\color{cas3}0}};
    \fill (I2) circle(\radius) node[below] {{\color{cas1}1}{\color{cas2}1}{\color{cas3}0}};
    \fill (I3) circle(\radius) node[above left] {{\color{cas1}2}{\color{cas2}1}{\color{cas3}1}};

\end{tikzpicture}
    \label{n=6--second_Casimir_equivalency}
\end{figure}

{If we now relabel two pairs of variables, we obtain the starting quiver (1) with Casimirs of Cases~\ref{case1},~\ref{case2} and ~\ref{case3}  permuted cyclically in that order. Repeating now the sequence of mutations in reverse, we obtain the original $\mathcal A_6$-quiver with two pairs of relabeled vertices and with all three Casimirs cyclically permuted.}

Straightforward checking using the Maple software confirms the following 
\begin{lemma} The mutation sequences constructed above satisfy the following conditions. 
\begin{itemize}
    \item The first sequence of mutations permutes the Casimirs of Case~\ref{case2} and Case~\ref{case3}, leaving the Casimir of Case~\ref{case1} unchanged. It also permutes expressions for minors giving the second conditions of Cases~\ref{case2} and ~\ref{case3}. Therefore, we conclude that the first sequence of mutations permutes Case~\ref{case2} and Case~\ref{case3}, leaving Case~\ref{case1} unchanged.
    \item The second mutation sequence cyclically permutes Case~\ref{case1}, Case~\ref{case2}, and Case~\ref{case3}.
\end{itemize}
    
\end{lemma}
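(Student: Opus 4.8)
The plan is to reduce the statement to two finite verifications carried out along the explicitly drawn mutation sequences: one purely combinatorial computation over $\Z$ for the Casimir functions, and one identity of rational functions in the cluster variables $a_i,b_j,c_k$ for the minor conditions. First I would treat the Casimir part. Each of the three relevant Casimirs of the quiver in Fig.~\ref{fig:clusterA6} is a Laurent monomial $\prod_v x_v^{n_v}$ whose exponent vector $(n_v)$ lies in the kernel of the exchange matrix $B$; under a mutation $\mu_k$ the exponent vector of a Casimir transforms by the standard linear map on $\ker B$ dual to matrix mutation, so the condition ``monomial $=1$'' is again a condition of the same type. The triples of colored integers printed beside each vertex in the successive figures are exactly these exponent vectors, so the first step is simply to check, mutation by mutation, that those labels update correctly under the mutation rule on $\ker B$, and that at the end of the first sequence the blue and green labels are exchanged while the red one is fixed, and at the end of the second sequence the three are cyclically permuted. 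This is elementary bookkeeping and can be done by hand.

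Next I would handle the minor conditions. The ``second conditions'' of Cases~\ref{case1}, \ref{case2}, \ref{case3} are the vanishing of $\psi_{44}$, of $\det(\Psi_{[3,5]}^{[3,5]})$, and of $\det(\Psi_{[2,6]}^{[2,6]})$ respectively, each a concrete Laurent-polynomial function of $a_i,b_j,c_k$ (the first two written out in the text). For each elementary mutation appearing in the two sequences one substitutes the $x$-variable mutation rule for the mutated variable into these expressions, clears denominators, and simplifies, and checks that after the whole first sequence the pair $\{\det(\Psi_{[3,5]}^{[3,5]}),\det(\Psi_{[2,6]}^{[2,6]})\}$ is interchanged (up to an invertible Laurent-monomial factor, which does not affect the zero locus) with $\psi_{44}$ fixed, and that after the second sequence the triple is cyclically permuted. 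Since $\det(\Psi_{[2,6]}^{[2,6]})$ alone has more than a thousand terms, this verification is only feasible with a computer-algebra system, which is where Maple enters.

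Finally, since Case~$i$ is by definition the intersection of the vanishing locus of its Casimir condition with the vanishing locus of its minor condition, and both the Casimir monomials and the minor polynomials are permuted compatibly (the latter up to units), the change of coordinates induced by the mutation sequence carries the subvariety defining one Case onto the subvariety defining another; combined with the combinatorial fact that the quiver returns to its original form, this gives the asserted permutations of the three cases. I expect the main obstacle to be the sheer size of the polynomial identities in the minor step, together with the need to keep track of the invertible monomial prefactors produced when denominators are cleared; there is also a bookkeeping point that several intermediate mutations are performed at vertices whose current value is not in the allowed set (for instance $c_1'=-1$ in Case~\ref{case1}), but since at this stage we only assert an identity of rational functions this affects only the interpretation of the result on the geometric leaf, not the argument itself.
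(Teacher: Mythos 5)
Your proposal matches the paper's approach: the paper tracks the Casimir exponent vectors as the colored integer triples attached to the quiver vertices through the explicit mutation sequences, and then simply states that the remaining verification — that the minors $\psi_{44}$, $\det(\Psi_{[3,5]}^{[3,5]})$, $\det(\Psi_{[2,6]}^{[2,6]})$ are permuted accordingly — is a straightforward check performed with Maple, exactly the combinatorial-bookkeeping-plus-computer-algebra scheme you describe. Your additional remarks (exponent vectors lying in $\ker B$, invariance of the zero locus under invertible Laurent-monomial factors, and the caveat about mutation values outside the allowed set) are consistent with the paper's remarks and do not change the route.
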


\begin{remark}
    Since the transposition $(2,3)$ and $3$-cycle $(1,2,3)$ generate the whole group $S_3$ of permutations of three elements, combining these sequences of mutations, one can obtain any permutation of Cases~\ref{case1},~\ref{case2}, and ~\ref{case3}.
\end{remark}

{
\begin{remark}
As in the case $n=5$, the Hamiltonian Reduction for $n=6$ leads to the cluster algebra con\-ta\-ining  all exponential shear coordinates as cluster variables and, therefore, all geodesic functions (symmetric or not) are universal Laurent polynomials of the square roots of these cluster variables. 
\end{remark}
}

{
\begin{remark}\label{rem:Berstein}
In the paper~\cite{GONCHAROV2018225}, three sequences of mutations were constructed that act on the set of Casimirs.
Let us introduce notations: $A=a_1 a_2 \dots a_6$, $B=b_1 b_2 \dots b_6$, $C=c_1 c_2 c_3$; $\hat A= A B C$, $\hat B= B C$, $\hat C=C$.
The vanishing of the Casimirs $\hat A,\hat B,\hat C$ corresponds to the vanishing of anti-diagonal entries of matrix $\Psi$.

These sequences of mutations lead to the following rational transformations:
\begin{itemize}
    \item  $s_a:(A,B,C)\mapsto (A^{-1},AB,C)$;
    \item  $s_b:(A,B,C)\mapsto (AB,B^{-1},BC)$;
    \item  $s_c:(A,B,C)\mapsto (A,BC,C^{-1})$;
\end{itemize}
Equivalently, 
\begin{itemize}
    \item  $s_a:(\hat A,\hat B,\hat C)\mapsto (\hat B,\hat A,\hat C)$;
    \item  $s_b:(A,B,C)\mapsto (\hat A,\hat C,\hat B)$;
    \item  $s_c:(A,B,C)\mapsto (\hat A\hat C^{-1},\hat B\hat C^{-1},\hat C^{-1})$;
\end{itemize}
Note that the subgroup generated by $s_a, s_b$ acts as permutation group $S_3$ on the triple of Casimirs $\hat A,\hat B, \hat C$.
All three generators $s_a,s_b,s_c$ satisfy the following relations:
$s_a^2=s_b^2=s_c^2=1,(s_a s_b)^3=1,(s_a s_c)^2=1, (s_b s_c)^4=1$, i.e,
$s_a,s_b,s_c$ generate Weyl group $B_3\simeq C_3$. It would be interesting to understand the connection between this Weyl group and the canonical braid group action on $\Acc_n$. In~\cite{GONCHAROV2018225}, the Weyl group acts on the moduli space of framed local systems on oriented surfaces by permutation of the eigenvalues and the change of flags of eigenvectors of monodromy operators. It might be  interesting to find a similar geometric description of the Weyl group action on symplectic groupoid.
We are indebted to M.Berstein for this observation.
Note, however, that mutation values $c_2=-1$ and $c_3=-1$ are not in {\color{brown} the set of  allowed values} for mutations $\mu_{c_{\tiny 2}}$ and $\mu_{c_{\tiny 3}}$ making $s_c$ not allowed in the process of geometric reduction. 
\end{remark}

\begin{remark}
    A.Shapiro and G.Schrader constructed an embedding of $U_q(GL_n)$ into the quantum cluster algebra of framed local $GL_n$ systems on a punctured torus with two marked points on the boundary. The latter has natural Weyl group action, and elements of $U_q(GL_n)$ are invariant with respect to this Weyl group action. L.Shen~\cite{shen2022clusternaturequantumgroups} proved that this embedding is, in fact, an isomorphism.  Similarly, we assume that the algebra of regular functions on $\Acc_n$ is isomorphic to the elements of the cluster algebra invariant under the corresponding Weyl group action.
\end{remark}

}

\bibliography{Reference}{}

\bibliographystyle{plain}

\end{document}